\documentclass[a4paper, 10pt]{amsart}

\usepackage{amsmath,amssymb,amscd}

\usepackage{amsfonts}
\usepackage[english]{babel}
\usepackage[leqno]{amsmath}
\usepackage{amssymb,amsthm}
\usepackage{amscd}
\usepackage{enumerate}
\usepackage{epsfig}

\usepackage{layout}
\usepackage{fullpage}

\usepackage[usenames,dvipsnames]{color}

\usepackage[backref=page]{hyperref}

\hypersetup{
 colorlinks,
 citecolor=Green,
 linkcolor=blue,
 urlcolor=Blue}

\usepackage[matrix,arrow,tips,curve]{xy}

\input{xy}
\xyoption{all}

\newenvironment{sis}{\left\{\begin{aligned}}{\end{aligned}\right.}

\newtheorem{thm}{Theorem}[section]
\newtheorem{lemma}[thm]{Lemma}
\newtheorem{lemdef}[thm]{Lemma-Definition}
\newtheorem{prop}[thm]{Proposition}
\newtheorem{cor}[thm]{Corollary}

\newtheorem{conj}[thm]{Conjecture}
\newtheorem{fact}[thm]{Fact}

\newtheorem*{thmA}{Theorem A}
\newtheorem*{thmC}{Theorem C}
\newtheorem*{thmD}{Theorem D}
\newtheorem*{thmE}{Theorem E}
\newtheorem*{corB}{Corollary B}

\numberwithin{equation}{section}
\setcounter{tocdepth}{1}

\theoremstyle{definition}
\newtheorem{defi}[thm]{Definition}
\newtheorem{convention}[thm]{}

\theoremstyle{remark}
\newtheorem{remark}[thm]{Remark}

\newcommand{\Z}{\mathbb{Z}}
\newcommand{\Q}{\mathbb{Q}}

\newcommand{\Pic}{\operatorname{Pic}}

\newcommand{\un}{\underline}
\newcommand{\ov}{\overline}
\newcommand{\wt}{\widetilde}
\newcommand{\wh}{\widehat}

\DeclareMathOperator{\Hom}{{Hom}}
\DeclareMathOperator{\Spec}{Spec \:}
\DeclareMathOperator{\Spf}{Spf \:}
\DeclareMathOperator{\Def}{Def}

\def \Im{{\rm Im}}
\DeclareMathOperator{\Supp}{Supp}
\DeclareMathOperator{\ann}{ann}
\DeclareMathOperator{\Ass}{Ass}
\DeclareMathOperator{\id}{id}

\def \PP{\mathbb{P}}

\def \Gm{{\mathbb G}_m}

\def \GL{{\rm GL}}

\def\J{\overline J}

\def \P{\mathcal P}
\def \F{\mathcal F}
\def \N {\mathcal N}
\def \X{\mathcal X}
\def \Y{\mathcal Y}
\def\I{\mathcal I}
\def \L{\mathcal L}

\def\O{\mathcal O}
\def\A{\mathcal A}

\def \D{\mathcal D}

\def\M0{\mathcal M^0}

\def \Tr{{\rm Tr}}

% Sheaves
\def\bJ{{\mathbb J}}
\def\bJbar{\ov{{\mathbb J}}}

\def\calI{{\mathcal I}}
\def\calM{{\mathcal M}}

\def \M{\mathcal M}

\def\m {\mathfrak{m}}

%Curve X

\def \Xsm{X_{\rm sm}}

%Fourier-Mukai functor

% Universal Poincar\'e bundle

\def \Pun{\P^{\rm un}}

% Mappa beta universale

\def \beun{\beta^{\rm un}_{\un q}}

%Hom fascio

\def \HHom{\mathcal Hom}
\def \EExt{\mathcal Ext}

%Fibrazione di Hitchin

\def \H {\mathcal H}

%%%% CRISTINA'S COMMANDS:

%\def\P{\overline P}

%\def\M0{\mathcal M^0}

% Euler fraktur letters (for categories)

% derived categories

% FM functors

% Homs Exts etc
%\newcommand{\Hom}{{\operatorname{Hom}}}

%\newcommand{\Ext}{{\operatorname{Ext}}}

\newcommand{\lotimes}{{\,\stackrel{\mathbf L}{\otimes}\,}}

\DeclareMathOperator{\codim}{{codim}}

\DeclareMathOperator{\supp}{{supp}}
%Kernels Cokernels, Images etc

%\DeclareMathOperator{\Spec}{{Spec}}

% blackboard bolds
\newcommand{\bbR}{{\mathbb R}}
\newcommand{\bbN}{{\mathbb N}}
\newcommand{\bbC}{{\mathbb C}}

\newcommand{\bbZ}{{\mathbb Z}}

\newcommand{\bbA}{{\mathbb A}}
% scripts

\newcommand{\cC}{{\mathcal C}}

\newcommand{\cI}{{\mathcal I}}

\newcommand{\cU}{{\mathcal U}}
\newcommand{\cV}{{\mathcal V}}
\newcommand{\cY}{{\mathcal Y}}
\newcommand{\cZ}{{\mathcal Z}}
% bold

% complexes
\newcommand{\cplx}[1]{{{\mathcal #1}^{\scriptscriptstyle\bullet}}}

% rank, Chern character and Todd class
\newcommand{\rk}{\operatorname{rk}}

%%%%%%%%%%%%%%%%%%%%%%

\title{Fourier-Mukai and autoduality for compactified Jacobians. I}

%\author{Margarida Melo, Antonio Rapagnetta, Filippo Viviani\\ \\(with an appendix  by Ana Cristina L{\'o}pez Mart{\'{\i}}n)}

\author[Melo]{Margarida Melo}
\address{Dipartimento di Matematica, Universit\`a Roma Tre, Largo S. Leonardo Murialdo 1, 00146 Roma (Italy)}
\email{melo@mat.uniroma3.it}

\author[Rapagnetta]{Antonio Rapagnetta}
\address{Dipartimento di Matematica, Universit\`a di Roma II - Tor Vergata, 00133 Roma (Italy)}
\email{rapagnet@mat.uniroma2.it}

\author[Viviani]{Filippo Viviani}
\address{Dipartimento di Matematica, Universit\`a Roma Tre, Largo S. Leonardo Murialdo 1, 00146 Roma (Italy)}
\email{viviani@mat.uniroma3.it}

\begin{document}

\keywords{Compactified Jacobians, Fourier-Mukai transform, autoduality, Poincar\'e bundle, Abel map.}

%\subjclass[2010]{14L24, 14H10, 14H40, 14H20, 14C05, 14D23.}

\begin{abstract}

To every singular reduced projective curve $X$ one can associate, following E. Esteves, many fine compactified Jacobians, depending on the choice of a polarization on $X$, each of which yields a modular compactification of a disjoint union of the generalized Jacobian of $X$. We prove that, for a reduced curve with locally planar singularities, the integral (or Fourier-Mukai) transform with kernel the Poincar\'e sheaf from the derived category of the generalized Jacobian of $X$ to the derived category of any fine compactified Jacobian of $X$ is fully faithful, generalizing a previous result of D. Arinkin in the case of integral curves. As a consequence, we prove that there is a canonical isomorphism (called autoduality) between the generalized Jacobian of $X$ and the connected component of the identity of the Picard scheme of any fine compactified Jacobian of $X$ and that  algebraic equivalence and numerical equivalence of line bundles coincide on any fine compactified Jacobian, generalizing previous results of Arinkin, Esteves, Gagn\'e, Kleiman, Rocha, Sawon.

The paper contains an Appendix in which we explain how our work can be interpreted in view of the Langlands duality for the Higgs bundles  as proposed by Donagi-Pantev.
 %the second one (due to Ana Cristina  L{\'o}pez Mart{\'{\i}}n) contains a fully faithful criterion for Fourier-Mukai transforms that we use in the proof of our main theorem.

\end{abstract}

\maketitle

\tableofcontents

\section{Introduction}

Let $C$ be a smooth irreducible projective curve over an algebraically closed field $k$
%(of arbitrary characteristic)
and let $J(C)$ be its Jacobian variety. Then $J(C)$ is an abelian variety that carries lots of information about the curve itself. Among abelian varieties, Jacobians  have the important property of being ``autodual'', i.e., they are canonically isomorphic to their dual abelian varieties. This is equivalent to the existence of a Poincar\'e line bundle $\P$ on $J(C)\times J(C)$ which is universal as a family of algebraically trivial line bundles on $J(C)$. In the breakthrough work \cite{mukai}, S. Mukai proved that the Fourier-Mukai transform with kernel $\P$ is an auto-equivalence of the bounded derived category of  $J(C)$ \footnote{More generally, for an arbitrary abelian variety $A$ with dual abelian variety $A^{\vee}$, Mukai proved that the Fourier-Mukai transform associated to the Poincar\'e line bundle on $A\times A^{\vee}$ gives an equivalence  between the bounded derived category of $A$ and that of $A^{\vee}$.}.

\vspace{0,1cm}

The aim of this paper and its sequel \cite{MRV2}, which are strongly based on our previous paper \cite{MRV1},   is to extend these results to fine compactified Jacobians (as defined by E. Esteves in \cite{est1}) of {\em reduced projective curves with locally planar singularities.} The case of integral (i.e. reduced and irreducible) projective curves with locally planar singularities was dealt with by D. Arinkin in \cite{arin1} and \cite{arin2}, generalizing previous partial results of Esteves-Gagn\'e-Kleiman \cite{egk}, Esteves-Kleiman   \cite{EK} and Sawon \cite{Saw} for integral projective curves with double point singularities. Moreover, the autoduality result has been recently extended by Esteves-Rocha \cite{ER} to tree-like curves, i.e. curves with locally planar singularities such that the unique singular points lying in more than one irreducible component are separating nodes (e.g. nodal curves of compact type). Finally, while this paper was under the referee process, two related papers have appeared on arXiv: D. Arinkin and R. Fedorov established in \cite{AF} a partial Fourier-Mukai transform for degenerate abelian schemes (in characteristic zero); J. L. Kass proved in \cite{Kas4} that  autoduality  holds true for (possibly coarse) compactified Jacobians of nodal curves and stable quasiabelian varieties (in characteristic zero).

%Indeed, the two papers of Arinkin (\cite{arin1} and \cite{arin2}) were our main source of inspiration although the case of reducible curves present several extra-difficulties that we have to overcome; see the discussion that follows.

\vspace{0,1cm}

The main motivation for this work
%(as well as for the work of Arinkin \cite{arin1} and \cite{arin2})
comes from the Langlands duality conjecture for Hitchin systems  proposed by Donagi-Pantev in \cite{DP} as a classical limit of the conjectural geometric Langlands correspondence (which we review in more details in the Appendix ).
In the special case of the Langlands self-dual linear group $\GL_r$, the  Langlands duality conjecture predicts an autoequivalence $\Phi:D^b(\M)\stackrel{\cong}{\longrightarrow} D^b(\M)$ of the bounded derived category of quasi-coherent sheaves of the moduli stack $\M$ of Higgs bundles of rank $r$ on a fixed smooth projective curve $C$, which should intertwine the classical limit tensorization functors with the classical limit  Hecke functors (see \cite[Sec. 2]{DP} for more details). The moduli stack $\M$ of Higgs bundles admits a morphism $\H:\M\to \A$, called the Hitchin morphism, to an affine space $\A$ parametrizing certain degree-$r$ singular covers of $C$, called spectral curves (see \eqref{E:fibr-Hitchin}). According to the so-called spectral correspondence (see Fact \ref{F:corr-spec}), the fiber of $\H^{-1}(\wt{C})$ over a given spectral curve $\wt{C}\to C$ is the stack of rank-$1$ torsion-free sheaves on $\wt{C}$, which, for $\wt{C}$ reduced, contains any fine compactified Jacobian of $\wt{C}$ as an open and projective subscheme.
The autoequivalence $\Phi$ is expected to be given by a Fourier-Mukai transform with kernel equal to a universal Poincar\'e sheaf $\P$ on $\M\times_{\A}\M$. Moreover, $\Phi$ is expected to preserve the Hitchin  morphism $\H$, i.e. for any spectral curve $\wt{C}\to C$ the restriction $\P_{\wt{C}}$ of $\P$ to $\H^{-1}(\wt{C})\times \H^{-1}(\wt{C})$ should induce a  Fourier-Mukai autoequivalence ${\Phi}^{\P_{\wt{C}}}:D^b(\H^{-1}(\wt{C}))\stackrel{\cong}{\longrightarrow} D^b(\H^{-1}(\wt{C}))$.
Theorem E below (which we will prove in a sequel \cite{MRV2} to this paper) can be regarded as a first step toward proving the Langlands duality conjecture over the open subset of reduced spectral curves, thus extending the work of Donagi-Pantev \cite{DP} for smooth spectral curves  and the work of Arinkin \cite{arin2} for integral spectral curves.

\vspace{0,1cm}

Before stating our results, we need to briefly recall how fine compactified Jacobians of singular curves are defined.

\subsection{Fine compactified Jacobians of singular curves}

Let $X$ be a {\em reduced} projective connected curve. The {\bf {\em generalized Jacobian}} $J(X)$ of $X$ is the connected component of the Picard scheme of $X$ containing the identity. It is not difficult to see that $J(X)$ is a smooth irreducible algebraic group of dimension equal to the arithmetic genus $p_a(X)$ of $X$, parametrizing line bundles on $X$ that have multidegree zero, i.e. degree zero on each irreducible component of $X$. However, for a singular curve $X$, the generalized Jacobian $J(X)$ is rarely complete. The problem of compactifying it is very natural and it has attracted the attention of many mathematicians, starting from the pioneering work of Mayer-Mumford and of  Igusa in the 50's, till the more recent works of Oda-Seshadri, Altmann-Kleiman, Caporaso, Pandharipande, Simpson, Jarvis, Esteves, etc.. (we refer to the introduction of \cite{est1} for an  account of the different approaches).

%Of course, when dealing with singular curves instead of dealing with the Jacobian of $X$ one should deal with the compactified Jacobian of $X$.
%when we consider singular curves, the situation is much more intricate and presently the research on the subject is very active. To start with, the generalized Jacobian of a singular curve is, in general, not compact. The problem of compactifying it is, of course, very natural, and it is considered to go back to the work of Mayer-Mumford and of Igusa in the 50's.
%Since then, several solutions appeared, starting with the work of Igusa himself and with the contribution of mathematicians as Oda-Seshadri, D'Souza, Altmann-Kleiman, Caporaso, Pandharipande, Simpson, Esteves, Schmitt, and so on. These constructions differ from one another in different aspects as the generality of the construction, the modular description of the boundary and the functorial properties.

Here we will consider  {\bf {\em fine compactified Jacobians}}, as constructed by E. Esteves in \cite{est1}.
 %which parametrize torsion-free rank-1 (i.e. rank-1 on each irreducible component of $X$) sheaves on $X$ that are semistable with respect to a general polarization on $X$.
Fine compactified Jacobians depend upon a \emph{polarization} on $X$, which for us will be simply a collection of rational numbers $\un q=\{\un q_{C_i}\}$,
one for each irreducible component $C_i$ of $X$, such that $|\un q|:=\sum_i \un q_{C_i}\in \Z$.
%We briefly review the definition of Esteves's fine compactified Jacobians of a reduced curve $X$; we refer the reader to \S\ref{S:fine-Jac} for more details.
%We will denote by $\{C_1, \ldots, C_{\gamma}\}$ the irreducible components of $X$.
%A \emph{polarization} on $X$ is a tuple of rational numbers $\un q=\{\un q_{C_i}\}$, one for each irreducible component $C_i$ of $X$, such that $|\un q|:=\sum_i \un q_{C_i}\in \Z$.
%We call $|\un q|$ the total degree of $\un q$.
%Given any (complete) subcurve $Y \subseteq X$, we set $\displaystyle \un{q}_Y:=\sum_{C_j\subseteq Y} \un{q}_{C_j}.$
A torsion-free rank-1  sheaf $I$ on $X$ of Euler characteristic  $\chi(I):=h^0(X,I)-h^1(X,I)$  equal to $|\un q|$ is called
\emph{$\un q$-semistable} (resp. \emph{$\un q$-stable}) if
for every proper subcurve $Y\subset X$, we have that
\begin{equation*}
\chi(I_Y)\geq  \sum_{C_i\subseteq Y} \un q_{C_i} \: \: (\text{resp. } > ),
\end{equation*}
where $I_Y$ is the biggest torsion-free quotient of the restriction $I_{|Y}$ of $I$ to the subcurve $Y$. A polarization $\un q$ is called {\em general} if there are no strictly $\un q$-semistable sheaves, i.e. if every $\un q$-semistable sheaf is also $\un q$-stable (see Definition \ref{def-int} for a numerical characterization of general polarizations). A  fine compactified Jacobian of $X$ is the fine moduli space $\ov{J}_X(\un q)$ of torsion-free rank-1 sheaves
 on $X$ that are $\un q$-semistable (or equivalently $\un q$-stable) with respect to a general polarization $\un q$ on $X$. Indeed, it is known that $\ov{J}_X(\un q)$ is a projective scheme over $k$ (see Fact \ref{F:Este-Jac}) on which the generalized Jacobian $J(X)$ of $X$ acts naturally by tensor product.

 If the curve $X$ has locally planar singularities, then we proved in \cite[Thm. A]{MRV1} that any fine compactified Jacobian $\ov{J}_X(\un q)$ of $X$ has the following remarkable properties:
\begin{itemize}
\item $\ov{J}_X(\un q)$ is a reduced scheme with locally complete intersection singularities.
\item The smooth locus of $\ov{J}_X(\un q)$ coincides with the open subset $J_X(\un q)\subseteq \ov{J}_X(\un q)$ parametrizing line bundles; in particular $J_X(\un q)$ is dense in $\ov{J}_X(\un q)$ and $\ov{J}_X(\un q)$ is of pure dimension equal to $p_a(X)$.
\item $\ov{J}_X(\un q)$ is connected.
\item $\ov{J}_X(\un q)$ has trivial dualizing sheaf.
\item $J_X(\un q)$ is the disjoint union of a number of copies of $J(X)$ equal to the complexity $c(X)$ of the curve $X$ (as defined in \cite[Def. 5.12]{MRV1}); in particular, $\ov{J}_X(\un q)$ has $c(X)$ irreducible components, independently of the chosen polarization $\un q$.
\end{itemize}
In the proof of all the above properties, we use in an essential way the fact that the curve has locally planar singularities and indeed we expect that many of the above properties are false without this assumptions (see \cite[Rmk. 2.7]{MRV1} and the references therein). The last property in the above list says that any two fine compactified Jacobians of a given curve $X$ are birational; however, the authors have found  in \cite{MRV1} examples of reducible curves (indeed even nodal curves) that admit non isomorphic (and even non homeomorphic if $k=\bbC$) fine compactified Jacobians.

After these preliminaries, we can now state our main results.

\subsection{Main results}

Since any fine compactified Jacobian $\ov{J}_X(\un q)$ of $X$ is a fine moduli space for certain sheaves (as the name suggests), there exists a universal sheaf $\I$ on $X\times \ov{J}_X(\un q)$. Using this universal sheaf and the formalism of the determinant of cohomology, it is possible to define a Poincar\'e line bundle $\P$ on $\ov{J}_X(\un q)\times J(X)$; we refer the reader to \S\ref{S:Poincare} for details.

Our first result concerns the Fourier-Mukai transform with kernel $\P$. This result can be seen as a first partial generalization of the above mentioned result of Mukai \cite{mukai} in the case of Jacobians. In Theorem E below (whose proof appears in \cite{MRV2}), we will give a second and more satisfactory generalization.

%Our first result is a generalization of Theorem A in \cite{arin1}.

%\begin{thmA}\label{T:MainA}
%Let $X$ be a reduced curve with locally planar singularities. Let $J$ be the generalized Jacobian of $X$ and
%$\ov{J}$ be any fine compactified Jacobian of $X$.
%Let $\P$ be a Poincar\'e bundle on $\J\times J$. Then we have that
%\begin{equation}\label{directim}
%R p_{2*} \P= \O_{\zeta}[-g]
%\end{equation}
%where $\O_{\zeta}$ is the structure sheaf of the neutral element $\zeta=[\O_X]\in J$, and $p_2:\J\times J\to J$  is the second projection.
%\end{thmA}

%From Theorem \ref{T:MainA} and either adapting the original argument of Mukai (\cite[Thm. 2.2]{mukai}) or using the %criterion of \textbf{Cristina}, we deduce the following

\begin{thmA}\label{T:MainA}
Let $X$ be a reduced projective connected curve with locally planar singularities over an algebraically closed field $k$.
Let $J(X)$ be the generalized Jacobian of $X$ and let $\ov{J}_X(\un q)$ be a fine compactified Jacobian of $X$.
Denote by  $D^b(J(X))$ and $D^b(\ov{J}_X(\un q))$ the bounded derived categories of quasi-coherent sheaves of  $J(X)$ and of $\ov{J}_X(\un q)$, respectively. Let $\P$ be a Poincar\'e line bundle on $\ov{J}_X(\un q)\times J(X)$.
Then the Fourier-Mukai transform (or integral transform) with kernel  $\P$
\begin{eqnarray*}
\Phi^{\P}:D^b(J(X))& \longrightarrow & D^b(\ov{J}_X(\un q))\\
\cplx{E} &\longmapsto & R  p_{1*}(p_2^*(\cplx{E})\otimes \P)
\end{eqnarray*}
is fully-faithful, where with $p_i$ we denote the projection of $\ov{J}_X(\un q)\times J(X)$ on the $i$-th factor.
\end{thmA}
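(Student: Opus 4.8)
The plan is to check full faithfulness on the spanning class of skyscraper sheaves of $J(X)$, invoking the fully faithful criterion of the second appendix to accommodate that the source $J(X)$ is smooth but not proper while the target $\ov{J}_X(\un q)$ is singular. The first step is to transform points: for a closed point $x\in J(X)$ the projection formula gives $\Phi^{\P}(\O_x)\cong\P_x:=\P|_{\ov{J}_X(\un q)\times\{x\}}$, a line bundle on $\ov{J}_X(\un q)$. From the construction of the Poincar\'e bundle in \S\ref{S:Poincare} I would extract the homomorphism property $\P_x^{-1}\otimes\P_y\cong\P_{y-x}$, so that for all $i$
\[
\Hom^i_{D^b(\ov{J}_X(\un q))}(\P_x,\P_y)\cong H^i\bigl(\ov{J}_X(\un q),\P_{y-x}\bigr).
\]
Since $\Ext^\bullet_{J(X)}(\O_x,\O_y)$ vanishes for $x\neq y$ and equals $\bigwedge^\bullet T_x J(X)\cong\bigwedge^\bullet k^{p_a(X)}$ for $x=y$, the criterion reduces the theorem to showing that the natural map induced by $\Phi^{\P}$ from these Ext-algebras to the groups above is an isomorphism, i.e. to two cohomological statements on $\ov{J}_X(\un q)$: the vanishing $H^i(\ov{J}_X(\un q),\P_z)=0$ for every $z\in J(X)\setminus\{0\}$ and every $i$ (condition A), and the identification $H^\bullet(\ov{J}_X(\un q),\O)\cong\bigwedge^\bullet H^1(\ov{J}_X(\un q),\O)$ with $\dim_k H^1=p_a(X)$, compatibly with cup products (condition B).

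For (B) I would use that $\ov{J}_X(\un q)$ is reduced, connected and lci, hence Cohen--Macaulay of pure dimension $p_a(X)$, with trivial dualizing sheaf (Corollaries \ref{C:prop-fineJac}, \ref{C:connect}, \ref{C:triv-can}). Connectedness gives $H^0=k$, triviality of $\omega$ gives the Serre-duality symmetry $H^i(\ov{J}_X(\un q),\P_z)^\vee\cong H^{p_a(X)-i}(\ov{J}_X(\un q),\P_{-z})$, and the $J(X)$-action endows $H^\bullet(\ov{J}_X(\un q),\O)$ with a graded coalgebra structure; combined with $\dim J(X)=p_a(X)$ and a deformation-theoretic identification of $H^1$ with the cotangent space of $J(X)$ along the dense smooth locus $J_X(\un q)$, this should force the exterior-algebra shape in (B).

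The heart of the matter, and the main obstacle, is the vanishing (A); it is the analogue for the singular projective scheme $\ov{J}_X(\un q)$ of the classical vanishing $H^\bullet(A,\mathcal L)=0$ for a nontrivial $\mathcal L\in\Pic^0$ of an abelian variety $A$. As a first reduction, numerical invariance of Euler characteristics gives $\chi(\ov{J}_X(\un q),\P_z)=\chi(\ov{J}_X(\un q),\O)=0$, so it remains to kill the individual groups. Since $\P_z$ is algebraically trivial, a seesaw argument shows it is invariant under the action $\mu\colon J(X)\times\ov{J}_X(\un q)\to\ov{J}_X(\un q)$, that is $\mu^*\P_z\cong p_{J(X)}^*N_z\otimes p_{\ov{J}_X(\un q)}^*\P_z$ for a line bundle $N_z$ on $J(X)$ that is nontrivial exactly when $z\neq0$. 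Pushing $\mu^*\P_z$ forward along the proper projection $p_{J(X)}$ and computing $\bR p_{J(X)*}(\mu^*\P_z)$ in two ways (once via this invariance, once via the shear automorphism $(g,I)\mapsto(g,g\cdot I)$, which trivializes $\mu$ over $J(X)$), base change and the projection formula yield $N_z^{\oplus h^i}\cong\O_{J(X)}^{\oplus h^i}$ with $h^i=\dim_k H^i(\ov{J}_X(\un q),\P_z)$; a Krull--Schmidt argument then forces $H^i=0$ as soon as $N_z\not\cong\O_{J(X)}$.

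I expect two points to demand genuine work, both tied to local planarity. First, $\mu$ is not a fibration along the boundary $\ov{J}_X(\un q)\setminus J_X(\un q)$, where sheaves cease to be locally free, so the invariance of $\P_z$ and the base-change statements must be controlled there; this is precisely where the lci structure and the triviality of the dualizing sheaf (Corollaries \ref{C:prop-fineJac}, \ref{C:triv-can}), which hold only under the planarity hypothesis, are indispensable. Second, because $J(X)$ is a non-proper semiabelian group, the concluding Krull--Schmidt step is delicate: one must deduce $N_z\cong\O_{J(X)}$ from $\O_{J(X)}^{\oplus m}\cong N_z^{\oplus m}$ in spite of the abundance of global functions on $J(X)$, which I would handle by descending along the torus fibration of $J(X)$ onto its abelian part and analysing $H^0(J(X),N_z)$ directly. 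Establishing the equivariance of the Poincar\'e bundle together with these base-change and indecomposability facts along the non-locally-free locus is, in my view, the technical core of the proof.
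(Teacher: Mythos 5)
Your reduction of Theorem A to the criterion of Theorem \ref{criterion2}, via the identification $\Phi^{\P}({\bf k}(z))\cong\P_z$ and the homomorphism property $\P_x^{-1}\otimes\P_y\cong\P_{y-x}$, is exactly the paper's starting point. The gap is in your proof of the vanishing (A), and it is fatal rather than technical. Your Mumford-style argument rests on the claim that the line bundle $N_z$ on $J(X)$ — which, unwinding the seesaw step, is the restriction of $\P_z$ to a $J(X)$-orbit inside $J_X(\un q)$ — is nontrivial for every $z\neq 0$. For a singular curve this cannot hold: $J(X)$ is a semiabelian variety whose toric part has dimension $\delta(X)=p_a(X)-g^{\nu}(X)>0$, and every line bundle on $J(X)$ is pulled back from the abelian quotient $J(X^{\nu})$ (the Picard group of a $\Gm$-torsor is the quotient of the Picard group of the base by the class of the torsor). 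Hence the homomorphism $z\mapsto N_z$ factors through a group of dimension at most $g^{\nu}(X)$ and has kernel of dimension at least $\delta(X)>0$. For every $z$ in this positive-dimensional kernel your translation/Krull--Schmidt argument says nothing about $H^i(\ov{J}_X(\un q),\P_z)$. The two difficulties you flag (base change along the boundary, indecomposability over the non-proper $J(X)$) are not where the problem lies; the problem is that $N_z$ is genuinely trivial on a large set of nonzero $z$.

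This is precisely why the paper does not argue fiberwise. Its own central-fiber results (Proposition \ref{P:vanishcohom}, Corollary \ref{C:smoothcohom}, Corollary \ref{codim}) only bound the bad locus $\N(\un q)$ by $\dim\N(\un q)\leq p_a(X)-g^{\nu}(X)$ — they cannot shrink it to $\{0\}$. The vanishing is instead proved in the relative form $R\wt{u}_*(\Pun)=\zeta_*(\O_{\Spec R_X})[-g]$ over the semiuniversal deformation $\X\to\Spec R_X$ (Theorem \ref{T:push-for}), by a support-and-codimension argument: relative duality makes $R^g\wt{u}_*(\Pun)$ Cohen--Macaulay of codimension $g$, the equigeneric stratification of $\Spec R_X$ (Fact \ref{F:Diaz-Har}: strata of expected codimension with nodal generic points) combined with Corollary \ref{codim} forces every component of the support to dominate $\Spec R_X$ or to lie over nodal curves, and autoduality for nodal curves (Section \ref{S:nodalC}, relying on Alexeev--Nakamura's computation $h^1(\ov{J}_X(\un q),\O)=p_a(X)$) eliminates the latter. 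None of this apparatus appears in your proposal, and without it the kernel of $z\mapsto N_z$ is unreachable. A secondary point: your condition (B) is both stronger than what Theorem \ref{criterion2} requires (only $\Hom^0=k$ and the automatic vanishing outside $[0,g]$ are needed, plus condition (3) in positive characteristic, which itself uses the full scheme-theoretic formula $Rp_{2*}\P={\bf k}(0)[-g]$), and not provable by your sketch: the identity $h^1(\ov{J}_X(\un q),\O)=p_a(X)$ for general curves with locally planar singularities is exactly the statement (**) that the paper can only deduce \emph{from} Theorem A, and it explicitly flags a direct proof as an open problem.
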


As a corollary of Theorem A, we can compute the cohomology of the line bundles $\P_M:=\P_{|\J\times \{M\}}$ on $\ov{J}_X(\un q)$, as $M$ varies in $J(X)$, generalizing the classical result for abelian varieties (see \cite[Sec. 13]{Mum}).

\begin{corB}\label{C:CorB}
Same assumptions as in Theorem A.
For any $M\in J(X)$, let $\P_M:=\P_{|\ov{J}_X(\un q)\times \{M\}}\in \Pic(\ov{J}_X(\un q))$. Then we have that
$$H^i(\ov{J}_X(\un q), \P_M)=
\begin{cases}
0 & \text{ if } M\neq [\O_X], \\
\bigwedge^i H^1(X, \O_X) & \text{ if } M=[\O_X].
\end{cases}
$$
\end{corB}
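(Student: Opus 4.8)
The plan is to express both the structure sheaf $\O_{\ov{J}_X(\un q)}$ and each line bundle $\P_M$ as images under $\Phi^{\P}$ of skyscraper sheaves on $J(X)$, and then to use the full faithfulness supplied by Theorem A to convert the sought-for cohomology into an $\operatorname{Ext}$-computation on the \emph{smooth} variety $J(X)$, where it becomes a standard Koszul calculation.

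First I would record, for a closed point $M\in J(X)$ with skyscraper sheaf $k(M)$, the identity $\Phi^{\P}(k(M))\cong \P_M$, concentrated in degree $0$. Indeed, as $p_2$ is flat the pullback $p_2^*k(M)$ is the structure sheaf of the slice $\ov{J}_X(\un q)\times\{M\}$; tensoring by $\P$ gives $\P_M$ supported on this slice, and since $p_1$ restricts to an isomorphism there, $\bR p_{1*}$ introduces no higher terms. Applying this at $M=[\O_X]$ and invoking the normalization $\P_{[\O_X]}\cong\O_{\ov{J}_X(\un q)}$ of the Poincar\'e bundle (see \S\ref{S:Poincare}), I also obtain $\O_{\ov{J}_X(\un q)}\cong\Phi^{\P}(k([\O_X]))$.

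Then I would compute
\[
H^i(\ov{J}_X(\un q),\P_M)=\operatorname{Hom}_{D^b(\ov{J}_X(\un q))}(\O_{\ov{J}_X(\un q)},\P_M[i])\cong\operatorname{Ext}^i_{J(X)}(k([\O_X]),k(M)),
\]
where the last isomorphism is Theorem A applied to the two identifications above. If $M\neq[\O_X]$ the two skyscrapers are supported at distinct points, so all these groups vanish, giving the first case. If $M=[\O_X]$, then because $J(X)$ is smooth (of dimension $p_a(X)$) the Koszul resolution of the residue field at the point $[\O_X]$ computes $\operatorname{Ext}^i_{J(X)}(k([\O_X]),k([\O_X]))\cong\bigwedge^i T_{[\O_X]}J(X)$; combined with the canonical identification of the tangent space to the Picard scheme at the origin, $T_{[\O_X]}J(X)\cong H^1(X,\O_X)$, this yields $\bigwedge^i H^1(X,\O_X)$, as claimed.

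The argument is formal once Theorem A is available, so there is no serious obstacle here; the only points requiring care are the two sheaf-theoretic identifications in the second step---that the transform of a skyscraper is the honest restriction $\P_M$, and that the Poincar\'e bundle is normalized so that $\P_{[\O_X]}$ is trivial---together with the classical computation of self-$\operatorname{Ext}$ of a skyscraper at a smooth point. All the real content sits in Theorem A, whose full faithfulness is exactly what transports a cohomology computation on the singular space $\ov{J}_X(\un q)$ to a transparent one on the smooth $J(X)$.
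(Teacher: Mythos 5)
Your proof is correct, and for the case $M=[\O_X]$ it coincides with the paper's argument: identify $\O_{\ov{J}_X(\un q)}=\P_{[\O_X]}=\Phi^{\P}({\bf k}(0))$ and $\P_M=\Phi^{\P}({\bf k}(M))$ (this is exactly formula \eqref{E:skyscraper} in the paper), use full faithfulness to reduce to ${\rm Ext}^i_{J(X)}({\bf k}(0),{\bf k}(0))$, and conclude by the exterior-algebra computation of self-Ext of a skyscraper at a smooth point together with $T_{[\O_X]}J(X)\cong H^1(X,\O_X)$. The only divergence is in the vanishing case $M\neq[\O_X]$: the paper reads this off directly from the pushforward formula $Rp_{2*}\P={\bf k}(0)[-g]$ of \eqref{directim} (proved in Theorem \ref{T:push-for}), by restricting to the fiber over $M$, whereas you obtain it from full faithfulness applied to the two skyscrapers ${\bf k}(0)$ and ${\bf k}(M)$, whose Ext groups vanish for support reasons. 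Both are one-line arguments resting on the same machinery, so the difference is cosmetic rather than substantive; note, though, that the vanishing ${\rm Ext}^i_{\ov{J}_X(\un q)}(\P_M,\P_{M'})=0$ for $M\neq M'$ is precisely condition (1) of the Appendix B criterion (Theorem \ref{criterion2}) which the paper verifies from \eqref{directim} in order to prove Theorem A in the first place, so your route repackages as a consequence of Theorem A something that is really an input to its proof --- logically harmless since you treat Theorem A as a black box, but a detour. What your version buys is uniformity: both cases of the Corollary become instances of a single Ext computation between skyscrapers on the smooth variety $J(X)$.
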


As we mentioned in the introduction, Jacobians of smooth curves are autodual. In other words, given a smooth projective curve $C$, its Jacobian $J(C)$ is canonically isomorphic to the dual abelian variety which, by definition, is equal to $\Pic^o(J(C))$, i.e. the connected component of the Picard scheme of $J(C)$ containing the origin.
Our next result is a generalization of this autoduality result to fine compactified Jacobians.

\begin{thmC}\label{T:MainC}
Same assumptions as in Theorem A.
The morphism
$$\begin{aligned}
 \beta_{\un q}: J(X) & \longrightarrow \Pic^o(\ov{J}_X(\un q)) \\
M & \mapsto \P_M:=\P_{|\ov{J}_X(\un q) \times \{M\}}
\end{aligned}
$$
is an isomorphism of algebraic groups.
\end{thmC}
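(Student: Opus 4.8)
The plan is to leverage Theorem A and Corollary B so that the deep input (full faithfulness of $\Phi^{\P}$ and the cohomology of the $\P_M$) does essentially all the work, reducing the statement to formal properties of \'etale group homomorphisms. First I would check that $\beta_{\un q}$ is a morphism of schemes: since $\P$ is a line bundle on $\ov{J}_X(\un q)\times J(X)$, it is a family of line bundles on $\ov{J}_X(\un q)$ parametrized by $J(X)$, so by the universal property of the Picard scheme it defines a morphism $J(X)\to \Pic(\ov{J}_X(\un q))$. That this morphism is a homomorphism of algebraic groups, i.e.\ $\P_{M\otimes N}\cong \P_M\otimes\P_N$ and $\P_{[\O_X]}\cong\O_{\ov{J}_X(\un q)}$, I would deduce from the construction of $\P$ in \S\ref{S:Poincare}: writing $\P_M$ via the determinant of cohomology of $\I\otimes q^*M$ (with $q,p$ the projections of $X\times\ov{J}_X(\un q)$), the relevant Deligne pairing is bi-additive in its arguments, so the normalization that subtracts $\det\bR p_*\I$ kills the quadratic and constant terms and leaves a homomorphism (a theorem-of-the-cube computation on the proper scheme $\ov{J}_X(\un q)$). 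Since $J(X)$ is connected and $\beta_{\un q}$ sends the identity to the identity, its image lies in $\Pic^o(\ov{J}_X(\un q))$.

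To prove $\beta_{\un q}$ is an isomorphism I would show it is an \'etale monomorphism onto the connected group $\Pic^o(\ov{J}_X(\un q))$. The key is that the differential of $\beta_{\un q}$ at every $M\in J(X)$ is an isomorphism, and here Theorem A is decisive. Since $\Phi^{\P}(k(M))=\P_M$ (the transform of a skyscraper at $M$ is the restriction of $\P$ to the slice $\ov{J}_X(\un q)\times\{M\}$), full faithfulness yields, for each $M$, an isomorphism
$$
{\Ext}^1_{J(X)}(k(M),k(M)) \iso {\Ext}^1_{\ov{J}_X(\un q)}(\P_M,\P_M).
$$
The left-hand side is $T_MJ(X)$ because $J(X)$ is smooth, and the right-hand side is $H^1(\ov{J}_X(\un q),\O)=T_{[\P_M]}\Pic(\ov{J}_X(\un q))$ because $\P_M$ is a line bundle. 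Moreover Corollary B at $M=[\O_X]$, where $\P_{[\O_X]}\cong\O_{\ov{J}_X(\un q)}$, identifies $T_{[\O]}\Pic^o(\ov{J}_X(\un q))=H^1(\ov{J}_X(\un q),\O)$ with $\bigwedge^1H^1(X,\O_X)=H^1(X,\O_X)=T_{[\O_X]}J(X)$, so the two tangent spaces have the same dimension $p_a(X)$.

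The main obstacle is to identify the abstract isomorphism produced by full faithfulness with the differential $d\beta_{\un q}|_M\colon T_MJ(X)\to T_{[\P_M]}\Pic(\ov{J}_X(\un q))$. I would do this by deformation theory: represent a tangent vector $v\in{\Ext}^1_{J(X)}(k(M),k(M))$ by a square-zero first-order deformation of $k(M)$ over $k[\epsilon]$, and show that $\Phi^{\P}$ carries it to the first-order deformation of $\P_M$ classified by $\beta_{\un q}$, which by definition represents $d\beta_{\un q}(v)$. This uses that $\Phi^{\P}$ commutes with base change along $k\to k[\epsilon]$, legitimate because $\P$ is flat over $J(X)$ and the transform of a skyscraper is a line bundle, so cohomology and base change applies. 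Granting this, $d\beta_{\un q}$ is an isomorphism at every point, hence $\beta_{\un q}$ is \'etale. Finally, injectivity on $k$-points comes from Corollary B: for $M\neq[\O_X]$ one has $H^0(\ov{J}_X(\un q),\P_M)=0$ while $H^0(\ov{J}_X(\un q),\O)=k$, so $\P_M\not\cong\O_{\ov{J}_X(\un q)}$; thus $\ker\beta_{\un q}$ is \'etale with a single $k$-point, hence trivial, and $\beta_{\un q}$ is a monomorphism. An \'etale monomorphism is an open immersion, and its image is an open subgroup of the connected group $\Pic^o(\ov{J}_X(\un q))$, hence all of it; therefore $\beta_{\un q}$ is an isomorphism of algebraic groups.
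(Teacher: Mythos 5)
Your proposal is correct, but it takes a genuinely different route from the paper's. The paper deduces Theorem C from a universal statement (Theorem \ref{T:isobetauniv}): the map $\beta_{\un q}$ is deformed over the semiuniversal base $\Spec R_X$ to $\beun\colon J(\X)\to \Pic^o(\ov{J}_{\X}(\un q))$, which is shown to be an isomorphism over the open locus of curves with at most one node (classical autoduality for smooth curves, Esteves--Gagn\'e--Kleiman for one-nodal curves, a product decomposition for a separating node), then to be an open embedding by Van der Waerden's purity theorem and Zariski's main theorem (the complement of that locus has codimension $\geq 2$), and finally to be an isomorphism by a fiberwise dimension/connectedness argument; in that approach Corollary B enters only through the equality $h^1(\ov{J}_X(\un q),\O_{\ov{J}_X(\un q)})=p_a(X)$, needed for the representability and smoothness of $\Pic^o(\ov{J}_{\X}(\un q))$ over $\Spec R_X$. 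You instead work entirely on the central fiber and use Theorem A in a stronger way: full faithfulness gives isomorphisms $\Ext^1_{J(X)}({\bf k}(M),{\bf k}(M))\iso \Ext^1_{\ov{J}_X(\un q)}(\P_M,\P_M)$, which by the Kodaira--Spencer compatibility of integral functors are identified with the differentials of $\beta_{\un q}$ — note that your ``main obstacle'' is already handled by the paper's own toolkit, since this is exactly Lemma \ref{Kodaira} of Appendix B; injectivity on $k$-points then follows from Corollary B and the homomorphism property, and an \'etale monomorphism onto the connected group $\Pic^o(\ov{J}_X(\un q))$ is an isomorphism. One step in your argument deserves care: ``differential an isomorphism everywhere, hence \'etale'' is not automatic when the target is not yet known to be smooth — and smoothness of $\Pic^o(\ov{J}_X(\un q))$ in positive characteristic is precisely what is at stake — but it is easily repaired: the tangent-level isomorphisms make $\beta_{\un q}$ unramified, so its fibers are discrete and Chevalley's fiber-dimension theorem forces $\dim \Pic^o(\ov{J}_X(\un q))\geq p_a(X)$, while $\dim \Pic^o(\ov{J}_X(\un q))\leq h^1(\ov{J}_X(\un q),\O_{\ov{J}_X(\un q)})=p_a(X)$ always; hence the target is smooth and the standard criterion applies. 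Your route buys brevity: it avoids the machinery of Sections \ref{S:defo}, \ref{S:univ-Jac}, \ref{S:Pic-univ} and \ref{S:nodalC} (semiuniversal deformations, relative Picard representability, nodal autoduality, purity, ZMT). What it does not give is the universal isomorphism $\beun$ of Theorem \ref{T:isobetauniv}, which the paper needs again in the proof of Theorem D, and whose supporting machinery must in any case be developed before Theorem A can be proved, since Theorem \ref{T:push-for} uses the nodal case of Theorem C.
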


Finally, it is well known that a line bundle on an abelian variety $A$ is algebraically equivalent to zero if and only if it is numerically equivalent to zero (see \cite[Cor. 2, p. 178]{Mum}). In other words, the connected component $\Pic^o(A)$ of the Picard scheme $\Pic(A)$ of $A$ containing the identity (which also parametrizes line bundles algebraically equivalent to zero) coincides with the open and closed subset $\Pic^{\tau}(A)\subseteq \Pic(A)$ parametrizing line bundles numerically equivalent to zero. This is equivalent to say that the N\'eron-Severi group ${\rm NS}(A)=\Pic(A)/\Pic^o(A)$ of $A$ is torsion-free,
since the torsion subgroup of ${\rm NS}(A)$ is equal to $\Pic^{\tau}(A)/\Pic^o(A)$.
We prove that the same holds true for fine compactified Jacobians.

\begin{thmD}\label{T:MainD}
Same assumptions as in Theorem A.  Then we have that
$$\Pic^o(\ov{J}_X(\un q))=\Pic^{\tau}(\ov{J}_X(\un q)).$$
Equivalently, the N\'eron-Severi group ${\rm NS}(\ov{J}_X(\un q))$ is torsion-free.
\end{thmD}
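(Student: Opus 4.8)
The plan is to reformulate the desired equality as a connectedness statement and then reduce it to a question about torsion line bundles. Recall that $\NS(\ov{J}_X(\un q))=\Pic(\ov{J}_X(\un q))/\Pic^{o}(\ov{J}_X(\un q))$ and that its torsion subgroup is precisely $\Pic^{\tau}(\ov{J}_X(\un q))/\Pic^{o}(\ov{J}_X(\un q))$, so the two formulations in the statement are equivalent and both amount to showing that $\Pic^{\tau}(\ov{J}_X(\un q))$ is connected, i.e. that every numerically trivial line bundle on $\ov{J}_X(\un q)$ is already algebraically trivial. By Theorem C the algebraically trivial line bundles are exactly the $\P_M$ with $M\in J(X)$, so the goal becomes: every $L\in\Pic^{\tau}(\ov{J}_X(\un q))$ is of the form $\P_M$.

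First I would reduce to torsion classes. Given $L\in\Pic^{\tau}$, some positive multiple $L^{\otimes m}$ lies in $\Pic^{o}\cong J(X)$, say $L^{\otimes m}\cong\P_{M}$. Since $J(X)$ is a connected commutative algebraic group over the algebraically closed field $k$ it is divisible (at least by integers prime to $\mathrm{char}\,k$; the $p$-part requires the group-scheme structure) and, by Theorem C, $\beta_{\un q}$ is a group isomorphism. Hence I can write $M=mM'$ for a suitable $M'\in J(X)$, so that $\P_{M}\cong\P_{M'}^{\otimes m}$, and then $N:=L\otimes\P_{M'}^{-1}$ satisfies $N^{\otimes m}\cong\O$. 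As $\P_{M'}\in\Pic^{o}$, we have $L\in\Pic^{o}$ if and only if $N\in\Pic^{o}$. It therefore suffices to prove that every torsion line bundle on $\ov{J}_X(\un q)$ lies in $\Pic^{o}$, equivalently $\Pic(\ov{J}_X(\un q))_{\mathrm{tors}}=J(X)_{\mathrm{tors}}$.

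To control the torsion I would pass to finite abelian covers. Since $\ov{J}_X(\un q)$ is reduced, connected and projective, its global units are the constants, so the Kummer sequence (étale, or flat in positive characteristic) yields $\Pic(\ov{J}_X(\un q))[n]\cong H^{1}(\ov{J}_X(\un q),\mu_n)$, and a nontrivial $n$-torsion class corresponds to a connected abelian cover, i.e. to a quotient of $\pi_1^{\mathrm{et}}(\ov{J}_X(\un q))$. The idea is then to show that all such covers are induced by isogenies of $J(X)$, exploiting the natural $J(X)$-action: the smooth locus $J_X(\un q)$ is open and dense and is a disjoint union of $c(X)$ torsors under $J(X)$, each isomorphic to $J(X)$, so restricting a torsion class to one component reduces the analysis to $J(X)$, where the torsion line bundles are understood via isogenies and land in $\Pic^{o}(J(X))$.

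The main obstacle is that this restriction loses information. The boundary $\ov{J}_X(\un q)\setminus J_X(\un q)$ is a divisor (codimension one), and $\ov{J}_X(\un q)$ is reducible and non-normal, so a cover of the smooth locus need not extend, and the kernel of $\Pic(\ov{J}_X(\un q))\to\Pic(J_X(\un q))$ is nontrivial and generated by boundary divisor classes. Pinning down $\pi_1^{\mathrm{et}}(\ov{J}_X(\un q))$, or equivalently the torsion of $H^{1}$, therefore forces the use of the fine geometry established earlier: the locally complete intersection structure of the singularities, the combinatorics of how the $c(X)$ irreducible components are glued along the boundary, and the cohomological input behind Theorem A and Corollary B, in order to rule out extra torsion classes. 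In positive characteristic the $p$-part must be handled separately through flat cohomology, since the unipotent part of $J(X)$ is not $p$-divisible; I expect this to be the most delicate point of the argument.
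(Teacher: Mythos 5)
Your proposal has two genuine gaps, one of which you partially acknowledge but neither of which you close. First, the reduction to torsion classes already fails in positive characteristic: you need to write $M=mM'$ in $J(X)$, but when ${\rm char}(k)=p>0$ and $X$ has non-nodal (e.g.\ cuspidal) singularities, $J(X)$ has a nontrivial unipotent part, and multiplication by $p$ on $\Ga$ is the zero map; so $J(X)(k)$ is not $p$-divisible and $M'$ need not exist when $p\mid m$. Since the torsion of $\NS(\ov{J}_X(\un q))$ may a priori contain $p$-torsion, you cannot assume $m$ prime to $p$, and your fallback (``the $p$-part requires the group-scheme structure'') is not an argument. This matters: the paper emphasizes that Theorem D is new for irreducible curves in positive characteristic precisely because Arinkin's characteristic-zero proof does not extend, and your plan inherits the same limitation. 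Second, and more seriously, even in characteristic zero the core step is missing: after reducing to showing that every torsion line bundle on $\ov{J}_X(\un q)$ lies in $\Pic^o(\ov{J}_X(\un q))$, you yourself list the obstructions (reducibility, non-normality, boundary divisor classes killed by restriction to $J_X(\un q)$, covers of the smooth locus that do not extend) and then stop. No computation of $\pi_1^{\rm et}(\ov{J}_X(\un q))$ or of the torsion of $\Pic(\ov{J}_X(\un q))$ is actually carried out, so the proposal is a plan that halts exactly where the difficulty begins.

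For comparison, the paper never analyzes torsion line bundles or covers of the singular variety $\ov{J}_X(\un q)$ at all; its key move is relative. Everything is spread over the semiuniversal deformation $\Spec R_X$. When $\ov{J}_X(\un q)$ admits an Abel map, one builds a universal Abel map $A_{\L}:\X\to \ov{J}_{\X}(\un q)$ and the induced diagram in which $A_{\L}^{*,o}:\Pic^o(\ov{J}_{\X}(\un q))\to J(\X)$ is an isomorphism (being a right inverse of $\beun$, which is an isomorphism by Theorem \ref{T:isobetauniv}) while $A_{\L}^{*,\tau}:\Pic^{\tau}(\ov{J}_{\X}(\un q))\to J(\X)$ is surjective. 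Over the open subset $U\subseteq \Spec R_X$ of curves with at most one node, whose complement has codimension at least two (Lemma \ref{L:codim1}), the equality $\Pic^o=\Pic^{\tau}$ holds fiberwise by Mumford's theorem for abelian varieties and by Esteves--Gagn\'e--Kleiman for irreducible one-nodal curves; Van der Waerden's purity theorem and Zariski's main theorem then force $A_{\L}^{*,\tau}$ to be an isomorphism everywhere, hence $\Pic^o(\ov{J}_{\X}(\un q))=\Pic^{\tau}(\ov{J}_{\X}(\un q))$, and the statement for $X$ follows by restricting to the closed fiber, using that $\Spec R_X$ is henselian to lift $k$-points to sections (Theorem \ref{T:thmD-Abel}). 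The general case, where $\ov{J}_X(\un q)$ admits no Abel map, is reduced to this one by choosing a polarization $\un q'$ whose compactified Jacobian does admit an Abel map (Corollary \ref{C:exist-Abel}) and comparing the two universal compactified Jacobians, which are isomorphic over $U$ by Lemma \ref{L:comp-unJac}. This relative strategy — checking the statement only over the one-node locus and globalizing by purity — is what makes the proof characteristic-free, and it is entirely absent from your proposal.
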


Note that the above Theorem D is new even for irreducible curves in positive characteristic: the proof of Theorem D  for irreducible curves by D. Arinkin (see \cite[Prop. 12]{arin1}) uses in a crucial way that ${\rm char}(k)=0$.

\vspace{0,2cm}

In a sequel of this paper \cite{MRV2}, we will use the results of this article to prove the following
%which can be seen as a strengthening of Theorem A and a further generalization of Mukai's result in \cite{mukai} to the case of singular curves.

\begin{thmE}[\cite{MRV2}]\label{T:MainE}
%Same assumptions as in Theorem A.
Let $X$ be a reduced projective and connected curve with locally planar singularities and arithmetic genus $p_a(X)$ over an algebraically closed field $k$ of characteristic zero or greater than $p_a(X)$. Let  $\ov{J}_X(\un q)$ and $\ov{J}_X(\un q')$ be two (possibly equal) fine compactified Jacobians of $X$.
There exists a (naturally defined) Cohen-Macaulay  sheaf $\ov{\P}$ on $\ov{J}_X(\un q)\times \ov{J}_X(\un q')$ such that  the Fourier-Mukai transform (or integral transform) with kernel  $\ov{\P}$
\begin{eqnarray*}
\Phi^{\ov{\P}}:D^b(\ov{J}_X(\un q'))& \longrightarrow & D^b(\ov{J}_X(\un q))\\
\cplx{E} &\longmapsto & R  p_{1*}(p_2^*(\cplx{E})\lotimes \ov{\P})
\end{eqnarray*}
is an equivalence.
\end{thmE}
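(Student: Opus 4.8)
The plan is to deduce Theorem E by first pinning down a good kernel $\ov{\P}$, then establishing fully faithfulness of $\Phi^{\ov{\P}}$ via a Bondal--Orlov-type criterion, and finally promoting fully faithfulness to an equivalence using the fact that the compactified Jacobians are Calabi--Yau.

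\emph{Constructing the kernel.} Over the dense open locus
$$U:=\left(J_X(\un q)\times \ov{J}_X(\un q')\right)\cup\left(\ov{J}_X(\un q)\times J_X(\un q')\right)\subseteq \ov{J}_X(\un q)\times \ov{J}_X(\un q')$$
where at least one factor parametrizes a genuine line bundle, the universal sheaves together with the determinant-of-cohomology construction of \S\ref{S:Poincare} produce a line bundle $\P_U$ restricting on each slice to the Poincar\'e bundle $\P$ of Theorem A. I would define $\ov{\P}$ to be the extension of $\P_U$ to the whole product that is maximal Cohen--Macaulay and flat over each factor; since the ambient product is lci (hence $S_2$) by Corollary \ref{C:prop-fineJac}, such a reflexive CM extension is essentially forced and is the natural analogue of Arinkin's Poincar\'e sheaf for integral curves. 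The first thing to verify is that each slice $\ov{\P}_{|\ov{J}_X(\un q)\times\{x\}}$, for $x\in \ov{J}_X(\un q')$, is again a Cohen--Macaulay sheaf supported on all of $\ov{J}_X(\un q)$, so that $\Phi^{\ov{\P}}(\O_x)$ is an honest Poincar\'e-type sheaf rather than a genuine complex.

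\emph{Fully faithfulness.} I would apply the fully faithfulness criterion of the Appendix to $\Phi^{\ov{\P}}$. Writing $P_x:=\Phi^{\ov{\P}}(\O_x)=\ov{\P}_{|\ov{J}_X(\un q)\times\{x\}}$, the criterion reduces the claim to the two statements
$$\Hom_{D^b(\ov{J}_X(\un q))}(P_x,P_y[i])=0\quad(x\neq y,\ \forall\, i),\qquad \Hom(P_x,P_x[i])\cong \textstyle\bigwedge^{i} H^1(X,\O_X).$$
The diagonal computation is closely related to the content of Corollary B (computing the cohomology of the Poincar\'e slices), while the off-diagonal vanishing is the assertion that two distinct $\un q'$-stable sheaves are ``Poincar\'e-orthogonal'': over the open locus this follows from Theorem A, and over the boundary it is reduced to a local computation in the \'etale-local model of the compactified Jacobian at a planar singularity.

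\emph{From fully faithful to equivalence.} Here I would use that $\ov{J}_X(\un q)$ and $\ov{J}_X(\un q')$ are both lci, hence Gorenstein, with trivial dualizing sheaf (the bullet point following Corollary \ref{C:triv-can}) and connected (Corollary \ref{C:connect}) of the same dimension $p_a(X)$. By Serre duality for projective Gorenstein schemes, each therefore admits a Serre functor equal to the shift $[p_a(X)]$, and $\Phi^{\ov{\P}}$, being a Fourier--Mukai functor with a CM kernel (hence admitting both adjoints), commutes with these shifts and so with the Serre functors. A fully faithful exact functor with adjoints that commutes with Serre functors realizes its essential image as an admissible subcategory $\mathcal A=\Img\,\Phi^{\ov{\P}}$ whose right orthogonal coincides with its left orthogonal; connectedness of $\ov{J}_X(\un q)$ makes $D^b(\ov{J}_X(\un q))$ indecomposable, forcing $\mathcal A^{\perp}=0$, so $\Phi^{\ov{\P}}$ is essentially surjective and hence an equivalence. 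This is Bridgeland's argument, and it is here that the characteristic zero hypothesis enters, exactly as in Arinkin's treatment.

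\emph{Main obstacle.} The crux is the construction and boundary analysis of $\ov{\P}$ in the first two steps: proving that the CM extension exists, is flat over both factors, and that its slices $P_x$ carry the expected $\Ext$-algebra \emph{including over the non-line-bundle locus}. For integral curves Arinkin controlled this through an explicit local model built from the Hilbert scheme of the singularity; the genuinely new difficulty for reducible $X$ is that the local structure of $\ov{J}_X(\un q)$ now depends on the polarization $\un q$ and on the combinatorics of the dual graph, so the boundary orthogonality must be checked across strata indexed by subcurves $Y\subseteq X$. Controlling this stratified behaviour is where the bulk of the work in \cite{MRV} will lie.
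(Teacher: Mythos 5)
First, an important caveat: this paper does not contain a proof of Theorem E at all. The statement is explicitly attributed to the sequel \cite{MRV} (``whose proof will appear in \cite{MRV}''), so there is no internal proof to compare your sketch against; what follows assesses the sketch on its own terms.

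Your overall strategy (a Cohen--Macaulay extension of the Poincar\'e bundle, fully faithfulness tested on skyscrapers, then a Calabi--Yau argument to upgrade to an equivalence) is the right family of ideas, but two steps fail as written. The most serious one is the appeal to ``the fully faithfulness criterion of the Appendix'': Theorems \ref{criterion} and \ref{criterion2} require the \emph{source} scheme to be smooth and connected and the kernel to be a \emph{line bundle}. In Theorem E the source $\ov{J}_X(\un q')$ is singular and $\ov{\P}$ is only Cohen--Macaulay, so neither hypothesis holds; this is exactly why Theorem A (smooth source $J(X)$, line-bundle kernel $\P$) is within reach of the present paper while Theorem E is not. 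Relatedly, your stated diagonal condition $\Hom(P_x,P_x[i])\cong\bigwedge^i H^1(X,\O_X)$ cannot be correct at a singular point $x\in\ov{J}_X(\un q')$: if $\Phi^{\ov{\P}}$ were fully faithful, then $\Hom(P_x,P_x[i])\cong \Ext^i_{\ov{J}_X(\un q')}({\bf k}(x),{\bf k}(x))$, which for singular $x$ is nonzero in infinitely many degrees and is not an exterior algebra. Any skyscraper-based criterion must accommodate this, and on a singular source the Bondal--Orlov mechanism (which uses smoothness essentially, e.g.\ via Lemma \ref{lKoszul}) breaks down, so a genuinely different argument is needed here.

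The second gap is in the construction of the kernel. What the $S_2$ property and the codimension-$\geq 2$ complement of your open set $U$ force is only the \emph{uniqueness} of a CM extension of $\P_U$; its \emph{existence}, together with flatness over both factors (which is needed even for $\Phi^{\ov{\P}}$ to preserve $D^b$ and to admit adjoints), is the actual content of Arinkin's construction for integral curves and of \cite{MRV} for reduced curves, not a formal consequence of reflexivity. You correctly flag this as the main obstacle in your closing paragraph, but the body of the sketch treats it as essentially automatic. Finally, the Bridgeland-type upgrade ``fully faithful $+$ trivial dualizing sheaf $+$ indecomposability $\Rightarrow$ equivalence'' is established in the literature for smooth projective varieties; for projective Gorenstein schemes one needs the machinery of adjoints and Serre-type functors for singular varieties (as in the work of Hern\'andez Ruip\'erez--L\'opez Mart\'{\i}n--Sancho de Salas), so this step also requires proof rather than citation.
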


Note that, in the special case when $\ov{J}_X(\un q)\cong\ov{J}_X(\un q')$, Theorem E can be seen as a strengthening of Theorem A and a further generalization of Mukai's result
in \cite{mukai} to the case of singular reduced curves.
Moreover, this result provides a first step towards the proof of the Langlands duality for Higgs bundles (see \eqref{E:fibdual}) over the  open subset of reduced spectral curves (i.e. over the so-called regular locus of the Hitchin morphism); see the Appendix  for more details.

On the other hand, in the general case when $\ov{J}_X(\un q)\not\cong \ov{J}_X(\un q')$, Theorem E implies that any two fine compactified Jacobians of $X$ (which are birational, but possibly non isomorphic,  Calabi-Yau singular projective
varieties by what said above) are derived equivalent. This result seems to suggest an extension to (mildly) singular varieties of the conjecture of Kawamata \cite{Kaw}, which predicts that birational Calabi-Yau
smooth projective varieties should be derived equivalent.
Moreover,  a topological counterpart of the above result is obtained by the third author, together with L. Migliorini and V. Schende, in \cite{MSV}: any two fine compactified Jacobians of a complex curve $X$  (under the same assumptions on $X$) have  the same perverse Leray filtration on their cohomology. This result again seems to suggest an extension to (mildly) singular varieties of the result of Batyrev \cite{Bat} which  says that birational Calabi-Yau smooth projective complex varieties have the same Hodge numbers.

\subsection{Sketch of the proofs}

Let us now give a brief outline of the proofs of the main results, trying to highlight the main ingredients that we use.

{\bf Theorem A} follows easily from the formula
\begin{equation}\label{E:for-Mum}
R p_{2*} \P\cong  {\bf k}(0)[-g]
\end{equation}
where  ${\bf k}(0)$  denotes the skyscraper sheaf supported at the origin $0=[\O_X]\in J(X)$, $g=p_a(X)$ is the  arithmetic genus of $X$ and $p_2:\ov{J}_X(\un q)\times J(X)\to J(X)$  is the projection onto the second factor. Indeed, formula \eqref{E:for-Mum} is a generalization of a well-known result of Mumford (see \cite[Sec. III.13]{Mum}) for abelian varieties which was indeed the crucial step for the celebrated original result of Mukai \cite{mukai}.

In order to prove \eqref{E:for-Mum}, the key idea, which we learned from D. Arinkin in \cite{arin1} and \cite{arin2}\footnote{In loc. cit., D. Arinkin considers the stack of all integral curves with locally planar singularities, which is of finite type. Here, we replace this stack with the semiuniversal deformation space of $X$ since the stack of all reduced curves with locally planar singularities is not of finite type.}, is to prove a similar formula for the effective semiuniversal deformation family of $X$ (see \S\ref{S:DefX} for more details):
\begin{equation*}
\xymatrix{
X \ar[d]\ar@{^{(}->}[r]\ar@{}[dr]|{\square}&  \X\ar[d]^{\pi}\\
\Spec k\ar@{^{(}->}[r] &  \Spec R_X.
}
\end{equation*}

The generalized Jacobian $J(X)$ and the fine compactified Jacobian $\ov{J}_X(\un q)$  deform over $\Spec R_X$ to, respectively, the universal generalized Jacobian $v:J(\X)\to \Spec R_X$ (see Fact \ref{F:ungenJac}) and  the universal fine compactified Jacobian $u:\ov{J}_{\X}(\un q)\to \Spec R_X$ with respect to the polarization $\un q$ (see Theorem
\ref{T:univ-fine}). Therefore we get the following  diagram
\begin{equation}\label{E:diag-intr}
\xymatrix{
&  \ov{J}_{\X}(\un q)\times_{\Spec R_X}  J(\X) \ar[dl]^{\wt{u}} \ar[dr]_{\wt{v}} & \\
J(\X) \ar[dr]^{v}\ar@{}[rr]|{\square} & & \ov{J}_{\X}(\un q) \ar[dl]_{u}\\
& \Spec R_X \ar@(dl,dl)[ul]^{\zeta} &
}
\end{equation}
where the central square is Cartesian and $\zeta$ is the zero section of $v$. Moreover, the Poincar\'e line bundle $\P$ on $\ov{J}_X(\un q)\times J(X)$ deforms to the universal Poincar\'e line bundle $\Pun$ on the fiber product  $\ov{J}_{\X}(\un q)\times_{\Spec R_X}  J(\X)$ (see \S\ref{S:nodalC}).

Equation \eqref{E:for-Mum} will follow, by restricting to the central fiber of $v$, from the following universal version of it (which we prove in Theorem \ref{T:push-for}):
\begin{equation}\label{E:Mum-intr}
R\wt{u}_* (\Pun)\cong \zeta_*(\O_{\Spec R_X})[-g].
\end{equation}
A key intermediate step in proving \eqref{E:Mum-intr} consists in showing that
\begin{equation*}
R\wt{u}_*(\Pun)[g]\cong R^g \wt{u}_*( \Pun) \text{ is a Cohen-Macaulay sheaf such that } \supp (R^g \wt{u}_*( \Pun))=\Im (\zeta). \tag{*}
\end{equation*}
%\begin{equation*}
%\supp (R^g \wt{u}_*( \Pun))=\Im (\zeta). \tag{**}
%\end{equation*}
%\begin{enumerate}[(i)]
%\item $R\wt{u}_*(\Pun)=R^g \wt{u}_*( \Pun)[-g]$ is a Cohen-Macaulay sheaf of codimension $g$.
%\item $\supp(R^g \wt{u}_*( \Pun))=\Im (\zeta)$.
%\end{enumerate}
The proof of (*) has two main ingredients. The first ingredient  is the study of the cohomology of the line bundles $\P_M\in \Pic(\ov{J}_X(\un q))$, for $M\in J(X)$; see \S \ref{S:coho-Poin}. Here, we use in an essential way the Abel map $A_L:X \longrightarrow \bJbar_X $, for $L\in \Pic(X)$, with values in the scheme $\bJbar_X$ parametrizing all simple torsion-free rank-1 sheaves on $X$, which was studied by the authors in \cite[\S 6]{MRV1} (see \S \ref{S:Abel} for a review).
The second ingredient is the \emph{equigeneric stratification} of $\Spec R_X$, i.e. the stratification of $\Spec R_X$ according to the arithmetic genus of the normalization of the geometric  fibers of the universal family $\X\to \Spec R_X$.
If $X$ has locally planar singularities, then each equigeneric stratum has codimension at least equal to the total $\delta$-invariant and all its generic points correspond to nodal curves: a result that is certainly well known to the experts (and proved partially by Teissier \cite{Tei} and Diaz-Harris \cite{DH} over $k=\bbC$ and by \cite{MY} over an algebraically closed field $k$ of large characteristic), and of which we will give a detailed proof in \cite{RV}.
These properties allow us to prove (*) over the generic points of each equigeneric stratum, using in an essential way Theorem C for nodal curves; see \S \ref{S:nodalC}.

The proof of {\bf Theorem C} follows the same idea of using the semiuniversal deformation family $\X\to \Spec R_X$ of $X$.
Under the assumption  that
\begin{equation*}
h^1(\ov{J}_X(\un q),\O_{\ov{J}_X(\un q)})=p_a(X), \tag{**}
\end{equation*}
the map $\beta_{\un q}$ of Theorem C deforms over $\Spec R_X$ to a homomorphism
\begin{equation}\label{E:beta-intr}
\beun: J(\X)\to \Pic^o(\ov{J}_{\X}(\un q))
\end{equation}
between two group schemes which are smooth, separated and of finite type over $\Spec R_X$ (see Fact \ref{F:ungenJac}, Theorem \ref{T:Pic-univ}\eqref{T:Pic-univ4} and Proposition \ref{P:hom-betaun}). We note here that the representability of $\Pic^o(\ov{J}_{\X}(\un q))$ and its smoothness over $\Spec R_X$ (proved in Theorem \ref{T:Pic-univ}\eqref{T:Pic-univ4}) use in a crucial way the assumption (**).

In Theorem \ref{T:isobetauniv}, we prove that the map $\beun$ is an isomorphism (assuming that (**) holds true), which therefore implies Theorem C restricting to the closed point of $\Spec R_X$. The proof of Theorem \ref{T:isobetauniv} uses the fact (due to Esteves-Gagn\'e-Kleiman \cite{egk}) that  $\beun$ is an isomorphism over the open subset $U\subseteq \Spec R_X$ (whose complement has codimension at least two by Lemma \ref{L:morpi}\eqref{L:morpi3})  of curves having at most one node, which combined with Van der Waerden's theorem on the purity of the ramification locus and Zariski's main theorem, gives that $\beun$ is an open embedding, hence an isomorphism.

Formula (**) is proved for nodal curves in Proposition \ref{P:H1-nodal} using results of Oda-Seshadri \cite{OS} and Alexeev-Nakamura \cite{AN}. Note that Theorem C for nodal curves plays a key role in establishing \eqref{E:Mum-intr}, hence in the proof of Theorem A and Corollary B.
For an arbitrary curve $X$ with locally planar singularities, formula (**) follows from Corollary B, hence from the Fourier-Mukai type result of Theorem A. A direct proof of (**) would allow to give a Fourier-Mukai's free proof of Theorem C (and  also of Theorem D as we will see below).

Finally, let us sketch the proof of {\bf Theorem D}, which will be given in  \S  \ref{S:ThmD}.

In Theorem \ref{T:thmD-Abel}, we will first prove Theorem D in the special case where the curve $X$ does not admit separating nodes and the fine compactified Jacobian $\ov{J}_X(\un q)$ admits an Abel map, i.e. if there exists $L\in \Pic(X)$ such that $\Im\,{A_L}\subseteq \ov{J}_X(\un q)$. Note that this hypothesis is quite restrictive for a fine compactified Jacobian since in general only a few of them will admit an Abel map (see e.g. \cite[\S 7]{MRV1}).
Once again, the strategy will be to work on the semiuniversal deformation family $\X\to \Spec R_X$. Indeed, we can deform the line bundle $L\in \Pic(X)$ that gives the Abel map $A_L:X\to \ov{J}_X(\un q)$ to a line bundle $\L$ on $\X$ in order to obtain a universal Abel map $A_{\L}:\X\to \ov{J}_{\X}(\un q)$. By taking the pull-back via $A_{\L}$, we obtain the following commutative diagram of group schemes (all of which are smooth, separated and of finite type over $\Spec R_X$, by Fact \ref{F:ungenJac} and Theorem \ref{T:Pic-univ}):
\begin{equation}\label{E:Abel-intr}
\xymatrix{
\Pic^{\tau}(\ov{J}_{\X}(\un q)) \ar@{->>}[rd]^{A_{\L}^{*,\tau}} & \\
& J(\X)\\
\Pic^o(\ov{J}_{\X}(\un q)) \ar[ru]_{A_{\L}^{*,o}}^{\cong} \ar@{^{(}->}[uu]^i& \\
}
\end{equation}
where $i$ is the natural open embedding and $A_{\L}^{*,o}$ is an isomorphism since it is the right inverse of $\beun$ (by Proposition \ref{P:prop-AL}), which is an isomorphism by Theorem \ref{T:isobetauniv}. The morphism $A_{\L}^{*,\tau}$ is an isomorphism over the open subset $U\subseteq \Spec R_X$  of curves having at most one node (as it follows from \cite{egk}); using that $\Spec R_X\setminus U$ has codimension at least two, together with  Van der Waerden's theorem on the purity of the ramification locus and Zariski's main theorem, we conclude that $A_{\L}^{*,\tau}$ is an open embedding, hence an isomorphism. Therefore $i$ must be an equality and Theorem D in this special case follows by restricting to the closed point of $\Spec R_X$.

In order to prove Theorem D in the general case, i.e. if either $X$ does have separating nodes or $\ov{J}_X(\un q)$ does not admit an Abel map, we first reduce to curves without separating nodes using that any fine compactified Jacobians of a curve $X$ is the product of fine compactified Jacobians of subcurves of $X$ without separating nodes (see Theorem \ref{T:Abel}) and that the formation of $\Pic^o$ and $\Pic^{\tau}$ commutes with products (provided that they are smooth algebraic groups) by a result of A. Langer \cite[Cor. 4.7]{Lan}.
Then, if $X$ does not have separating nodes but the fine compactified Jacobian $\J_X(\un q)$ does not admit an Abel map,
we consider another fine compactified Jacobian $\ov{J}_X(\un q')$ of $X$ that does admit an Abel map (such a fine compactified Jacobian $\ov{J}_X(\un q')$ exists by Theorem \ref{T:Abel}\eqref{T:Abel3}) and we are able to deduce
Theorem D for $\ov{J}_X(\un q)$ knowing that it does hold true for $\ov{J}_X(\un q')$ (by Theorem \ref{T:thmD-Abel}). The key ingredient is to compare their universal fine compactified Jacobians $\ov{J}_{\X}(\un q)$ and $\ov{J}_{\X}(\un q')$
by showing that they are isomorphic over the open subset $U\subseteq \Spec R_X$  of curves having at most one node (see Lemma \ref{L:comp-unJac}). We refer to \S  \ref{S:ThmD} for more details.

\subsection{Outline of the paper}

The paper is organized as follows.

Section \ref{S:comp-Jac} is devoted to collecting several facts on fine compactified Jacobians of reduced curves. In \S\ref{S:sheaves}, we consider the scheme $\bJbar_X$ parametrizing all simple torsion-free rank-1 sheaves on a curve $X$ (see Fact \ref{F:huge-Jac}) and we recall its properties under the assumption that $X$ has locally planar singularities (see Theorem \ref{T:prop-J-planar}). In \S\ref{S:fine-Jac}, we introduce fine compactified Jacobians of $X$ (see Fact \ref{F:Este-Jac}) and we recall their properties under the assumption that $X$ has locally planar singularities (see Theorem \ref{T:compJac}). Finally, in \S \ref{S:Abel},  we recall the definition of the $L$-twisted Abel map of degree one and its main properties (see Theorem \ref{T:Abel}).

Section \ref{S:univ-Jac} is devoted to collecting several results on the universal fine compactified Jacobians. In \S\ref{S:DefX}, we  recall some basic facts about the semiuniversal deformation space of a curve $X$ and  the properties of its equigeneric stratification in the case where $X$ has locally planar singularities. In \S\ref{S:unJac}, we introduce  the universal fine compactified Jacobians relative to the semiuniversal deformation of a curve $X$ (see Fact \ref{F:univ-Jac} and Theorem \ref{T:univ-fine}) and study these Jacobians under the assumption that $X$ has locally planar singularities (see Theorem \ref{T:univ-Jac}).

Section \ref{S:Pic-univ} is devoted to study the representability of the relative Picard scheme of the universal fine compactified Jacobians and of its subfunctors parametrizing line bundles that are fiberwise algebraically or numerically equivalent to the trivial line bundle (see Theorem \ref{T:Pic-univ}).

In Section \ref{S:Poincare}, we define the Poincar\'e line bundle and study its behavior with respect to the Abel maps (see Proposition \ref{P:prop-AL}).

In Section \ref{S:coho-Poin}, we study the cohomology of the restricted Poincar\'e line bundles on a fine compactified Jacobian, obtaining some  special cases of Corollary B.

Section \ref{S:nodalC} contains a proof of Theorem C for nodal curves while Section \ref{S:proof1} contains the proof of Theorem A, Corollary B and the general case of Theorem C. Finally, Theorem D is proved in  Section \ref{S:ThmD}.

 In the Appendix, we first discuss the Hitchin fibration and the description of its fibers in terms of compactified Jacobians of spectral curves (see Fact \ref{F:corr-spec}). Then we state the conjectural Langlands duality for Higgs bundles  (see Conjecture \ref{C:Lang-duality}) and its fiberwise version for each spectral curve (see \eqref{E:fibdual}).

\vspace{0,2cm}

The following notations will be used throughout the paper.

\subsection*{Notations}
\begin{convention}
 	$k$ will denote an algebraically closed field (of arbitrary characteristic), unless otherwise stated. All \textbf{schemes} are $k$-schemes, and all morphisms are implicitly assumed to respect
	the $k$-structure.
\end{convention}

\begin{convention}\label{N:curves}
	A \textbf{curve}  is a \emph{reduced} projective scheme over $k$ of pure dimension $1$. Unless otherwise specified, a curve is meant to be  connected.

Given a curve $X$, we denote by $X_{\rm sm}$ the smooth locus of $X$, by $X_{\rm sing}$ its singular locus and by $\nu:X^{\nu}\to X$ the normalization morphism.
 We denote by $\gamma(X)$, or simply by $\gamma$ where there is no danger of confusion, the number of irreducible components of $X$.

We denote by $p_a(X)$  the \emph{arithmetic genus} of $X$, i.e.  $p_a(X):=1-\chi(\O_X)=1-h^0(X,\O_X)+h^1(X, \O_X) $.
We denote by $g^{\nu}(X)$ the \emph{geometric genus} of $X$, i.e. the sum of the genera of the connected components of the normalization $X^{\nu}$. Note that $g^{\nu}(X)=h^1(X^{\nu}, \O_{X^{\nu}})$.

\end{convention}

\begin{convention}
	A \textbf{subcurve} $Z$ of a curve $X$ is a closed $k$-subscheme $Z \subseteq X$ that is reduced  and of pure dimension $1$.  We say that a subcurve $Z\subseteq X$ is non-trivial if
	$Z\neq \emptyset, X$.
	
	Given two subcurves $Z$ and $W$ of $X$ without common irreducible components, we denote by $Z\cap W$ the $0$-dimensional subscheme of $X$ that is obtained as the
	scheme-theoretic intersection of $Z$ and $W$ and we denote by $|Z\cap W|$ its length.
	
	Given a subcurve $Z\subseteq X$, we denote by $Z^c:=\ov{X\setminus Z}$ the \textbf{complementary subcurve} of $Z$ and we set $\delta_Z=\delta_{Z^c}:=|Z\cap Z^c|$.
 \end{convention}

\begin{convention}
A curve $X$ is called \textbf{Gorenstein} if its dualizing sheaf $\omega_X$ is a line bundle.
\end{convention}

\begin{convention}
A curve $X$ has \textbf{locally complete intersection (l.c.i.) singularities at $p\in X$} if the completion $\wh{\O}_{X,p}$ of the local ring of $X$ at $p$ can be written as
$$\wh{\O}_{X,p}=k[[x_1,\ldots,x_r]]/(f_1,\ldots,f_{r-1}),$$
for some $r\geq 2$ and some $f_i\in k[[x_1,\ldots,x_r]]$. A curve $X$ has locally complete intersection (l.c.i.)
singularities if $X$ is l.c.i. at every $p\in X$.
It is well known  that  a curve with l.c.i. singularities is Gorenstein.
\end{convention}

\begin{convention}\label{N:locplan}
A curve $X$ has \textbf{locally planar singularities at $p\in X$} if  the completion
$\wh{\O}_{X,p}$ of the local ring of $X$ at $p$ has embedded dimension at most two, or equivalently if it can be written
as
$$\wh{\O}_{X,p}=k[[x,y]]/(f),$$
for a reduced series $f=f(x,y)\in k[[x,y]]$.
A curve $X$ has locally planar singularities if $X$ has locally planar singularities at every $p\in X$.
Clearly, a curve with locally planar singularities has l.c.i. singularities, hence it is Gorenstein. A (reduced) curve has locally planar singularities if and only if it can be embedded in a smooth surface (see \cite{AK0}).
\end{convention}

\begin{convention}
A curve $X$ has a \textbf{node at $p\in X$} if  the completion
$\wh{\O}_{X,p}$ of the local ring of $X$ at $p$ is isomorphic to
$$\wh{\O}_{X,p}=k[[x,y]]/(xy).$$
\end{convention}

\begin{convention}\label{N:sep-node}
A \textbf{separating point}
%of a connected curve $X$ is a node $p\in X$ such that the partial normalization of $X$ at $p$ is disconnected.
is a closed  point $n\in X$ for which there exists a subcurve $Z\subset X$ such that $\delta_Z=1$ and $Z\cap Z^c=\{n\}$. Often, we will deal with reduced curves satisfying the following
\begin{equation}\label{E:dagger}
\un{\text{Condition } (\dagger)}: \text{Every separating point is a node.}
\end{equation}
Every Gorenstein curve satisfies the condition $(\dagger)$ by  \cite[Prop. 1.10]{Cat}. However, the union of the three coordinate axes in ${\mathbb A}^3$ is a (non Gorenstein) reduced curve that does not satisfy condition $(\dagger)$
(see    \cite[Example 6.5]{MRV1}).
\end{convention}

\begin{convention}\label{N:Pic-field}
Given a scheme $S$ proper over a field $k$ (not necessarily algebraically closed), we denote by $\Pic(S)$ its \textbf{Picard scheme}, which exists by a result of Murre
(see \cite[Cor. 9.4.18.3]{FGA} and the references therein). The \textbf{connected component of the identity} of $\Pic(S)$, denoted by $\Pic^o(S)$, parametrizes
line bundles on $S$ which are algebraically equivalent to the trivial line bundle (see \cite[Sec. 9.5]{FGA} for details).
The \textbf{torsion component of the identity} of $\Pic(S)$, denoted by $\Pic^{\tau}(S)$, parametrizes
line bundles on $S$ which are numerically equivalent to the trivial line bundle or, equivalently, such that some powers of them lie in $\Pic^o(S)$
(see \cite[Sec. 9.6]{FGA} for details). The scheme $\Pic^{\tau}(S)$ is an open and closed group subscheme of $\Pic(S)$ which is of finite type over $k$
(see \cite[Prop. 9.6.12]{FGA}).

On the other hand, given an arbitrary scheme $S$, we denote by ${\mathcal Pic}(S)$ the \textbf{Picard group}  of $S$, i.e. the abstract group consisting of all isomorphism classes of line bundles
on $S$ with the operation of tensor product.
\end{convention}

\begin{convention}\label{N:Jac-gen}
Given a curve $X$ over an algebraically closed field, we call $\Pic^o(X)$ the \textbf{generalized Jacobian} of $X$.
It is easy that the $k$-valued points of $\Pic^o(X)$ coincide with the group of line bundles on $X$ of multidegree $\un 0$ (i.e. having
degree $0$ on each irreducible component of $X$) together with the multiplication given by the tensor product.
The generalized Jacobian of $X$ is a connected commutative smooth algebraic group of dimension equal to
$h^1(X,\O_X)$ and it coincides with $\Pic^\tau(X)$.
We also use the notation $J(X)$ and $\Pic^{\un 0}(X)$ for the generalized Jacobian of $X$.
\end{convention}

\begin{convention}
Given a scheme $X$, we will denote by $D(X)$ the \textbf{derived category} of complexes of $\mathcal{O}_X$-modules with quasi-coherent cohomology sheaves and by $D^b(X)\subset D(X)$ the \textbf{bounded derived category} consisting of complexes
with only finitely many non-zero cohomology sheaves.
\end{convention}

\begin{convention}
Given a scheme $X$ and a closed point $x\in X$, we will denote by ${\bf k}(x)$ the \textbf{skyscraper sheaf} supported at $x$.
\end{convention}

\section{Fine Compactified Jacobians}\label{S:comp-Jac}

The aim of this section is to collecting several facts about fine compactified Jacobians of reduced curves with locally planar singularities, following \cite[\S 2]{MRV1}.
%Throughout this section, we fix a connected reduced curve $X$ with locally planar singularities.

\subsection{Simple rank-1 torsion-free sheaves}\label{S:sheaves}

Fine compactified Jacobians on a connected reduced curve $X$ parametrize simple rank-$1$ torsion free sheaves on $X$.

\begin{defi}
A coherent sheaf $I$ on a connected reduced curve $X$ is said to be:
\begin{enumerate}[(i)]
\item \emph{rank-1} if $I$ has generic rank $1$ at every irreducible component of $X$;
\item \emph{torsion-free} if $\Supp(I)=X$ and every non-zero subsheaf $J\subseteq I$ is such that $\dim \Supp(J)=1$;
%\footnote{If $X$ is irreducible, then $I$ is pure if and only if $I$ is torsion-free. Sometimes, by abuse of notation, one uses the word torsion-free to denote pure sheaves even if $X$ is not
%irreducible.};
\item \emph{simple} if ${\rm End}_k(I)=k$.
\end{enumerate}
\end{defi}
\noindent Note that any line bundle on $X$ is a simple rank-1 torsion-free sheaf.

Consider the functor
\begin{equation}\label{E:func-Jbar}
\bJbar_X^* : \{{\rm Schemes}/k\}  \to \{{\rm Sets}\}
\end{equation}
which associates to a $k$-scheme $T$ the set of isomorphism classes of $T$-flat, coherent sheaves on $X\times _k T$
whose fibers over $T$ are simple rank-1 torsion-free sheaves.
% (this definition agrees with the one in \cite[Def. 5.1]{AK} by virtue of \cite[Cor. 5.3]{AK}).
%such that the canonical map $\O_T\to (\pi_2)_*\Hom_{X_T}(\F,\F)$ is an isomorphism of sheaves on $T$.
  The functor $\bJbar_X^*$ contains the open subfunctor
\begin{equation}\label{E:func-J}
\bJ_X^* : \{{\rm Schemes}/k\}  \to \{{\rm Sets}\}
\end{equation}
which associates to a $k$-scheme $T$ the set of isomorphism classes of line bundles on $X\times _k T$.

\begin{fact}[Murre-Oort, Altman-Kleiman \cite{AK}, Esteves \cite{est1}]\label{F:huge-Jac}
Let $X$ be a connected reduced curve. Then
\noindent
\begin{enumerate}[(i)]
\item \label{F:huge1} The \'etale sheafification of $\bJ_X^*$ is represented by a $k$-scheme $\Pic(X)=\bJ_X$, locally of finite type over $k$. Moreover, $\bJ_X$ is formally smooth over $k$.
\item \label{F:huge2} The \'etale sheafification of $\bJbar_X^*$ is represented by a $k$-scheme $\bJbar_X$, locally of finite type over $k$. Moreover, $\bJ_X$ is an open subset of $\bJbar_X$ and $\bJbar_X$ satisfies the existence part of the valuative criterion for properness\footnote{Note that $\bJbar_X$ is not universally closed because it is not quasi-compact, in general.}.
\item \label{F:huge3} There exists a sheaf $\I$ on $X\times \bJbar_X$  such for every $\F\in \bJbar_X^*(T)$ there exists a unique map $\alpha_{\F}:T\to \bJbar_X$ with the property that
$\F=(\id_X\times \alpha_{\F})^*(\I)\otimes \pi_2^*(N)$ for some $N\in \Pic(T)$, where $\pi_2:X\times T\to T$ is the projection onto the second factor.
The sheaf $\I$ is uniquely determined up  to tensor product with the pullback of an invertible sheaf on $\bJbar_X$ and it is called a \emph{universal sheaf}.
\end{enumerate}
\end{fact}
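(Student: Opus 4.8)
The plan is to assemble the three statements from the standard representability theory, treating each part in turn and invoking the foundational results of Murre--Oort, Altman--Kleiman and Esteves where the heavy lifting has already been carried out. For part \eqref{F:huge1}, the \'etale sheafification of $\bJ_X^*$ is by definition the relative Picard functor $\Pic_{X/k}$. Since $X$ is projective over the field $k$, the representability theorem of Grothendieck in the refined form due to Murre and Oort (valid when $X$ is merely projective, not necessarily geometrically integral) gives that $\Pic_{X/k}$ is represented by a $k$-scheme $\bJ_X=\Pic(X)$ locally of finite type over $k$. For smoothness I would use the infinitesimal criterion: the tangent space to $\Pic(X)$ at a point $[L]$ is canonically $H^1(X,\O_X)$, while the obstructions to lifting a deformation of $L$ along a square-zero extension lie in $H^2(X,\O_X)$. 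Because $X$ has dimension one, $H^2(X,\O_X)=0$, so every such lifting is unobstructed; combined with locally-finite-type this shows $\Pic(X)$ is formally smooth, hence smooth, over $k$ (this is the characteristic-free argument, which bypasses Cartier's theorem).

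For part \eqref{F:huge2}, I would invoke Esteves's construction. Applying Artin's representability criteria to the deformation theory of simple torsion-free rank-$1$ sheaves (governed by the $\Ext$-groups computed in the deformation-theoretic part of the paper) shows that the \'etale sheafification $\bJbar_X$ is an algebraic space locally of finite type over $k$, and Esteves shows further that it is in fact a scheme. The essential point separating $\bJbar_X$ from $\bJ_X$ is universal closedness, which I would verify through the valuative criterion: given a discrete valuation ring $R$ with fraction field $K$ and a simple rank-$1$ torsion-free sheaf on $X_K$, one must produce, after a finite extension of $R$, a flat extension over $\Spec R$ whose special fibre is again simple, rank-$1$ and torsion-free. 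Existence of such a limit is a Langton-type argument for torsion-free sheaves on the fixed curve $X$ (boundedness of the family together with passage to a maximal torsion-free extension), which is precisely the content of Esteves's completeness theorem. Note that only existence, and not uniqueness, of the limit is required, which is exactly why $\bJbar_X$ need not be separated.

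For the remaining assertions, openness of $\bJ_X$ in $\bJbar_X$ is immediate: in a $T$-flat family of rank-$1$ torsion-free sheaves the locus in $T$ over which the sheaf is locally free (i.e. a line bundle) is open, so $\bJ_X^*$ is an open subfunctor of $\bJbar_X^*$, and this persists after sheafification. For part \eqref{F:huge3}, the natural map from $\bJbar_X^*$ to its sheafification $\bJbar_X$ has the structure of a $\Gm$-gerbe, and a universal sheaf on $X\times\bJbar_X$ exists precisely when this gerbe is neutral; the ambiguity by $\pi_2^*N$ with $N\in\Pic(T)$ in the statement is exactly the residual $\Pic$-indeterminacy that remains once the gerbe has been trivialized. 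I would cite Esteves's theorem that simplicity of the sheaves, together with the existence of a relatively ample line bundle on $X$ (and the abundance of rational points of $X$ over the algebraically closed field $k$), neutralizes the gerbe and produces $\I$; uniqueness of $\alpha_{\F}$ and of $\I$ up to a $\pi_2^*$-twist then follows formally from the universal property.

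The hard part, and the step I expect to consume most of the work, is the combination in \eqref{F:huge2} of scheme-theoretic representability with universal closedness in the possibly non-separated, reducible setting: the valuative-criterion / Langton argument for extending torsion-free sheaves must be run on a curve that may be reducible and only reduced (not integral), where the classical Altman--Kleiman compactified-Jacobian theory does not apply directly and one genuinely needs Esteves's generalization. The existence of the \emph{global} universal sheaf in \eqref{F:huge3} is the second delicate point, since neutrality of the $\Gm$-gerbe is not automatic and relies essentially on the simplicity hypothesis on the sheaves.
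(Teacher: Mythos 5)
Your proposal is correct and follows essentially the same route as the paper's proof, which is likewise an assembly of the foundational results: part (i) via Murre--Oort representability plus smoothness from the vanishing of $H^2(X,\O_X)$ (this is exactly what the cited result in Bosch--L\"utkebohmert--Raynaud does), part (ii) via representability by an algebraic space, then Esteves's theorem for scheme-ness (using that each irreducible component of $X$ has a $k$-point) and Esteves's completeness theorem for universal closedness, openness of $\bJ_X$ from the open-subfunctor observation, and part (iii) from simplicity together with the existence of rational points. The only discrepancies are in attribution, not in substance: the paper derives the algebraic-space representability in (ii) from Altman--Kleiman's Theorem 7.4 rather than directly from Artin's criteria, and the existence and uniqueness of the universal sheaf in (iii) is a theorem of Altman--Kleiman (Thm. 3.4 of their second paper), not of Esteves.
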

\begin{proof}
See \cite[Fact 2.2]{MRV1}  and the references therein.

\end{proof}

%Given a coherent sheaf $I$ on $X$, its degree $\deg(I)$ is defined by $\deg(I):=\chi(I)-\chi(\O_X)$, where $\chi(I)$ (resp. $\chi(\O_X)$) denotes the Euler-Poincar\'e characteristic of $I$
%(resp. of the trivial sheaf $\O_X$).
Since the Euler-Poincar\'e characteristic $\chi(I):=h^0(X,I)-h^1(X,I)$ of a sheaf $I$ on $X$ is constant under deformations, we get a decomposition
\begin{equation}\label{E:dec}
\begin{sis}
& \bJbar_X=\coprod_{\chi \in \Z} \bJbar_X^{\chi},\\
& \bJ_X=\coprod_{\chi\in \Z} \bJ_X^{\chi},\\
\end{sis}
\end{equation}
where $\bJbar_X^{\chi}$ (resp. $\bJ_X^{\chi}$) denotes the open and closed subscheme of $\bJbar_X$ (resp. $\bJ_X$) parametrizing
simple rank-1 torsion-free sheaves $I$ (resp. line bundles $L$) such that $\chi(I)=\chi$ (resp. $\chi(L)=\chi$).

If $X$ has locally planar singularities, then $\bJbar_X$ has the following properties.

\begin{thm}\label{T:prop-J-planar}
Let $X$ be a connected reduced curve with locally planar singularities. Then
\begin{enumerate}[(i)]
\item $\bJbar_X$ is a reduced scheme with locally complete intersection singularities.
\item $\bJ_X$ is dense in $\bJbar_X$.
\item $\bJ_X$ is the smooth locus of $\bJbar_X$.
\end{enumerate}
\end{thm}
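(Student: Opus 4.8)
The plan is to study $\bJbar_X$ through the deformation theory of a torsion-free rank-$1$ sheaf on $X$, reduce the three assertions to a single dimension estimate at the non-locally-free sheaves, and extract that estimate from the planar (hence hypersurface) nature of the singularities. Fix a point $[I]\in\bJbar_X$. Since $\bJbar_X$ represents the \'etale sheafification of the functor of simple sheaves (Fact \ref{F:huge-Jac}), its completed local ring $\wh{\O}_{\bJbar_X,[I]}$ is the base of the miniversal deformation of $I$ as an $\O_X$-module; simplicity of $I$ kills the scalar automorphisms, so $\wh{\O}_{\bJbar_X,[I]}$ is a quotient $P/J$ of the regular ring $P:=k[[\Ext^1_X(I,I)]]$ by an ideal $J$ generated by at most $\dim_k\Ext^2_X(I,I)$ elements. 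The local-to-global spectral sequence, together with $H^2(X,-)=0$ and the fact that $\SExt^1_{\O_X}(I,I)$ is a skyscraper, gives $\Ext^2_X(I,I)=\bigoplus_{p}\Ext^2_{\wh{\O}_{X,p}}(I_p,I_p)$, a sum over the finitely many points where $I$ is not locally free.

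I would first pin down the smooth locus (iii). At each singular point $\wh{\O}_{X,p}=k[[x,y]]/(f)$ is a hypersurface, so a torsion-free module is maximal Cohen--Macaulay and admits a $2$-periodic complete resolution (matrix factorization). Periodicity of Tate cohomology then yields $\Ext^2_{\wh{\O}_{X,p}}(I_p,I_p)\cong\underline{\End}_{\wh{\O}_{X,p}}(I_p)$, the stable endomorphism module, which vanishes exactly when $I_p$ is free. Hence $\Ext^2_X(I,I)=0$ if and only if $I$ is a line bundle; at a line bundle the obstruction space vanishes, so $\bJbar_X$ is smooth there of dimension $\dim_k\Ext^1_X(I,I)=h^1(X,\O_X)=p_a(X)$. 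This shows $\bJ_X$ is contained in the smooth locus and is smooth of pure dimension $p_a(X)$; the reverse inclusion will follow once we know $\dim_{[I]}\bJbar_X=p_a(X)$ everywhere, since then a non-locally-free $I$ has obstructed, hence strictly larger, tangent space and cannot be a smooth point.

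The l.c.i. property (i) I would obtain by passing to the semiuniversal deformation $\pi\colon\X\to\Spec R_X$ of the curve and its relative compactified Jacobian $\bJbar_{\X}\to\Spec R_X$. Because $X$ has locally planar (in particular l.c.i.) singularities, both the deformations of $X$ and the deformations of the pairs $(X,I)$ are unobstructed; thus $\Spec R_X$ is regular and the total space $\bJbar_{\X}$ is smooth. Granting the dimension estimate $\dim\bJbar_X=p_a(X)$ below, all fibres of $\bJbar_{\X}\to\Spec R_X$ have the expected dimension, so by miracle flatness (the total space being Cohen--Macaulay, the base regular and the fibres of constant dimension) the morphism is flat; the special fibre $\bJbar_X$ is then cut out in the smooth scheme $\bJbar_{\X}$ by the regular sequence generating the maximal ideal of $R_X$, hence is a local complete intersection of dimension $p_a(X)$. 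Being l.c.i. it is Cohen--Macaulay, so it has no embedded components; since it is generically smooth along the dense open $\bJ_X$ it satisfies Serre's conditions $R_0$ and $S_1$ and is therefore reduced, and the complement of $\bJ_X$, having dimension $<p_a(X)$, is not dense, giving (ii).

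The one genuinely hard input, on which all three parts rest, is the estimate $\dim_{[I]}\bJbar_X\le p_a(X)$ (equivalently, that the non-locally-free locus has codimension at least one), and this is exactly where local planarity is indispensable; I expect it to be the main obstacle. The general obstruction-theoretic bound only gives $\dim_{[I]}\bJbar_X\ge\dim_k\Ext^1_X(I,I)-\dim_k\Ext^2_X(I,I)$, so smoothness of the total space alone does not prevent the central fibre from jumping in dimension. I would prove the bound locally: over $\wh{\O}_{X,p}=k[[x,y]]/(f)$ the rank-$1$ torsion-free modules are governed by matrix factorizations of $f$, and a dimension count on these modulo equivalence shows that each stratum of non-free local type contributes strictly positive codimension (the balance being exactly the $\delta$-invariant of the singularity). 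Summing the local contributions against the $p_a(X)$-dimensional family of line bundles yields $\dim\bJbar_X=p_a(X)$ and simultaneously the flatness used above. This last step uses planarity essentially: for a hypersurface singularity the local moduli of torsion-free modules has the controlled dimension that forces the expected codimension, a conclusion that can fail for non-planar Gorenstein singularities.
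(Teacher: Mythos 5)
Your framework (obstruction theory at $[I]$, matrix factorizations for the local $\Ext$'s, the universal family plus miracle flatness for the l.c.i.\ property, Serre's criterion for reducedness) is reasonable, and the one implication you actually prove --- that line bundles are smooth points, via $\Ext^2_X(I,I)\cong\bigoplus_p \underline{\End}_{\wh{\O}_{X,p}}(I_p)$ and $2$-periodicity --- is correct. But the proposal has a genuine gap: you yourself reduce parts (i), (ii) and the remaining half of (iii) to the estimate $\dim_{[I]}\bJbar_X\le p_a(X)$ (positive codimension of the non-locally-free locus), and for that estimate you offer only the assertion that ``a dimension count on matrix factorizations modulo equivalence shows'' it. No such count is given, and it is not a routine verification: for integral curves this statement is essentially the content of the theorems of Altman--Iarrobino--Kleiman \cite{AIK}, Rego \cite{Reg} and Kleppe--Kleiman \cite{KK} (it in fact characterizes planarity), whose proofs occupy entire papers and do not proceed via moduli of matrix factorizations; for reducible curves it was not in the literature at all. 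The paper never re-proves any such estimate: it quotes the known facts that $\Hilb^d(X)$ is reduced and l.c.i., that $\Hilb^d(X)_l$ is dense, and that $\Hilb^d(X)_l$ is the smooth locus (\cite{AIK}, \cite{BGS}), and transfers them to $\bJbar_X$ by showing that every $I_0\in\bJbar_X$ has an open neighborhood $U$ which is the image of an open $V\subseteq \Hilb^d(X)_s$ under a twisted Abel map $A_M^d\colon [J\subset \O_X]\mapsto J\otimes M$ that is smooth on $V$ (take $M=\O_X(t)$ with $t\gg 0$, let $U$ be cut out by the open conditions that $I^*(t)$ be globally generated and $\Ext^1(I(-t),\O_X)=0$, and use \cite[Thm. 5.18(ii)]{AK} for smoothness). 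Since all three properties are local on the target and descend along smooth surjective morphisms, nothing further is needed. Your route would require supplying a new proof of exactly the hard local input that this transfer is designed to import.

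Even granting the estimate, two of your intermediate steps need repair. First, from $\Ext^2_X(I,I)\neq 0$ you infer that $[I]$ is ``obstructed, hence has strictly larger tangent space''; non-vanishing of an obstruction space does not imply that any obstruction is non-zero, so this is a non sequitur. What you need is the inequality $\dim_k\Ext^1_X(I,I)>p_a(X)$ for $I$ not locally free, which is true on planar curves but requires a separate argument (the local-to-global sequence plus a comparison of $h^1(\SHom(I,I))$ with $h^1(\O_X)$: already at a node one must check that $\dim\Ext^1_{\O_{X,p}}(\m_p,\m_p)=2$ beats the drop of $h^1$ by $1$); in the paper this point is subsumed in the quoted statement \cite[Prop. 2.3]{BGS}. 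Second, both your density argument and your appeal to miracle flatness presuppose that every irreducible component of $\bJbar_X$ (and of every fibre of $\bJbar_{\X}\to \Spec R_X$) has dimension exactly $p_a(X)$; smoothness of the total space $\bJbar_{\X}$ does not give this, since a priori $\bJbar_{\X}$ could have connected components lying over proper closed subsets of $\Spec R_X$ and consisting entirely of non-locally-free sheaves. Ruling these out is once more the same missing input, so the argument cannot be closed from the ingredients you have assembled.
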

\begin{proof}
See \cite[Thm. 2.3]{MRV1}.
\end{proof}

\subsection{Fine compactified Jacobians}\label{S:fine-Jac}

For an integer  $\chi \in\mathbb Z$, the scheme $\bJbar_X^{\chi}$ is not of finite type nor separated over $k$ (and similarly for $\bJ_X^{\chi}$) if $X$ is not irreducible.
However, they can be covered by open subsets that are proper (and even projective) over $k$: the fine compactified Jacobians of $X$. Fine compactified Jacobians  depend on the choice of a polarization, whose definition is as
follows.
%  (using the notations of \cite{MV}).

\begin{defi}\label{pola-def}
A \emph{polarization} on a connected curve $X$ is a tuple of rational numbers $\un q=\{\un q_{C_i}\}$, one for each irreducible component $C_i$ of $X$, such that $|\un q|:=\sum_i \un q_{C_i}\in \Z$.
We call $|\un q|$ the total degree of $\un q$.

%for every connected component $Y\subset X$ of $X$ it holds that
%$$\sum_{C_i\subset Y} \un q_{C_i}\in \Z.$$
%The degree of the polarization is defined to be $|\un q|:=\sum_i \un q_{C_i}\in \Z$.
\end{defi}

Given any subcurve $Y \subseteq X$, we set $\un{q}_Y:=\sum_j \un{q}_{C_j}$ where the sum runs
over all the irreducible components $C_j$ of $Y$.
Note that giving a polarization $\un q$ is the same as giving an
assignment $(Y\subseteq X)\mapsto \un q_Y$ such that $\un q_X\in \Z$ and which is additive on $Y$, i.e. such that if $Y_1,Y_2\subseteq X$ are two subcurves of $X$ without common irreducible components, then $\un q_{Y_1\cup Y_2}=\un q_{Y_1}+\un q_{Y_2}$.

\begin{defi}\label{def-int}
A polarization $\un q$ is called \emph{integral} at a subcurve $Y\subseteq X$ if
$\un q_Z \in \Z$ for any connected component $Z$ of $Y$ and of $Y^c$.

A polarization is called
%\begin{enumerate}[(i)]
%\item \label{def-pol1} A polarization $\un q$ is called
\emph{general} if it is not integral at any proper subcurve $Y\subset X$.
 %which is not a connected component of $X$.
%\item \label{def-pol2} A polarization $\un q$ is called \emph{non-degenerate} if it is not integral at any proper %subcurve $Y\subsetneq X$ which is not a spine of $X$.
\end{defi}

\begin{remark}\label{R:conn-pola}
It is easily seen that  $\un q$ is general if and only if $\un q_Y\not\in \Z$ for any proper subcurve $ Y\subset X$ such that $Y$ and $Y^c$ are connected.
\end{remark}

For each subcurve $Y$ of $X$ and each torsion-free sheaf $I$ on $X$, the restriction $I_{|Y}$ of $I$ to $Y$ is not necessarily a
torsion-free sheaf on $Y$. However, $I_{|Y}$ contains a biggest subsheaf, call it temporarily $J$, whose support has dimension zero, or in other words  such that $J$ is a torsion sheaf.
We denote by $I_{Y}$ the quotient of $I_{|Y}$
by $J$. It is easily seen that $I_Y$ is torsion-free on $Y$ and it is the biggest torsion-free quotient of $I_{|Y}$: it is actually the unique torsion-free quotient of $I$ whose support is equal to $Y$.
Moreover, if $I$ is torsion-free rank-1 then $I_Y$ is torsion-free rank-1.
%We let $\deg_Y (I)$ denote the degree of $I_Y$ on $Y$, that is, $\deg_Y(I) := \chi(I_Y )-\chi(\O_Y)$.

\begin{defi}\label{sheaf-ss-qs}
\noindent Let $\un q$ be a polarization on $X$.
Let $I$ be a torsion-free rank-1  sheaf on $X$ such that $\chi(I)=|\un q|$ (not necessarily simple).
\begin{enumerate}[(i)]
\item \label{sheaf-ss} We say that $I$ is \emph{semistable} with respect to $\un q$ (or $\un q$-semistable) if
for every proper subcurve $Y\subset X$, we have that
\begin{equation}\label{multdeg-sh1}
\chi(I_Y)\geq \un q_Y.
\end{equation}
\item \label{sheaf-s} We say that $I$ is \emph{stable} with respect to $\un q$ (or $\un q$-stable) if it is semistable with respect to $\un q$
and if the inequality (\ref{multdeg-sh1}) is always strict.
\end{enumerate}
\end{defi}

\begin{remark}\label{R:tanteoss}
\noindent
\begin{enumerate}[(i)]
\item \label{R:tanteoss1}
It is easily seen that a torsion-free rank-1 sheaf $I$ is  $\un q$-semistable (resp. $\un q$-stable) if and only if \eqref{multdeg-sh1} is satisfied (resp. is satisfied with strict inequality)
for any subcurve $Y\subset X$ such that  $Y$ and $Y^c$ are connected.

\item \label{R:tanteoss2}
Let $\un q$ be a polarization on $X$ and $I$ a torsion-free rank-1 sheaf on $X$ that is stable with respect to $\un q$.  Then, it is easy to see that,  by slightly perturbing $\un q$, we get a general polarization $\un q'$ on $X$ for which $I$ remains stable.

\item \label{R:tanteoss3}
If $X$ has locally planar singularities, we can write the inequality \eqref{multdeg-sh1} in terms of the degree of $I_Y$ as follows
\begin{equation}\label{E:ineqdeg}
\chi(I_Y )-\chi(\O_Y):=\deg_Y(I)\geq \un q_Y-\chi(\O_Y)=\un q_Y+\frac{\deg_Y(\omega_X)}{2}-\frac{\delta_Y}{2},
\end{equation}
where we used the adjunction formula (see \cite[Lemma 1.12]{Cat})
%\footnote{In loc. cit., the formula is only stated for Gorenstein subcurves; however, an easy inspection of the proof reveals that the formula does hold true for non-Gorenstein subcurves as well.})
$$\deg_Y(\omega_X)=2p_a(Y)-2+\delta_Y=-2\chi(\O_Y)+\delta_Y.$$
The inequality \eqref{E:ineqdeg} was used to define stable rank-$1$ torsion-free sheaves on nodal curves in \cite{MV} and in \cite{CMKV}.
\end{enumerate}
\end{remark}

The geometric meaning for a polarization being general is clarified by the following result.

\begin{lemma}\label{L:nondeg}
Let $X$ be a connected reduced curve and let $\un q$ be a general polarization on $X$. Then every  $\un q$-semistable sheaf $I$ is also $\un q$-stable and hence simple.
\end{lemma}
\begin{proof}
See \cite[Lemmas 2.18]{MRV1}.
\end{proof}

For a general polarization $\un q$ on a connected reduced curve $X$, we will denote by
$\ov{J}_X(\un q)$ the open subscheme of $\bJbar_X$ parametrizing simple rank-1 torsion-free sheaves $I$ on $X$ which are $\un q$-semistable (or equivalently $\un q$-stable by Lemma \ref{L:nondeg}).
The scheme $\ov{J}_X(\un q)$ is called the \emph{fine compactified Jacobian} with respect to the polarization $\un q$.

%By \cite[Prop. 34]{est1}, the inclusions
%$\ov{J}^{s}_X(\un q)\subseteq \ov{J}^{ss}_X(\un q)\subset \bJbar_X$$
%are open.

\begin{fact}[Esteves \cite{est1}]\label{F:Este-Jac}
Let $X$ be a connected reduced curve.
\noindent
\begin{enumerate}[(i)]
\item \label{F:Este-Jac1} If $\un q$ is general polarization on $X$ then $\ov{J}_X(\un q)$ is a projective scheme over $k$ (not necessarily reduced).
\item \label{F:Este-Jac2} $\displaystyle \bJbar_X=\bigcup_{{\un q} \text{ general}} \ov{J}_X(\un q).$
\end{enumerate}
\end{fact}
\begin{proof}
Part \eqref{F:Este-Jac1} follows from \cite[Thm. A(1) and Thm. C(4)]{est1}.
Part \eqref{F:Este-Jac2} follows from \cite[Cor. 15]{est1}, which asserts that a simple torsion-free rank-1 sheaf is stable with respect to a certain polarization, together with Remark \ref{R:tanteoss}\eqref{R:tanteoss2}, which asserts that it is enough to consider general polarizations.
\end{proof}

We collect the properties of fine compactified Jacobians in the following Theorem.

\begin{thm}\label{T:compJac}
Let $X$ be a connected reduced curve with locally planar singularities. Then every fine compactified Jacobian $\ov{J}_X(\un q)$ satisfies the following properties:
\begin{enumerate}[(i)]
\item \label{T:compJac1} $\ov{J}_X(\un q)$ is a reduced scheme with locally complete intersection singularities;
\item \label{T:compJac2} The smooth locus of $\ov{J}_X(\un q)$ coincides with the open subset $J_X(\un q)\subseteq \ov{J}_X(\un q)$ parametrizing line bundles; in particular $J_X(\un q)$ is dense in $\ov{J}_X(\un q)$ and
$\ov{J}_X(\un q)$ is of pure dimension equal to $p_a(X)$;
\item \label{T:compJac3} $\ov{J}_X(\un q)$ is connected;
\item \label{T:compJac4} $\ov{J}_X(\un q)$ has trivial dualizing sheaf;
\item \label{T:compJac5} $J_X(\un q)$ is the disjoint union of a number of copies of the generalized Jacobian $J(X)$ of $X$ equal to the complexity $c(X)$ of the curve $X$;
in particular, $\ov{J}_X(\un q)$ has $c(X)$ irreducible components, independently of the chosen polarization $\un q$.
\end{enumerate}
\end{thm}
\begin{proof}
See \cite[Thm. A]{MRV1}.
\end{proof}

The complexity $c(X)$ of a reduced curve $X$ with planar singularities is an invariant of $X$ that depends on the pairwise intersection numbers of the irreducible components of $X$; see  \cite[Def. 5.10]{MRV1} for a definition.
Raynaud showed in \cite{Ray} that, for any one-parameter regular smoothing of $X$,  $c(X)$ is the number of connected components of the special fiber of the N\'eron model of the Jacobian of the generic fiber. Part \eqref{T:compJac5} of the above Theorem \ref{T:compJac} follows then from a result of J. Kass \cite{Kas2}, which says that, for a one-parameter regular smoothing of $X$, any relative fine compactified Jacobian is a compactification of the N\'eron model of its generic fiber.

%The name degree class group was first introduced by L. Caporaso in \cite[Sec. 4.1]{Cap}. The name complexity comes from the fact that if $X$ is a nodal curve then $c(X)$ is the complexity of the dual graph $\Gamma_X$ of $X$, i.e. the number of spanning trees of $\Gamma_X$ (see e.g. \cite[Sec. 2.2]{MV}).

%Fine compactified Jacobians of $X$ depend on the choice of a polarization and there are clearly infinitely many polarizations. However, we are now going to show that there are finitely many  isomorphism classes of fine compactified Jacobians.  The simplest way to show that two fine compactified Jacobians are isomorphic is to show that there is translation that sends one into the other.

%\begin{defi}\label{D:transla}
%Let $X$ be a connected curve. We say that two compactified Jacobians $\ov{J}_X(\un q)$ and $\ov{J}_X(\un q')$ are \emph{equivalent by translation } if there exists  a line bundle $L$ on $X$ inducing an isomorphism
%$$\begin{aligned}
%\ov{J}_X(\un q) & \stackrel{\cong}{\longrightarrow} \ov{J}_X(\un q'), \\
%I & \mapsto I\otimes L.
%\end{aligned}$$
%\end{defi}

%Note however that, in general,  there could be  fine compactified Jacobians that are isomorphic without being equivalent by translation, see \S  \ref{S:genus1} for some explicit examples.

%\begin{lemma}\label{L:finite-eq}
%Let $X$ be a connected curve. There is a finite number of fine compactified Jacobians up to equivalence by translation.
%\end{lemma}

\subsection{Abel maps}\label{S:Abel}

In this subsection, we review, for later use, the construction and main properties of (twisted) Abel maps of degree one into fine compactified Jacobians, following \cite[\S 6]{MRV1}.

To this aim, we restrict ourselves to a connected  reduced curve $X$ satisfying condition $(\dagger)$, as in  \S\ref{N:sep-node}.
Let $\{n_1,\ldots$ $ ,n_{r-1}\}$ be the separating points of $X$, which are nodes by assumption.   Denote by $\wt{X}$ the partial normalization of $X$ at the set $\{n_1,\ldots,n_{r-1}\}$.
Since each $n_i$ is a node, the curve $\wt{X}$ is a disjoint union of $r$ connected reduced curves $\{Y_1,\ldots,Y_r\}$ such that each $Y_i$ does not have separating points.
We have a natural morphism
\begin{equation*}
\tau:\wt{X}=\coprod_i Y_i\to X.
\end{equation*}
We can naturally identify each $Y_i$ with a subcurve of $X$ in such a way that their union  is $X$ and that they do not have common
 irreducible components. We call the components $Y_i$ (or their image in $X$) the \emph{separating blocks}  of $X$.

\begin{thm}\label{T:Abel}
Let $X$ be a connected reduced curve satisfying condition $(\dagger)$.
\begin{enumerate}[(i)]
\item \label{T:Abel1} The pull-back map
$$\begin{aligned}
\tau^*: \bJbar_X & \longrightarrow \prod_{i=1}^r \bJbar_{Y_i}\\
I & \mapsto (I_{|Y_1},\ldots, I_{|Y_r}),
\end{aligned}
$$
is an isomorphism. Moreover, given any fine compactified Jacobians $\ov{J}_{Y_i}(\un q^i)$ on $Y_i$, $i=1,\dots,r$, there exists a (uniquely determined) fine compactified Jacobian $\J_X(\un q)$ on $X$ such that
\begin{equation*}
\tau^*: \ov{J}_X(\un q)\xrightarrow{\cong} \prod_i \ov{J}_{Y_i}(\un q^i),
\end{equation*}
and every fine compactified Jacobian on $X$ is obtained in this way.

\item \label{T:Abel2}
For every $L\in \Pic(X)$, there exists a unique morphism $A_L:X\to \bJbar_X^{\chi(L)-1}$  such that for every $1\leq i\leq r$ and every $p\in Y_i$ it holds
\begin{equation*}
\tau^*(A_L(p))=(M_1^i,\ldots,M_{i-1}^i,\m_p \otimes L_{|Y_i},M_{i+1}^i,\ldots,M_r^i)
\end{equation*}
for some (uniquely determined) elements $M_j^i\in \bJbar_{Y_j}$ for $j\neq i$, where $\m_p$ is the ideal of the point $p$ in $Y_i$.

\item \label{T:Abel3} If, moreover, $X$ is Gorenstein, then for every $L\in \Pic(X)$ there exists a general polarization $\un q$ with $|\un q|=\chi(L)-1$ such that
$\Im A_L\subseteq \J_X(\un q)$.

\item \label{T:Abel4} For every $L\in \Pic(X)$, the morphism $A_L$ is an embedding away from the separating blocks of arithmetic genus zero (which are isomorphic to $\PP^1$) while it contracts each rational separating block $Y_i\cong \PP^1$
 into a seminormal point of $A_L(X)$, i.e. an ordinary singularity with linearly independent tangent directions.

\end{enumerate}
\end{thm}
\begin{proof}
See \cite[Thm. D]{MRV1}.
\end{proof}

The map $A_L$ in Theorem \ref{T:Abel}\eqref{T:Abel2} is called the \emph{(L-twisted) Abel map} of $X$. Fine compactified Jacobians $\J_X(\un q)$ for which there exists $L\in \Pic(X)$ with the property that $\Im A_L\subseteq \J_X(\un q)$  are said to   \emph{admit an Abel map}.  Theorem \ref{T:Abel}\eqref{T:Abel3} says that any connected reduced Gorenstein curve has  some fine compactified Jacobians which admit an Abel map. However, not every fine compactified Jacobian of $X$ (even for a nodal curve) admits an Abel map, see \cite[Sec. 7]{MRV1} for some examples.

Note that if $\ov{J}_X(\un q)$ is a fine compactified Jacobian of $X$  and $L\in \Pic(X)$ is such that $\Im A_L\subseteq \ov{J}_X(\un q)$, then
the $L$-twisted Abel map $A_L:X\to \ov{J}_X(\un q)\subseteq \bJbar_X$ induces via pull-back a homomorphism
\begin{equation}\label{E:pullback-Abel}
A_L^*:\Pic(\ov{J}_X(\un q))\to \Pic(X)=\bJ_X,
\end{equation}
which clearly sends $\Pic^o(\ov{J}_X(\un q))$ into $\Pic^o(X)=J(X)$.

\section{Universal fine compactified Jacobians}\label{S:univ-Jac}

The aim of this subsection is to review the definition and main properties of the universal fine compactified Jacobians, following \cite[\S 4-5]{MRV1}.

\subsection{Deformation theory of $X$}\label{S:DefX}

We start by recalling in this subsection some well-known facts about the deformation theory of a (reduced) curve $X$. For basic facts  on deformation theory, we refer to the book of Sernesi \cite{Ser}.

Let $\Def_X$ be the deformation functor of $X$.
%and, for any $p\in X_{\rm sing}$, we denote by $\Def_{X,p}$ the deformation functor of the complete local $k$-algebra $\wh{\O}_{X,p}$.
%Note that there is a natural morphism of functors (see \cite[\S 4.1(d)]{Rim})
%\begin{equation}\label{E:mor-func}
%\Def_X\to \Def_X^{\rm loc}:=\prod_{p\in X_{\rm sing}} \Def_{X,p}.
%\end{equation}
%The following result is well known.
%\begin{fact}\label{F:for-smooth}
%If $X$ is a reduced curve with l.c.i. singularities, then the functors  $\Def_X$ and $\Def_{X}^{\rm loc}$ are smooth and the morphism $\displaystyle \Def_X\to \Def_{X}^{\rm loc}$ is  smooth.
%\end{fact}
%\begin{proof}
%See \cite[Cor. 4.13]{Rim}.
%\end{proof}
According to \cite[Cor. 2.4.2]{Ser}, the functor $\Def_X$ admits a semiuniversal \footnote{Some authors use the word miniversal instead of semiuniversal. We prefer to use  the word semiuniversal in order to be coherent with the terminology of the book of Sernesi \cite{Ser}.} formal couple
$(R_X,\ov{\X})$, where $R_X$ is a Noetherian complete local $k$-algebra with maximal ideal $\m_{X}$ and residue field
$k$ and
$$\ov{\X}\in \wh{\Def_X}(R_X):=\varprojlim \Def_X\left(\frac{R_X}{\m_{X}^n}\right)$$
is a formal deformation of $X$ over $R_X$. Recall that this means that the morphism of functors
\begin{equation}\label{E:map-func1}
h_{R_X}:=\Hom(R_X,-)\longrightarrow \Def_X
\end{equation}
determined by $\ov{\X}$ is smooth and induces an isomorphism
of tangent spaces $T R_X:=(\m_X/\m_X^2)^{\vee}\stackrel{\cong}{\to} T \Def_X$ (see \cite[Sec. 2.2]{Ser}).
The formal couple $(R_X,\ov{\X})$ can be also viewed as a flat morphism of formal schemes
\begin{equation}\label{E:form-fam}
\ov{\pi}:{\ov \X}\to \Spf\: R_X,
\end{equation}
where $\Spf$ denotes the formal spectrum, such that the fiber over $o:=[\m_X]\in \Spf  R_X$ is isomorphic to $X$
(see \cite[p. 77]{Ser}).
Note that the semiuniversal formal couple $(R_X,\ov{\X})$ is unique by  \cite[Prop. 2.2.7]{Ser}.

Since $X$ is projective and $H^2(X,\O_X)=0$, Grothendieck's existence theorem (see \cite[Thm. 2.5.13]{Ser}) gives that
the formal deformation \eqref{E:form-fam} is \emph{effective}, i.e. there exists a deformation
$\pi:\X\to \Spec R_X$ of $X$ over $\Spec R_X$  whose completion along $X=\pi^{-1}(o)$
is isomorphic to \eqref{E:form-fam}.  In other words, we have a Cartesian diagram
\begin{equation}\label{E:eff-fam}
\xymatrix{
X \ar[d]\ar@{^{(}->}[r]\ar@{}[dr]|{\square}& \ov{\X}\ar[r]\ar[d]^{\ov{\pi}}\ar@{}[dr]|{\square} & \X\ar[d]^{\pi}\\
\Spec k\cong o \ar@{^{(}->}[r] & \Spf R_X\ar[r] &  \Spec R_X.
}
\end{equation}
Note also that the deformation $\pi$ is unique by \cite[Thm. 2.5.11]{Ser}. For later reference, we collect the properties of the effective semiuniversal deformation morphism $\pi:\X\to \Spec R_X$ into  the following:

\begin{lemma}\label{L:morpi}
Let $X$ be a (reduced and connected) curve.
\begin{enumerate}[(i)]
\item \label{L:morpi1} The effective semiuniversal deformation  $\pi:\X\to \Spec R_X$ is a flat and projective morphism with geometrically reduced and geometrically connected fibers.
\item \label{L:morpi2}  If $X$ has l.c.i. singularities then $R_X$ is a power series ring (hence $\Spec R_X$ is irreducible) and the generic fiber of $\pi$ is smooth.
\item Assume that $X$ has locally planar singularities. Then the following hold true:
\begin{enumerate}[(a)]
\item \label{L:morpinew} All the  fibers of $\pi$ have locally planar singularities.
\item \label{L:morpi3} Let $U$ be the open subset of $\Spec R_X$ consisting of all the (schematic) points $s\in \Spec R_X$ such that
the geometric fiber $\X_{\ov s}$ of the universal family $\pi:\X\to \Spec R_X$ is smooth or has a unique singular point that  is a
node. Then the codimension of the complement of $U$ inside $\Spec R_X$ is at least two.
\end{enumerate}
\end{enumerate}
\end{lemma}
\begin{proof}
Part \eqref{L:morpi1}: the fact that $\pi$ is flat is part of the definition of a deformation and the fact that $\pi$ is projective follows directly from the proof of Grothendieck's existence theorem (see \cite[Thm. 2.5.13]{Ser}) using that the central fiber $X$ is projective. Since the central fiber $X$ is (geometrically) reduced  and the property of having geometrically reduced fibers is open for a flat, proper morphism of finite presentation by \cite[Thm. 12.2.4(v)]{EGAIV3}, it follows that all the  fibers of $\pi$ are geometrically reduced.
Moreover, the fibers of $\pi$ are geometrically connected because $X$ is (geometrically) connected and the number of geometric connected components of the fibers is locally constant for a universally open (e.g. flat) and proper
morphism by \cite[Prop. 15.5.7]{EGAIV3}.

Part \eqref{L:morpi2}: by the definition of semiuniversal deformation ring and \cite[Thm. C.4]{Ser}, the ring  $R_X$ is a power series ring if and only if $\Def_X$ is  smooth; this last property does hold true if $X$ has
locally planar singularities (see e.g. \cite[Cor. 4.13]{Rim}). The  generic fiber of $\pi$ is smooth because a reduced curve is smoothable if and only if it has locally formally smoothable singularities
(see \cite[Cor. 29.10]{Har-ter}) and l.c.i. singularities are locally formally smoothable (see \cite[Ex. 29.0.1]{Har-ter}). For another proof of the last statement, see \cite[Prop. 4.1.1]{Lau}.

Part \eqref{L:morpinew}: this follows from the well-known fact that the property of having locally planar singularities is open in a projective family of curves, see e.g. \cite[Proof of Prop. 3.5]{MY}.

Part \eqref{L:morpi3}: see \cite[Lemma 4.3]{MRV1}.
\end{proof}

The space $\Spec R_X$ admits two stratifications into closed subsets according to either the arithmetic genus or the geometric genus of the
normalization of the fibers of the family $\pi$. More precisely, using the notation introduced in \S\ref{N:curves}, we have two functions
\begin{equation}
\begin{aligned}
p_a^{\nu}: \Spec R_X & \longrightarrow \bbN,\\
 s & \mapsto p_a^{\nu}(\X_{\ov s}):=p_a(\X_{\ov s}^{\nu}),\\
\end{aligned}
\hspace{2cm}
\begin{aligned}
g^{\nu}: \Spec R_X & \longrightarrow \bbN,\\
s & \mapsto g^{\nu}(\X_{\ov s})=g^{\nu}(\X_{\ov s}^{\nu}),
\end{aligned}
\end{equation}
where $\X_s:=\pi^{-1}(s)$ is the fiber of $\pi$ over the (schematic) point $s\in \Spec R_X$ and  $\X_{\ov s}:=\X_s\times_{k(s)}\ov{k(s)}$ is the geometric fiber over $s$.
%of $\pi$ over the point $s\in \Spec R_X$.
Since  the number of connected components of $\X_{\ov s}^{\nu}$ is the number $\gamma(\X_{\ov s})$
of irreducible components of $\X_{\ov s}$, we have the relation
\begin{equation}\label{E:geo-ari}
p_a^{\nu}(\X_{\ov s})=g^{\nu}(\X_{\ov s})-\gamma(\X_{\ov s})+1\leq g^{\nu}(\X_{\ov s}).
\end{equation}

\begin{lemma}\label{L:lower-semcont}
The functions $p_a^{\nu}$ and $g^{\nu}$ are lower semi-continuous.
\end{lemma}
\begin{proof}
This is known to the experts: a proof over the complex numbers can be
found in \cite[I. Thm. 1.3.2]{Tei}, \cite[Prop. 2.4]{DH} or \cite[Chap. II, Thm. 2.54]{GLS}; a proof over an arbitrary field for integral curves
(in which case $p_a^{\nu}=g^{\nu}$) can be found in \cite[Prop. A.2.1]{Lau}.
See \cite{RV} for a complete proof in our more general setting.

\end{proof}

Using the above Lemma, formula \eqref{E:geo-ari} and the fact that the arithmetic genus $p_a$ stays constant in the family $\pi$ because of flatness, we get
that
$$
p_a(X^{\nu})=p_a^{\nu}(X)\leq p_a^{\nu}(\X_{\ov s})\leq g^{\nu}(\X_{\ov s})\leq p_a(\X_{\ov s})=p_a(X).
$$
Therefore for any $p_a(X^{\nu})\leq l\leq p_a(X)$ we have two closed subsets of $\Spec R_X$:
\begin{equation}\label{E:strata-RX}
(\Spec R_X)^{g^{\nu}\leq l}:=\{s\in \Spec R_X \: : \: g^{\nu}(\X_{\ov s})\leq l \}\subseteq (\Spec R_X)^{p_a^{\nu}\leq l}:=\{s\in \Spec R_X \: : \: p_a^{\nu}(\X_{\ov s})\leq l \}.
\end{equation}
If $X$ has locally planar singularities, then the stratification by the arithmetic genus of the normalization (which is sometimes called the \emph{equigeneric stratification}) is particularly well-behaved. 

% In particular, we have the following result which is due to Teissier and Diaz-Harris in characteristic zero (see \cite[Prop. (4.17), Thm. (4.15)]{DH} or \cite[Chap. II]{GLS}); we present here a new proof that is valid in any characteristic.

\begin{thm}\label{T:strati}
Assume that $X$ has locally planar singularities. Then, for any $p_a(X^{\nu})\leq l\leq p_a(X)$, we have that:
\begin{enumerate}[(i)]
\item \label{T:strati1} The closed subset
$(\Spec R_X)^{p_a^{\nu}\leq l}\subset \Spec R_X$ has codimension at least $p_a(X)-l$.
\item \label{T:strati2} Each generic point $\eta$ of $(\Spec R_X)^{p_a^{\nu}\leq l}$ is such that $\X_{\ov \eta}$ is a nodal curve.
    %with at least $p_a(X)-l$ nodes.
\end{enumerate}
\end{thm}

Part \eqref{T:strati1} of the above Theorem follows over $k=\bbC$ from \cite[Thm. 4.15, Prop. 4.17]{DH}  and over an algebraically closed field $k$ of characteristic $0$ or bigger than the maximum of the
multiplicities of the points of $X$ from \cite[Prop. 3.5]{MY}. 
% A proof over an algebraically closed field of arbitrary characteristics seems to be new.
Part \eqref{T:strati2}  \emph{est bien connue mais ne semble \^etre d\'emontr\'ee nulle part} (not even for $k=\bbC$!) as Laumon points out in the sentence preceding \cite[Thm. A.4.2]{Lau}. The result is certainly well known to the experts and it has been used many times in the literature (see e.g. \cite{MS}, \cite{MSV}). We will give a complete proof of the above Theorem in \cite{RV}.

\vspace{0.1cm}

From the above Theorem \ref{T:strati} together with the inclusion in \eqref{E:strata-RX}, we get the following
\begin{cor}\label{C:Diaz-Har}
Assume that $X$ has locally planar singularities. Then, for any $p_a(X^{\nu})\leq k\leq p_a(X)$,
the codimension of the closed subset
$(\Spec R_X)^{g^{\nu}\leq k}$ inside $\Spec R_X$ is at least $p_a(X)-k$.
\end{cor}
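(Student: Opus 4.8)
The plan is to deduce the statement directly from Fact \ref{F:Diaz-Har}\eqref{F:Diaz-Harris1} by means of the inclusion of strata recorded in \eqref{E:strata-RX}, together with the elementary fact that, on an irreducible scheme, codimension is inclusion-reversing for closed subsets. In other words, I expect the genuine content to reside entirely in the already-established Fact \ref{F:Diaz-Har}, and the passage from the equigeneric ($p_a^{\nu}$) stratification to the $g^{\nu}$-stratification to be a purely formal monotonicity argument.

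First I would observe that the two stratifications are nested. By the relation \eqref{E:geo-ari}, namely $p_a^{\nu}(\X_s)=g^{\nu}(\X_s)-\gamma(\X_s)+1\leq g^{\nu}(\X_s)$, any point $s$ with $g^{\nu}(\X_s)\leq k$ automatically satisfies $p_a^{\nu}(\X_s)\leq k$. This is precisely the inclusion
$$(\Spec R_X)^{g^{\nu}\leq k}\subseteq (\Spec R_X)^{p_a^{\nu}\leq k}$$
already recorded in \eqref{E:strata-RX}. Next, since $X$ has locally planar singularities it has l.c.i. singularities, so by Fact \ref{F:for-smooth}\eqref{F:for-smooth3} the functor $\Def_X$ is smooth; hence $R_X$ is a formal power series ring and $\Spec R_X$ is regular, in particular irreducible. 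On such a scheme the codimension of a closed subset $Z$ equals $\dim \Spec R_X-\dim Z$, so a smaller closed subset has codimension at least as large as a larger one.

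Combining these two points, I would conclude
$$\codim (\Spec R_X)^{g^{\nu}\leq k}\geq \codim (\Spec R_X)^{p_a^{\nu}\leq k}=p_a(X)-k,$$
where the final equality is exactly Fact \ref{F:Diaz-Har}\eqref{F:Diaz-Harris1}. There is essentially no obstacle here: all the hard geometry (that each equigeneric stratum has the expected codimension $p_a(X)-k$, via the results of Teissier and Diaz--Harris) is already contained in Fact \ref{F:Diaz-Har}, and the move to the $g^{\nu}$-stratification requires only the trivial numerical inequality $p_a^{\nu}\leq g^{\nu}$ and the monotonicity of codimension. The one point worth flagging is the irreducibility (or at least equidimensionality) of $\Spec R_X$, needed to make the codimension comparison meaningful; as noted, this is guaranteed by the smoothness of $\Def_X$ for l.c.i. curves. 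I would also remark that, unlike the $p_a^{\nu}$-stratum, one cannot in general expect \emph{equality} of codimensions here, which is exactly why the statement is phrased as a lower bound.
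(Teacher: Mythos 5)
Your proposal is correct and follows exactly the paper's argument: the paper deduces the corollary from Fact \ref{F:Diaz-Har}\eqref{F:Diaz-Harris1} together with the inclusion $(\Spec R_X)^{g^{\nu}\leq k}\subseteq (\Spec R_X)^{p_a^{\nu}\leq k}$ of \eqref{E:strata-RX}, which is precisely your monotonicity-of-codimension argument. Your additional remark on the regularity of $\Spec R_X$ (via Fact \ref{F:for-smooth}\eqref{F:for-smooth3}) just makes explicit a point the paper leaves implicit.
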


%VECCHIO LEMMA (ora incorporato nel nuovo Lemma 3.2)

%We end this subsection by recalling the  following result, which will be used later on.

%\begin{lemma}\label{L:codim1}
%Assume that $X$ has locally planar singularities. Let $U$ be the open subset of $\Spec R_X$ consisting of all the (schematic) points $s\in \Spec R_X$ such that the geometric fiber $\X_{\ov s}$ of the universal family $\pi:\X\to \Spec R_X$ is smooth or has a unique singular point that  is a node. Then the codimension of the complement of $U$ inside $\Spec R_X$ is at least two.
%\end{lemma}
%\begin{proof}
%See \cite[Lemma 4.3]{MRV1}.
%\end{proof}

\subsection{Universal fine compactified Jacobians}\label{S:unJac}

In this subsection, we introduce the universal fine compactified Jacobians relative to the effective semiuniversal deformation $\pi:\X\to \Spec R_X$ introduced in  \S \ref{S:DefX}.
To this aim, consider the functor
$$\bJbar_{\X}^*:\{\Spec R_X-\text{schemes} \} \longrightarrow \{ \text{Sets} \}$$
which sends a scheme $T\to \Spec R_X$ to the set of isomorphism classes of $T$-flat, coherent sheaves on
$\X_T:=T\times_{\Spec R_X} \X$ whose fibers over $T$ are simple rank-1  torsion-free sheaves. The functor
$\bJbar_{\X}^*$ contains the open subfunctor
$$\bJ_{\X}^*:\{\Spec R_X-\text{schemes} \} \longrightarrow \{ \text{Sets} \}$$
which sends a scheme $T\to \Spec R_X$ to the set of isomorphism classes of line bundles on
$\X_T$.

Analogously to Fact \ref{F:huge-Jac}, we have the following

\begin{fact}[Altman-Kleiman, Esteves]\label{F:univ-Jac}
\noindent
\begin{enumerate}[(i)]
\item \label{F:univ-Jac1}The \'etale sheafification of $\bJbar_{\X}^*$ is represented by
a scheme $\bJbar_{\X}$ endowed with a morphism
$u:\bJbar_{\X}\to \Spec R_X$, which is locally of finite type and satisfies the existence part of the valuative criterion for properness.
The scheme $\bJbar_{\X}$ contains an open subset $\bJ_{\X}$ which represents the \'etale sheafification of
$\bJ_{\X}^*$ and the restriction $u:\bJ_{\X}\to \Spec R_X$ is formally smooth.\\
Moreover, the geometric fiber of $\bJbar_{\X}$ (resp. of $\bJ_{\X}$) over any point $s\in \Spec R_X$ is isomorphic to $\bJbar_{\X_s}$ (resp. $\bJ_{\X_s}$).

\item \label{F:univ-Jac2} There exists a sheaf $\wh{\I}$ on $\X\times_{\Spec R_X} \bJbar_{\X}$  such that for every $\F\in \bJbar_{\X}^*(T)$ there exists a unique $\Spec R_X$-map $\alpha_{\F}:T\to \bJbar_{\X}$ with the property that $\F=(\id_{\X}\times \alpha_{\F})^*(\wh{\I})\otimes \pi_2^*(N)$ for some $N\in \Pic(T)$, where
$\pi_2:\X\times_{\Spec R_X} T\to T$ is the projection onto the second factor.
The sheaf $\wh{\I}$ is uniquely determined up  to tensor product with the pullback of an invertible sheaf on $\bJbar_{\X}$ and it is called a \emph{universal sheaf} on $\bJbar_{\X}$.\\
Moreover, the restriction of $\wh{\I}$ to $X\times \bJbar_X$ is equal to a universal sheaf as in Fact \ref{F:huge-Jac}\eqref{F:huge3}.

\end{enumerate}
\end{fact}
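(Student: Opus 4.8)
The plan is to deduce Fact \ref{F:univ-Jac} as the relative version, over the Noetherian base $\Spec R_X$, of the absolute statement Fact \ref{F:huge-Jac}, by feeding the projective flat family $\pi:\X\to \Spec R_X$ into the very same general representability theorems of Altman-Kleiman and Esteves that were used there, in place of the curve $X$ over $\Spec k$. Two structural inputs make these theorems applicable. The first is that the fibers of $\pi$ are geometrically reduced and connected curves. The second is that $\pi$ admits enough sections: the relative smooth locus $\X^{\mathrm{sm}}\subseteq\X$ of $\pi$ is smooth over $\Spec R_X$ and meets the special fiber exactly in $\Xsm$, and since $R_X$ is a complete (hence Henselian) local ring with algebraically closed residue field $k$, every $k$-point of $\Xsm$ lying on a prescribed irreducible component of $X$ (such points exist because $k=\ov{k}$, as in Fact \ref{F:huge-Jac}) lifts to a section of $\pi$ factoring through $\X^{\mathrm{sm}}$. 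These sections play exactly the role that the $k$-points of the components of $X$ played in the proof of Fact \ref{F:huge-Jac}.

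For part \eqref{F:univ-Jac1} I would argue in four steps. First, the \'etale sheafification of $\bJbar_{\X}^*$ is represented by an algebraic space $\bJbar_{\X}$, locally of finite type over $\Spec R_X$, by Altman-Kleiman \cite[Thm. 7.4]{AK}, and this algebraic space is universally closed over $\Spec R_X$ by Esteves \cite[Thm. 32]{est1}. Second, the sections produced above allow one to invoke Esteves \cite[Thm. B]{est1} to conclude that $\bJbar_{\X}$ is in fact a scheme. Third, since $\bJ_{\X}^*$ is an open subfunctor of $\bJbar_{\X}^*$, its sheafification $\bJ_{\X}$ is an open subscheme of $\bJbar_{\X}$, and its smoothness over $\Spec R_X$ follows from the relative version of \cite[Prop. 2]{BLR}, the obstruction to smoothness lying in $H^2(\X_s,\O_{\X_s})$, which vanishes because the fibers are curves. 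Fourth, the identification of the closed fibers is a base-change statement: $\bJbar_{\X}$ and $\bJ_{\X}$ represent the relative functors $\bJbar_{\X}^*$ and $\bJ_{\X}^*$, whose pullback along the inclusion $\Spec k\cong[\m_X]\hookrightarrow\Spec R_X$ of the closed point is precisely the absolute functor of the fiber $X$; hence $\bJbar_{\X}\times_{\Spec R_X}\Spec k\cong\bJbar_X$ and likewise $\bJ_{\X}\times_{\Spec R_X}\Spec k\cong\bJ_X$.

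For part \eqref{F:univ-Jac2}, the existence of a universal sheaf $\wh{\I}$ on $\X\times_{\Spec R_X}\bJbar_{\X}$, unique up to tensoring with the pullback of an invertible sheaf from $\bJbar_{\X}$, follows from Altman-Kleiman \cite[Thm. 3.4]{AK2}, once more using the sections of $\pi$. Its compatibility with the universal sheaf $\I$ of Fact \ref{F:huge-Jac}\eqref{F:huge3} is obtained by restricting the defining universal property of $\wh{\I}$ along the base change to the closed point: the restriction $\wh{\I}_{|X\times\bJbar_X}$ satisfies the universal property characterizing an absolute universal sheaf on $X\times\bJbar_X$, hence coincides with one such $\I$.

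The only point requiring genuine care, beyond this bookkeeping, is checking that the cited theorems really do apply over $\Spec R_X$, which is Noetherian but not of finite type over $k$; the crux is securing enough sections of $\pi$ to pass from an algebraic space to a scheme and to construct $\wh{\I}$, and this is where the completeness of $R_X$ and the equality $k=\ov{k}$ enter, through the lifting of $k$-points of $\Xsm$. One should also confirm the standing hypothesis that every fiber of $\pi$ is geometrically reduced and connected; for the curves of interest here, which have at worst locally planar singularities, the effective semiuniversal family has reduced (and connected) fibers, so this causes no difficulty.
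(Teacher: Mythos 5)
Most of your proposal runs parallel to the paper's own proof: representability by an algebraic space via \cite[Thm. 7.4]{AK}, universal closedness via \cite[Thm. 32]{est1}, the open-subfunctor argument for $\bJ_{\X}$, smoothness of $\bJ_{\X}\to \Spec R_X$ via \cite[Sec. 8.4, Prop. 2]{BLR}, the identification of the closed fibers by base change, and, for part (ii), the combination of \cite[Thm. 3.4]{AK2} with a section of $\pi$ through its smooth locus obtained from the (strict) henselianity of $R_X$ --- this last point is exactly the paper's argument. Where you genuinely diverge from the paper is the step in which the algebraic space $\bJbar_{\X}$ is shown to be a scheme, and this is where your proposal has a gap.

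You invoke Esteves' Theorem B, whose hypothesis --- as the paper itself recalls when proving Fact \ref{F:huge-Jac}, where it is verified by the fact that each irreducible component of $X$ has a $k$-point --- requires, in the relative setting, sections through the smooth locus meeting \emph{every irreducible component of every geometric fiber} of $\pi$, not only of the closed fiber. Your sections $\sigma_i$ are constructed to pass through prescribed smooth points $p_i$ of the components $C_i$ of $X$; for another fiber $\X_s$, the only constraint this places on $\sigma_i(s)$ is that $p_i$ lies in the closure of $\sigma_i(s)$, so a priori two of your sections could land in the same irreducible component of $\X_s$ while another component is missed (components of $\X_s$ can have closures meeting $X$ in overlapping subcurves, e.g. in the presence of monodromy among components of nearby fibers). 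Establishing that your sections do meet every component of every geometric fiber is a true statement, but it needs a nontrivial argument about specialization of irreducible components (flatness plus reducedness of $X$ to get per-path uniqueness of limits, and Galois-invariance of the rational points $\sigma_i(s)$), none of which appears in your proposal. The paper sidesteps this entirely: it cites \cite[Cor. 52]{est1}, which gives unconditionally that $\bJbar_{\X}$ becomes a scheme after an \'etale cover of $\Spec R_X$, and then uses that $\Spec R_X$ is strictly henselian (complete local ring with algebraically closed residue field), hence admits no nontrivial \'etale covers, so $\bJbar_{\X}$ is already a scheme. You should either adopt that argument or supply the missing analysis of components of geometric fibers before appealing to Theorem B.
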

\begin{proof}
See \cite[Fact 4.4]{MRV1} and the references therein.
\end{proof}

In \cite[Thm. 4.5]{MRV1}, the authors proved that  the completed local ring of $\bJbar_{\X}$ at a point $I$ of the central fiber $u^{-1}([\m_X])=\bJbar_X$ is a semiuniversal deformation ring
for the deformation functor $\Def_{(X,I)}$ of the pair $(X,I)$. Applying a result of   Fantechi-G\"ottsche-van Straten \cite{FGvS} which says that $\Def_{(X,I)}$ is unobstructed if $X$ has locally planar singularities, we get the following

\begin{thm}\label{T:reg-univ}
Let $X$ be a connected reduced curve with locally planar singularities. Then  the scheme $\bJbar_{\X}$ is regular.
\end{thm}
\begin{proof}
See \cite[Thm. 4.5(iii)]{MRV1}.
\end{proof}

The universal fine compactified Jacobians will be certain open subschemes of $\bJbar_X$, proper over $\Spec R_X$, whose definition will depend on
 a general polarization $\un q$ on $X$, see  Definition \ref{def-int}. Indeed, the polarization $\un q$ induces
a polarization on each fiber of the effective semiuniversal deformation family
$\pi:\X\to \Spec R_X$, in the following way. Recall that for any (schematic) point $s\in \Spec R_X$, we denote by $\X_s:=\pi^{-1}(s)$ the fiber of $\pi$ over $s$
and by $\X_{\ov s}:=\X_s\times_{k(s)} \ov{k(s)}$ the geometric fiber over $s$.
%where $k(s)$ is the residue field of $s$ and $\ov{k(s)}$ is its algebraic closure.

There is a natural specialization map
\begin{equation}\label{E:map-subcurve}
\begin{aligned}
\Sigma_s:\{\text{Subcurves of } \X_{\ov s}\} & \longrightarrow \{\text{Subcurves of } X\}\\
\X_{\ov s}\supseteq Z & \mapsto \ov{Z}\cap X\subseteq X,
\end{aligned}
\end{equation}
where $\ov{Z}$ denotes the Zariski closure  inside $\X$ of the image of $Z$ under the natural morphism $\X_{\ov{s}}\to \X_s\hookrightarrow \X$ and the intersection $\ov{Z}\cap X$ is endowed with the reduced scheme structure.
The properties of this map are studied in \cite[Sec. 5]{MRV1}. Here we notice that the function $Z\mapsto \delta_Z=|Z\cap Z^c|$ is lower semicontinuous with respect to $\Sigma_s$\footnote{Indeed, we strongly believe that the function
$Z\mapsto \delta_Z$ is invariant under the map $\Sigma_s$, but we do not know how to prove this and also we do not need this stronger result.}. More precisely, we have

\begin{lemma}\label{L:seminf-del}
Assume that $X$ is a  (reduced and connected) curve with locally planar singularities. For any  subcurve $Z$ of $\X_{\ov s}$ we have that $\delta_{\Sigma_s(Z)}\leq \delta_Z$.
\end{lemma}
\begin{proof}
Consider the relative dualizing sheaf $\omega_{\pi}$ of the family $\pi:\X\to \Spec R_X$.
 %which can be characterized as the unique coherent sheaf on $\X$ such that its restriction to every geometric fiber $\X_{\ov s}$ is the dualizing sheaf of $\X_{\ov s}$ (since there are no non-trivial line bundles on $\Spec R_X$).
Since $X$ has locally planar singularities by assumption, then all the geometric fibers of $\pi$ have locally planar singularities  by Lemma \ref{L:morpi}\eqref{L:morpinew}. In particular, all the geometric fibers of $\pi$ are Gorenstein, which implies that $\omega_{\pi}$ is a line bundle and its restriction to every geometric fiber $\X_{\ov s}$ is the dualizing sheaf of $\X_{\ov s}$.

The Claim in the proof of \cite[Thm. 5.4]{MRV1} applied to the line bundle $\omega_{\pi}$ implies that
\begin{equation}\label{E:eq-omega}
\deg_Z(\omega_{\X_{\ov s}})=\deg_{\Sigma_s(Z)}(\omega_X).
\end{equation}
On the other hand, the subcurve $Z$ has also locally planar singularities (hence it is Gorenstein), and the adjunction formula  \cite[Lemma 1.12]{Cat}
%\footnote{which holds for any Gorenstein subcurve of a Gorenstein curve.})
%The adjunction formula, in the form presented here, is true even without the hypothesis that the subcurve in question is Gorenstein, as a quick inspection of the proof of loc. cit. reveals.})
gives that
\begin{equation}\label{E:adj-for}
\begin{sis}
& \deg_Z(\omega_{\X_{\ov s}})=2 p_a(Z)-2+\delta_Z, \\
& \deg_{\Sigma_s(Z)}(\omega_X)=2p_a(\Sigma_s(Z))-2+\delta_{\Sigma_s(Z)}.
\end{sis}
\end{equation}
Consider now the image $Y$ of $Z\subseteq \X_{\ov s}$ in the (usual) fiber $\X_s$ over $s$. Since the irreducible components of  $\X_{ s}$ are geometrically integral by Lemma \ref{L:morpi}\eqref{L:morpi1} and \cite[Lemma 5.1]{MRV1}, we get that
\begin{equation}\label{E:gen-basech}
p_a(Y)=p_a(Z).
\end{equation}

Now, arguing as in Step II of the proof of Lemma \ref{L:lower-semcont}, we can find a discrete valuation ring $R$ with residue field $k$, together with a morphism  $f: \Spec R\to  \ov{\{s\}}$ that maps the generic point $\eta$ of $\Spec R$ to $s\in \Spec R_X$ and the special point $0$ of $\Spec R$ to $[\m_X]\in \Spec R_X$.
Denote by $\tau: \Y\to \Spec R$ the pull-back of  $\pi:\ov{Y}\to \ov{\{s\}}$ via the morphism $f$
%Note that, by the definition of the map $\Sigma_s$, the special fiber $\Y_0$ coincides set-theoretically with the reduced curve $\Sigma_s(Z)$.
and consider the closure $\cZ:=\ov{\Y_{\eta}}$ of the generic fiber $\Y_{\eta}$  inside $\Y$, i.e. the unique closed subscheme $\cZ\subseteq \Y$ which is flat over $\Spec R$ and such that its generic fiber $\cZ_{\eta}$ is equal to $\Y_{\eta}$ (see \cite[Prop. 2.8.5]{EGAIV2}). Since the arithmetic genus is constant for a flat and proper family of curves, we deduce that the arithmetic genus of the special fiber $\cZ_0$ of $\cZ$ satisfies
\begin{equation}\label{E:flat-g}
p_a(\cZ_0)=p_a(\cZ_{\eta})=p_a(\Y_{\eta})=p_a(Y).
\end{equation}
As proved in \cite[Proof of the Claim in Theorem 5.4]{MRV1},  the 1-cycle associated to $\cZ_0$ coincides with the 1-cycle associated to $\Sigma_s(Z)$,
%the special fiber $\cZ_0$ is a generically reduced (possibly non-reduced) curve whose underlying reduced curve is $\Sigma_s(Z)$,
or in other words $\cZ_0$ coincides with the reduced curve $\Sigma_s(Z)$ except for the possible presence of embedded points.
Since the presence of embedding points decreases the arithmetic genus, we get that
\begin{equation}\label{E:go-down}
p_a(\cZ_0)\leq p_a(\Sigma_s(Z)).
\end{equation}
Now we conclude that $\delta_{\Sigma_s(Z)}\leq \delta_Z$ by putting together \eqref{E:eq-omega}, \eqref{E:adj-for}, \eqref{E:gen-basech}, \eqref{E:flat-g} and \eqref{E:go-down}.
\end{proof}

From the above Lemma, we deduce a Corollary that will be used in what follows (see Theorem \ref{T:thmD-Abel}).

\begin{cor}\label{C:sep-point}
 Assume that $X$ is a  (reduced and connected) curve with locally planar singularities. If $X$ does not have separating nodes then every geometric fiber $\X_{\ov s}$ of the effective semiuniversal deformation $\pi:\X\to \Spec R_X$ does not have
separating nodes.
\end{cor} 
\begin{proof}
Assume that $\X_s$ has a separating node $p$ and we are going to show that $X$ has also a separating node.
By \S\ref{N:sep-node}, there exists a subcurve $Z$ of $\X_{\ov s}$ whose scheme-theoretic intersection with the complementary subcurve $Z^c$ is equal to $\{p\}$. In particular, $\delta_Z=1$. Consider now the subcurve $\Sigma_s(Z)$ of $X$. Lemma \ref{L:seminf-del} implies $\delta_{\Sigma_s(Z)}\leq \delta_Z=1$. However, since $X$ is connected and $\Sigma_s(Z)$ is a non-trivial subcurve (because $\Sigma_s(Z)^c=\Sigma_s(Z^c)\neq \emptyset$ by \cite[Lemma 5.2]{MRV1}), we should have $\delta_{\Sigma_s(Z)}\neq 0$, which forces then $\delta_{\Sigma_s(Z)}=1$.
Since $X$ is Gorenstein, condition $(\dagger)$ of \S\ref{N:sep-node} implies that $\Sigma_s(Z)$ intersects scheme-theoretically its complementary subcurve $\Sigma_s(Z)^c$ into a  separating node $q$ of $X$, q.e.d.
\end{proof}

Using the specialization map $\Sigma_s$, we can show that a polarization on $X$ induces a polarization on each geometric fiber $\X_{\ov s}$.

\begin{lemdef}\label{D:def-pola}
Let $s\in \Spec R_X$ and let $\un q$ be a polarization on $X$. The polarization $\un q^s$ induced by $\un q$ on the geometric fiber
$\X_{\ov s}$ is defined by
$$\un q^s_Z:=\un q_{\Sigma_s(Z)}\in \Q$$
for every subcurve $Z\subseteq \X_{\ov s}$. If $\un q$ is general then $\un q^s$ is general.
\end{lemdef}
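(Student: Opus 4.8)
The plan is to prove two things: first, that the assignment $Z\mapsto \un q_{\Sigma_s(Z)}$ really is a polarization on $\X_s$ (i.e.\ that it is additive and has integral total degree), and second, that it is general whenever $\un q$ is. Both parts rest on understanding the combinatorics of the specialization map $\Sigma_s$, that is, on how the irreducible components and the subcurves of the fibre $\X_s$ specialize to subcurves of the central fibre $X$ inside the total space $\X$ of the semiuniversal family. Throughout I set $T:=\ov{\{s\}}=\Spec(R_X/\mathfrak p_s)$, an integral closed subscheme of $\Spec R_X$ with generic point $s$ and containing the closed point $[\m_X]$, and I work with the flat projective family $\X_T:=\X\times_{\Spec R_X}T\to T$, whose generic fibre is $\X_s$ and whose special fibre is $X$.

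The first step is the \emph{partition property}: distinct irreducible components $Z_i$ of $\X_s$ have closures $\ov{Z_i}\subseteq \X_T$ meeting $X$ in subcurves $\Sigma_s(Z_i)$ that share no common irreducible component and together cover $X$. Covering follows from flatness: since $\X_T\to T$ is flat with one-dimensional fibres over the integral base $T$, the irreducible components of $\X_T$ are precisely the closures $\ov{Z_i}$ of the components $Z_i$ of the generic fibre, so $X=(\X_T)_0=\bigcup_i\Sigma_s(Z_i)$. For disjointness, let $C$ be a component of $X$ with generic point $\eta_C$. As $X$ is reduced it is smooth at $\eta_C$, so $\pi$ is smooth there (being flat with regular fibre), and \'etale-locally $\X\cong \mathbb A^1_{\Spec R_X}$ near $\eta_C$; hence $\X_T$ is \'etale-locally $\mathbb A^1_T$, which is irreducible. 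Therefore exactly one component of $\X_T$ passes through $\eta_C$, i.e.\ $C\subseteq \Sigma_s(Z_i)$ for exactly one $i$. The partition property immediately gives additivity of $Z\mapsto \un q_{\Sigma_s(Z)}$ and the relation $\Sigma_s(Z^c)=\Sigma_s(Z)^c$, and taking $Z=\X_s$ yields $\un q^s_{\X_s}=\un q_X=|\un q|\in\Z$; so $\un q^s$ is a polarization.

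For generality I would use the criterion of Remark \ref{R:conn-pola}: it suffices to check, for every proper subcurve $W\subset \X_s$ with $W$ and $W^c$ connected, that $\un q^s_W-\tfrac{\delta_W}{2}\notin\Z$, where $\delta_W=|W\cap W^c|$. Put $Y:=\Sigma_s(W)$, so that $Y^c=\Sigma_s(W^c)$ by the partition property and both $Y,Y^c$ are proper. Two claims then reduce the statement to $X$. First, $Y$ and $Y^c$ are connected: the only nodes of $X$ that can merge components of $Y$ in passing to $\X_s$ are nodes internal to $Y$, so if $Y$ were disconnected its separate pieces, sharing no node, would stay disconnected in $W$, contradicting connectedness of $W$ (and symmetrically for $Y^c$). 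Second, and this is the technical heart, $\delta_W=\delta_Y$: the points of $Y\cap Y^c$ cannot be smoothed in $\X_s$ (smoothing one would reconnect $W$ and $W^c$), so near each such planar point the family splits as $\{F_A F_B=0\}$ with $F_A,F_B$ cutting out $\ov W$ and $\ov{W^c}$, and the conservation of the intersection number of two divisors in the smooth ambient $R_X[[x,y]]$ shows the local intersection length is constant in the family; summing over the boundary points gives $\delta_W=\delta_Y$. Granting these, $\un q^s_W-\tfrac{\delta_W}{2}=\un q_Y-\tfrac{\delta_Y}{2}$, which is non-integral because $\un q$ is general and $Y,Y^c$ are connected (Remark \ref{R:conn-pola}); hence $\un q^s$ is general.

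The main obstacle is the equality $\delta_W=\delta_Y$ together with the connectivity claim for $Y$: both require controlling how the decomposition $X=Y\cup Y^c$ propagates through the family $\X_T$. The safest route is the local one sketched above---using planarity to embed $\X$ locally in a trivial smooth surface family $R_X[[x,y]]$ and invoking deformation-invariance of intersection multiplicities---rather than a global arithmetic-genus computation, since the closure $\ov W$ of a subcurve need not be flat over $T$ and so one cannot take $p_a(W)=p_a(Y)$ for granted. Once the local factorization $F=F_AF_B$ is established (which is exactly the assertion that $W$ and $W^c$ do not reconnect at a point of $Y\cap Y^c$), everything else follows from standard conservation-of-number and flatness arguments.
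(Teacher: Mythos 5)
Your treatment of well-definedness (additivity plus integrality of the total degree) matches the paper's, and your reduction of generality to Remark \ref{R:conn-pola} via the three properties of $\Sigma_s$ (connectedness, compatibility with complements, control of $\delta$) is also the paper's skeleton. The genuine difference is in the third property: the paper never proves the equality $\delta_Z=\delta_{\Sigma_s(Z)}$, only the congruence $\delta_Z\equiv\delta_{\Sigma_s(Z)}\pmod 2$, obtained ``softly'' from the chain $\delta_Z\equiv \deg_Z(\omega_{\X_s})=\deg_Z(\omega_{\X})=\deg_{\Sigma_s(Z)}(\omega_{\X})=\deg_{\Sigma_s(Z)}(\omega_{X})\equiv\delta_{\Sigma_s(Z)}$, i.e.\ Gorenstein adjunction plus constancy of the degree of the relative dualizing sheaf under specialization of subcurves. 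Since the criterion of Remark \ref{R:conn-pola} only involves $\un q_Y-\frac{\delta_Y}{2}$ modulo $\Z$, parity is all that is needed, and this sidesteps every bit of local analysis. Your route instead establishes the exact conservation $\delta_W=\delta_{\Sigma_s(W)}$; this is a strictly stronger statement (true for locally planar $X$, and your genus bookkeeping together with the local factorization can indeed be pushed through to prove it), and as a bonus you actually prove the partition/disjointness property of $\Sigma_s$ that the paper merely asserts. The price is that your argument is heavier and more delicate than what the lemma requires.

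Two steps in your sketch have real gaps, both fixable. First, the factorization is performed in the wrong ring: the closure $\ov{W}$ lives in $\X_T$ with $T=\ov{\{s\}}=\Spec (R_X/\mathfrak p_s)$, so the formal ambient at a point of $Y\cap Y^c$ is $(R_X/\mathfrak p_s)[[x,y]]$, not $R_X[[x,y]]$ (inside $\X$ itself $\ov{W}$ has codimension $1+\mathrm{ht}(\mathfrak p_s)$ and is not a divisor). Since $R_X/\mathfrak p_s$ need not be regular, this ring need not be factorial and the principality of the ideal of $\ov{W}$ fails as stated. The standard fix is to base change along a map $\Spec R\to \Spec R_X$ from a DVR sending the generic point to $s$ and the closed point to $[\m_X]$: then the ambient becomes the three-dimensional regular (hence factorial) ring $R[[x,y]]$, the closures become reduced divisors $V(F_A)$, $V(F_B)$ with $F=uF_AF_B$, and ``conservation of number'' is the statement that $R[[x,y]]/(F_A,F_B)$ is finite over $R$ and flat (being a one-dimensional complete intersection with no components supported over the closed point). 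Second, your arguments for disjointness and connectedness use more than stated: ``$\X_T$ is \'etale-locally ${\mathbb A}^1_T$, which is irreducible'' does not by itself bound the number of components through $\eta_C$, because Zariski-irreducibility is not an \'etale-local property; over a general integral base both disjointness and connectedness of specializations genuinely fail (monodromy around a non-unibranch point of $T$ can swap components of the fibres, e.g.\ over the local ring of a nodal curve, where ${\mathbb A}^1_T$ is still irreducible). Here they hold because $R_X/\mathfrak p_s$ is a complete local domain with algebraically closed residue field, hence Henselian and geometrically unibranch; the clean arguments are unibranchness for disjointness and properness plus Stein factorization over the Henselian local base for connectedness (your appeal to ``nodes'' is also misplaced, as the points of $Y\cap Y^c$ need not be nodes).
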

\begin{proof}
See \cite[Lemma-Definition 5.1]{MRV1}.
\end{proof}

Given a general polarization $\un q$ on $X$, by the next theorem we get an open subset of $\bJbar_X$ which is proper
over $\Spec R_X$.

\begin{thm}\label{T:univ-fine}
Let $\un q$ be a general polarization on $X$. Then there exists an open subscheme $\J_{\X}(\un q)\subseteq \bJbar_{\X}$
which is projective over $\Spec R_X$ and such that the geometric fiber of  $u:\J_{\X}(\un q)\to \Spec R_X$ over a point
$s\in \Spec R_X$ is isomorphic to $\J_{\X_{\ov s}}(\un q^s)$. In particular, the fiber of $\J_{\X}(\un q)\to \Spec R_X$
over the closed point $[\m_X]\in \Spec R_X$ is isomorphic to $\J_X(\un q)$.
\end{thm}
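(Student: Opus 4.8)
The plan is to cut out $\J_\X(\un q)$ from $\bJbar_\X$ as the relative locus of fiberwise $\un q^s$-stable sheaves, and then to combine the relative representability results of Esteves with the generality of $\un q$ to deduce projectivity. First I would introduce two subsets of $\bJbar_\X$: the relative semistable locus $\J^{ss}_\X(\un q)$ and the relative stable locus $\J^s_\X(\un q)$, consisting of those $I\in \bJbar_\X$ lying over $s=u(I)\in\Spec R_X$ for which $I$ is $\un q^s$-semistable (resp. $\un q^s$-stable) on the fiber $\X_s$ in the sense of Definition \ref{sheaf-ss-qs}, with $\un q^s$ the induced polarization of Lemma-Definition \ref{D:def-pola}. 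The three compatibility properties \eqref{E:def-pola} of the specialization map $\Sigma_s$ guarantee that the collection $\{\un q^s\}_s$ is a genuine relative polarization induced by the single polarization $\un q$ on $X$. Applying the relative version of Esteves's construction \cite{est1} to the projective morphism $\pi\colon \X\to\Spec R_X$ (the relative analogue of Fact \ref{F:Este-Jac}) then produces open subschemes
$$\J^s_\X(\un q)\subseteq \J^{ss}_\X(\un q)\subseteq \bJbar_\X,$$
with $\J^{ss}_\X(\un q)$ universally closed over $\Spec R_X$ and $\J^s_\X(\un q)$ quasi-projective, hence separated and of finite type, over $\Spec R_X$.

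Next, since $\un q$ is general, Lemma-Definition \ref{D:def-pola} shows that $\un q^s$ is general for \emph{every} $s\in\Spec R_X$; hence Lemma \ref{L:nondeg} applied to each fiber $\X_s$ gives that every $\un q^s$-semistable sheaf is already $\un q^s$-stable. Therefore $\J^{ss}_\X(\un q)=\J^s_\X(\un q)$, and I set $\J_\X(\un q):=\J^{ss}_\X(\un q)=\J^s_\X(\un q)$. This scheme is simultaneously universally closed and quasi-projective over $\Spec R_X$; being an open subscheme of a projective $\Spec R_X$-scheme that is at the same time universally closed, it is a closed subscheme of that projective scheme, and thus projective over $\Spec R_X$. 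This is exactly the argument used for the absolute case in Fact \ref{F:Este-Jac}(iii).

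The fiber identification is then immediate from the construction: the fiber of $\J_\X(\un q)\to\Spec R_X$ over $s$ parametrizes the $\un q^s$-stable (equivalently $\un q^s$-semistable) sheaves on $\X_s$, which is exactly $\J_{\X_s}(\un q^s)$; and since $\Sigma_{[\m_X]}$ is the identity on subcurves of $X$, the polarization induced over the closed point is $\un q^{[\m_X]}=\un q$, so the central fiber is $\J_X(\un q)$. I expect the main obstacle to be the openness of the relative (semi)stability locus inside $\bJbar_\X$: because $\un q^s$ genuinely varies with the fiber, being defined combinatorially through $\Sigma_s$ rather than being literally constant, one must verify that the strict inequalities of Definition \ref{sheaf-ss-qs} carve out an open set across the whole family and that they assemble into a relative polarization to which the relative representability of \cite{est1} applies. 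The resolution is that the properties \eqref{E:def-pola} render $\{\un q^s\}_s$ compatible with specialization, so this variation is harmless and openness is inherited from the relative construction over the (locally Noetherian) base $\Spec R_X$.
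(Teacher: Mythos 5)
Your overall strategy is the same as the paper's (reduce to Esteves's relative theory from \cite{est1} and use that $\un q^s$ is general for every $s$ to get $\J^{ss}=\J^s$, hence projectivity from universal closedness plus quasi-projectivity), but there is a genuine gap at precisely the point you flag as the ``main obstacle,'' and your proposed resolution does not close it. Esteves's relative (semi)stability in \cite{est1} is not defined with respect to a collection of fiberwise numerical polarizations $\{\un q^s\}_s$: it is defined with respect to a \emph{vector bundle} on the total space of the family, and his Theorems A and C (openness of the stable locus, universal closedness of $J^{ss}_{\E}$, quasi-projectivity of $J^{s}_{\E}$) take such a bundle as input. The compatibility properties \eqref{E:def-pola} of the specialization map $\Sigma_s$ only guarantee that each $\un q^s$ is a well-defined general polarization on $\X_s$; they do not produce the global object on $\X$ to which the relative representability results apply, so the sentence ``Applying the relative version of Esteves's construction \dots then produces open subschemes'' is exactly the step that is missing.

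The paper fills this gap by a deformation-theoretic construction that your proposal omits entirely. One first chooses $r$ sufficiently divisible and a vector bundle $E$ on $X$ of rank $r$ with $-\deg(E_{|W})=r\left(\un q_W-\frac{\deg_W(\omega_X)}{2}\right)$ for every subcurve $W\subseteq X$, so that $\un q$-(semi)stability coincides with $E$-(semi)stability in Esteves's sense. One then extends $E$ to a vector bundle $\E$ on $\X$: the obstruction space $H^2(X,E\otimes E^{\vee})$ vanishes because $X$ is a curve, so $E$ deforms over the formal family, and Grothendieck's algebraization theorem (\cite[Thm. 8.4.2]{FGA}) descends this formal bundle to an actual bundle $\E$ on the effective family $\X\to\Spec R_X$. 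Finally, the degree computation $\deg_Z(\E_s)=\deg_{\Sigma_s(Z)}(E)$ together with $\deg_Z(\omega_{\X_s})=\deg_{\Sigma_s(Z)}(\omega_X)$ shows that $\E$-(semi)stability on the fiber $\X_s$ agrees with $\un q^s$-(semi)stability, after which \cite[Thm. A]{est1} and \cite[Thm. C]{est1} apply verbatim to give properness and quasi-projectivity of $\J_{\X}(\un q):=J^{s}_{\E}=J^{ss}_{\E}$. The remainder of your argument (generality of $\un q^s$ forcing $ss=s$ via Lemma \ref{L:nondeg}, projectivity from universally closed plus quasi-projective, and the identification of the fibers, including the central one since $\un q^{[\m_X]}=\un q$) matches the paper and is fine once this bundle $\E$ is in place.
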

We call the scheme $\J_{\X}(\un q)$ the \emph{universal fine compactified Jacobian} of $X$ with respect to the polarization $\un q$. We denote by $J_{\X}(\un q)$ the open subset of $\J_{\X}(\un q)$ parametrizing line bundles,
i.e. $J_{\X}(\un q)=\J_{\X}(\un q)\cap \bJ_{\X}\subseteq \bJbar_{\X}$.

\begin{proof}
See \cite[Thm. 5.2]{MRV1}.
\end{proof}

If the curve $X$ has locally planar singularities, then the universal fine compactified Jacobians of $X$ have several nice properties that we collect in the following statement.

\begin{thm}\label{T:univ-Jac}
Assume that $X$ has locally planar singularities and let $\un q$ be a general polarization on $X$.
Then we have:
\begin{enumerate}[(i)]
\item \label{T:univ-Jac1} The scheme $\J_{\X}(\un q)$ is regular and irreducible.
\item \label{T:univ-Jac2} The surjective map $u:\J_{\X}(\un q)\to \Spec R_X$ is projective and flat of relative dimension $p_a(X)$.
\item \label{T:univ-Jac4} The relative dualizing sheaf of $u$ is  trivial.
\item \label{T:univ-Jac3} The smooth locus of $u$ is $J_{\X}(\un q)$.
\end{enumerate}
\end{thm}
\begin{proof}
See \cite[Thm. C]{MRV1}.
\end{proof}

%Note that a statement similar to Corollary \ref{C:triv-can} was proved  by Arinkin in \cite[Cor. 9]{arin1} for the universal compactified Jacobian over the moduli stack of \emph{integral} curves with locally planar singularities.

Finally, note that the universal fine compactified Jacobians are acted upon by the universal generalized Jacobian, whose properties are collected into the following

\begin{fact}[Bosch-L\"utkebohmert-Raynaud]\label{F:ungenJac}
There is an open subset of $\bJ_{\X}$, called the \emph{universal generalized Jacobian} of $\pi:\X\to \Spec R_X$
and denoted by $v: J(\X) \to \Spec R_X$, whose geometric fiber over any point $s\in \Spec R_X$ is the generalized Jacobian
$J(\X_{\ov s})$ of the geometric fiber $\X_{\ov s}$ of $\pi$ over $s$.

The morphism $v$ makes $J(\X)$ into a smooth and separated group scheme of finite type over $\Spec R_X$.
\end{fact}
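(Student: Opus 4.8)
The plan is to realise $J(\X)$ as the fiberwise identity component of the relative Picard scheme $\bJ_{\X}$ and then to verify the three asserted properties in turn, feeding in the representability and smoothness of $\bJ_{\X}$ already established in Fact \ref{F:univ-Jac}. The construction and all three properties are, of course, exactly the content of the theory of Jacobians of relative curves developed in \cite[Ch. 9]{BLR}; my aim is only to indicate how the pieces fit together in the present situation.

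First I would record that the relative Picard functor is a functor in commutative groups, so that its representing scheme $\bJ_{\X}$ (see Fact \ref{F:univ-Jac}) is a commutative group scheme over $\Spec R_X$, smooth and locally of finite type by Fact \ref{F:univ-Jac}\eqref{F:univ-Jac1}. For separatedness I would use that every fiber $\X_s$ of $\pi$ is a geometrically connected and geometrically reduced curve (both conditions are open on the base and hold at the closed fiber $X$, whose only open neighbourhood in the local scheme $\Spec R_X$ is the whole space), so that $h^0(\X_s,\O_{\X_s})=1$ for all $s$. Hence $\pi$ is cohomologically flat in degree zero with $\pi_*\O_{\X}=\O_{\Spec R_X}$, and the separatedness criterion for the relative Picard functor (\cite[Sec. 8.4]{BLR}) shows that $\bJ_{\X}\to \Spec R_X$ is separated.

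Next I would invoke the general fact that the union of the identity components of the fibers of a smooth group scheme is represented by an open subgroup scheme (Grothendieck's theory of group schemes; in the Picard setting, \cite[Ch. 9]{BLR}). Applied to $\bJ_{\X}$ this yields $J(\X):=\bJ_{\X}^o\subseteq \bJ_{\X}$, an open subgroup scheme, smooth over $\Spec R_X$ and with connected fibers, whose fiber over $s$ is $\Pic(\X_s)^o=\Pic^o(\X_s)$; by Convention \ref{N:Jac-gen} this is precisely the generalized Jacobian $J(\X_s)$. Being open in $\bJ_{\X}$, the scheme $J(\X)$ inherits smoothness and separatedness over $\Spec R_X$, and as the identity component it is a subgroup scheme.

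The only step that I expect to require genuine care is finite type, since a smooth morphism with connected fibers need not have quasi-compact source. Here I would exploit that for a connected reduced projective curve algebraic and numerical triviality of line bundles coincide, i.e. $\Pic^o(\X_s)=\Pic^{\tau}(\X_s)$ (Convention \ref{N:Jac-gen}). Consequently $J(\X)=\bJ_{\X}^o$ agrees, fiberwise and therefore as an open subscheme of $\bJ_{\X}$, with the open subscheme parametrizing line bundles that are fiberwise numerically trivial, namely the relative $\Pic^{\tau}$ of $\X$ over $\Spec R_X$. Since the latter is of finite type over the Noetherian base $\Spec R_X$ by the standard boundedness result (\cite[Prop. 9.6.12]{FGA} and the references therein), it follows that $v:J(\X)\to \Spec R_X$ is of finite type, which completes the verification; the constant fiber dimension $\dim J(\X_s)=h^1(\X_s,\O_{\X_s})=p_a(X)$ then serves only as a consistency check.
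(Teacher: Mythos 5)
Your construction of $J(\X)$ as the fiberwise identity component of $\bJ_{\X}$ (open subgroup scheme, smooth over the base), and your finite-type argument via the identification of $J(\X)$ with the relative $\Pic^{\tau}$ (using that $\Pic^o=\Pic^{\tau}$ on each reduced connected fiber, plus Kleiman's finiteness theorem — note that \cite[Prop.\ 9.6.12]{FGA} is stated over a field, so you would need the relative version in \cite{Klei}), are sound in outline. The genuine gap is the separatedness step, and it fails as written: the relative Picard scheme $\bJ_{\X}\to \Spec R_X$ is \emph{not} separated whenever $X$ is reducible. The criterion you attribute to \cite[Sec.\ 8.4]{BLR} requires geometrically \emph{integral} fibers; cohomological flatness in degree zero (i.e.\ $\pi_*\O_{\X}=\O_{\Spec R_X}$) is what gives representability by an algebraic space (Artin's theorem, \cite[Sec.\ 8.3]{BLR}), not separatedness. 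Concretely, pull $\pi$ back along a $1$-parameter regular smoothing $f:\SS\to B=\Spec R$ as in Lemma \ref{L:1parsmooth} and let $C_i\subset X$ be an irreducible component: the twister $\O_{\SS}(C_i)$ is trivial on the generic fiber $\SS_K$ but has multidegree $\un{C_i}\neq \un 0$ on the special fiber, so $\O_{\SS}$ and $\O_{\SS}(C_i)$ define two sections of $\Pic_{\SS/B}$ that agree at the generic point and differ at the closed point. Since separatedness is stable under base change, $\bJ_{\X}$ cannot be separated; this is exactly the phenomenon recorded in \S\ref{S:1par-sm}, where it is observed that $\Pic^d_f$ is not separated over $B$ whenever $X$ is reducible, and it is the reason N\'eron models (biggest separated quotients) appear there.

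Consequently $J(\X)$ cannot ``inherit'' separatedness from $\bJ_{\X}$, and this is precisely the point where the hypotheses on the fibers do real work rather than serving as bookkeeping. Separatedness of the fiberwise identity component is a theorem in its own right: over a DVR with regular total space, a line bundle trivial on the generic fiber has the form $\O(D)$ with $D=\sum_i a_iC_i$ supported on the special fiber, and if the special fiber is \emph{reduced} and connected, the condition that $\O(D)$ have multidegree $\un 0$ forces $D$ to be a multiple of the whole fiber, hence $\O(D)\cong\O$; this argument is unavailable for the full Picard scheme (where the twisters above witness non-separatedness) and fails for non-reduced fibers. This is the content of the result the paper's proof invokes directly, namely \cite[Sec.\ 9.3, Thm.\ 7]{BLR}, applied using that $\Spec R_X$ is strictly henselian and that the fibers of $\pi$ are geometrically reduced and connected. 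To close the gap you should either quote that theorem for the separatedness assertion (at which point your proof essentially reduces to the paper's) or carry out the valuative-criterion argument just sketched for $J(\X)$ itself.
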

\begin{proof}
The existence of a group scheme $v:J(\X)\to \Spec R_X$ whose fibers are the generalized Jacobians of the fibers of $\pi:\X\to \Spec R_X$
follows by \cite[Sec. 9.3, Thm. 7]{BLR}, which can be applied since $\Spec R_X$
is a strictly henselian local scheme (because $R_X$ is a complete local ring) and the geometric fibers of $\pi:\X\to \Spec R_X$ are reduced and connected since $X$ is assumed
to be so. The result of loc. cit. gives also that the map $v$ is smooth, separated and of finite type.
% while the fact that it is of finite type follows from a theorem of Kleiman (see e.g. \cite[Sec. 8.4, Thm. 4]{BLR}).
\end{proof}

We denote by $\zeta: \Spec R_X\to J(\X)$ the zero section of the group scheme $v: J(\X)\to \Spec R_X$; in other words, $\zeta$ is the morphism which sends a geometric point $\ov{s}$ lying over
a point $s\in \Spec R_X$ into the trivial line bundle on the geometric fiber  $\X_{\ov s}$ of $\pi:\X\to \Spec R_X$ over $s$.

\section{The Picard scheme of the universal fine compactified Jacobians}\label{S:Pic-univ}

The aim of this section is to discuss the properties of the Picard scheme of the universal fine compactified Jacobians $u:\ov{J}_{\X}(\un q)\to \Spec R_X$,
introduced in \S \ref{S:univ-Jac}.  Following \cite[Sec. 9.2]{FGA} and \cite[Sec. 8.1]{BLR}, we define the \emph{relative Picard functor} $\Pic_u$
as the fppf-sheaf  associated  to the contravariant Picard functor
$$\begin{aligned}
\Pic_u: {\rm Sch}/R_X & \to {\rm Grps},\\
T & \mapsto {\mathcal Pic}(\ov{J}_{\X}(\un q)\times_{\Spec R_X} T),
\end{aligned}
$$
where ${\rm Sch}/R_X$ is the category of schemes over $\Spec R_X$, ${\rm Grps}$ is the category of abelian groups and $\mathcal Pic$ denotes the Picard group as defined in \S\ref{N:Pic-field}.

Following \cite[Sec. 9.5 and 9.6]{FGA} and \cite[Sec. 8.4]{BLR}, we consider the two subfunctors of the relative Picard functor
\begin{equation}\label{E:Pic-func}
\Pic^o_u\subseteq \Pic_u^{\tau}\subseteq \Pic_u,
\end{equation}
such that $\Pic^o_u$ (resp. $\Pic_u^{\tau}$) consists of the elements of $\Pic_u$ whose restriction to every fiber $u^{-1}(s)$ for $s\in \Spec R_X$ belongs
to $\Pic^o(u^{-1}(s))$ (resp. $\Pic^{\tau}(u^{-1}(s))$), see \S\ref{N:Pic-field}.

We summarize the properties of the above functors in the following

\begin{thm}\label{T:Pic-univ}
Let $X$ be a curve with locally planar singularities of arithmetic genus $p_a(X)$ and let $\un q$ be a general polarization on $X$.
\begin{enumerate}[(i)]
\item \label{T:Pic-univ1} $\Pic_u$ is represented by a group scheme $\Pic(\ov{J}_{\X}(\un q))$ locally of finite type over $\Spec R_X$.
\item \label{T:Pic-univ2} $\Pic_u^{\tau}$ is represented by an open subgroup scheme $\Pic^{\tau}(\ov{J}_{\X}(\un q))\subseteq \Pic(\ov{J}_{\X}(\un q))$
which is of finite type and separated over $\Spec R_X$.
%\item \label{T:Pic-univ3} If $h^1(\ov{J}_{X}(\un q),\O_{\ov{J}_{X}(\un q)})=p_a(X)$, then $\Pic(\ov{J}_{\X}(\un q))$ is formally smooth over $\Spec R_X$.
\item \label{T:Pic-univ4} Assume that $h^1(\ov{J}_{X}(\un q),\O_{\ov{J}_{X}(\un q)})=p_a(X)$. Then $\Pic_u^o$ is represented by an open subgroup scheme
$\Pic^o(\ov{J}_{\X}(\un q))\subseteq \Pic^{\tau}(\ov{J}_{\X}(\un q))$, and both of them are of finite type, separated and smooth over $\Spec R_X$.
\end{enumerate}
\end{thm}
\begin{proof}
Observe that  the morphism $u$ is projective and flat by Theorem \ref{T:univ-Jac}\eqref{T:univ-Jac2} and
the fibers of $u$ are geometrically connected by Theorem \ref{T:compJac}\eqref{T:compJac3}.  Therefore, part \eqref{T:Pic-univ1} will follow from
Mumford's representability criterion for the Picard scheme (see \cite[Sec. 8.2, Thm. 2]{BLR} or \cite[Thm. 9.4.18.1]{FGA}) once we have proved the following

\un{Claim 1:} The irreducible components of the fibers of $u$ are geometrically irreducible.

Let $V\subseteq \ov{J}_{\X}(\un q)$ be the biggest open subset where the restriction of the morphism $u:\ov{J}_{\X}(\un q)\to \Spec R_X$ is smooth.
Since $u$ is flat, the  fiber $V_{s}$ of $V$ over a point $s\in \Spec R_X$ is the smooth locus of the fiber $u^{-1}(s)$, which is geometrically reduced because
$u^{-1}(s)\otimes_{k(s)} \ov{k(s)}\cong \J_{\X_{\ov s}}(\un q^s) $ is reduced by Theorem \ref{T:compJac}\eqref{T:compJac1}.
  In particular, $V_s:=\pi^{-1}(s)\subseteq u^{-1}(s)$ and  $V_{\ov s}:=V_s\times_{k(s)} \ov{k(s)}\subseteq u^{-1}(s)\otimes_{k(s)} \ov{k(s)}\cong \J_{\X_{\ov s}}(\un q^s) $ are dense open subsets.

Therefore, the irreducible components of $\X_s$ (resp. of $\X_{\ov s}$) are equal to the irreducible components of $u^{-1}(s)$ (resp. of $ \J_{\X_{\ov s}}(\un q^s)$). However, since $V_s$ is smooth over $k(s)$ by construction, the irreducible components of $V_s$ coincide with the connected components of $V_s$ and similarly for $V_{\ov s}$.
In conclusion, we have to show that the connected components of $V_s$ are geometrically connected for any point $s\in \Spec R_X$.

Let $C$ be a connected component of $V_s$, for some point $s\in \Spec R_X$.  The closure $\wt{C}$  of $C$ inside $ \ov{J}_{\X}(\un q)$  will contain some irreducible component of the central fiber $\ov{J}_{X}(\un q)$
by the upper semicontinuity of the dimension of the fibers (see \cite[Lemma 13.1.1]{EGAIV3}) applied to the projective surjective morphism $\wt{C}\to \ov{\{s\}}$.
Hence, the closure $\ov C$ of $C$ inside $V$ will contain some (not necessarily unique) connected component $C_o$ of the central fiber $V_o=V_{[\m_X]}$. Now, since $R_X$ is a strictly henselian ring and $V\to \Spec R_X$ is smooth,  given any point $p\in C_o\subseteq V_o$, we can find a section $\sigma$ of $V\to \Spec R_X$ passing through $p$ (see \cite[Sec. 2.2, Prop. 14]{BLR}). Clearly, $\sigma(s)$ is a $k(s)$-rational point of $C$. Therefore we conclude that $C$ is geometrically connected by  \cite[Cor. 4.5.14]{EGAIV2}, q.e.d.

\vspace{0,1cm}

% Since $R_X$ is strictly henselian (being a complete local ring with algebraically closed residue field), the fibers of $u$ are such that their irreducible
%components are geometrically irreducible, see \cite[Lemma 18]{est1}.

%Moreover, the morphism $u$ is projective and flat by Theorem \ref{T:univ-Jac}\eqref{T:univ-Jac2} and
%the fibers of $u$ are geometrically connected by Theorem \ref{T:compJac}\eqref{T:compJac3}.  Therefore, we can apply  Mumford's
%representability criterion for the Picard scheme (see \cite[Sec. 8.2, Thm. 2]{BLR} or \cite[Thm. 9.4.18.1]{FGA}) in order to deduce that the relative Picard functor $\Pic_u$ is represented
%by a group scheme $f:\Pic(\ov{J}_{\X}(\un q))\to \Spec R_X$ locally of finite type, which proves part \eqref{T:Pic-univ1}.

Let us now prove part  \eqref{T:Pic-univ2}. Since $u$ is proper and $\Pic_u$ is represented by a scheme, a result of Kleiman  \cite[Thm. 4.7]{Klei} gives that $\Pic_u^{\tau}$ is represented by an open subgroup scheme $\Pic^{\tau}(\ov{J}_{\X}(\un q))\subseteq \Pic(\ov{J}_{\X}(\un q))$ which is moreover of finite type over $\Spec R_X$.
In order to prove that $f^{\tau}: \Pic^{\tau}(\ov{J}_{\X}(\un q)) \to \Spec R_X$ is separated, it is enough to prove, using the valuative criterion of separatedness
(see \cite[Prop. 7.2.3]{EGAII}),
that for any map $\Spec R\to \Spec R_X$, where $R$ is a discrete valuation ring,
the base change  map
$$f_R^{\tau}: \Pic^{\tau}(\ov{J}_{\X}(\un q))\times_{\Spec R_X} \Spec R\cong \Pic^{\tau}(\ov{J}_{\X}(\un q)\times_{\Spec R_X} \Spec R) \to \Spec R$$
is separated. Since the fibers of $\ov{J}_{\X}(\un q)\times_{\Spec R_X} \Spec R \to \Spec R$ are geometrically reduced by Theorem \ref{T:compJac}\eqref{T:compJac1} and $R$ is a discrete valuation ring, a result of Raynaud
\cite[Cor. 6.4.5]{Ray} guarantees that the map $f_R^{\tau}$ is separated. Part \eqref{T:Pic-univ2} is now proved.

\vspace{0,1cm}

Before proving the remaining assertions, we prove the following

\un{Claim 2:} If $h^1(\ov{J}_{X}(\un q),\O_{\ov{J}_{X}(\un q)})=p_a(X)$, then the geometric fibers of $f:\Pic(\ov{J}_{\X}(\un q))\to \Spec R_X$ are smooth of dimension $p_a(X)$.

Indeed, if $s$ is the  generic point of $\Spec R_X$, then $\X_{\ov s}$ is a smooth projective curve of genus equal to $p_a(X)$ by Lemma  \ref{L:morpi}\eqref{L:morpi2} and therefore
$\ov{J}_{\X_{\ov s}}(\un q)$ is an abelian variety of dimension equal to $p_a(X)$. This implies that $\Pic(\ov{J}_{\X_{\ov s}}(\un q))$ is a smooth group scheme of dimension $p_a(X)$
over  $\ov{k(s)}$. Consider now the function
%$\un{dim}: \Pic(\ov{J}_{\X}(\un q))\to \bbN$ sending a point
$$\Pic(\ov{J}_{\X}(\un q))\ni x \mapsto \dim_x f^{-1}(f(x))\in \bbN,$$
which is   upper semi-continuous by Chevalley's theorem (see \cite[(13.1.3)]{EGAIV3}). Since the fibers of $f$ are group schemes (because $f$ is such), the local dimension stays constant on each fiber which implies that
$\dim_x f^{-1}(s)=\dim f^{-1}(s)$ for any $s\in \Spec R_X$ and  any  $x\in f^{-1}(s)$. Moreover, since the dimension of a scheme locally of finite type over a field is invariant under field extensions (see \cite[(4.1.4)]{EGAIV2}),
we also have that $\dim f^{-1}(s)=\dim \Pic(\ov{J}_{\X_{\ov s}}(\un q))$ for any $s\in \Spec R_X$. Putting everything together  we deduce that
\begin{equation}\label{E:upper1}
\dim \Pic(\ov{J}_{\X_{\ov s}}(\un q)) \geq p_a(X) \text{ for any } s\in S.
\end{equation}
On the other hand, if $h^1(\ov{J}_{X}(\un q),\O_{\ov{J}_{X}(\un q)})=p_a(X)$ then by upper semicontinuity of the cohomology groups, we get that
\begin{equation}\label{E:upper2}
h^1(\ov{J}_{\X_{\ov s}}(\un q),\O_{\ov{J}_{\X_{\ov s}}(\un q)})\leq p_a(X) \text{ for any } s\in S.
\end{equation}
Combining \eqref{E:upper1} and \eqref{E:upper2}, we infer that $\dim \Pic(\ov{J}_{\X_{\ov s}}(\un q))\geq h^1(\ov{J}_{\X_{\ov s}}(\un q),\O_{\ov{J}_{\X_{\ov s}}(\un q)})$ for any $s\in S$.
This implies that, for every $s\in S$, $\Pic(\ov{J}_{\X_{\ov s}}(\un q))$ is smooth of dimension equal to $p_a(X)$ (see \cite[Cor. 9.5.13]{FGA}) which proves the claim.

\vspace{0,1cm}

Let us now prove part \eqref{T:Pic-univ4}. Since the geometric fibers of $f$ are smooth of the same dimension by Claim 2, $\Pic^o_u$ is represented by an open subgroup scheme
$\Pic^o(\ov{J}_{\X}(\un q))\subseteq \Pic^{\tau}(\ov{J}_{\X}(\un q))$,  smooth and of finite type over $\Spec R_X$, by \cite[Prop. 9.5.20]{FGA}. Moreover, $\Pic^o(\ov{J}_{\X}(\un q))$ is separated over $\Spec R_X$ because $\Pic^{\tau}(\ov{J}_{\X}(\un q))$ is separated over $\Spec R_X$ by \eqref{T:Pic-univ2}. Therefore it remains to prove that $f^{\tau}:\Pic^{\tau}(\ov{J}_{\X}(\un q))\to \Spec R_X$ is smooth.

For any $n\in \bbN$, denote by $\phi_n: \Pic(\ov{J}_{\X}(\un q))\to \Pic(\ov{J}_{\X}(\un q))$ the group scheme homomorphism sending an element to its $n$-th power.
Following \cite[\S 1]{FGAVI}, consider the following open subgroup schemes of $\Pic^{\tau}(\ov{J}_{\X}(\un q))$:
\begin{equation}\label{E:tantiPic}
\begin{aligned}
f^{\sigma}: \Pic^{\sigma}(\ov{J}_{\X}(\un q)):=\bigcup_{(n,p)=1}\phi_n^{-1}\left(\Pic^o(\ov{J}_{\X}(\un q)) \right)\to \Spec R_X, \\
f^{\rho}: \Pic^{\rho}(\ov{J}_{\X}(\un q)):=\bigcup_{n=p^r}\phi_n^{-1}\left(\Pic^o(\ov{J}_{\X}(\un q)) \right)\to \Spec R_X, \\
\end{aligned}
\end{equation}
where $p$ denotes the characteristic of the base field $k$ and $(n,p)$ denotes the greatest common divisor of $n$ and $p$.
Clearly, the multiplication map induces a surjective homomorphism of $\Spec R_X$-group schemes (see also \cite[\S 1]{FGAVI})
\begin{equation}\label{E:molti}
m: \Pic^{\sigma}(\ov{J}_{\X}(\un q))\times_{\Spec R_X} \Pic^{\rho}(\ov{J}_{\X}(\un q)) \twoheadrightarrow
\Pic^{\tau}(\ov{J}_{\X}(\un q)).
\end{equation}
According to \cite[Thm. 2.5]{FGAVI}, the $n$-th power morphism $\phi_n$ is \'etale, hence in particular universally open, if $(n,p)=1$. This implies that:
\begin{enumerate}[(a)]
\item \label{E:a} $f^{\sigma}: \Pic^{\sigma}(\ov{J}_{\X}(\un q))\to \Spec R_X$ is smooth (hence universally open), using that $\Pic^o(\ov{J}_{\X}(\un q))\to \Spec R_X$ is smooth and \cite[Prop. 2.10(i)]{FGAVI};
\item \label{E:b} $f^{\rho}: \Pic^{\rho}(\ov{J}_{\X}(\un q))\to \Spec R_X$ is universally open by \cite[Thm. 1.1(iv)]{FGAVI}.
\end{enumerate}
Therefore, $\Pic^{\sigma}(\ov{J}_{\X}(\un q))\times_{\Spec R_X} \Pic^{\rho}(\ov{J}_{\X}(\un q))\to \Spec R_X$ is universally open, since the property of being universally open is stable by base change  and composition (see \cite[(14.3.4)]{EGAIV3}). This indeed implies that $f^{\tau}:\Pic^{\tau}(\ov{J}_{\X}(\un q))\to \Spec R_X$ is universally open, using that the  multiplication map $m$ of \eqref{E:molti} is surjective and \cite[(14.3.4)(i)]{EGAIV3}.

%Since $f$ is locally of finite type by part \eqref{T:Pic-univ1}, equidimensional and the codomain $\Spec R_X$ of $f$ is geometrically unibranch (being regular), then $f$ is universally open by Chevalley's criterion (\cite[(14.4.3)]{EGAIV3}).
Now, since $f^{\tau}$ is universally open and  of finite type, the fibers of $f^{\tau}$ are geometrically reduced (being smooth)
and the codomain $\Spec R_X$ of $f^{\tau}$ is locally Noetherian and reduced (being Noetherian and regular), then we conclude that $f^{\tau}$ is flat by \cite[(15.2.3)]{EGAIV3}.
Finally, since $f^{\tau}$ is flat and of finite presentation (being of finite type over a Noetherian codomain) and the geometric fibers of $f^{\tau}$ are smooth, we conclude that $f^{\tau}$ is smooth by
\cite[Sec. 2.4, Prop. 8]{BLR}, q.e.d.

\end{proof}

\section{The Poincar\'e bundle}\label{S:Poincare}

The aim of this section is to introduce the Poincar\'e line bundle for fine compactified Jacobians and to study its properties. Throughout this section, we fix a reduced connected curve $X$ (not necessarily with locally planar singularities).

With this in mind, consider the triple product $X\times \bJbar_X\times \bJ_X$ and, for any $1\leq i<j\leq 3$, denote by $p_{ij}$ the projection onto the product of the $i$-th and $j$-th factors. Choose a universal sheaf $\I$ on $X\times \bJbar_X$
as in Fact \ref{F:huge-Jac}\eqref{F:huge3} and denote by $\calI^0$ its restriction to
$X\times \bJ_X\subseteq X\times \bJbar_X$.
Consider the trivial family of curves
$$p_{23}:X\times\bJbar_X\times \bJ_X\to \bJbar_X\times \bJ_X$$
and form the line bundle on $\bJbar_X\times \bJ_X$, called the \emph{Poincar\'e bundle}:
\begin{equation}\label{E:Poin-sheaf}
\P:={\mathcal D}_{p_{23}}(p_{12}^*\I\otimes p_{13}^*\calI^0)^{-1}\otimes \mathcal D_{p_{23}}(p_{13}^*\calI^0)
\otimes {\mathcal D}_{p_{23}}(p_{12}^*\mathcal I)
\end{equation}
where ${\mathcal D}_{p_{23}}$ denotes the determinant of cohomology with respect to the morphism $p_{23}$.
For the basic properties of the determinant of cohomology, we refer to \cite{knudsen2} (see also
\cite[Sec. 6.1]{est1} for a summary).

In the sequel we will be often interested in the line bundles
$$\P_M:=\P_{|\bJbar_X\times \{M\}}\in {\mathcal Pic}(\bJbar_X),$$
where $M\in \bJ_X$ is a line bundle on $X$. Although the Poincar\'e line bundle \eqref{E:Poin-sheaf}  depends on the chosen universal sheaf $\I$, the restriction $\P_M$ does not if
$M$ has degree $0$.

\begin{lemma}\label{L:PM}
If $M\in \bJ_X^{1-p_a(X)}$, i.e. if $\chi(M)=1-p_a(X)$ (or, equivalently, if $M$ has degree $0$), then the line bundle $\P_M\in  {\mathcal Pic}(\bJbar_X)$ is  given by
\begin{equation}\label{E:PM}
\P_M=\D_{\pi_2}(\I\otimes \pi_1^*M)^{-1}\otimes \D_{\pi_2}(\pi_1^*M)\otimes \D_{\pi_2}(\I),
\end{equation}
where as usual $\pi_i$ denotes the projection of $X\times \bJbar_X$ onto the $i$-th factor (for $i=1,2$)
and $\I$ is any universal sheaf on $X\times \bJbar_X$ as in Fact \ref{F:huge-Jac}\eqref{F:huge3}. In particular, $\P_M$ is independent of the chosen universal sheaf $\I$.
\end{lemma}
\begin{proof}
Formula \eqref{E:PM} (for any $M\in \bJ_X$) with respect to the universal sheaf $\I$ used in \eqref{E:Poin-sheaf}   follows from the fact that the determinant of cohomology commutes with base change.

The fact that \eqref{E:PM} is independent from the chosen $\I$ if $\chi(M)=1-p_a(X)$ follows from the projection formula for the determinant of cohomology
using that any universal sheaf $\wt{\I}$ on $X\times \bJbar_X$ is related to $\I$ via
$\wt{\I}=\I\otimes \pi_2^*(N),$
for some $N\in \Pic(\bJbar_X)$, where $\pi_2:X\times \bJbar_X\to \bJbar_X$ denotes the projection onto the second factor. The computation is similar to the one in \cite[Proof of Prop. 2.2]{egk}
and left to the reader.
\end{proof}

For any general polarization $\un q$ on $X$, the restriction of the Poincar\'e bundle $\P$ to $\ov{J}_X(\un q)\times \bJ_X^{1-p_a(X)}$  defines, via the universal property of $\Pic(\ov{J}_X(\un q))$,
an algebraic morphism
\begin{equation}\label{D:mor-beta}
\begin{aligned}
\wt{\beta}_{\un q}: \bJ_X^{1-p_a(X)} & \longrightarrow \Pic(\ov{J}_X(\un q)),\\
M & \mapsto (\P_M)_{| \ov{J}_X(\un q)}.
\end{aligned}
\end{equation}
 Lemma \ref{L:PM}, together with the fact that $\bJ_X^{1-p_a(X)}$ is reduced by Fact \ref{F:huge-Jac}\eqref{F:huge1}, implies that the morphism $\beta_{\un q}$ is independent of the chosen Poincar\'e bundle $\P$.

Note that, from \eqref{E:PM}, it follows that
$\wt{\beta}_{\un q}(\O_X)=(\P_{\O_X})_{| \ov{J}_X(\un q)}=\O_{\ov{J}_X(\un q)}.$
%The morphism $\beta$ sends the trivial sheaf on $X$ into the trivial sheaf on $\bJbar_X$.
Therefore the morphism $\wt{\beta}_{\un q}$ restricts to a morphism
\begin{equation}\label{E:beta-pola}
\begin{aligned}
\beta_{\un q}: J(X)=\Pic^o(X) & \to \Pic^o(\ov{J}_X(\un q)), \\
M & \mapsto (\P_M)_{| \ov{J}_X(\un q)}.
\end{aligned}
\end{equation}

\begin{prop}\label{P:beta-hom}
For any general polarization $\un q$ on $X$,
the maps $\wt{\beta}_{\un q}$ and $\beta_{\un q}$ are homomorphisms of group schemes.
\end{prop}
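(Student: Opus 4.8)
The plan is to deduce that $\wt\beta_{\un q}$ is a homomorphism of group schemes from a single functorial isomorphism expressing the additivity of $M\mapsto \P_M$, and then to obtain the statement for $\beta_{\un q}$ by restriction. Write $\P'$ for the restriction of $\P$ to $\ov{J}_X(\un q)\times \bJ_X^0$ and let $m\colon \bJ_X^0\times \bJ_X^0\to \bJ_X^0$ be the tensor-product group law. First I would establish, on $\ov{J}_X(\un q)\times \bJ_X^0\times \bJ_X^0$, an isomorphism
\[
(\id\times m)^*\P'\;\cong\; p_{12}^*\P'\otimes p_{13}^*\P'
\]
up to the pullback of a line bundle from $\bJ_X^0\times \bJ_X^0$. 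Because $\Pic(\ov{J}_X(\un q))$ represents the fppf-sheafified relative Picard functor, such an isomorphism says exactly that the two morphisms $\bJ_X^0\times \bJ_X^0\to \Pic(\ov{J}_X(\un q))$, namely $(M_1,M_2)\mapsto \wt\beta_{\un q}(M_1\otimes M_2)$ and $(M_1,M_2)\mapsto \wt\beta_{\un q}(M_1)\otimes \wt\beta_{\un q}(M_2)$, coincide; this is the homomorphism property.

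To build this isomorphism I would use formula \eqref{E:PM} together with the standard functorial properties of the determinant of cohomology $\D_{\pi_2}$: additivity on short exact sequences, compatibility with base change, and the theorem of the cube (see \cite{knudsen2}). Abbreviating $A(M):=\D_{\pi_2}(\I\otimes \pi_1^*M)$, $B(M):=\D_{\pi_2}(\pi_1^*M)$ and $C:=\D_{\pi_2}(\I)=A(\O_X)$, formula \eqref{E:PM} reads $\P_M=A(M)^{-1}\otimes B(M)\otimes C$, and a direct rearrangement gives a canonical isomorphism
\[
\P_{M_1\otimes M_2}\otimes \P_{M_1}^{-1}\otimes \P_{M_2}^{-1}\;\cong\; D^2A(M_1,M_2)^{-1}\otimes D^2B(M_1,M_2)\otimes B(\O_X)^{-1},
\]
where $D^2A(M_1,M_2):=A(M_1\otimes M_2)\otimes A(M_1)^{-1}\otimes A(M_2)^{-1}\otimes A(\O_X)$ is the second difference and $D^2B$ is defined likewise. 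Now both second differences equal the same Deligne pairing $\langle \pi_1^*M_1,\pi_1^*M_2\rangle_{\pi_2}$ relative to $\pi_2$: for $B$ this is the definition of the pairing, while for $A$ it is precisely the theorem of the cube applied to the line bundles $\pi_1^*M_1,\pi_1^*M_2$ and the sheaf $\I$, the crucial point being that $\I$ has generic rank $1$, so the cube term occurs with multiplicity one. Hence $D^2A\cong D^2B$, these factors cancel, and since $B(\O_X)=\D_{\pi_2}(\O_{X\times \bJbar_X})$ is canonically trivial on $\bJbar_X$, we obtain $\P_{M_1\otimes M_2}\cong \P_{M_1}\otimes \P_{M_2}$. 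The same computation, carried out for the universal line bundles $\calI^0$ on the two copies of $\bJ_X^0$, upgrades this to the family isomorphism of the previous paragraph, the residual Deligne pairing being pulled back from $\bJ_X^0\times \bJ_X^0$.

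The cancellation also has a transparent geometric meaning: since $\pi_1^*M_1$ and $\pi_1^*M_2$ are pulled back from $X$, their relative Deligne pairing is the pullback of $\langle M_1,M_2\rangle_X$ and is therefore a trivial line bundle on $\bJbar_X$, which in particular reproves $\wt\beta_{\un q}(\O_X)=\O$ and the preservation of the identity. I expect the only real obstacle to be bookkeeping: formulating the theorem of the cube in the correct form for the coherent, not locally free, sheaf $\I$ rather than for a line bundle, and keeping every isomorphism natural in $(M_1,M_2)$ so that it descends to the product $\bJ_X^0\times \bJ_X^0$ and the identification takes place at the level of the Picard scheme. As an independent check for $\beta_{\un q}$, one may instead verify $\P_{M_1\otimes M_2}\cong \P_{M_1}\otimes \P_{M_2}$ on $k$-points of $\Pic^{\un 0}(X)$ and conclude by noting that $\Pic^{\un 0}(X)\times \Pic^{\un 0}(X)$ is reduced, being smooth, while $\Pic^o(\ov{J}_X(\un q))$ is a group scheme of finite type over $k$, hence separated; two morphisms into a separated scheme that agree on all closed points of a reduced scheme coincide.
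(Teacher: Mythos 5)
Your proposal is correct in substance, but it is worth separating what is genuinely different from what is the same. The paper proves the key multiplicativity $\P_{M_1\otimes M_2}\cong\P_{M_1}\otimes\P_{M_2}$ of \eqref{E:mult-Poin} pointwise: it writes $M_i=\O_X(-\gamma_i+\delta_i)$ with $\gamma_i,\delta_i$ effective divisors supported on $X_{\rm sm}$, with $\delta_1,\delta_2$ (resp.\ $\gamma_1,\gamma_2$) of disjoint support, pulls back the exact sequences \eqref{E:2ex-seq}, tensors with $\I$, and uses additivity of $\D_{\pi_2}$ together with the splitting \eqref{E:disj-supp} over disjoint zero-dimensional supports; since both sides of the homomorphism identity are already morphisms of schemes and the source $\Pic^{\un 0}(X)\times \Pic^{\un 0}(X)$ is reduced, the pointwise identity suffices. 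That is exactly your ``independent check'' at the end, so your fallback route \emph{is} the paper's actual argument. Your primary route --- a universal isomorphism $(\id\times m)^*\P'\cong p_{12}^*\P'\otimes p_{13}^*\P'$ up to pullback from $\bJ_X^0\times\bJ_X^0$, read off in the relative Picard functor --- is more functorial and would yield the homomorphism property with no reducedness argument (and over a non-reduced parameter scheme); note that canonicity of the isomorphisms is not actually needed for the descent, since two $T$-points of $\Pic(\ov{J}_X(\un q))$ agree as soon as the corresponding line bundles differ by a pullback from $T$, but in exchange you must run the d\'evissage for the universal bundle $\calI^0$, e.g.\ Zariski-locally over $\bJ_X^0\times\bJ_X^0$ where it can be written as $\O(-\Gamma+\Delta)$ for relative divisors $\Gamma,\Delta$ contained in the smooth locus. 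The one step I would push back on is the ``theorem of the cube'' you invoke for $D^2A$: this is not an off-the-shelf result. The Deligne pairing and cube formalism is established for line bundles (and, via Deligne's Riemann--Roch, for locally free coefficients) on families of curves; for a family $\I$ of torsion-free rank-$1$ sheaves on a singular, reducible curve the isomorphism $D^2A\cong\langle \pi_1^*M_1,\pi_1^*M_2\rangle_{\pi_2}$ is true but has to be proven, and its proof is precisely the paper's computation --- move the $M_i$ to divisors supported where $\I$ is invertible and with disjoint supports, then apply additivity; what makes it work is not only that $\I$ has generic rank $1$ but that it is locally free along the chosen divisors. So calling this step ``bookkeeping'' understates it: once you supply that argument your proof and the paper's coincide, and without it the cube step is a citation to a theorem that does not exist in the stated generality.
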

\begin{proof}
Since we have already observed that the maps in question are algebraic morphisms, it remains to prove that
\begin{equation}\label{E:mult-Poin}
\P_{M_1\otimes M_2}\cong\P_{M_1}\otimes \P_{M_2}.
\end{equation}
for any $M_1, M_2\in \bJ_X^{1-p_a(X)}$.

In order to prove this, observe that we can write (for $i=1,2$) $M_i=\O_X(-\gamma_i+\delta_i)$, where $\gamma_i$ and $\delta_i$ are effective divisors contained in the smooth locus of $X$.
Moreover, we can clearly assume that $\delta_1$ and $\delta_2$ (resp. $\gamma_1$ and $\gamma_2$) have disjoint support.

%Moreover, the effective divisors $\delta_i$ and $\gamma_i$ can be written as
%$$ \delta_i:=\sum_{a^i_j>0}a^i_j Q^i_j \hspace{0,5cm} \text{ and } \hspace{0,5cm}
%-\gamma_i:=\sum_{a^i_j<0} a^i_j Q^i_j.$$
Consider the following two exact sequences that are derived from the two exact sequences defining $\O_{\gamma_i}$ and
$\O_{\delta_i}$:
\begin{equation}\label{E:2ex-seq}
\begin{aligned}
& 0 \to \O_X(-\gamma_i) \to \O_X \to \O_{\gamma_i}\to 0,\\
& 0 \to \O_X(-\gamma_i)\to  M_i \to  {M_i}_{|\delta_i}\cong\O_{\delta_i}\to 0.
\end{aligned}
\end{equation}
Pulling back \eqref{E:2ex-seq} via $\pi_1$ and using the additivity of the determinant of cohomology, we get
\begin{equation}\label{E:form1}
 \D_{\pi_2}(\pi_1^*M_i)\cong \D_{\pi_2}(\pi_1^*M_i)\otimes \D_{\pi_2}(\pi_1^*\O_X)^{-1}\cong
 \D_{\pi_2}(\pi_1^*\O_{\delta_i})\otimes \D_{\pi_2}(\pi_1^*\O_{\gamma_i})^{-1}.
\end{equation}
Similarly, by tensoring the pull-back via $\pi_1$ of two exact sequences \eqref{E:2ex-seq} with $\I$ and using the additivity of
determinant of cohomology, we get
\begin{equation}\label{E:form2}
\D_{\pi_2}(\I\otimes \pi_1^* M_i)^{-1}\otimes \D_{\pi_2}(\I)\cong
\D_{\pi_2}(\I\otimes \pi_1^*\O_{\delta_i})^{-1}\otimes \D_{\pi_2}(\I\otimes \pi_1^* \O_{\gamma_i}).
\end{equation}
Note that the above exact sequences make sense since $\I$ is locally free along $\pi_1^{-1}(\delta_i)$ and $\pi_1^{-1}(\gamma_i)$ (because
$\gamma_i$ and $\delta_i$ are  contained in the smooth locus of $X$), hence $\I\otimes \pi_1^*\O_{\delta_i}$ and  $\I\otimes \pi_1^* \O_{\gamma_i}$
are flat over $\bJbar_X$ and we can consider their determinant of cohomology with respect to $\pi_2$.
By plugging \eqref{E:form1} and \eqref{E:form2} into the \eqref{E:PM}, we get
\begin{equation}\label{E:for-PMi}
\P_{M_i}\cong\D_{\pi_2}(\I\otimes \pi_1^*\O_{\delta_i})^{-1}\otimes \D_{\pi_2}(\I\otimes \pi_1^* \O_{\gamma_i})\otimes
\D_{\pi_2}(\pi_1^*\O_{\delta_i})\otimes \D_{\pi_2}(\pi_1^*\O_{\gamma_i})^{-1}.
\end{equation}
Since $M_1\otimes M_2=\O_X(-\gamma_1-\gamma_2+\delta_1+\delta_2)$,  we get in a similar way that
\begin{equation}\label{E:for-PM12}
\P_{M_1\otimes M_2}\cong\D_{\pi_2}(\I\otimes \pi_1^*\O_{\delta_1\cup\delta_2})^{-1}\otimes \D_{\pi_2}(\I\otimes \pi_1^* \O_{\gamma_1\cup \gamma_2})\otimes
\D_{\pi_2}(\pi_1^*\O_{\delta_1\cup \delta_2})\otimes \D_{\pi_2}(\pi_1^*\O_{\gamma_1\cup \gamma_2})^{-1}.
\end{equation}
Since $\delta_1$ and $\delta_2$ (reps.  $\gamma_1$ and $ \gamma_2$) are zero-dimensional subschemes of $X$ with disjoint  support, for any
coherent sheaf $\F$ on $X\times \bJbar_X$ which is locally free along $\pi_1^{-1}(\delta_i)$ and $\pi_1^{-1}(\gamma_i)$, we have that
\begin{equation}\label{E:disj-supp}
\begin{aligned}
& \D_{\pi_2}(\F\otimes \pi_1^*\O_{\delta_1\cup \delta_2})=\D_{\pi_2}(\F\otimes \pi_1^*\O_{\delta_1})\otimes \D_{\pi_2}(\F\otimes \pi_1^*\O_{ \delta_2}), \\
& \D_{\pi_2}(\F\otimes \pi_1^*\O_{\gamma_1\cup \gamma_2})=\D_{\pi_2}(\F\otimes \pi_1^*\O_{\gamma_1})\otimes \D_{\pi_2}(\F\otimes \pi_1^*\O_{ \gamma_2}). \\
\end{aligned}
\end{equation}
Comparing formulas \eqref{E:for-PMi} and \eqref{E:for-PM12} and using \eqref{E:disj-supp}, we get  the required formula \eqref{E:mult-Poin}.
\end{proof}

\begin{remark}
With an argument similar to the one in the proof of the above Proposition \ref{P:beta-hom}, it is possible to get a description of  the line bundle $\P_M$ on $\bJbar_X$.
More precisely,  given $M\in  \bJ_X^{1-p_a(X)}$ with $M\cong \O_X(\sum a_iQ_i)$ for a divisor $\sum a_i Q_i$ supported on the smooth locus of $X$,
then we get that
%the fiber of $\P$ over $(I,M)$ is canonically isomorphic to
\begin{equation}\label{E:fiber-Poin}
\P_{M}\cong \bigotimes (\cI_{|Q_i\times \bJbar_X})^{-a_i}.
\end{equation}
\end{remark}

\vspace{0,2cm}

An important property of the line bundles $\P_M$ on $\bJbar_X$ is the fact that they are invariant under pull-back for the multiplication map by an element
$N\in \Pic(X)$:
\begin{equation}\label{E:multN}
\begin{aligned}
-\otimes N : \bJbar_X & \to \bJbar_X \\
I & \mapsto I\otimes N.
\end{aligned}
\end{equation}

\begin{lemma}\label{L:inv-mult}
For any $N\in {\mathcal Pic}(X)$ and $M\in \bJ_X^{1-p_a(X)}$, we have that
$$(-\otimes N)^*\P_M\cong\P_M.$$
\end{lemma}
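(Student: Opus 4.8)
The plan is to prove the asserted equality of line bundles on $\bJbar_X$ by a direct manipulation of the formula \eqref{E:PM} for $\P_M$, using only the standard functorial properties of the determinant of cohomology: compatibility with base change, the twisting rule $\D_{\pi_2}(\F\otimes \pi_2^*L)=\D_{\pi_2}(\F)\otimes L^{\chi(\F)}$, and additivity in short exact sequences. Write $t_N:=(-\otimes N)\colon \bJbar_X\to \bJbar_X$ for the isomorphism \eqref{E:multN}. The first step is to identify the pullback of the universal sheaf. Since $(\id_X\times t_N)^*\I$ is a $\bJbar_X$-flat family of simple torsion-free rank-$1$ sheaves whose fibre over $I$ is $I\otimes N$, and the same is true of $\I\otimes \pi_1^*N$, the universal property in Fact \ref{F:huge-Jac}\eqref{F:huge3} yields
$$(\id_X\times t_N)^*\I\cong \I\otimes \pi_1^*N\otimes \pi_2^*L$$
for a uniquely determined line bundle $L\in \Pic(\bJbar_X)$.

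Next I would compute $t_N^*\P_M$. The square with vertical maps $\pi_2$ and horizontal maps $t_N$ and $\id_X\times t_N$ is Cartesian, so base change for $\D_{\pi_2}$ gives $t_N^*\D_{\pi_2}(\F)=\D_{\pi_2}((\id_X\times t_N)^*\F)$. Applying this to the three factors of \eqref{E:PM}, and using $\pi_1\circ(\id_X\times t_N)=\pi_1$ together with the twisting rule, the only powers of $L$ that occur are $L^{-\chi(\I\otimes \pi_1^*(M\otimes N))}$ from the first factor and $L^{\chi(\I\otimes \pi_1^*N)}$ from the third. Because $M$ has multidegree $\un 0$, in particular total degree $0$, twisting by $M$ preserves the Euler characteristic of any rank-$1$ sheaf, so $\chi(\I\otimes \pi_1^*(M\otimes N))=\chi(\I\otimes \pi_1^*N)$ and these two powers of $L$ cancel. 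Using moreover that $\D_{\pi_2}(\pi_1^*L')$ is a trivial bundle for every $L'\in\Pic(X)$ (being the determinant of $H^\bullet(X,L')\otimes_k\O_{\bJbar_X}$), the equality $t_N^*\P_M=\P_M$ reduces to the biadditivity relation
\begin{equation*}
\D_{\pi_2}(\I\otimes \pi_1^*M)\otimes \D_{\pi_2}(\I\otimes \pi_1^*N)=\D_{\pi_2}(\I\otimes \pi_1^*(M\otimes N))\otimes \D_{\pi_2}(\I). \tag{$\star$}
\end{equation*}

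The substantive point is thus to establish $(\star)$ for all $N\in \Pic(X)$, with $M\in\Pic^0(X)$ fixed; I would do this by a subgroup trick rather than attacking $(\star)$ directly. The set $S:=\{N\in \Pic(X)\colon t_N^*\P_M\cong \P_M\}$ is a subgroup of $\Pic(X)$, since $t_{N_1\otimes N_2}=t_{N_1}\circ t_{N_2}$. As every line bundle on $X$ is a product of elements of $\Pic^0(X)$ and of line bundles $\O_X(Q)$ with $Q$ a smooth point (the classes $[\O_X(Q_i)]$, one smooth point on each component, generate $\Pic(X)/\Pic^0(X)=\Z^{\gamma(X)}$), it suffices to check $N\in S$ in these two cases. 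For $N\in \Pic^0(X)$, relation $(\star)$ is precisely the identity \eqref{E:mult-Poin} proved in Proposition \ref{P:beta-hom}: after cancelling the trivial bundles $\D_{\pi_2}(\pi_1^*\bullet)$, \eqref{E:mult-Poin} is literally $(\star)$ with both arguments in $\Pic^0(X)$. For $N=\O_X(Q)$ with $Q$ smooth, I would tensor the sequence $0\to \O_X\to \O_X(Q)\to \O_Q\to 0$, pulled back by $\pi_1$, first with $\I$ and then with $\I\otimes \pi_1^*M$; since $\I$ is locally free along $X_{\mathrm{sm}}\times\bJbar_X$ both sequences stay exact, and additivity of $\D_{\pi_2}$ reduces $(\star)$ to the identity $\D_{\pi_2}\bigl((\I\otimes \pi_1^*M)|_{Q\times \bJbar_X}\bigr)=\D_{\pi_2}\bigl(\I|_{Q\times \bJbar_X}\bigr)$ up to a constant line, which holds because $(\pi_1^*M)|_{Q\times\bJbar_X}$ is the trivial bundle $M|_Q\otimes_k\O_{\bJbar_X}$.

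The only routine items to verify carefully are the flatness hypotheses underlying base change and additivity (all sheaves in sight are flat over $\bJbar_X$) and the explicit cancellation of the powers of $L$; the genuinely nontrivial content, the biadditivity $(\star)$, is handed off to Proposition \ref{P:beta-hom} in the $\Pic^0$ direction and to a single one-point exact-sequence computation in the transverse direction, so no new estimate is needed. I expect the main subtlety to be bookkeeping the $L$-twist and confirming the Euler-characteristic equality that makes it disappear, this being exactly where the hypothesis $M\in\Pic^0(X)$ enters.
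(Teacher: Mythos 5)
Your proposal is correct, and its first half coincides with the paper's own proof: both pull back the formula \eqref{E:PM} along $-\otimes N$, identify $(\id\times(-\otimes N))^*\I$ with $\I\otimes\pi_1^*N$ (up to a twist from the base), and reduce the Lemma to the biadditivity relation \eqref{E:comp-lb}, which is exactly your $(\star)$. You are in fact more careful than the paper at this step: you keep track of the ambiguity $\pi_2^*L$ in the pullback of the universal sheaf and note that its contribution cancels precisely because $\deg M=0$, a point that \eqref{E:pull-mult} passes over in silence. Where you genuinely diverge is in the proof of $(\star)$: the paper takes $N$ arbitrary, writes $M\cong\O_X(-\gamma+\delta)$ with $\gamma,\delta$ effective divisors in $X_{\rm sm}$, and shows via the exact sequences \eqref{E:exseq-div} that both sides of \eqref{E:comp-lb} equal $\D_{\pi_2}(\I_{|\pi_1^{-1}(\delta)})^{-1}\otimes\D_{\pi_2}(\I_{|\pi_1^{-1}(\gamma)})$, handling all $N$ at once. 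You instead fix $M$ and decompose $N$, using that the invariance locus $S\subseteq\Pic(X)$ is a subgroup and checking generators: for $N\in\Pic^0(X)$ you invoke \eqref{E:mult-Poin} from Proposition \ref{P:beta-hom} (legitimate and non-circular, since that Proposition precedes the Lemma and does not use it), and for $N=\O_X(Q)$ with $Q$ a smooth point you run a one-point exact-sequence computation. Your route recycles the already-proven multiplicativity and minimizes new computation; the paper's route is self-contained and needs no generation statement for $\Pic(X)$. The two are close relatives in the end, since the proof of Proposition \ref{P:beta-hom} is exactly the paper's divisor computation, so your argument ultimately rests on the same technique, just organized around $N$ rather than $M$.

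Two slips, neither fatal. First, you justify $\chi(\I\otimes\pi_1^*(M\otimes N))=\chi(\I\otimes\pi_1^*N)$ by saying $M$ has multidegree $\un 0$; but $M\in\Pic^0(X)$ means total degree $0$ only (the paper reserves $\Pic^{\un 0}(X)=J(X)$ for multidegree zero). Fortunately total degree $0$ is all the cancellation needs, since twisting a rank-$1$ torsion-free sheaf by a line bundle shifts $\chi$ by the total degree of that line bundle. Second, $\Pic(X)/\Pic^0(X)\cong\Z$ via the total degree, not $\Z^{\gamma(X)}$; the latter group is $\Pic(X)/\Pic^{\un 0}(X)$. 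Your generation claim survives either way: $\Pic(X)$ is generated by $\Pic^0(X)$ together with the classes $\O_X(Q)$ for $Q$ smooth (a single such $Q$ already suffices), so the subgroup argument goes through unchanged.
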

\begin{proof}
Consider the following commutative diagram
\begin{equation}\label{poincmult}
\xymatrix{
X\times \bJbar_X  \ar[r]^{(\id,-\otimes N)}  \ar[d]_{\pi_2} &  X\times \bJbar_X \ar[d]^{\pi_2}\\
\bJbar_X \ar[r]_{-\otimes N}  & \bJbar_X
}
\end{equation}
By definition of the multiplication map $-\otimes N$, it follows that
\begin{equation}\label{E:pull-mult}
(\id, -\otimes N)^*\I\cong\wt \I\otimes \pi_1^*N,
\end{equation}
for some universal sheaf $\wt \I$ on $X\times \bJbar_X$, possibly different from $\I$.
Using \eqref{E:PM} and \eqref{E:pull-mult}, together with the fact that the determinant of cohomology
commutes with pull-back, we get
\begin{equation}\label{E:pull-PM}
(-\otimes N)^*\P_M\cong\D_{\pi_2}(\wt \I\otimes \pi_1^*N\otimes \pi_1^*M)^{-1}\otimes \D_{\pi_2}(\pi_1^*M)\otimes
\D_{\pi_2}(\wt \I\otimes  \pi_1^*N).
\end{equation}
By comparing \eqref{E:pull-PM} and  the formula \eqref{E:PM} which remains true if we substitute $\I$ with $\wt \I$ as observed before, we deduce that the statement of the Lemma is equivalent to
\begin{equation}\label{E:comp-lb}
\D_{\pi_2}(\wt \I\otimes \pi_1^*M)^{-1}\otimes \D_{\pi_2}(\wt \I)\cong
\D_{\pi_2}(\wt \I\otimes \pi_1^*N\otimes \pi_1^*M)^{-1}\otimes \D_{\pi_2}(\wt \I\otimes  \pi_1^*N).
\end{equation}
In order to prove \eqref{E:comp-lb}, we proceed similarly to the proof of Proposition \ref{P:beta-hom}.  
We write $M\cong\O_X(-\gamma+\delta)$ where $\gamma$ and $\delta$ are two effective divisors on $X_{\rm sm}$ and we apply formula \eqref{E:form2} (with $M_i$ replaced by $M$) to the sheaves  $\I$ and $\wt \I\otimes \pi_1^*N$ in order to get
\begin{equation}\label{E:isom1}
\D_{\pi_2}(\wt \I\otimes \pi_1^*M)^{-1}\otimes \D_{\pi_2}(\wt \I)\cong \D_{\pi_2}( \wt \I\otimes \pi_1^{*}\O_{\delta})^{-1}\otimes \D_{\pi_2}(\wt \I \otimes \pi_1^{*}\O_{\gamma}),
\end{equation}
\begin{equation}\label{E:isom2}
\D_{\pi_2}(\wt \I\otimes \pi_1^*N\otimes \pi_1^*M)^{-1}\otimes \D_{\pi_2}(\wt \I\otimes \pi_1^*N)\cong
\D_{\pi_2}( \wt \I\otimes \pi_1^*N \otimes \pi_1^{*}\O_{\delta})^{-1}\otimes \D_{\pi_2}(\wt \I \otimes \pi_1^*N \otimes \pi_1^{*}\O_{\gamma}).
\end{equation}
Comparing \eqref{E:isom1} and \eqref{E:isom2} and using that $\wt \I\otimes \pi_1^*N \otimes \pi_1^{*}\O_{\delta}=\wt \I\otimes \pi_1^*(N \otimes \O_{\delta})\cong \wt \I\otimes \pi_1^{*}\O_{\delta}$ and   similarly with $\delta$ replaced by $\gamma$, we get the isomorphism in \eqref{E:comp-lb}, q.e.d.

%By pulling back via $\pi_1$ the two exact sequences associated to the two effective divisors $\gamma,\delta\subset X$ and tensoring with a suitable line bundle on $X\times \bJbar_X$, we get the two exact sequences on $X\times \bJbar_X$:
%\begin{equation}\label{E:exseq-div}
%\begin{sis}
%& 0\rightarrow \wt \I \otimes \pi_1^*\mathcal{O}(-\gamma)\rightarrow \wt \I \rightarrow \wt \I_{|\pi_1^{-1}(\gamma)}\rightarrow 0,\\
%& 0\rightarrow \wt \I \otimes \pi_1^*\mathcal{O}(-\gamma)\rightarrow \wt \I \otimes \pi_1^*M\rightarrow \wt \I_{|\pi_1^{-1}(\delta)}\rightarrow 0,\\
%\end{sis}
%\end{equation}
%From \eqref{E:exseq-div} and the additivity of the determinant of cohomology, we get that
%\begin{equation}\label{E:isom1}
%\D_{\pi_2}(\wt \I\otimes \pi_1^*M)^{-1}\otimes \D_{\pi_2}(\wt \I)\cong \D_{\pi_2}( \wt \I_{|\pi_1^{-1}(\delta)})^{-1}\otimes \D_{\pi_2}(\wt \I_{|\pi_1^{-1}(\gamma)}).
%\end{equation}
%Similarly, by tensoring the two exact sequences \eqref{E:exseq-div} with $\pi_1^*(N)$ and using the additivity of the determinant of cohomology together with the fact that $(\wt \I\otimes \pi_1^*N)_{|\pi_1^{-1}(\delta)}\cong \wt \I_{|\pi_1^{-1}(\delta)}$ and similarly with $\delta$ replaced by $\gamma$,  we get that
%\begin{equation}\label{E:isom2}
%\D_{\pi_2}(\wt \I\otimes \pi_1^*M\otimes \pi_1^*N)^{-1}\otimes \D_{\pi_2}(\wt \I\otimes \pi_1^*N)\cong
% \D_{\pi_2}( \wt \I_{|\pi_1^{-1}(\delta)})^{-1}\otimes \D_{\pi_2}(\wt \I_{|\pi_1^{-1}(\gamma)}).
%\end{equation}
%By putting together \eqref{E:isom1} and \eqref{E:isom2}, we get the isomorphism in \eqref{E:comp-lb}, q.e.d.
\end{proof}

The Poincar\'e bundle behaves well with respect to the decomposition of a curve into its separating blocks in the sense of \S\ref{S:Abel}.

\begin{lemma}\label{L:PM-blocks}
Let $X$ be a connected reduced curve satisfying condition $(\dagger)$ and denote by $Y_1,\ldots, Y_r$ its separating blocks as in \S\ref{S:Abel}.
Let $M\in J(X)$ and set $M_j:=M_{|Y_j}\in J(Y_j)$. Denote by $\P_M\in \Pic^o(\bJbar_X)$ and $\P_{M_j}\in \Pic^o(\bJbar_{Y_j})$
the corresponding  fibers of the Poincar\'e bundles for the curves $X$ and $Y_j$, respectively.
Then the push-forward of $\P_M$ via the isomorphism  $\tau^*:\bJbar_X\xrightarrow[]{\cong} \prod_j \bJbar_{Y_j}$ of Theorem \ref{T:Abel}\eqref{T:Abel1}
is equal to
$$\P_{M_1}\boxtimes \ldots \boxtimes \P_{M_r}:=p_1^*(\P_{M_1})\otimes \ldots \otimes p_r^*(\P_{M_r}),$$
where $p_j:\prod_i \bJbar_{Y_i}\to \bJbar_{Y_j}$ is the projection onto the $j$-th factor.
\end{lemma}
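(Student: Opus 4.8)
The plan is to compute $\P_M$ through the determinant-of-cohomology formula \eqref{E:PM} and to decompose that formula along the partial normalization $\tau\colon\wt X=\coprod_j Y_j\to X$ at the separating nodes $n_1,\ldots,n_{r-1}$. Throughout I work on $X\times\bJbar_X$, identifying $\bJbar_X$ with $\prod_j\bJbar_{Y_j}$ via the isomorphism $\tau^*$ of Proposition \ref{P:norm-sheaves}\eqref{P:norm-sheaves1}, so that the projection $p_j$ corresponds to the classifying map $q_j\colon\bJbar_X\to\bJbar_{Y_j}$, $I\mapsto I_{|Y_j}$. I first record that, since $M\in\Pic^{\un 0}(X)$ has multidegree zero, $\P_M$ is independent of the chosen universal sheaf $\I$: replacing $\I$ by $\I\otimes\pi_2^*N$ multiplies the three factors of \eqref{E:PM} by powers of $N$ governed by fibrewise Euler characteristics, and these powers cancel because $\deg M=0$ (this is the mechanism behind \eqref{E:change-Poin}). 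The crucial geometric input, already used in Proposition \ref{P:norm-sheaves}\eqref{P:norm-sheaves1}, is that a simple sheaf is locally free at every separating node; hence $\I$ is locally free of rank one along each $\{n_k\}\times\bJbar_X$, which makes the conductor sequence
$$
0\to\I\to\bigoplus_j(\iota_j)_*\bigl(\I_{|Y_j\times\bJbar_X}\bigr)\to\bigoplus_{k=1}^{r-1}(\iota_{n_k})_*\bigl(\I_{|\{n_k\}\times\bJbar_X}\bigr)\to 0
$$
exact, where $\iota_j,\iota_{n_k}$ are the closed immersions over $\bJbar_X$ and each node term is a line bundle on $\bJbar_X$.

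Next I tensor this sequence with $\pi_1^*M$, and write the analogous sequence for $\pi_1^*M$ alone; both remain exact since $M$ is a line bundle, and the node restrictions satisfy $(\I\otimes\pi_1^*M)_{|\{n_k\}}\cong\I_{|\{n_k\}}$ and $(\pi_1^*M)_{|\{n_k\}}\cong\O_{\bJbar_X}$ because $\pi_1^*M$ is trivial on $\{n_k\}\times\bJbar_X$. Applying additivity of the determinant of cohomology and its compatibility with closed pushforward, namely $\D_{\pi_2}((\iota)_*\G)=\D_{\pi_2\circ\iota}(\G)$ (see \cite{knudsen2} and \cite[Sec. 6.1]{est1}), to each of the three sheaves in \eqref{E:PM} expresses $\D_{\pi_2}(\I\otimes\pi_1^*M)$, $\D_{\pi_2}(\pi_1^*M)$ and $\D_{\pi_2}(\I)$ as products of block contributions over the $Y_j$ times node contributions $\I_{|\{n_k\}\times\bJbar_X}^{\pm 1}$. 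Substituting into \eqref{E:PM}, each node term enters once with exponent $+1$ (from the inverse of the node factor of $\D_{\pi_2}(\I\otimes\pi_1^*M)$), once with exponent $-1$ (from $\D_{\pi_2}(\I)$), and trivially from $\D_{\pi_2}(\pi_1^*M)$; hence the node contributions cancel exactly, leaving the tensor product over $j$ of the combination $\D(\I_{|Y_j}\otimes\pi_1^*M_j)^{-1}\otimes\D(\pi_1^*M_j)\otimes\D(\I_{|Y_j})$ taken relative to $\pi_2^{Y_j}\colon Y_j\times\bJbar_X\to\bJbar_X$.

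Finally I identify the $j$-th factor with $p_j^*\P_{M_j}$. By Fact \ref{F:huge-Jac}\eqref{F:huge3} applied to $Y_j$ and the map $q_j$, one has $\I_{|Y_j\times\bJbar_X}\cong(\id\times q_j)^*\I^j\otimes\pi_2^*N_j$ for a universal sheaf $\I^j$ on $Y_j\times\bJbar_{Y_j}$ and some $N_j\in\Pic(\bJbar_X)$. Base-change compatibility of the determinant of cohomology identifies the untwisted combination with $q_j^*\P_{M_j}$, and—exactly as in the first step, now using $M_j=M_{|Y_j}\in\Pic^{\un 0}(Y_j)$—the twist by $N_j$ cancels out of the combination. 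Under the identification $q_j=p_j$ this gives $\P_M\cong\bigotimes_j p_j^*\P_{M_j}=\P_{M_1}\boxtimes\cdots\boxtimes\P_{M_r}$, as claimed.

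The main obstacle I anticipate is bookkeeping rather than conceptual: I must check that the conductor sequence stays exact after tensoring with $\pi_1^*M$ and after restriction to the node loci, and that the Euler-characteristic exponents in the two twist-cancellation arguments genuinely match—both points resting on $M$ having multidegree zero and on the local freeness of simple sheaves at separating nodes. Should the simultaneous normalization prove awkward to organize, the alternative is to induct on $r$ by splitting off one separating node at a time (as in the proof of Proposition \ref{P:norm-sheaves}), reducing everything to the single-node case $X=Y_1\cup Y_2$.
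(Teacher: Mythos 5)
Your proposal is correct and follows essentially the same route as the paper's own proof: both compute $\P_M$ via the determinant-of-cohomology formula \eqref{E:PM}, decompose it along the partial normalization $\tau:\wt{X}=\coprod_j Y_j\to X$ at the separating nodes, and use base-change compatibility of the determinant of cohomology to identify each block factor with $p_j^*(\P_{M_j})$. The only difference is presentational: your conductor-sequence bookkeeping of the node contributions, and your explicit cancellation of the universal-sheaf twist $N_j$ using that $M$ and $M_j$ have multidegree zero, spell out two cancellations that the paper's proof leaves implicit when it passes from $X\times \bJbar_X$ to $\wt{X}\times \prod_i \bJbar_{Y_i}$ and when it identifies $(\eta_j\times\id)^*(\wt{\I})$ with $\wt{\I}_j$.
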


\begin{proof}
Consider the following commutative diagram
\begin{equation}\label{E:diagr2}
\xymatrix{
X\times \bJbar_X   \ar[d]^{\pi_2}
& \wt{X} \times \prod_i \bJbar_{Y_i} \ar[l]_{\tau \times (\tau^*)^{-1}} \ar[d]^{\wt{\pi_2}}
& Y_j\times \prod_i \bJbar_{Y_i} \ar[d] \ar@{_{(}->}[l]_{\eta_j\times \id} \ar[r]^{\id \times p_j}
\ar@{}[dr]|{\square} \ar[d]^{\wt{\pi_2}^j} & Y_j\times \bJbar_{Y_j} \ar[d]^{\pi_2^j}\\
\bJbar_X   & \prod_i \bJbar_{Y_i} \ar[l]_{\cong}^{(\tau^*)^{-1}} \ar@{=}[r] & \prod_i \bJbar_{Y_i} \ar[r]_{p_j}&\bJbar_{Y_j}
}
\end{equation}
where $\tau: \wt{X}=\coprod_i Y_i \to X$ is the normalization of $X$ at the separating nodes
of $X$ and $\eta_j:Y_j\hookrightarrow \wt{X}=\coprod_i Y_i$ is the natural inclusion.
Denote by $\pi_1$, $\wt{\pi_1}$, $\wt{\pi_1}^j$, $\pi_1^j$ the projections onto the first factors of the products  appearing in the middle row of diagram \eqref{E:diagr2}. Choose a universal sheaf $\I$ on $X\times \bJbar_X$  as in Fact \ref{F:huge-Jac}\eqref{F:huge3} and set $\wt{\I}:=(\tau \times (\tau^*)^{-1})^*(\I)$.

Since a torsion-free rank-1 sheaf on $X$ is completely determined by its pull-back to $\wt{X}$
by Theorem \ref{T:Abel}\eqref{T:Abel1}, we have that
the pull-back of $\P_M$ to $\prod_i \bJbar_{Y_i}$ via the isomorphism $(\tau^*)^{-1}$ is equal to
\begin{equation*}\tag{*}
((\tau^*)^{-1})^*(\P_M)=((\tau^*)^{-1})^*\left(\D_{\pi_2}(\I\otimes \pi_1^*M)^{-1}\otimes \D_{\pi_2}(\pi_1^*M)\otimes \D_{\pi_2}(\I)\right)\cong
\end{equation*}
$$\cong\D_{\wt{\pi_2}}(\wt{\I}\otimes \wt{\pi_1}^*\wt M)^{-1}\otimes \D_{\wt{\pi_2}}(\wt{\pi_1}^*\wt M)\otimes \D_{\wt{\pi_2}}(\wt{\I}),
$$
where $\wt M:=\tau^*M\in J(\wt X)$.
Since $\wt{X}$ is the disjoint union of the subcurves $Y_i$ and $(\eta_j\times \id)^*(\wt{\I})=
(\id \times p_j)^*(\I_j):=\wt{\I_j}$ for some universal sheaf $\I_j$ on $Y_j\times \bJbar_{Y_j}$,
we have that
\begin{equation*}\tag{**}
\D_{\wt{\pi_2}}(\wt{\I}\otimes \wt{\pi_1}^*\wt M)^{-1}\otimes \D_{\wt{\pi_2}}(\wt{\pi_1}^*\wt M)\otimes \D_{\wt{\pi_2}}(\wt{\I})\cong
\end{equation*}
$$
\bigotimes_{j=1}^r \left[\D_{\wt{\pi_2}^j}(\wt{\I}_j\otimes (\wt{\pi_1}^j)^*M_j)^{-1}\otimes \D_{\wt{\pi_2}^j}((\wt{\pi_1}^j)^*M_ j)\otimes \D_{\wt{\pi_2}^j}(\wt{\I}_j)\right].
$$
Finally, since the square in the right of diagram \eqref{E:diagr2} is cartesian, applying the base change
properties of the determinant of cohomology, we get
\begin{equation*}\tag{***}
p_j^*(\P_{M_j})=p_j^*\left(\D_{{\pi_2}^j}({\I}_j\otimes ({\pi_1}^j)^*M_j)^{-1}\otimes
\D_{{\pi_2}^j}(({\pi_1}^j)^*M_ j)\otimes \D_{{\pi_2}^j}({\I}_j)\right)\cong
\end{equation*}
$$\cong
\D_{\wt{\pi_2}^j}(\wt{\I}_j\otimes (\wt{\pi_1}^j)^*M_j)^{-1}\otimes \D_{\wt{\pi_2}^j}((\wt{\pi_1}^j)^*M_ j)\otimes \D_{\wt{\pi_2}^j}(\wt{\I}_j).
$$
By combining (*), (**) and (***) we get the equality
$$((\tau^*)^{-1})^*(\P_M)\cong\bigotimes_{j=1}^r p_j^*(\P_{M_j}),$$
which concludes the proof.
\end{proof}

Given any Abel map $A_L$ and choosing a fine compactified Jacobian $\ov{J}_X(\un q)$ such that $\Im A_L\subseteq \ov{J}_X(\un q)$ (which is always possible if $X$ is Gorenstein by  Theorem \ref{T:Abel}\eqref{T:Abel3}),
the morphism $\beta_{\un q}$ of \eqref{E:beta-pola} provides a right inverse for the morphism $A_L^*$ of \eqref{E:pullback-Abel}.
 %if $X$ does not have separating points.
This is originally due to Esteves-Gagn\'e-Kleiman in the case where $X$ is integral (see \cite[Prop. 2.2]{egk}).

\begin{prop}\label{P:prop-AL}
Let $X$ be a  connected reduced curve satisfying condition $(\dagger)$, as in \S\ref{N:sep-node}. Then, for every $L\in \Pic(X)$ and any general polarization $\un q$ such that $\Im A_L\subseteq \ov{J}_X(\un q)$,
we have that
$$A_L^*\circ \beta_{\un q} = {\rm id}_{J(X)}.$$
In other words, for every $M\in J(X)$ we have that $A_L^*((\P_M)_{|\ov{J}_X(\un q)})\cong M$.
\end{prop}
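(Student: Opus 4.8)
The plan is to first reduce to the case of a curve without separating points and then to carry out an explicit computation of $A_L^*(\P_M)$ using the determinant of cohomology.

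\emph{Reduction to separating blocks.} Let $Y_1,\dots,Y_r$ be the separating blocks of $X$ and write $M_i:=M_{|Y_i}$. Since $\tau^*\colon \bJ_X\to \prod_i \bJ_{Y_i}$ is an isomorphism by Proposition \ref{P:norm-sheaves}\eqref{P:norm-sheaves1}, a line bundle on $X$ is determined by its restrictions to the $Y_i$, so it is enough to show $(A_L^*\P_M)_{|Y_i}=M_i$ for every $i$. By Lemma \ref{L:PM-blocks} the bundle $\P_M$ corresponds, under the isomorphism $\tau^*\colon \bJbar_X\xrightarrow{\cong}\prod_j\bJbar_{Y_j}$, to $\bigotimes_j p_j^*\P_{M_j}$; composing this with the description of $A_L$ in Proposition \ref{P:Abel-sep}\eqref{P:Abel-sep1} (its $i$-th block component is $A_{L_i}$ while its other block components are constant) I would obtain $(A_L^*\P_M)_{|Y_i}=A_{L_i}^*(\P_{M_i})$, the constant components contributing only trivial line bundles. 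As each $Y_i$ has no separating points and $M_i\in\Pic^{\un 0}(Y_i)$, this reduces the statement to the case where $X$ itself has no separating points.

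\emph{The case without separating points.} Here $A_L\colon X\to\bJbar_X$, $p\mapsto \m_p\otimes L$, is the classifying map of the family $\I_\Delta\otimes p_1^*L$ on $X\times X$ (with the second factor as parameter). By the universal property of Fact \ref{F:huge-Jac}\eqref{F:huge3} there is $N\in\Pic(X)$ with $(\id_X\times A_L)^*\I\cong \I_\Delta\otimes p_1^*L\otimes p_2^*N$. Writing $M=\O_X(\delta-\gamma)$ with $\delta,\gamma$ reduced effective divisors on $X_{\rm sm}$ of disjoint support (as in the proof of Proposition \ref{P:beta-hom}), formula \eqref{E:for-PMi} together with the triviality of $\D_{\pi_2}(\pi_1^*\O_\delta)$ and $\D_{\pi_2}(\pi_1^*\O_\gamma)$ gives $\P_M=\big(\bigotimes_{Q\in\delta}\I_Q\big)^{-1}\otimes\bigotimes_{R\in\gamma}\I_R$, where $\I_Q:=\I_{|\{Q\}\times\bJbar_X}$ is a line bundle on $\bJbar_X$ since $Q\in X_{\rm sm}$. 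As the determinant of cohomology commutes with base change, pulling back along $A_L$ amounts to restricting $(\id_X\times A_L)^*\I$ to $\{Q\}\times X$; because $Q$ is a smooth point the diagonal is Cartier near $(Q,Q)$, and one finds $A_L^*\I_Q=(\I_\Delta\otimes p_1^*L\otimes p_2^*N)_{|\{Q\}\times X}\cong \O_X(-Q)\otimes N$. Multiplying these contributions yields $A_L^*\P_M\cong \O_X(\delta-\gamma)\otimes N^{\deg\gamma-\deg\delta}=M\otimes N^{-\deg M}$, and since $\deg M=0$ this equals $M$. Because $\Im A_L\subseteq \ov J_X(\un q)$, the identical computation applies to the restriction $(\P_M)_{|\ov J_X(\un q)}=\beta_{\un q}(M)$, giving $A_L^*\circ\beta_{\un q}=\id_{J(X)}$.

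\emph{Main obstacle.} The delicate point is the bookkeeping with the universal sheaf: both $\P_M$ and each pullback $A_L^*\I_Q$ depend on the chosen universal sheaf through the twist $N$, and one must verify that these twists cancel. This is governed precisely by the hypothesis $\deg M=0$, which renders $N^{-\deg M}$ trivial and is consistent with the fact, recorded in \eqref{E:change-Poin}, that $\P_M$ is independent of the choice of universal sheaf when $M$ has degree zero. The identification $(\I_\Delta)_{|\{Q\}\times X}\cong\O_X(-Q)$, valid only because $Q$ lies in the smooth locus, is the other step requiring care.
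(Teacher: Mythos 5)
Your proof is correct and follows the same skeleton as the paper's: the general case is reduced to curves without separating points via the separating blocks, using Proposition \ref{P:norm-sheaves}, Proposition \ref{P:Abel-sep} and Lemma \ref{L:PM-blocks}, and the blockless case is settled by a determinant-of-cohomology computation whose engine is the restriction of $\I_\Delta$ at smooth points together with the degree-zero hypothesis on $M$. The difference is in how that computation is executed. The paper runs it in family: it computes $(A_L\times \id)^*\P$ on $X\times J(X)$ via the diagonal sequence \eqref{E:diag} and base change for the determinant of cohomology, and concludes because this line bundle differs from the universal bundle $\I^0$ by a pullback from $J(X)$, so that the induced classifying map is the identity; this yields the equality of morphisms $A_L^*\circ\beta_{\un q}=\id_{J(X)}$ at once. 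You instead argue pointwise: fixing $M$ and a presentation $M=\O_X(\delta-\gamma)$, you express $\P_M$ through \eqref{E:for-PMi} as $\bigl(\bigotimes_{Q\in\delta}\I_Q\bigr)^{-1}\otimes\bigotimes_{R\in\gamma}\I_R$ and pull back each $\I_Q$ along $A_L$, the universal-sheaf ambiguity $N$ cancelling exactly because $\deg M=0$. Your version is more elementary (no triple product, no family base change), but two small points should be made explicit: first, the existence of $\delta,\gamma$ \emph{reduced}, disjoint and contained in $X_{\rm sm}$ is a Bertini-type fact not actually contained in the proof of Proposition \ref{P:beta-hom}, which only produces effective divisors supported in the smooth locus (alternatively, your computation goes through with multiplicities); second, since you verify the identity only on closed points, you should note that two morphisms between reduced, separated $k$-schemes of finite type that agree on closed points coincide, which applies here because $J(X)$ is smooth and separated. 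A minor advantage of your arrangement is that it works entirely on $\bJbar_X$, so the block reduction avoids the polarization bookkeeping of diagram \eqref{E:diagr1} in the paper's Case II.
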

\begin{proof}
We will first prove the Proposition in the case where $X$ does not have separating points and then in the general case.

\un{Case I:} $X$ does not have separating points.

The proof in this case is an easy adaptation of \cite[Prop. 2.2]{egk} and it is therefore left to the reader. The crucial property that holds in this case (while failing in general) and that makes the proof of loc. cit. work is the fact that the Abel map $A_L:X\to \J_X(\un q)\subseteq \bJbar_X$ is defined by the sheaf  $\I_{\Delta}\otimes p_1^*L$ on $X\times X$, where $\Delta\subset X\times X$ is the diagonal, as it follows from Theorem \ref{T:Abel}\eqref{T:Abel2}.

\un{Case II}: $X$ satisfies condition $(\dagger)$.

Let $Y_i$ for $1\leq i\leq r$ be the separating blocks of $X$ as in \S\ref{S:Abel} and set $L_i:=L_{|Y_i}\in \Pic(Y_i)$.
According to Theorem \ref{T:Abel}\eqref{T:Abel1}, we can choose  general polarizations $\un q^i$ on $Y_i$, for $1\leq i\leq r$, such that $\tau^*$ induces an isomorphism between $\J_X(\un q)$ and $\prod_{i=1}^r \J_{Y_i}(\un q^i)$.
Since $\Im A_L\subseteq \J_X(\un q)$ by assumption, we have that
$\Im A_{L_i}\subseteq \ov{J}_{Y_i}(\un q^i)\subseteq \bJbar_{Y_i}$, for every $1\leq i\leq r$, by \cite[Prop. 6.7(ii)]{MRV1}.
We get the following  diagram
\begin{equation}\label{E:diagr1}
\xymatrix{
& \prod_i \Pic^o(\ov{J}_{Y_i}(\un q^i)) \ar@/_/[dl]_{\prod_i {A_{L_i}^*}}\ar[dr]^{\otimes_i p_i^*(-)} & \\
 \prod_i J(Y_i) \ar@/_/[ur]_{\prod_i \beta_{\un q_i}}& & \Pic^o(\prod_i \ov{J}_{Y_i}(\un q^i)) \ar[d]^{\wh{\tau^*}}_{\cong}  \\
J(X) \ar[u]^{\tau^*}_{\cong} \ar@/_/[rr]_{\beta_{\un q}} & & \Pic^o(\ov{J}_X(\un q)),\ar@/_/[ll]_{A_L^*}
}
\end{equation}
where $\wh{\tau^*}$ is the isomorphism induced on $\Pic^o$ by  $\tau^*:\J_X(\un q) \stackrel{\cong}{\longrightarrow} \prod_{i=1}^r\J_{Y_i}(\un q^i)$
and $\beta_{\un q_i}$ is the map \eqref{E:beta-pola} with respect to the general polarization $\un q^i$ on the curve $Y_i$. From the definition of the Abel map $A_L$ (see Theorem \ref{T:Abel}\eqref{T:Abel2}), it follows that
 the two maps from $\prod_i \Pic^o(\J_{Y_i}(\un q^i))$ to $\prod_i J(Y_i)$ that arise from diagram \eqref{E:diagr1} are equal.
Lemma \ref{L:PM-blocks} can be re-interpreted as saying that the two maps from $J(X)$ to $\Pic^o(\J_X(\un q))$
that arise from diagram \eqref{E:diagr1} are equal.

Since each $Y_i$ does not have separating points, then we have that
$A_{L_i}^*\circ \beta_{\un q_i}=\id_{J(Y_i)}$ by Case I. This implies that $A_{L}^*\circ \beta_{\un q}=\id_{J(X)}$ by an easy diagram chase in  \eqref{E:diagr1}.
\end{proof}

An immediate consequence of the above result is the following:

\begin{cor}\label{C:inj-beta}
Let $X$ be a  connected reduced curve satisfying condition $(\dagger)$ and let $\un q$ be a general polarization on $X$.
If $\ov{J}_X(\un q)$ admits an Abel map (in the sense of \S\ref{S:Abel}), then
the homomorphism $\beta_{\un q}:J(X)\to \Pic^o(\ov{J}_X(\un q))$ is injective.
\end{cor}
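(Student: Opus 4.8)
The plan is to read off the injectivity of $\beta_{\un q}$ directly from the composition identity established in Proposition \ref{P:prop-AL}. The hypothesis that $\ov{J}_X(\un q)$ admits an Abel map means, by Definition \ref{D:ex-Abel-sep}, that there exists a line bundle $L\in \Pic^{|\un q|+1}(X)$ such that $\Im A_L\subseteq \ov{J}_X(\un q)$. This is exactly the input required to apply Proposition \ref{P:prop-AL}, which gives
$$A_L^*\circ \beta_{\un q} = {\rm id}_{J(X)},$$
where $A_L^*:\Pic^o(\ov{J}_X(\un q))\to \Pic^o(X)=\Pic^{\un 0}(X)=J(X)$ is the pull-back homomorphism of \eqref{E:pullback-Abel} (which, as noted there, does send $\Pic^o$ into $\Pic^o=J(X)$).

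From here the argument is immediate: a morphism of group schemes which admits a left inverse is a monomorphism. Concretely, since $A_L^*\circ\beta_{\un q}$ is the identity on $J(X)=\Pic^{\un 0}(X)$, the homomorphism $\beta_{\un q}$ must be injective on points, and more precisely it is a monomorphism of group schemes (it is a split injection, $A_L^*$ being a retraction). Thus $\beta_{\un q}:\Pic^{\un 0}(X)\to \Pic^o(\ov{J}_X(\un q))$ is injective, which is the assertion of the corollary.

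I do not expect any genuine obstacle here, since all the work has already been done in Proposition \ref{P:prop-AL}; the corollary is a formal consequence of the existence of a one-sided inverse. The only point requiring a word of care is bookkeeping: one must make sure that the line bundle $L$ supplied by the Abel-map hypothesis has the correct degree so that $\beta_{\un q}$ and $A_L^*$ are composable as maps on the connected component $\Pic^o$, but this is guaranteed by Definition \ref{D:ex-Abel-sep} together with the remark following \eqref{E:pullback-Abel} that $A_L^*$ carries $\Pic^o(\ov{J}_X(\un q))$ into $J(X)=\Pic^{\un 0}(X)$.
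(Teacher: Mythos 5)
Your proposal is correct and follows exactly the paper's route: Corollary \ref{C:inj-beta} is stated there as an immediate consequence of Proposition \ref{P:prop-AL}, since the identity $A_L^*\circ \beta_{\un q}={\rm id}_{J(X)}$ exhibits $A_L^*$ as a left inverse and hence forces $\beta_{\un q}$ to be injective. Your care about the degree of $L$ and about $A_L^*$ preserving $\Pic^o$ matches the remarks surrounding \eqref{E:pullback-Abel} in the paper.
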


%\begin{remark}
%The conjecture  \ref{C:beta-inj?}Ê would follow, using the injectivity of $\beta$,  from the following
%statement: $$ (\P_M)_{|\ov{J}_X(\un q)}\Rightarrow \P_M=\O_{\bJbar_X}.$$
%\end{remark}

\section{Cohomology of restricted Poincar\'e bundles}\label{S:coho-Poin}

The aim of this section is to prove some results about the cohomology of restricted Poincar\'e bundles
$\P_M:=\P_{|\bJbar_X\times \{M\}}$, for $M\in J(X)$, to  the fine compactified Jacobians of a connected reduced curve $X$ (not necessarily with locally planar singularities).
%Throughout this section, we assume that $X$ is connected.

The first result is a generalization \cite[Prop. 1]{arin1}, which deals with $X$ integral.

\begin{prop}\label{P:vanishcohom}
Let $M\in J(X)$ and let $\un q$ be a general polarization on $X$.
If  there exists $i\in{\mathbb N}$ such that $H^i(\ov{J}_X(\un q),\P_M)\neq 0$, then ${\P_M}_{|\bJ_X}\cong \O_{\bJ_X}$.
%\begin{enumerate}[(i)]
%\item \label{P:vanishcohom1} If $H^i(\bJbar_X,\P_M)\neq 0$ for some $i$ then ${\P_M}_{|\bJ_X}\cong \O_{\bJ_X}$.
%\item \label{P:vanishcohom2} If $H^i(\ov{J}_X(\un q),\P_M)\neq 0$ for some $i$ then
%${\P_M}_{|\bJ_X}\cong \O_{\bJ_X}$.
%\end{enumerate}
\end{prop}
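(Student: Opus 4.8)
The plan is to exploit the invariance of $\P_M$ under the translation action of the generalized Jacobian, reducing the statement to the triviality of a single homogeneous line bundle on $J(X)$, which is then forced by a Mumford-type see-saw computation on the projective space $\ov{J}_X(\un q)$.

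First I would set up the action. Since tensoring with a line bundle of multidegree $\un 0$ preserves both the total degree $|\un q|$ and every stability inequality \eqref{multdeg-sh1}, the group $J(X)=\Pic^{\un 0}(X)$ acts on $\ov{J}_X(\un q)$ by
$$a\colon J(X)\times \ov{J}_X(\un q)\longrightarrow \ov{J}_X(\un q),\qquad (N,I)\mapsto I\otimes N,$$
and Lemma \ref{L:inv-mult} says exactly that $(-\otimes N)^*\P_M\cong\P_M$ for every such $N$. Set $\cA:=\P_M|_{\Pic^{\un 0}(X)}$, the restriction of $\P_M$ to the identity component of $\bJ_X$; it is a homogeneous (translation-invariant) line bundle on the group $J(X)$. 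Because $\bJ_X=\Pic(X)$ is a disjoint union of translates of $\Pic^{\un 0}(X)$ and, by Lemma \ref{L:inv-mult}, translation by an arbitrary $N\in\Pic(X)$ carries $\P_M$ to itself, the restriction $\P_M|_{\bJ_X}$ is trivial on one component if and only if it is trivial on all of them. Thus it suffices to prove that $\cA\cong\O_{J(X)}$.

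Next I would run the see-saw argument, using that $\ov{J}_X(\un q)$ is projective (Fact \ref{F:Este-Jac}). On $J(X)\times\ov{J}_X(\un q)$ both $a^*\P_M$ and $p_2^*\P_M$ restrict, on each slice $\{N\}\times\ov{J}_X(\un q)$, to $(-\otimes N)^*\P_M\cong\P_M$; since the second factor is proper, the see-saw principle yields
$$a^*\P_M\cong p_2^*\P_M\otimes p_1^*\cA,$$
the factor on $J(X)$ being identified with $\cA$ by restricting to $J(X)\times\{L_0\}$ for a line bundle $L_0\in J_X(\un q)$ (which exists by density, Corollary \ref{C:prop-fineJac}) and invoking Lemma \ref{L:inv-mult} once more. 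Now I would compute $\bR p_{1*}(a^*\P_M)$ in two ways. The automorphism $\Sigma(N,I)=(I\otimes N,N)$ of $\ov{J}_X(\un q)\times J(X)$ intertwines $a$ with a projection and $p_1$ with the other projection, so base change gives $\bR p_{1*}(a^*\P_M)\cong\big(\bigoplus_i H^i(\ov{J}_X(\un q),\P_M)[-i]\big)\otimes_k\O_{J(X)}$; on the other hand the displayed formula together with the projection formula gives $\bR p_{1*}(a^*\P_M)\cong\big(\bigoplus_i H^i(\ov{J}_X(\un q),\P_M)[-i]\big)\otimes_k\cA$. Comparing $i$-th cohomology sheaves yields, for every $i$,
$$H^i(\ov{J}_X(\un q),\P_M)\otimes_k\O_{J(X)}\cong H^i(\ov{J}_X(\un q),\P_M)\otimes_k\cA .$$

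Finally, assume $H^i(\ov{J}_X(\un q),\P_M)\neq 0$ for some $i$. Taking global sections in the last isomorphism and using $H^0(J(X),\O_{J(X)})\neq 0$, I conclude that $H^0(J(X),\cA)\neq 0$. It then remains to see that a homogeneous line bundle on $J(X)$ carrying a nonzero global section must be trivial; since $J(X)$ is a connected commutative algebraic group that is in general neither proper nor affine, I expect this to be the only non-formal point of the proof. I would handle it via the structure of homogeneous line bundles: every translation-invariant line bundle on $J(X)$ is pulled back from $\Pic^0$ of the abelian part $J(X^\nu)$ (the line bundles associated to the torus and additive directions are canonically trivialized on the corresponding torsors), and on an abelian variety a line bundle of degree zero admitting a section is trivial; hence $\cA\cong\O_{J(X)}$ and therefore $\P_M|_{\bJ_X}\cong\O_{\bJ_X}$. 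The main obstacle is thus not the cohomological see-saw, which is formal, but precisely this triviality criterion on the non-complete group $J(X)$, which is where Mumford's abelian-variety argument must be adapted.
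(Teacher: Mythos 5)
Your strategy is genuinely different from the paper's, and its formal core is sound: the see-saw identity $a^*\P_M\cong p_2^*\P_M\otimes p_1^*\cA$ on $J(X)\times \ov{J}_X(\un q)$, the shear automorphism trick, and the resulting comparison $H^i(\ov{J}_X(\un q),\P_M)\otimes_k\O_{J(X)}\cong H^i(\ov{J}_X(\un q),\P_M)\otimes_k\cA$ are all correct, as is the reduction to one component of $\bJ_X$ via Lemma \ref{L:inv-mult}. The paper instead follows Arinkin: using the universal sheaf $\mathcal{I}^{\un 0}$ on $X\times J(X)$ and see-saw arguments whose proper factor is the curve $X$ itself (on $X\times J(X)\times J(X)$ and on $X\times J(X)\times\ov{J}_X(\un q)$), it endows the $\Gm$-torsor $T$ of $(\P_M)_{|J(X)}$ with a group structure making it an extension $0\to\Gm\to T\to J(X)\to 0$, lifts the $J(X)$-action on $\ov{J}_X(\un q)$ to a $T$-action on $(\P_M)_{|\ov{J}_X(\un q)}$ with $\Gm$ scaling the fibers, and then notes that a nonzero $H^i$ contains a one-dimensional $T$-submodule whose character restricts to the identity on $\Gm$ and hence splits the extension, forcing $T\cong J(X)\times\Gm$, i.e. triviality. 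Relative to this, your proposal has two genuine gaps.

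First, your see-saw takes $\ov{J}_X(\un q)$ as the proper factor, and together with the identification of $\cA$ via a point $L_0\in J_X(\un q)$ it requires $H^0(\ov{J}_X(\un q),\O_{\ov{J}_X(\un q)})=k$ and $J_X(\un q)\neq\emptyset$, i.e. connectedness, reducedness and the presence of line bundles in every relevant component. These facts are Corollaries \ref{C:connect} and \ref{C:prop-fineJac}, which the paper proves only for curves with locally planar singularities, and Fact \ref{F:Este-Jac} explicitly warns that $\ov{J}_X(\un q)$ may be non-reduced in general. Proposition \ref{P:vanishcohom} carries no locally planar hypothesis, and the paper's proof attains this generality precisely because every see-saw it invokes has the connected reduced proper curve $X$ as its fiber, so that nothing about the geometry of $\ov{J}_X(\un q)$ beyond properness is ever used. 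Your argument, as written, proves the statement only under the locally planar assumption.

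Second, the step you yourself flag as the crux --- a translation-invariant line bundle $\cA$ on $J(X)$ with $H^0(J(X),\cA)\neq0$ must be trivial --- is not established by your sketch. Granting (which already requires Rosenlicht-type structure theory) that $\cA\cong\pi^*L$ with $L\in\Pic^o(J(X^{\nu}))$, where $\pi:J(X)\to J(X^{\nu})$ is the projection to the abelian quotient, a section of $\pi^*L$ is a section of $L\otimes\pi_*\O_{J(X)}$ on $J(X^{\nu})$, not a section of $L$; and $\pi_*\O_{J(X)}$ is filtered with graded pieces the line bundles $\mathbb{L}_{\chi}\in\Pic^o(J(X^{\nu}))$ attached to the characters $\chi$ of the torus part of $J(X)$ (the extension classes). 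What one actually deduces is $H^0(J(X^{\nu}),L\otimes\mathbb{L}_{\chi})\neq0$ for some $\chi$, hence $L\cong\mathbb{L}_{\chi}^{-1}\in\ker\pi^*$, hence $\pi^*L\cong\O_{J(X)}$: the conclusion is true, but only after this extra layer (Rosenlicht's unit theorem, the Weil--Barsotti identification of extension classes with $\Pic^o$, and the computation of $\pi_*\O_{J(X)}$), which your sketch bypasses by applying the abelian-variety criterion directly to $L$. The paper's character argument packages exactly this difficulty into the splitting of $0\to\Gm\to T\to J(X)\to0$, which is both shorter and free of structure-theoretic input; if you complete your route, it would be worth noting that this is what you gain by following Arinkin instead.
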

\begin{proof}
The proof is an adaptation of the proof of \cite[Prop. 1]{arin1}. However, for the benefit of the reader, we chose to give some more details
than in loc. cit.
%Set $\ov{J}$ be equal to either $\bJbar_X$ or $\ov{J}_X(\un q)$.

Note that the generalized Jacobian $J(X)$ acts on $\ov{J}_X(\un q)$. Denote by $T\to J(X)$ the $\Gm$-torsor corresponding to the line bundle $(\P_M)_{|J(X)}$. Let us first prove two claims.

\un{Claim 1}: $T$ has the structure of a commutative algebraic group that is an extension of $J(X)$ by $\Gm$, i.e. there is a sequence of commutative algebraic groups
\begin{equation}\label{E:seq-T}
0\to \Gm\to T\to J(X)\to 0.
\end{equation}

Let $p\in X$ and let $\mathcal{I}^{\un 0}$ be a universal sheaf on $X \times J(X)$ such that
its restriction at $p\times J(X)$ is trivial. Let $m: J(X) \times J(X)\rightarrow  J(X)$ be
the multiplication map and $\pi_{i,j}: X \times J(X) \times J(X)\rightarrow X \times J(X) $ be the projection maps.
By the see-saw principle, the line bundles
$\pi_{1,2}^{*}\mathcal{I}^{\un 0}\otimes \pi_{1,3}^{*}\mathcal{I}^{\un 0}$ and
$({\id}_{X}\times m)^{*}\mathcal{I}^{\un 0}$ on $X \times J(X) \times J(X)$ are isomorphic.
Let $\sigma$ be a nowhere vanishing section of $(\mathcal{I}^{\un 0})_{|p\times J(X)}$. The section $\sigma$  induces nowhere vanishing sections $\overline{\sigma}$ of
$(\pi_{1,2}^{*}\mathcal{I}^{\un 0}\otimes \pi_{1,3}^{*}\mathcal{I}^{\un 0})_{|p\times J(X)\times J(X)}$
and $\hat{\sigma}$ of $(({\id}_{X}\times m)^{*}\mathcal{I}^{\un 0})_{|p\times J(X)\times J(X)}$.
Let $\phi:\pi_{1,2}^{*}\mathcal{I}^{\un 0}\otimes \pi_{1,3}^{*}\mathcal{I}^{\un 0}\rightarrow({\id}_{X}\times m)^{*}\mathcal{I}^{\un 0}$ on $X \times J(X) \times J(X)$
be an isomorphism sending $\overline{\sigma}$ to $\hat{\sigma}$.
A straightforward computation shows that $\phi$ makes the complement of the zero section in $\mathcal{I}^{\un 0}$ into
a group scheme over $X$. As a consequence, for any $s\in X$ the isomorphism $\phi$ induces
a group structure on the complement $T_{s}$ of the zero section in $\mathcal{I}^{\un 0}_{|s\times J(X)}$.

Let $p_{i}$ be smooth points of $X$ such that $M=\mathcal{O}(\sum a_i p_i)$.
By equation  \eqref{E:fiber-Poin}, we get an isomorphism
$${\P_M}_{|J(X)}
%=\D_{\pi_2}(\I^{\un 0}\otimes \pi_1^*\O_{\delta})^{-1}\otimes \D_{\pi_2}(\I^{\un 0}\otimes \pi_1^* \O_{\gamma})\otimes
%\D_{\pi_2}(\pi_1^*\O_{\delta})\otimes \D_{\pi_2}(\pi_1^*\O_{\gamma})^{-1}=
\cong \bigotimes_i (\mathcal{I}^{\un 0}_{|p_i\times J(X)})^{-a_{i}}.$$
Hence $T$ is the complement of the zero section in $\bigotimes_i (\mathcal{I}^{\un 0}_{|p_i\times J(X)})^{-a_{i}}$ and it carries a group structure which is induced by the group
structures on the $T_{p_{i}}$.
This group structure makes $T$ an abelian group  and its natural group morphism onto
$J(X)$ produces the exact sequence (\ref{E:seq-T}), q.e.d.

\un{Claim 2}: The action of $J(X)$ on $\ov{J}_X(\un q)$ lifts to an action of $T$ on $(\P_M)_{|\ov{J}_X(\un q)}$.
Moreover, $\Gm\subset T$ acts  on $(\P_M)_{|\ov{J}_X(\un q)}$ fiberwise in the standard way by multiplication.

As for the previous claim, let $p\in X$ and let $\mathcal{I}^{\un q}$ be a universal sheaf on $X \times \ov{J}_X(\un q)$ such that
its restriction at $p\times \ov{J}_X(\un q)$ is trivial.
Denote by $p_{1,2}: X\times J(X)\times \ov{J}_X(\un q)\rightarrow  X\times J(X)$ and
by $p_{1,3}: X\times J(X)\times \ov{J}_X(\un q)\rightarrow  X\times \ov{J}_X(\un q)$ the projection maps and let
$a:J(X)\times \ov{J}_X(\un q)\rightarrow \ov{J}_X(\un q)$ be the action of $J(X)$ on $\ov{J}_X(\un q)$.
In this case the see-saw principle gives an isomorphism $\psi: p_{1,2}^{*}\mathcal{I}^{\un 0}\otimes p_{1,3}^{*}\mathcal{I}^{\un q}
\rightarrow
({\id}_{X}\times a)^{*}\mathcal{I}^{\un q}$. Moreover a suitable choice of $\psi$ (analogous to the choice of $\phi$ in the previous claim)
gives an action over $X$ of  the complement of the zero section in $\mathcal{I}^{\un 0}$ on $\mathcal{I}^{\un q}$. Hence,
for every $s\in X$, the isomorphism $\psi$ induces an action of
$T_{s}$ on   $\mathcal{I}^{\un q}_{|s\times \ov{J}_ X(\un q)}$.
Since equation \eqref{E:fiber-Poin} gives the equality $${\P_M}_{|\ov{J}_X(\un q)}\cong
\bigotimes_i (\mathcal{I}^{\un q}_{|p_i\times \ov{J}_X(\un q)})^{-a_{i}},$$
we finally get that $T$ acts on $(\P_M)_{|\ov{J}_X(\un q)}$ lifting the action of $J(X)$ on $\ov{J}_X(\un q)$. The second part of the claim follows from our description of the action, q.e.d.

We can now finish the proof of the Proposition.
According to Claim 2, the algebraic group $T$ acts on any cohomology group $H^i(\ov{J}_X(\un q), (\P_M)_{|\ov{J}_X(\un q)})$.
Suppose that $H^i(\ov{J}_X(\un q),$ $ (\P_M)_{|\ov{J}_X(\un q)})\neq 0$ for some index $i$. Consider a $T$-irreducible non-trivial
submodule $0\neq V\subseteq H^i(\ov{J}_X(\un q),(\P_M)_{|\ov{J}_X(\un q)})$. Since $T$ is commutative, $V$ is a one-dimensional representation of $T$.
Therefore the action of $T$ on $V$ is given by a character $\chi:T\to \Gm$ and,
since $\Gm\subset T$ acts  on $(\P_M)_{|\ov{J}_X(\un q)}$ fiberwise in the standard way by multiplication, it follows that $\chi_{|\Gm}={\id}$. As a consequence, the character $\chi$ gives a splitting of the exact sequence \eqref{E:seq-T}, from which we deduce that $T\cong J(X)\times \Gm$. This is indeed equivalent to the fact that
\begin{equation}\label{E:trivP}
(\P_M)_{|J(X)}\cong \O_{J(X)}.
\end{equation}

We conclude now by using the Lemma \ref{L:inv-mult}. Indeed, if $\{X_1, \ldots, X_{\gamma}\}$ are the irreducible components of $X$, then we have the decomposition
\begin{equation}\label{E:dec-con}
\bJ_X=\Pic(X)=\coprod_{\un d\in \Z^{\gamma}} \Pic^{\un d}(X)
\end{equation}
into connected components, where  $\Pic^{\un d}(X)$ is the connected component of $\Pic(X)$ parametrizing those line bundles $L$ on $X$ having multidegree $\un \deg(L)$ equal to $\un d=(\un d_1,\ldots,\un d_{\gamma})$, i.e. $\deg(L_{|X_i})=\un d_i$ for any $1\leq i\leq \gamma$.
 From the above decomposition \eqref{E:dec-con}, it is enough to show that $(\P_M)_{|\Pic^{\un d}(X)}\cong  \O_{\Pic^{\un d}(X)}$ for each multidegree $\un d$.
Fix such  multidegree $\un d$ and take a line bundle $N$ of multidegree $\un d$. The multiplication by $N^{-1}$ induces an isomorphism
$-\otimes N^{-1}: \Pic^{\un d}(X)\stackrel{\cong}{\to} \Pic^{\un 0}(X)=J(X).$
Using Lemma \ref{L:inv-mult} and \eqref{E:trivP}, we now get
$$(\P_M)_{|\Pic^{\un d}(X)}\cong  (-\otimes N^{-1})^*(\P_M)_{|\Pic^{\un d}(X)}\cong  (-\otimes N^{-1})^*((\P_M)_{|J(X)})\cong  (-\otimes N^{-1})^*(\O_{J(X)})\cong \O_{\Pic^{\un d}(X)}.$$
\end{proof}

The previous Proposition implies the following two Corollaries, that generalize \cite[Cor. 2]{arin1} and \cite[Cor. 3]{arin1} to the case where $X$ is not integral.

\begin{cor}\label{C:smoothcohom}
Assume that $X$ is Gorenstein.  Let $M\in J(X)$ and
let $\un q$ be a general polarization on $X$. If  $H^i(\ov{J}_X(\un q),\P_M)\neq 0$ for some $i$ then $M_{|\Xsm} \cong \O_{\Xsm}$.

%\begin{enumerate}[(i)]
%\item \label{C:smoothcohom1} If $H^i(\bJbar_X,\P_M)\neq 0$ for some $i$ then $M_{|\Xsm} \cong \O_{\Xsm}$.
%\item \label{C:smoothcohom2} Assume Conjecture \ref{C:Abel}\eqref{C:Abel1}. If $H^i(\ov{J}_X(\un q),\P_M)\neq 0$ for
%some $i$ then $M_{|\Xsm}\cong \O_{\Xsm}$.
%\end{enumerate}
\end{cor}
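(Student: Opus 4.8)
The plan is to deduce this Corollary formally from the two results that precede it: the vanishing criterion of Proposition \ref{P:vanishcohom} and the right-inverse property of the Abel map recorded in Proposition \ref{P:prop-AL}. First I would feed the hypothesis into Proposition \ref{P:vanishcohom}: since $H^i(\ov{J}_X(\un q),\P_M)\neq 0$ for some $i$, that proposition yields the triviality
\begin{equation*}
(\P_M)_{|\bJ_X}=\O_{\bJ_X}.
\end{equation*}
Note that this conclusion no longer refers to the given polarization $\un q$; hence for the rest of the argument I am free to work with whatever polarization is most convenient, and the only remaining task is to read off the triviality of $M_{|\Xsm}$ from that of $(\P_M)_{|\bJ_X}$ by means of a suitable Abel map.

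Next I would fix an arbitrary line bundle $L\in\Pic(X)$ and, invoking Corollary \ref{C:exist-Abel}\eqref{C:exist-Abel2}, choose a general polarization $\un q'$ with $\Im A_L\subseteq \ov{J}_X(\un q')$, so that the Abel map $A_L\colon X\to \bJbar_X$ (available because $X$ satisfies condition $(\dagger)$ from \eqref{E:dagger}) factors through $\ov{J}_X(\un q')$. The crucial observation is that $A_L$ carries the smooth locus into the line-bundle locus: for $p\in\Xsm$ the ideal sheaf $\m_p$ is invertible, so $A_L(p)=\m_p\otimes L$ is a line bundle and thus $A_L(p)\in\bJ_X$. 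When $X$ has separating points one checks this through the isomorphism $\tau^*\colon\bJbar_X\xrightarrow{\cong}\prod_i\bJbar_{Y_i}$ of Proposition \ref{P:norm-sheaves}\eqref{P:norm-sheaves1}, under which $A_L(p)$ corresponds, by Proposition \ref{P:Abel-sep}, to a tuple whose entries are line bundles on the separating blocks $Y_i$ (the fixed entries are images of smooth preimages of the separating nodes), together with the compatibility $\tau^*(\bJ_X)=\prod_i\bJ_{Y_i}$. In either case one concludes $A_L(\Xsm)\subseteq\bJ_X$.

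Finally, applying Proposition \ref{P:prop-AL} to the polarization $\un q'$ gives the equality of line bundles $A_L^*\P_M=M$ on $X$, where $A_L^*$ is pullback along $A_L\colon X\to\bJbar_X$ (this is legitimate since $A_L$ factors through $\ov{J}_X(\un q')$). Restricting to the open set $\Xsm$ and using $A_L(\Xsm)\subseteq\bJ_X$ to replace $\P_M$ by its restriction before pulling back, I would compute
\begin{equation*}
M_{|\Xsm}=(A_L^*\P_M)_{|\Xsm}=(A_L|_{\Xsm})^*\big((\P_M)_{|\bJ_X}\big)=(A_L|_{\Xsm})^*\O_{\bJ_X}=\O_{\Xsm},
\end{equation*}
which is exactly the assertion. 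Since both main ingredients are already proved, the only step demanding genuine care is the middle identity $A_L(\Xsm)\subseteq\bJ_X$, as it is precisely what allows one to invoke the triviality of $(\P_M)_{|\bJ_X}$ after pulling back by the Abel map; the bookkeeping of working with the auxiliary polarization $\un q'$ rather than $\un q$ is harmless because the output of Proposition \ref{P:vanishcohom} is polarization-independent.
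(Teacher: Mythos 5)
Your proof is correct and follows essentially the same route as the paper's: apply Proposition \ref{P:vanishcohom} to trivialize $(\P_M)_{|\bJ_X}$, then pull back along an Abel map $A_L$ (which sends $\Xsm$ into $\bJ_X$) and use Proposition \ref{P:prop-AL} to identify $A_L^*\P_M$ with $M$. The only difference is that you make explicit the choice of an auxiliary polarization $\un q'$ via Corollary \ref{C:exist-Abel}\eqref{C:exist-Abel2} so that Proposition \ref{P:prop-AL} applies, a point the paper's proof leaves implicit.
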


\begin{proof}
Consider the Abel map $A_L:X\to \bJbar_X$ for some $L\in \Pic(X)$ and choose a fine compactified Jacobian $\ov{J}_X(\un q')$ such that $\Im A_L\subseteq \ov{J}_X(\un q')$ (which is always possible if $X$ is Gorenstein by  Theorem \ref{T:Abel}\eqref{T:Abel3}). Clearly we have that $A_L(X_{\rm sm})\subseteq J_X(\un q')\subseteq  \bJ_X$.  Using Proposition \ref{P:vanishcohom} and Proposition \ref{P:prop-AL} (applied to the fine compactified Jacobian $\ov J_X(\un q')$), we get
$$M_{|\Xsm}\cong A_L^*((\P_M)_{|\ov J_X(\un q')})_{|\Xsm}
\cong({A_{L}}|_{X_{sm}})^*((\P_M)_{|J_X(\un q')})
\cong({A_{L}}|_{X_{sm}})^*(\O_{J_X(\un q')})\cong\O_{\Xsm}.$$
\end{proof}

For any general polarization $\un q$ on $X$, consider the locus
\begin{equation*}
\begin{aligned}
%& \N:=\{M \in J(X) : H^i(\bJbar_X,\P_M)\neq 0 \mbox{ for some }i\}\subseteq J(X), \\
& \N(\un q):=\{M \in J(X) : H^i(\ov{J}_X(\un q) ,\P_M)\neq 0 \mbox{ for some }i\}\subseteq J(X).\\
\end{aligned}
\end{equation*}
Notice that, by semicontinuity, $\N(\un q)$ is a closed subset of $J(X)$ and that
$$
\begin{aligned}
%& \N=\supp (R{p_2}_*\P), \\
& \N(\un q)=\supp (R{p_2}_*(\P_{|J(X)\times \ov{J}_X(\un q)})), \\
\end{aligned}
$$
where $p_2:\ov{J}_X(\un q)\times J(X)\to J(X)$ is the second projection.

\begin{cor}\label{codim}
Assume that $X$ satisfies condition $(\dagger)$  and let $\un q$ be a general polarization on $X$. Then
$$\dim \N(\un q)\leq p_a(X)-g^{\nu}(X).$$
%\begin{enumerate}[(i)]
%\item $\dim \N \leq p_a(X)-g^{\nu}(X).$
%\item $\dim \N(\un q)\leq p_a(X)-g^{\nu}(X),$
%for any general polarization $\un q$ on $X$.
%\end{enumerate}
 \end{cor}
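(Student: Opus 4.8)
The plan is to combine Corollary \ref{C:smoothcohom} with the structure of the generalized Jacobian under pull-back to the normalization $\nu:X^{\nu}\to X$. By the definition of $\N(\un q)$ and Corollary \ref{C:smoothcohom}, every $M\in \N(\un q)$ satisfies $M_{|\Xsm}=\O_{\Xsm}$; hence $\N(\un q)$ is contained, as a set, in the locus $\SS:=\{M\in J(X): M_{|\Xsm}=\O_{\Xsm}\}$. The point is therefore to bound the dimension of the closed subsets of $J(X)$ contained in $\SS$.

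First I would describe $\SS$ via $\nu$. Since $\nu$ is an isomorphism over $\Xsm$ and $X^{\nu}\setminus \nu^{-1}(\Xsm)=\nu^{-1}(X_{\rm sing})$ is finite, the condition $M_{|\Xsm}=\O_{\Xsm}$ is equivalent to $(\nu^*M)_{|\nu^{-1}(\Xsm)}=\O$, i.e. to $\nu^*M$ being a line bundle of multidegree $\un 0$ on the smooth curve $X^{\nu}$ that is trivial away from the finite set $\nu^{-1}(X_{\rm sing})$. Such line bundles are exactly the classes $\O_{X^{\nu}}(D)$ with $D$ a multidegree-$\un 0$ divisor supported on $\nu^{-1}(X_{\rm sing})$, and they form a finitely generated subgroup $A\subseteq \Pic^{\un 0}(X^{\nu})=J(X^{\nu})$, generated by the finitely many differences of points of $\nu^{-1}(X_{\rm sing})$ lying on the same irreducible component. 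Thus $\SS=(\nu^*)^{-1}(A)$, where $\nu^*:J(X)\to J(X^{\nu})$ is the pull-back homomorphism.

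Next I would recall that $\nu^*:J(X)\to J(X^{\nu})$ is surjective: this follows from the exact sequence $1\to \O_X^*\to \nu_*\O_{X^{\nu}}^*\to \mathcal Q\to 0$, with $\mathcal Q$ supported on $X_{\rm sing}$, together with $H^2(X,\O_X^*)=0$ for the curve $X$. Its kernel $K:=\ker(\nu^*)$ is the affine part of $J(X)$ and has dimension $\dim J(X)-\dim J(X^{\nu})=p_a(X)-g^{\nu}(X)$. Consequently $\SS=(\nu^*)^{-1}(A)$ is a countable union of cosets of $K$, each of dimension $p_a(X)-g^{\nu}(X)$.

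The main step is to deduce that every closed subvariety of $J(X)$ contained in $\SS$ has dimension at most $p_a(X)-g^{\nu}(X)$, and to apply this to the closed set $\N(\un q)$. Let $Z$ be an irreducible component of $\N(\un q)$; then $\nu^*(Z)$ is a constructible subset of $J(X^{\nu})$ contained in the finitely generated, hence countable, group $A$. I expect this to be the only delicate point: one must rule out that $\nu^*(Z)$ is positive-dimensional. Since the dimension of $\N(\un q)$ and the entire construction are insensitive to an extension of the algebraically closed base field, I would first base-change to an uncountable algebraically closed field; there a constructible set that is dense in a positive-dimensional irreducible variety contains a dense open subset and is therefore uncountable, which is incompatible with $\nu^*(Z)\subseteq A$ countable. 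Hence $\nu^*(Z)$ is finite, so $Z$ lies in a single coset of $K$ and $\dim Z\le \dim K=p_a(X)-g^{\nu}(X)$. As this holds for every component, $\dim \N(\un q)\le p_a(X)-g^{\nu}(X)$, as desired; equivalently, over an uncountable field one simply uses that an irreducible variety is never a countable union of proper closed subsets.
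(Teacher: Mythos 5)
Your proof is correct and follows essentially the same route as the paper's: combine Corollary \ref{C:smoothcohom} with the pull-back $\nu^*:J(X)\to J(X^{\nu})$, whose fibers have dimension $p_a(X)-g^{\nu}(X)$, and observe that $\nu^*(\N(\un q))$ lands in the countable subgroup of classes of multidegree-$\un 0$ divisors supported on $\nu^{-1}(X_{\rm sing})$. The only difference is that you spell out the passage from countability of the image to the dimension bound (finiteness of $\nu^*(Z)$ for each component $Z$, via base change to an uncountable field), a point the paper's proof leaves implicit.
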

\begin{proof}
Observe that it is enough to prove the Corollary after a base change  to an uncountable algebraically closed field; therefore, with a slight abuse of notation, we can assume that our algebraically closed base field $k$ is uncountable.

The normalization morphism $\nu:X^{\nu}\to X$ induces by pull-back a smooth and surjective morphism $\nu^*:J(X)\to J(X^{\nu})$ with fibers of dimension equal to $p_a(X)-g^{\nu}(X)$.
Denote by $\wt{N}\subseteq J(X^{\nu})$ the locus of line bundles on $X^{\nu}$ that are trivial on
$\nu^{-1}(X_{\rm sm})\subseteq X^{\nu}$.

\un{Claim:} $\wt{N}$ is a countable set.

Indeed, set $F:=X^{\nu}\setminus \nu^{-1}(\Xsm)$. We have an exact sequence
$$\Z^{F} \stackrel{\alpha}{\to} \Pic(X^{\nu}) \to \Pic(\nu^{-1}(\Xsm)),$$
where the last map is the restriction map and $\alpha$ sends
$\{m_P\}_{P\in F} \in \Z^{F}$ into $\O_X(\sum_{P\in F} m_P\cdot P)$. The claim follows since
$\wt{N}$ is equal to $\Im(\alpha)\cap J(X^{\nu})$, q.e.d.

Now, Corollary \ref{C:smoothcohom} implies that  the subset $\nu^*(\N(\un q))\subseteq J(X^{\nu})$
(which is constructible by Chevalley's theorem, see \cite[(1.8.4)]{EGAIV1}) is contained in the countable subset $\wt{N}\subset J(X^{\nu})$. Since $k$ is uncountable by assumption, this can only happen if $\nu^*(\N(\un q))$ is a finite union of points. Therefore, the dimension of $\N(\un q)$ can be at most equal to the dimension of the fibers of $\nu^*$, i.e. to $p_a(X)-g^{\nu}(X)$, q.e.d.

\end{proof}

Proposition \ref{P:vanishcohom} can be strengthened for $i=0$ if the curve $X$ has  locally planar singularities.

% for fine compactified Jacobians that admit an Abel map in the sense of Definition \ref{D:ex-Abel-sep}.
%if one has the non vanishing of the space of global sections of $\P_M$.

\begin{prop}\label{P:nonvaH0}
Assume that $X$ has locally planar singularities and let $\un q$ be a general polarization on $X$.
%Assume that $\ov{J}_X(\un q)$ admits an Abel map.
If $M\in J(X)$ is such that  $H^0(\ov{J}_X(\un q),\P_M)\neq 0$,  then ${\P_M}_{|\ov{J}_X(\un q)}\cong  \O_{\ov{J}_X(\un q)}$.
\end{prop}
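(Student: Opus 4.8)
The plan is to promote the triviality statement from the smooth locus to all of $\ov J_X(\un q)$ by combining the fact that $\P_M$ is \emph{numerically trivial} with the good global geometry that the locally planar hypothesis guarantees. The point is that Proposition \ref{P:vanishcohom} only controls $\P_M$ on the locus of line bundles $\bJ_X$ (where the group-action argument lives and where the scheme is smooth); for $i=0$ and under the locally planar assumption one can instead work on the whole, honestly-reduced, space.

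First I would record that $\P_M|_{\ov J_X(\un q)}$ is numerically equivalent to $\O_{\ov J_X(\un q)}$. Indeed, since $M\in \Pic^{\un 0}(X)$, the very definition of the homomorphism $\beta_{\un q}$ in \eqref{E:beta-pola} shows that $\P_M|_{\ov J_X(\un q)}=\beta_{\un q}(M)$ lies in $\Pic^o(\ov J_X(\un q))$, and by Convention \ref{N:Pic-field} we have $\Pic^o\subseteq \Pic^{\tau}$. Hence $\P_M|_{\ov J_X(\un q)}$ is numerically trivial; in particular, for any ample class $H$ the intersection number $(\P_M\cdot H^{n-1})$ vanishes, where $n:=p_a(X)$.

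Next I would take a nonzero section $s\in H^0(\ov J_X(\un q),\P_M)$, which exists by hypothesis, and analyze its zero locus component by component. This is exactly where the locally planar assumption enters: by Corollary \ref{C:prop-fineJac} the scheme $\ov J_X(\un q)$ is reduced, of pure dimension $n$, and l.c.i.\ (hence Cohen--Macaulay), and by Corollary \ref{C:connect} it is connected. Fixing an ample $H$, on each irreducible component $\ov J_i$ (which is integral of dimension $n$) either $s$ vanishes identically, or $s|_{\ov J_i}$ is a nonzerodivisor and its vanishing scheme is an effective Cartier divisor $D_i$ with $\P_M|_{\ov J_i}\cong \O(D_i)$. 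In the latter case numerical triviality forces $(H^{n-1}\cdot D_i)=0$; since $H$ is ample and $D_i$ is effective, this is possible only if $D_i=0$, i.e.\ $s$ is nowhere vanishing on $\ov J_i$. Finally I would propagate: as $s\neq 0$ it is nowhere vanishing on some component, and using connectedness, whenever $s$ is nowhere zero on $\ov J_j$ and $\ov J_i\cap \ov J_j\neq\emptyset$ then $s$ does not vanish identically on $\ov J_i$ and so is nowhere vanishing there as well; running through all components of the connected scheme $\ov J_X(\un q)$ yields that $s$ is nowhere vanishing, whence $\P_M|_{\ov J_X(\un q)}\cong \O_{\ov J_X(\un q)}$.

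The main obstacle is the possibility that $s$ vanishes along an entire irreducible component, which would destroy the divisor-theoretic computation. The argument disposes of this precisely because the locally planar hypothesis forces $\ov J_X(\un q)$ to be reduced, connected and equidimensional (Corollaries \ref{C:prop-fineJac} and \ref{C:connect}), so that a nonzero section is a nonzerodivisor on each component and its zero scheme is a genuine effective divisor of the correct dimension. It is exactly this reducedness and purity that may fail for an arbitrary curve, which is why the conclusion here is stronger than the one of Proposition \ref{P:vanishcohom}.
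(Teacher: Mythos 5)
Your proof is correct and follows essentially the same route as the paper: the paper likewise observes that $\P_M|_{\ov J_X(\un q)}\in\Pic^o(\ov J_X(\un q))$ and that $\ov J_X(\un q)$ is connected, reduced and projective, and then invokes a general lemma (Lemma \ref{L:sec-Pico}) whose proof is exactly your component-by-component analysis of the zero locus of a nonzero section followed by propagation through connectedness. The only cosmetic difference is in how one kills the zero divisor on an integral component: you use $(H^{n-1}\cdot D_i)=0$ with $H$ ample, while the paper notes that the injection $\O_{V_i}\hookrightarrow\P_M|_{V_i}$ must be an isomorphism because algebraically equivalent line bundles have the same Hilbert polynomial.
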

\begin{proof}
We know that $\ov{J}_X(\un q)$ is a connected reduced projective scheme over $k$ by Fact \ref{F:Este-Jac} and Theorem \ref{T:compJac}.
As already observed in  \eqref{E:beta-pola}, we have that
${\P_M}_{|\ov{J}_X(\un q)}\in \Pic^o(\ov{J}_X(\un q))$.
We can apply the (quite standard) Lemma \ref{L:sec-Pico}  below in order to conclude that
${\P_M}_{|\ov{J}_X(\un q)}\cong  \O_{\ov{J}_X(\un q)}$.
%From Corollary \ref{C:inj-beta}, it follows now that $M\cong \O_X$.
\end{proof}

%The following Lemma, which was used in the proof of the above Proposition, is quite standard; however, we include a proof since we were not able to find a suitable reference.

\begin{lemma}\label{L:sec-Pico}
Let $V$ be a connected reduced projective scheme over an algebraically closed field $k$.
Let $\L$ be a line bundle belonging to $ \Pic^o(V)$, i.e. the connected component of $\Pic(V)$ containing the identity.
If $H^0(V, \L)\neq 0$ then $\L\cong  \O_V$.
\end{lemma}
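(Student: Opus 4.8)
The plan is to produce a nowhere-vanishing section of $\L$ directly from the hypothesis $H^0(V,\L)\neq 0$, which immediately gives $\L\cong \O_V$. The only structural input I would use about $\L$ beyond its having a section is that $\L\in \Pic^o(V)\subseteq \Pic^{\tau}(V)$, so that $\L$ is numerically equivalent to the trivial bundle; in particular $\deg(\L_{|C})=0$ for every integral projective curve $C\subseteq V$.

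First I would isolate the integral case as a lemma: if $W$ is an integral projective $k$-variety and $\mathcal N$ is a line bundle with $\deg(\mathcal N_{|C})=0$ for all integral curves $C\subseteq W$, then any nonzero $\sigma\in H^0(W,\mathcal N)$ is nowhere vanishing. Indeed $\sigma$ cuts out an effective Cartier divisor $D\geq 0$ with $\mathcal N\cong \O_W(D)$; if $D\neq 0$ one chooses an integral curve $C$ meeting $\Supp(D)$ but not contained in it (e.g.\ a general complete-intersection curve through a point of $\Supp(D)$), and then $0=\deg(\mathcal N_{|C})=(C\cdot D)>0$, a contradiction. Hence $D=0$ and $\sigma$ has no zeros.

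The main work, and the point where reducibility of $V$ enters, is the gluing step. Writing $V=\bigcup_i V_i$ for the (integral, since $V$ is reduced) irreducible components, the integral case applies to each $V_i$ on which $s$ restricts nontrivially, showing $s_{|V_i}$ is nowhere vanishing there. The hard part is to rule out that $s$ vanishes identically on some component. I would argue by connectedness: if $A=\{i:s_{|V_i}=0\}$ and $B=\{j:s_{|V_j}\neq 0\}$ were both nonempty, then $V=V_A\cup V_B$ with $V_A$ and $V_B$ nonempty and closed, so connectedness of $V$ forces a point $p\in V_i\cap V_j$ with $i\in A$ and $j\in B$. Evaluating $s$ in the one-dimensional fibre $\L(p)$, which is canonically identified with the fibres of both $\L_{|V_i}$ and $\L_{|V_j}$ at $p$, gives $s(p)=(s_{|V_i})(p)=0$ and simultaneously $s(p)=(s_{|V_j})(p)\neq 0$, a contradiction.

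Therefore $A=\emptyset$, so $s_{|V_i}\neq 0$ for every $i$ and $s$ is nowhere vanishing on all of $V$; by Nakayama the map $s\colon \O_V\to \L$ is then an isomorphism, whence $\L\cong \O_V$. I expect the only delicate point to be the existence, in the integral case, of a curve meeting a given effective divisor but not contained in it, so as to detect positive degree; this is classical, and everything else is formal.
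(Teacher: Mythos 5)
Your proof is correct, but it takes a genuinely different route from the paper's in the key step, namely the case of an integral component. The paper also reduces to the irreducible components and then uses exactly your connectedness argument (in the form: the zero locus $Z(s)$ is a union of irreducible components, hence open and closed, hence empty), but on an integral component it argues without any curves: a nonzero section gives an injection $\O_V\hookrightarrow \L$ by torsion-freeness, and since $\L$ is \emph{algebraically} equivalent to $\O_V$ the two sheaves have the same Hilbert polynomial, so the cokernel of the injection has zero Hilbert polynomial and the injection is an isomorphism; then $s_{|V_i}$ is a nonzero constant in $H^0(V_i,\O_{V_i})=k$, hence nowhere vanishing. Your argument uses only \emph{numerical} triviality plus intersection with curves, so it actually proves a stronger statement --- the lemma holds for any $\L\in \Pic^{\tau}(V)$, not just $\Pic^o(V)$ --- at the price of importing the classical fact that an integral projective variety contains an integral curve meeting a given nonzero effective divisor properly. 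One caution there: your parenthetical construction, ``a general complete-intersection curve \emph{through a point of} $\Supp(D)$,'' is not quite right as stated, because Bertini irreducibility can fail for linear systems with base points (for the quadric cone in $\PP^3$, every plane section through the vertex is a union of lines). The repair is easy: take a general complete intersection $C=W\cap H_1\cap\dots\cap H_{n-1}$ with no base condition. It is integral by Bertini; it meets $\Supp(D)$ automatically, since $\Supp(D)$ has dimension $n-1$ and cutting by $n-1$ ample hypersurfaces leaves a nonempty set; and it is not contained in $\Supp(D)$, since $\Supp(D)\cap H_1\cap\dots\cap H_{n-1}$ has dimension $0<\dim C$. (Alternatively, invoke the classical existence of an integral curve through one point of $\Supp(D)$ and one point off it.) With that repair your proof is complete; the paper's version is more self-contained, yours is more general.
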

\begin{proof}
Assume first that $V$ is irreducible. Then any non-zero section $s$  of $\L$ induces a generically injective map $\wt{s}:\O_V\to \L$ which is therefore injective since $\O_V$ does not contain torsion sheaves. Moreover, since $\L$ and $\O_V$ have the same Hilbert polynomial with respect to any ample line bundle on $V$ (being algebraically equivalent), $\wt{s}$ has trivial cokernel, hence it is an isomorphism.

%Let $s$ be a non-zero section of $\L$ and consider it as a map $\wt{s}:\O_V\to \L$. Since $V$ is irreducible and $s\not \equiv 0$, the map $\wt{s}$ is generically  injective. This implies that the kernel of $\wt{s}$ is a torsion sheaf of $\O_V$, hence it is zero since $\O_V$ does not contain torsion sheaves. In other words, $\wt{s}$ realizes $\O_V$ as a subsheaf of $\L$. However, since $\L\in \Pic^o(V)$ by assumption, $\L$ and $\O_V$ are algebraically equivalent and therefore they have the same Hilbert polynomial with respect to any ample line bundle on $V$. This can only happen if  $\wt{s}$ is an isomorphism, q.e.d.

 In the general case, let $V_1, \ldots, V_r$ be the irreducible components of $V$.  Take a non-zero section $s\in H^0(V, L)$ and consider its zero locus $Z(s)\subsetneq V$.
%$$Z(s):=\{P\in V: s(P)=0\}\subsetneq V.$$
For each irreducible component $V_i$, there are two possibilities: either $s_i:=s_{|V_i}\equiv 0$ in which case $V_i\subseteq Z(s)$, or $s_i\not\equiv 0$ in which case $L_{|V_i}\cong \O_{V_i}$ by what was proved above. In the second case,  $s_i\in H^0(V_i,L_{|V_i})=H^0(V_i, \O_{V_i})$ is given by a constant non-zero section (because $V_i$ is projective and integral), which implies that  $V_i\cap Z(s)=\emptyset$.
Since $V$ is connected and $Z(s)\neq V$, we deduce that $Z(s)=\emptyset$. In other words, $s$ is a nowhere vanishing section of $\L$, hence it defines an isomorphism $\L\cong   \O_V$.

\end{proof}

\section{Proof of Theorem C for nodal curves}\label{S:nodalC}

The aim of this section is to prove Theorem C from the introduction for nodal curves. The key fact about fine compactified Jacobians of nodal curves that we are going to
use is the following result.
% of Alexeev-Nakamura \cite{AN} and Alexeev \cite{Ale2}.

\begin{prop}\label{P:H1-nodal}
Let $X$ be a nodal curve and let $\un q$ be a general polarization on $X$. Then  we have that
\begin{equation}\label{E:Hi}
h^i(\ov{J}_X(\un q),\O_{\ov{J}_X(\un q)})=\binom{p_a(X)}{i} \: \text{ for any } \: 0\leq i \leq p_a(X).
\end{equation}
In particular, it holds that
\begin{equation}\label{E:H1-g}
h^1(\ov{J}_X(\un q),\O_{\ov{J}_X(\un q)})=p_a(X).
\end{equation}
\end{prop}
\begin{proof}
%If ${\rm char}(k)=0$ then there is a very simple proof of the statement. Indeed, consider the universal fine compactified Jacobian $u:\J_{\X}(\un q)\to \Spec R_X$ of Theorem \ref{T:univ-fine}. Since $X$ is nodal by assumption, the fine compactified Jacobian $\J_X(\un q)$ has semi-log-canonical singularities by \cite[Thm. B(i)]{CMKV}. The same is true for the geometric fibers $\J_{\X_{\ov s}}(\un q^s)$ of $u$, since $\X_{\ov s}$ is nodal for every $s\in \Spec R_X$.  It follows from \cite[Cor. 1.2]{KK} that $h^i(\J_{\X_{\ov s}}(\un q^s), \O_{\J_{\X_{\ov s}}(\un q^s)})$ stays constant (for any fixed $i$) as $s$ varies in $\Spec R_X$.  In particular,  $h^i(\ov{J}_X(\un q),\O_{\ov{J}_X(\un q)})$ is equal to the $i$-th cohomology group of the structure sheaf of the Jacobian of the geometric generic fiber $\X_{\ov s}$, which is equal to $\binom{p_a(X)}{i}$ since $\X_{\ov s}$ is a smooth curve of genus $p_a(X)$.

%In arbitrary characteristics,
We will adapt  the proof of \cite[Thm. 4.3]{AN}, where the analogous  result is proved for
stable quasiabelian varieties, i.e. special fibers of certain one parameter degenerations of abelian varieties constructed from Delaunay decompositions. However, only the canonical polarized compactified Jacobian of degree $g-1$ (see \cite[Sec. 3]{Ale2}, \cite{CV}) is a stable quasiabelian variety and this special compactified Jacobian is far away from being a fine compactified Jacobian (indeed, in some sense, it is the most degenerate compactified Jacobian). Therefore, we will indicate why the proof of loc. cit. can be extended to the case of fine compactified Jacobians of nodal curves\footnote{Indeed, the same result is true, with the same proof, for any (non necessarily fine) compactified Jacobians in the sense of Oda-Seshadri \cite{OS}.}.

With this aim, we have to recall some results of Oda-Seshadri \cite{OS}  on the structure of compactified  Jacobians of nodal curves (see also \cite[Sec. 2]{Ale2} and \cite[Sec. 3.1]{MRV1}).
%however, we will follow the notation of \cite[Sec. 3.1]{MRV1} which is coherent with the one of the present paper).
First of all, any fine compactified Jacobian of $X$ is equivalent by translation in the sense of \cite[Def. 3.1]{MRV1} (hence isomorphic) to
a fine compactified Jacobian $\J_X(\un q)$ with total degree equal to $|\un q|=1-p_a(X)$; therefore, from now on
we will restrict to general polarizations $\un q$ such that $|\un q|=1-p_a(X)$. For any such polarization $\un q$, we can consider the new polarization $\phi(\un q)$ defined by
\begin{equation}\label{E:pol-phi}
\phi(\un q)_{C_i}:=\un q_{C_i}+\frac{\deg_{C_i}(\omega_X)}{2},
\end{equation}
for any irreducible component $C_i$ of $X$. Observe that $|\phi(\un q)|=|\un q|+p_a(X)-1=0$. From Remark \ref{R:tanteoss}\eqref{R:tanteoss3} and \cite[Formula (2) and \S 2.1]{Ale2} (see also the discussion in \cite[\S 2.5, \S 2.6]{MV} and \cite[\S 2.2]{CMKV}), it follows that $\J_X(\un q)$ is isomorphic to the Oda-Seshadri
compactified Jacobian ${\rm Jac}_{\phi(\un q)}(X)$.

Consider now the dual graph $\Gamma=\Gamma_X$ of the nodal curve $X$ and let $H_1(\Gamma,A)$ be the first homology group of the graph $\Gamma$ with coefficients in the commutative ring $A$ (in the sequel, we will consider $A=\bbZ$ or $\bbR$). It is well known that $H_1(\Gamma,A)\cong A^r$ for some integer $r$ which is called the rank of $\Gamma$.
The generalized Jacobian $J(X)$ of $X$ is a semiabelian variety and it fits into the extension (see \cite[Prop. 10.2]{OS})
$$0\to T\to J(X)\to J(X^{\nu})\to 0,$$
where $T\cong \Gm^r$ is an $r$-dimensional torus whose character group is canonically isomorphic to $H_1(\Gamma, \bbZ)\cong \bbZ^r$ and $J(X^{\nu})$ is the Jacobian of the normalization $X^{\nu}$ of $X$.

To any polarization $\un q$ on $X$ as above, there is associated a locally finite  arrangement $\cV_{\un q}$ of affine rational hyperplanes of the real vector space $H_1(\Gamma,\bbR)\cong \bbR^r$, which cuts
$H_1(\Gamma,\bbR)$ into infinitely many rational polytopes giving rise to a (face-to-face) complex $\cC_{\un q}$ of polytopes. An explicit definition of the arrangement $\cV_{\un q}$ (which we do not include here since it is not needed for what follows) can be found in \cite[Sec. 3.1]{MRV1}. From the definition of loc. cit., it is clear that  $\cC_{\un q}$ coincides with the Voronoi complex of polytopes ${\rm Vor}_{\phi(\un q)}$ defined in \cite[Sec. 2.6]{Ale2}, which is dual to the Namikawa complex of polytopes ${\rm Del}_{\phi(\un q)}$ defined in \cite[\S I.5, I.6]{OS}.
%\footnote{The arrangement of hyperplanes $\cV_{\un q}$ is equal to the arrangement of hyperplanes ${\rm Vor}_{\phi}$ of \cite[Sec. 2.6]{Ale2} for a certain parameter $\phi$ which depends on $\un q$.}.
The lattice $\bbZ^r\cong H_1(\Gamma,\Z)\subset H_1(\Gamma,\bbR)$ acts by translations on $H_1(\Gamma,\bbR)$ and preserves both the arrangement of hyperplanes $\cV_{\un q}$ and the complex of polytopes $\cC_{\un q}$.

For any rational polytope $\sigma\in \cC_{\un q}$, let $T_{\sigma}$ be the corresponding projective $T$-toric variety and consider the variety $Z_{\sigma}:=T_{\sigma} \times^T J(X)=(T_{\sigma}\times J(X))/T$ which maps to the $g^{\nu}(X)$-dimensional abelian variety $J(X^{\nu})$ with fibers isomorphic to $T_{\sigma}$. In \cite[Thm. 13.2]{OS} (see also \cite[Thm. 2.9]{Ale2}), it is shown that $\J_X(\un q)\cong {\rm Jac}_{\phi(\un q)}(X)$ is obtained by choosing representatives
$\{\sigma_1,\ldots, \sigma_n\}$ for the maximal polytopes in $\cC_{\un q}$ (which correspond to the vertices of  ${\rm Del}_{\phi(\un q)}$) modulo $H_1(\Gamma,\bbZ)$ and gluing
 the disjoint union $\coprod_{i} Z_{\sigma_i}$ according to the identification of the faces of the $\sigma_i$'s in  the quotient complex  $\cC_{\un q}/H_1(\Gamma,\bbZ)$.

An equivalent way to rephrase the above result of Oda-Seshadri is the following (see \cite[\S 6]{And} for more details).
The varieties $\{Z_{\sigma}\}_{\sigma\in \cC_{\un q}}$ glue together, according to the way the polytopes fit together in the face-to-face complex $\cC_{\un q}$, and give rise to a locally finite $k$-scheme $\wt{P}_{\un q}$.
The action of  the lattice $H_1(\Gamma, \bbZ)$ on $\cC_{\un q}$ gives rise to an action of $H_1(\Gamma, \bbZ)$
 on the scheme $\wt{P}_{\un q}$ for which there exists a quotient $\wt{P}_{\un q}/H_1(\Gamma,\bbZ)$, which is indeed isomorphic to the fine compactified Jacobian $\J_X(\un q)\cong {\rm Jac}_{\phi(\un q)}(X)$.

This realization of $\J_X(\un q)$ as a quotient $\wt{P}_{\un q}/H_1(\Gamma,\bbZ)$ has the same properties of the realization of a stable quasiabelian variety $P_0$ as a quotient $\wt{P}_0/Y$, which is described in \cite[Thm. 3.17]{AN}. Therefore, by a direct inspection, the same proof of \cite[Thm. 4.3]{AN} for the computation of $h^i(P_0,\O_{P_0})$ applies to the computation of $h^i(\J_X(\un q), \O_{\J_X(\un q)})$: the crucial property of $\cC_{\un q}$, which makes the proof of loc. cit. work also in our case, is that the geometric realization $\lvert \cC_{\un q}/H_1(\Gamma,\bbZ)\rvert$ of the quotient complex $\cC_{\un q}/H_1(\Gamma,\bbZ)$ is homeomorphic to a real torus of dimension $r=\dim T$. This is clearly true also in our case, since
$$\lvert \cC_{\un q}/H_1(\Gamma,\bbZ)\rvert\cong \lvert \cC_{\un q}\rvert /H_1(\Gamma,\bbZ)\cong
H_1(\Gamma,\bbR)/H_1(\Gamma, \bbZ)\cong \bbR^r/\bbZ^r. $$
\end{proof}
%Proposition \ref{P:H1-nodal}
%Every fine compactified Jacobian of a nodal curve $X$ is a stable quasiabelian variety of dimension $p_a(X)$ in the sense of \cite{AN}, as proved by Alexeev in \cite[Sec. 5.5]{Ale2}. Therefore, the conclusion now follows from \cite[Thm. 4.3]{AN}, which says that for a stable quasiabelian variety of dimension $g$ the structure sheaf has the same cohomology groups of the structure sheaf of an abelian variety of dimension $g$.

We will now prove Theorem C from the introduction for fine compactified Jacobians $\ov{J}_X(\un q)$ that satisfy  condition \eqref{E:H1-g}, and in particular for all fine
compactified Jacobians of nodal curves by the above Proposition \ref{P:H1-nodal}.
Note that, a posteriori,  it will follow from Corollary B that every fine compactified Jacobian $\ov{J}_X(\un q)$ of any curve $X$ with locally planar singularities satisfies condition \eqref{E:H1-g} (and even the stronger condition \eqref{E:Hi}). However, we do not know a direct proof of this fact avoiding the use of the Fourier-Mukai transform.

\vspace{0,2cm}

The special case of Theorem C that we are going to prove will follow from a more general result involving  the semiuniversal deformation family of $X$.
Let us fix the set-up.  Consider  the semiuniversal deformation family $\pi:\X\to \Spec R_X$  for $X$ as in \S\ref{S:DefX}.
The generalized Jacobian $J(X)$ and the fine compactified Jacobian $\ov{J}_X(\un q)$  deform over $\Spec R_X$ to, respectively,
the universal generalized Jacobian $v:J(\X)\to \Spec R_X$ (see Fact \ref{F:ungenJac}) and  the universal fine compactified Jacobian $u:\ov{J}_{\X}(\un q)\to \Spec R_X$
with respect to the polarization $\un q$ (see Theorem \ref{T:univ-fine}).

We are now going to define a  universal Poincar\'e line bundle $\Pun$ on the fiber product  $\ov{J}_{\X}(\un q)\times_{\Spec R_X}  J(\X)$, similarly to the definition \eqref{E:Poin-sheaf}. With that in mind,
consider  the triple product $\X\times_{\Spec R_X} \ov J_{\X}(\un q)\times_{\Spec R_X} J(\X)$ and, for any $1\leq i<j\leq 3$, denote by $p_{ij}$ the projection onto the product of the $i$-th and $j$-th factors.
Choose a universal sheaf $\wh{\I}$ on $\X\times_{\Spec R_X} \bJbar_{\X}$ (see Fact \ref{F:univ-Jac}\eqref{F:univ-Jac1}); denote by $\wh{\I}^0$ its restriction to  $\X\times_{\Spec R_X} J(\X)$ and, by an abuse of notation,
by $\wh{\I}$ its restriction to $\X\times_{\Spec R_X} \ov J_{\X}(\un q)$. Then the universal Poincar\'e line bundle $\Pun$ on the fiber product  $\ov{J}_{\X}(\un q)\times_{\Spec R_X}  J(\X)$ is defined by
\begin{equation}\label{E:Psheaf-un}
\Pun:={\mathcal D}_{p_{23}}(p_{12}^*\wh{\I}\otimes p_{13}^*\wh{\I}^0)^{-1}\otimes \mathcal D_{p_{23}}(p_{13}^*\wh{\I}^0)
\otimes {\mathcal D}_{p_{23}}(p_{12}^*\wh{\I})
\end{equation}
where ${\mathcal D}_{p_{23}}$ denotes the determinant of cohomology with respect to the morphism $p_{23}$.
%By using a universal sheaf $\wh{\I}$ on $\X\times_{\Spec R_X} \bJbar_{\X}$ (see Fact \ref{F:univ-Jac}\eqref{F:univ-Jac1}), it is possible to define a universal Poincar\'e line bundle $\Pun$ on the fiber product  $\ov{J}_{\X}(\un q)\times_{\Spec R_X}  J(\X)$, similarly to the definition \eqref{E:Poin-sheaf} of the Poincar\'e line bundle $\P$ on $\ov{J}_X(\un q)\times J(X)$ via the determinant of cohomology.
Since the determinant of cohomology commutes with base change, it follows
that $\Pun$ restricts to $\P$ on the central fiber of $\ov{J}_{\X}(\un q)\times_{\Spec R_X}  J(\X)\to \Spec R_X$.

Assume now that $X$ has locally planar singularities and that $\ov{J}_X(\un q)$ satisfies condition \eqref{E:H1-g}.
In analogy with the definition \eqref{E:beta-pola} of the map $\beta_{\un q}$,
the existence of a universal Poincar\'e line bundle $\Pun$ on the fiber product  $\ov{J}_{\X}(\un q)\times_{\Spec R_X}  J(\X)$ defines a morphism
\begin{equation}\label{E:univ-beta}
\beun: J(\X)\to \Pic^o(\ov{J}_{\X}(\un q)),
\end{equation}
between two group schemes which are of finite type, smooth and separated over $\Spec R_X$ (see Fact \ref{F:ungenJac}
and Theorem \ref{T:Pic-univ}\eqref{T:Pic-univ4}). Moreover, by construction, the fibers of $J(\X)\to \Spec R_X$ and of
$\Pic^o(\ov{J}_{\X}(\un q))\to \Spec R_X$ are non empty and geometrically connected. Therefore, using that $\Spec R_X$ is regular by Lemma
\ref{L:morpi}\eqref{L:morpi2}, we get that the schemes $J(\X)$ and $\Pic^o(\ov{J}_{\X}(\un q))$ are regular and connected, hence irreducible.
Since $\Pun$ restricts to $\P$ on the central fiber of
$\ov{J}_{\X}(\un q)\times_{\Spec R_X}  J(\X)\to \Spec R_X$, the morphism $\beun$ restricts to the morphism
$\beta_{\un q}:J(X)\to \Pic^o(\ov{J}_{X}(\un q))$ on the central fiber. Using Proposition \ref{P:beta-hom}, we can easily show that $\beun$ is a homomorphism of group schemes.

\begin{prop}\label{P:hom-betaun}
The morphism $\beun$ is a homomorphism of group schemes.
\end{prop}
\begin{proof}
Observe that, since the determinant of cohomology commutes with base change, the pull-back of $\Pun$ to the geometric fiber over any point $s\in \Spec R_X$ is equal to the Poincar\'e line bundle
$\P_{\ov s}$ over $\J_{\X_{\ov s}}(\un q^s)\times_{\ov{k(s)}} J(\X_{\ov s})$. This implies that the pull-back $(\beun)_{\ov s}$ of the morphism $\beun$ to the geometric fiber over $s$ coincides with the morphism  $\beta_{\un q^s}: J(\X_{\ov s})\to \Pic^o(\J_{\X_{\ov s}}(\un q^s))$ of \eqref{E:beta-pola} for the curve $\X_{\ov s}$.  Therefore, Proposition \ref{P:beta-hom} gives that $(\beun)_{\ov s}$ is a homomorphism of group schemes. We conclude that $\beun$ is a homomorphism of group schemes using the lemma below.
\end{proof}

\begin{lemma}\label{L:hom-grp}
Let $S$ be an integral scheme and let $f:G_1\to G_2$ be an $S$-morphism between  two $S$-group schemes.
Assume that $G_1\to S$ is smooth and $G_2\to S$ is separated. If the base change  $f_{\ov s}:(G_1)_{\ov s}\to (G_2)_{\ov s}$ of $f$ to the geometric fiber over the generic point $s\in \Spec S$ is a homomorphism of $\ov{k(s)}$-group schemes, then $f$ is a homomorphism of $S$-group schemes.
\end{lemma}
\begin{proof}
The fact that $f:G_1\to G_2$ is a homomorphism of $S$-group schemes amounts to checking the following three equalities of $S$-morphisms
\begin{enumerate}[(i)]
\item \label{E:ugua1} $f\circ 0_1=0_2:S\to  G_2$,
\item \label{E:ugua2} $f\circ m_1=m_2\circ (f\times f):G_1\times_S G_1\to G_2$,
\item \label{E:ugua3} $i_2\circ f=f\circ i_1:G_1\to G_2$,
\end{enumerate}
where $0_j:S\to G_j$ is the identity, $m_j:G_j\times_S G_j\to G_j$ is the multiplication  and $i_j:G_j\to G_j$ is the inverse of the $S$-group scheme $G_j$ (for $j=1,2$).

Using the diagonal $\Delta\subseteq G_2\times_S G_2$, we can reformulate the above equalities of morphisms in terms of equalities of $S$-schemes as follows:

% which is a closed subscheme since $G_2\to S$ is separated.
%The above three equalities \eqref{E:ugua1},  \eqref{E:ugua2} and \eqref{E:ugua3} are equivalent to the following properties:
\begin{enumerate}[(a)]
\item \label{E:facto1} $(f\circ 0_1,0_2)^{-1}(\Delta)=S$,
  %\to  G_2\times_S G_2$ factors through $\Delta$,
\item \label{E:facto2} $(f\circ m_1,m_2\circ (f\times f))^{-1}(\Delta)=G_1\times_S G_1$,
    %\times_S G_2$ factors through $\Delta$,
\item \label{E:facto3} $(i_2\circ f,f\circ i_1)^{-1}(\Delta)=G_1$,
 %\to G_2\times_S G_2$ factors through $\Delta$.
\end{enumerate}
where in each case we take the scheme-theoretic inverse image. Now observe that $\Delta\subseteq G_2\times_S G_2$ is closed since $G_2\to S$ is separated, hence its scheme-theoretic inverse image in  \eqref{E:facto1},  \eqref{E:facto2} and  \eqref{E:facto3} is also a closed subscheme.
Moreover, using that $G_1\to S$ is smooth and $S$ is integral by assumption, the three schemes $S$, $G_1\times_S G_1$ and $G_1$, appearing in  \eqref{E:facto1},  \eqref{E:facto2} and  \eqref{E:facto3}, are integral and smooth over $S$. Therefore, in order to check that we have equality of $S$-schemes in  \eqref{E:facto1},  \eqref{E:facto2} and  \eqref{E:facto3}, it is enough to prove that we have equalities when we restrict to the fibers over the generic point of $S$.  Furthermore, since the fact that a morphism is an isomorphism can be checked after a faithfully flat base change  (by \cite[(2.7.1)]{EGAIV2}), it is enough to prove that we have equalities when we  restrict to the geometric generic fiber over $S$.  But this is equivalent to saying that $f$ induces a group scheme homomorphism on the geometric generic fibers, which holds true by assumption, q.e.d.

\end{proof}

The main result of this Section is the following

\begin{thm}\label{T:isobetauniv}
Let $X$ be a reduced curve with locally planar singularities and let $\ov{J}_X(\un q)$ be a fine compactified
Jacobian of $X$ having the property that
$h^1(\ov{J}_X(\un q),\O_{\ov{J}_X(\un q)})=p_a(X)$.
Then the group homomorphism
$$\beun: J(\X)\to \Pic^o(\ov{J}_{\X}(\un q))$$
is an isomorphism.
\end{thm}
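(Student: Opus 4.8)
The plan is to treat $\beun$ as a homomorphism between the two group schemes $v\colon J(\X)\to \Spec R_X$ and $\Pic^o(\ov{J}_{\X}(\un q))\to \Spec R_X$, which by Fact \ref{F:ungenJac} and Theorem \ref{T:Pic-univ}\eqref{T:Pic-univ4} are smooth and separated over $\Spec R_X$ with geometrically connected fibres, and which (using that $\Spec R_X$ is smooth over $k$ by Theorem \ref{T:univ-rings}\eqref{T:univ-rings2} together with the hypothesis $h^1=p_a$) are smooth and irreducible over $k$ of the same dimension $\dim \Spec R_X + p_a(X)$, as recorded just before the statement. The entire problem then reduces to the following general shape: a homomorphism of such group schemes which is an isomorphism away from a closed subset of codimension at least two in the base is globally an isomorphism. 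Note also that $\beun$ is separated, since $J(\X)\to \Spec R_X$ is.

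First I would record that $\beun$ is an isomorphism over the open subset $U\subseteq \Spec R_X$ of Lemma \ref{L:codim1} parametrising fibres that are smooth or have a single node. On a smooth fibre $\beta_{\un q}$ is the classical autoduality of the Jacobian (an abelian variety); on an integral one-nodal fibre it is the isomorphism of Esteves--Gagn\'e--Kleiman \cite{egk}; on a reducible fibre whose only singularity is a (necessarily separating) node one splits both sides through Proposition \ref{P:norm-sheaves} into the product of the Jacobians of its two smooth separating blocks and reduces again to the abelian-variety case. Hence $\beun$ is a fibrewise isomorphism over $U$, and therefore (fibrewise isomorphism of smooth, flat, finitely presented $U$-group schemes) an isomorphism over $U$. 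In particular $\beun$ is birational and is \'etale along $v^{-1}(U)$.

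The heart of the argument is to upgrade ``\'etale over $v^{-1}(U)$'' to ``\'etale everywhere'', which is exactly where purity of the ramification locus enters. Concretely, I would consider the differential of $\beun$, namely the map of rank-$p_a(X)$ vector bundles $\beun^{*}\Omega_{\Pic^o(\ov{J}_{\X}(\un q))/\Spec R_X}\to \Omega_{J(\X)/\Spec R_X}$ on the regular scheme $J(\X)$, and its degeneracy locus $D$, where it fails to be an isomorphism. As $D$ is the zero scheme of the determinant section of a line bundle, it is a Cartier divisor, hence empty or of pure codimension one. Since the differential is an isomorphism over $v^{-1}(U)$, we have $D\subseteq v^{-1}(U^c)$; and since $v$ is smooth, $v^{-1}(U^c)$ has the same codimension in $J(\X)$ as $U^c$ has in $\Spec R_X$, namely $\codim\ge 2$ by Lemma \ref{L:codim1}. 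A pure-codimension-one set contained in a codimension-$\ge 2$ set is empty, so $D=\emptyset$. Thus the differential is everywhere an isomorphism, whence $\beun$ is everywhere unramified; by miracle flatness (both total spaces being smooth over $k$ of equal dimension, so the resulting $0$-dimensional fibres have the expected dimension) $\beun$ is flat, and flat plus unramified means \'etale. Here one uses that the degeneracy locus of the differential coincides with the non-\'etale locus, the equality of ranks and miracle flatness supplying the nontrivial inclusion.

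Finally, $\beun$ is \'etale, separated and birational onto the normal (indeed smooth) scheme $\Pic^o(\ov{J}_{\X}(\un q))$, so Zariski's main theorem makes it an open immersion. Its image is then an open subgroup scheme over $\Spec R_X$; on each fibre it is an open subgroup of the connected group $\Pic^o(\ov{J}_{\X_s}(\un q^{s}))$, hence the whole fibre, so $\beun$ is surjective. An open immersion which is surjective is an isomorphism, which is the assertion. I expect the genuinely delicate point to be the purity step: its virtue is that it never requires any analysis of $\beun$ over the (possibly badly singular) fibres lying over $U^c$, the divisorial nature of the degeneracy locus and the codimension estimate of Lemma \ref{L:codim1} doing all the work; the only care needed is to verify that the degeneracy locus is indeed the non-\'etale locus, for which the equal ranks of the two relative cotangent bundles and miracle flatness are essential.
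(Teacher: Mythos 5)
Your proof is correct, and it follows the paper's skeleton: the same three-case fibrewise analysis over $U$ (smooth fibres; integral uninodal fibres via Esteves--Gagn\'e--Kleiman; reducible uninodal fibres split along the necessarily separating node), then a purity-flavoured step exploiting that $\Spec R_X\setminus U$ has codimension at least two, then Zariski's main theorem to get an open immersion, and finally the fibrewise connectedness argument forcing surjectivity. Where you genuinely differ is the purity step. The paper's Claim 2 invokes Van der Waerden's theorem on purity of the ramification locus (EGA IV, (21.12.12)) as a black box: a birational morphism of integral schemes to a normal, locally factorial scheme fails to be a local isomorphism only along a set of pure codimension one, which, being contained in $v^{-1}(\Spec R_X\setminus U)$, must be empty. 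You instead prove exactly this statement by hand in the present situation: since $v$ and $\Pic^o(\ov{J}_{\X}(\un q))\to \Spec R_X$ are both smooth of relative dimension $p_a(X)$, the locus where the map on relative cotangent bundles degenerates is the zero scheme of a determinant section of a line bundle on the integral scheme $J(\X)$, hence of pure codimension one if nonempty, hence empty; surjectivity of the cotangent map gives unramifiedness, and miracle flatness upgrades this to \'etaleness. In effect you inline the proof of purity in the smooth case. The trade-off is reasonable: your argument is self-contained and avoids the EGA citation, at the price of using smoothness of both total spaces --- available here by Theorem \ref{T:univ-rings}\eqref{T:univ-rings3}, Fact \ref{F:ungenJac} and Theorem \ref{T:Pic-univ}\eqref{T:Pic-univ4} (the latter being where the hypothesis $h^1(\ov{J}_X(\un q),\O_{\ov{J}_X(\un q)})=p_a(X)$ enters) --- whereas the purity theorem needs only normality and local factoriality of the target. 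The one gloss worth adding to your write-up: in the reducible uninodal case, identifying $\Pic^o$ of the product of the two Jacobians with the product of their $\Pic^o$'s is not formal; it is Lemma \ref{L:Pico-prod} of the paper (proved by a dimension count using smoothness of the Picard schemes), so it should be cited rather than folded silently into ``splits both sides''.
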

\begin{proof}
Consider the open subset $U\subseteq \Spec R_X$ consisting of all the points $s$ such that
the geometric fiber $\X_{\ov s}$ of the universal family $\pi:\X\to \Spec R_X$ over $s$ is smooth or has a unique singular point
which is a node. By Lemma \ref{L:morpi}\eqref{L:morpi3}, the complement of $U$ inside $\Spec R_X$ has codimension at least two.

\un{Claim  1:} The restriction of $\beun$ to $U$
$$(\beun)_{|U}:  J(\X)_{|U}\to \Pic^o(\ov{J}_{\X}(\un q))_{|U}$$
is an isomorphism. In particular, $\beun$ is an isomorphism in codimension one.

Indeed, since the map $\Pic^o(\ov{J}_{\X}(\un q))_{|U}\to U$ is flat, using \cite[(17.9.5)]{EGAIV4} it is enough to prove that the restrictions of $\beun$ to the fibers over $U$ are isomorphisms. Moreover, since the property  of being an isomorphism is invariant under faithfully flat base change  (see \cite[(2.7.1)]{EGAIV2}), it is enough to prove that the restriction of $\beun$ to the geometric fibers
\begin{equation}\label{E:fib-beta}
(\beun)_{\ov s}:  J(\X)_{\ov s}=J(\X_{\ov s})\to \Pic^o(\ov{J}_{\X}(\un q))_{\ov s}=\Pic^o(\ov{J}_{\X_{\ov s}}(\un q^s))
\end{equation}
is an isomorphism for every $s\in U$. By the definition of $U$, the geometric fibers $\X_{\ov s}$ can be of three types:
\begin{enumerate}[(i)]
\item \label{E:fib1} $\X_{\ov s}$ is smooth;
\item \label{E:fib2} $\X_{\ov s}$ is an irreducible curve having a unique singular point that  is a node;
\item \label{E:fib3} $\X_{\ov s}$ has two smooth irreducible components $\X_{\ov s}^1$ and $\X_{\ov s}^2$ which meet in a separating node.
\end{enumerate}
In cases \eqref{E:fib1} and  \eqref{E:fib2}, the fact that the morphism $(\beun)_{\ov s}$ is an isomorphism is a particular case of
the main result of Esteves-Gagn\'e-Kleiman in \cite[(2.1)]{egk} (the case of a smooth curve is classical), who proved that the same conclusion is true for any integral curve with double points singularities (in which case all the fine compactified Jacobian are isomorphic to $\bJbar_X^{0}$).
In case \eqref{E:fib3},
using Theorem \ref{T:Abel}\eqref{T:Abel1}, we get that $\ov{J}_{\X_{\ov s}}(\un q^s)\cong \ov{J}_{\X_{\ov s}^1}(\un q^1)\times
\ov{J}_{\X_{\ov s}^2}(\un q^2)$ for some general polarizations $\un q^i$ on $\X_{\ov s}^i$ (for $i=1,2$). The diagram \eqref{E:diagr1} in \S  \ref{S:Poincare} translates
into the following commutative diagram
\begin{equation}\label{E:diagr1bis}
\xymatrix{
& \Pic^o(\ov{J}_{\X_{\ov s}^1}(\un q^1))\times \Pic^o(\ov{J}_{\X_{\ov s}^2}(\un q^2)) \ar[dr]^{p_1^*(-)\otimes p_2^*(-)} & \\
 J(\X_{\ov s}^1)\times J(\X_{\ov s}^2) \ar[ur]^{\beta_{\un q^1}\times \beta_{\un q^2}}& & \Pic^o(\ov{J}_{\X_{\ov s}^1}(\un q^1)\times \ov{J}_{\X_{\ov s}^2}(\un q^2)) \ar[d]_{\cong}  \\
J(\X_{\ov s}) \ar[u]_{\cong} \ar[rr]_{\beta_{\un q^s}} & & \Pic^o(\ov{J}_{\X_{\ov s}}(\un q^s))}
\end{equation}
The maps $\beta_{\un q^1}$ and $\beta_{\un q^2}$ are isomorphism since $\X_{\ov s}^1$ and $\X_{\ov s}^2$ are smooth curves (as in case \eqref{E:fib1}); hence
the fact that $\beta_{\un q^s}$ is an isomorphism follows from the previous diagram together with the fact that $p_1^*(-)\otimes p_2^*(-)$ is an isomorphism by \cite[Cor. 4.7]{Lan} (using the fact that
$\Pic^o(\ov{J}_{\X_{\ov s}^i}(\un q^i))$ is smooth for $i=1,2$).

\vspace{0,2cm}

\un{Claim  2:} $\beun$ is an open embedding.

Indeed, since $\beun$ is a birational map between two integral schemes which is an isomorphism in
codimension one (by Claim 1) and the codomain is normal and locally factorial (being regular),
we deduce that $\beun$ is a local isomorphism by Van der Waerden's
theorem on the purity of the ramification locus (see \cite[(21.12.12)]{EGAIV4}). In particular, $\beun$ is
quasi-finite. Moreover, since $\beun$ is birational (by Claim 1) and separated (which follows from the fact that  $J(\X)\to \Spec R_X$ is separated, see \cite[Chap. II, Cor. 4.6(e)]{Har}) and the codomain is normal (being regular),
we deduce that $\beun$ is an open embedding by Zariski's main theorem (see \cite[(4.4.9)]{EGAIII1}).

\vspace{0,2cm}

We can now easily conclude the proof of the Theorem. Indeed, for any $s\in \Spec R_X$,   the restriction $(\beun)_{\ov s}$ of  \eqref{E:fib-beta} is a group homomorphism between two connected and smooth algebraic groups over $\ov{k(s)}$ of the same dimension, $p_a(X)$, which is moreover an open embedding by Claim 2. This forces $(\beun)_{\ov s}$ to be surjective (see e.g. \cite[Sec. 7.3, Lemma 1]{BLR}), hence an isomorphism.
Since the map $\Pic^o(\ov{J}_{\X}(\un q))\to \Spec R_X$ is flat, using again \cite[(17.9.5)]{EGAIV4}, it follows that $\beun$ is an isomorphism, q.e.d.

\end{proof}

\begin{cor}\label{C:isobeta}
Let $X$ be a reduced curve with locally planar singularities and let $\ov{J}_X(\un q)$ be a fine compactified
Jacobian of $X$ having the property that  $h^1(\ov{J}_X(\un q),\O_{\ov{J}_X(\un q)})=p_a(X)$. Then the group homomorphism
$$\beta_{\un q}: J(X)\to \Pic^o(\ov{J}_{X}(\un q))$$
is an isomorphism.
\end{cor}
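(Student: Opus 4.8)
The plan is to obtain this corollary as the fiberwise incarnation of Theorem \ref{T:isobetauniv} over the closed point of the semiuniversal deformation base. Indeed, the whole substance of the statement has already been packaged into the universal homomorphism $\beun\colon J(\X)\to \Pic^o(\ov{J}_{\X}(\un q))$, and the hypothesis $h^1(\ov{J}_X(\un q),\O_{\ov{J}_X(\un q)})=p_a(X)$ is precisely what is needed both to make $\Pic^o(\ov{J}_{\X}(\un q))$ representable, smooth and separated over $\Spec R_X$ (Theorem \ref{T:Pic-univ}\eqref{T:Pic-univ4}) and to invoke Theorem \ref{T:isobetauniv} itself. So the only task left is to transport the conclusion from the total spaces down to the central fibers.

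First I would identify the fibers over the closed point $[\m_X]\in \Spec R_X$. By Fact \ref{F:ungenJac}, the fiber of $v\colon J(\X)\to \Spec R_X$ over $[\m_X]$ is $J(X)$, and by Theorem \ref{T:Pic-univ}\eqref{T:Pic-univ4} the fiber of $\Pic^o(\ov{J}_{\X}(\un q))\to \Spec R_X$ over $[\m_X]$ is $\Pic^o(\ov{J}_X(\un q))$. As recorded in the paragraph preceding Theorem \ref{T:isobetauniv}, the universal Poincar\'e bundle $\Pun$ restricts to $\P$ on the central fiber of $\ov{J}_{\X}(\un q)\times_{\Spec R_X} J(\X)\to \Spec R_X$; consequently, under these identifications, the restriction of $\beun$ to the fiber over $[\m_X]$ is exactly the homomorphism $\beta_{\un q}\colon J(X)\to \Pic^o(\ov{J}_X(\un q))$ of \eqref{E:beta-pola}.

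Then I would conclude by base change. Since $\beun$ is an isomorphism of group schemes over $\Spec R_X$ by Theorem \ref{T:isobetauniv}, its pullback along the inclusion of the closed point $[\m_X]$ is again an isomorphism, and this pullback is $\beta_{\un q}$ by the previous paragraph; hence $\beta_{\un q}$ is an isomorphism. There is no real obstacle here beyond the bookkeeping of the fibers: all the difficulty has been absorbed into Theorem \ref{T:isobetauniv}, whose proof (via the purity of the ramification locus together with the Esteves-Gagn\'e-Kleiman isomorphism over the locus $U$ of curves with at most one node) supplies the universal statement that we are here merely specializing to the closed point.
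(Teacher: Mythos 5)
Your proposal is correct and is exactly the paper's own argument: Corollary \ref{C:isobeta} is deduced from Theorem \ref{T:isobetauniv} by restricting to the central fiber over $[\m_X]\in \Spec R_X$, using that $\Pun$ restricts to $\P$ and hence $\beun$ restricts to $\beta_{\un q}$. Your write-up merely makes the fiber identifications explicit, which the paper leaves implicit.
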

\begin{proof}
This follows directly from  Theorem \ref{T:isobetauniv} by restricting to the central fiber.
\end{proof}

\section{Proof of Theorem A, Corollary B and Theorem C}\label{S:proof1}

The aim of this section is to prove the first three results that were stated in the introduction, namely Theorem A, Corollary B and Theorem C.

% \begin{assumption}\label{A:ass-Abel}
%Throughout this section, we fix a general polarization $\un q$ on $X$ such that the corresponding fine compactified Jacobian $\ov{J}_X(\un q)$ of $X$ admits an Abel map,
%i.e.  there exists $L\in \Pic^{|\un q|+1}(X)$ with the property that $\Im A_L\subseteq \ov{J}_X(\un q)$.
 %\end{assumption}

A key role will be played by the semiuniversal deformation family $\pi:\X\to \Spec R_X$  for $X$ as in \S\ref{S:DefX}.
More precisely,  we will be looking at the following Cartesian diagram
\begin{equation}\label{E:diag-ThmA}
\xymatrix{
&  \ov{J}_{\X}(\un q)\times_{\Spec R_X}  J(\X) \ar[dl]^{\wt{u}} \ar[dr]_{\wt{v}} & \\
J(\X) \ar[dr]^{v}\ar@{}[rr]|{\square} & & \ov{J}_{\X}(\un q) \ar[dl]_{u}\ar@(u,ul)[ul]_(0.3){\wt{\zeta}}\\
& \Spec R_X \ar@(dl,dl)[ul]^{\zeta} &
}
\end{equation}
where $v:J(\X)\to \Spec R_X$ is the universal generalized Jacobian (see Fact \ref{F:ungenJac}), $u:\ov{J}_{\X}(\un q)\to \Spec R_X$ is the universal fine compactified Jacobian
with respect to the polarization $\un q$ (see Theorem \ref{T:univ-fine}), $\zeta$ is the zero section of $v$ and $\wt{\zeta}:=\id\times \zeta$.
Let $\Pun$ be the universal Poincar\'e line bundle on  $\ov{J}_{\X}(\un q)\times_{\Spec R_X}  J(\X)$ as defined in \eqref{E:Psheaf-un}, which restricts to $\P$ on the central fiber $\ov{J}_{X}(\un q)\times  J(X)$.

%As observed already in Section \ref{S:nodalC}, there exists a universal Poincar\'e line bundle $\Pun$ on the fiber product  $\ov{J}_{\X}(\un q)\times_{\Spec R_X}  J(\X)$ which restricts to $\P$ on the central fiber $\ov{J}_{X}(\un q)\times  J(X)$.

Assuming that $X$ has locally planar singularities and setting $g:=p_a(X)$, the morphisms appearing in the above diagram satisfy the following properties:
the morphism $v$ (hence also $\wt{v}$) is smooth of relative dimension $g$ (see Fact \ref{F:univ-Jac}\eqref{F:univ-Jac1}); the morphism $u$ (and hence also $\wt{u}$)
is projective, flat of relative dimension $g$ with trivial relative dualizing sheaf and geometrically connected fibers (see Theorems \ref{T:compJac} and \ref{T:univ-Jac}).
Moreover all the schemes appearing in diagram \eqref{E:diag-ThmA} are regular: $\Spec R_X$ is regular by Lemma \ref{L:morpi}\eqref{L:morpi2};
$\ov{J}_{\X}(\un q)$ is regular by Theorem  \ref{T:univ-Jac}; $J(\X)$ (resp.  $\ov{J}_{\X}(\un q)\times_{\Spec R_X}  J(\X)$) is regular because the morphism $v$ (resp.
$\wt{v}$) is smooth over a regular codomain (see \cite[(17.5.8)]{EGAIV4}).

The following result is a generalization of a well-known result of Mumford for abelian varieties (see \cite[Sec. III.13]{Mum}) and it is the key for the proof of our main theorems.

\begin{thm}\label{T:push-for}
Let $X$ be a reduced curve with locally planar singularities of arithmetic genus $g:=p_a(X)$ and let $\un q$ be a general polarization.
There is a natural quasi-isomorphism of complexes of coherent sheaves on $J(\X)$:
\begin{equation}\label{E:mor-Phi}
\Phi: R\wt{u}_* (\Pun)\stackrel{\cong}{\longrightarrow} \zeta_*(\O_{\Spec R_X})[-g].
\end{equation}
In particular, we get that
\begin{equation}\label{directim}
R p_{2*} \P= {\bf k}(0)[-g]
\end{equation}
where ${\bf k}(0)$ denotes the skyscraper sheaf supported at the origin $0=[\O_X]\in J(X)$, and $p_2:\ov{J}_X(\un q)\times J(X)\to J(X)$  is the projection onto the second factor.
\end{thm}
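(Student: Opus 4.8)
The plan is to prove the universal statement \eqref{E:mor-Phi}; the formula \eqref{directim} then follows by restricting along the inclusion $i$ of the central fibre $J(X)=v^{-1}([\m_X])\hookrightarrow J(\X)$. Indeed, since $\wt{u}$ is flat and $\Pun$ is a line bundle, the relevant Cartesian square is Tor-independent, so derived base change gives $Rp_{2*}\P\cong Li^{*}R\wt{u}_{*}(\Pun)$; and because $\zeta$ is a section of $v$, its image meets the central fibre transversally at the origin $0=[\O_X]$, whence $Li^{*}(\zeta_{*}\O_{\Spec R_X})\cong{\bf k}(0)$ and the right-hand side becomes ${\bf k}(0)[-g]$. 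Thus everything reduces to statement (*) from the introduction: that $R\wt{u}_{*}(\Pun)$ is concentrated in cohomological degree $g$, that $\mathcal H:=R^{g}\wt{u}_{*}(\Pun)$ is Cohen--Macaulay, that $\supp(\mathcal H)=\Im(\zeta)$, and finally that $\mathcal H\cong\zeta_{*}(\O_{\Spec R_X})$.

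First I would bound the supports of the cohomology sheaves. For $M\in J(\X)$ lying over $s\in\Spec R_X$ the fibre of $\wt{u}$ is $\ov{J}_{\X_s}(\un q^{s})$ and $\Pun$ restricts to $\P_M$, so by semicontinuity $\supp(R^{i}\wt{u}_{*}\Pun)\cap v^{-1}(s)\subseteq\N(\un q^{s})$, which has dimension at most $p_a(\X_s)-g^{\nu}(\X_s)=g-g^{\nu}(\X_s)$ by Corollary \ref{codim} (the fibres $\X_s$ are Gorenstein, hence satisfy condition $(\dagger)$). Combining this fibrewise bound with Corollary \ref{C:Diaz-Har}, which says that $(\Spec R_X)^{g^{\nu}\leq k}$ has codimension at least $g-k$, a stratum-by-stratum count shows $\dim\supp(R^{i}\wt{u}_{*}\Pun)\leq\dim\Spec R_X=\dim J(\X)-g$; that is, every cohomology sheaf of $R\wt{u}_{*}(\Pun)$ is supported in codimension at least $g$.

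Next I would upgrade this to concentration and the Cohen--Macaulay property using Grothendieck--Serre duality for $\wt{u}$. Since $\wt{u}$ is projective, flat and Cohen--Macaulay of relative dimension $g$ with trivial relative dualizing sheaf (Corollary \ref{C:triv-can}), relative duality yields $R\SHom_{J(\X)}(R\wt{u}_{*}\Pun,\O)\cong R\wt{u}_{*}(\Pun^{-1})[g]$, and the same codimension bound holds for $\Pun^{-1}$ since $\P_M^{-1}=\P_{M^{-1}}$ (Proposition \ref{P:beta-hom}). On the smooth scheme $J(\X)$ a coherent sheaf supported in codimension $\geq g$ has $\SExt^{j}(-,\O)=0$ for $j<g$; feeding this into the hyperext spectral sequence forces $R\wt{u}_{*}(\Pun^{-1})[g]$, which a priori lives in degrees $[-g,0]$, to be concentrated in degree $0$. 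By the symmetry $\Pun\leftrightarrow\Pun^{-1}$ the same holds for $\Pun$, so $R\wt{u}_{*}\Pun=\mathcal H[-g]$; re-reading the duality then gives $\SExt^{j}(\mathcal H,\O)=0$ for $j\neq g$, i.e. $\mathcal H$ is Cohen--Macaulay of pure codimension $g$.

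The main obstacle is pinning down the support. As $\mathcal H$ is Cohen--Macaulay of codimension $g$, $\supp(\mathcal H)$ is equidimensional of dimension $\dim\Spec R_X$, and I would analyse an arbitrary component $W$: writing $T=\ov{v(W)}$ with generic point $s_W$, the fibre bound forces $\codim_{\Spec R_X}T\leq g-g^{\nu}(\X_{s_W})$, while $T\subseteq(\Spec R_X)^{g^{\nu}\leq g^{\nu}(\X_{s_W})}\subseteq(\Spec R_X)^{p_a^{\nu}\leq g^{\nu}(\X_{s_W})}$ has codimension at least $g-g^{\nu}(\X_{s_W})$ by Fact \ref{F:Diaz-Har}(i); equality forces $T$ to be an irreducible component of an equigeneric stratum, so $\X_{s_W}$ is \emph{nodal} by Fact \ref{F:Diaz-Har}(ii). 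At such a nodal $s_W$, Serre duality on the Gorenstein fibre $\ov{J}:=\ov{J}_{\X_{s_W}}(\un q^{s_W})$ (of pure dimension $g$, with trivial dualizing sheaf) gives $H^{g}(\ov{J},\P_M)\cong H^{0}(\ov{J},\P_{M^{-1}})^{\vee}$, which by Proposition \ref{P:nonvaH0} is nonzero exactly when $\P_{M^{-1}}=\O$, i.e., by Theorem C for nodal curves (Corollary \ref{C:isobeta} together with Fact \ref{F:H1-nodal}), exactly when $M=\O$. Hence the generic fibre of $W$ over $T$ is a single point, forcing $T=\Spec R_X$ and $W=\Im(\zeta)$; nonvanishing of $H^{g}$ at the zero section over the generic (smooth) point shows that $\Im(\zeta)$ does occur, so $\supp(\mathcal H)=\Im(\zeta)$. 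Finally, restricting $\mathcal H$ to the generic point of $\Im(\zeta)$—where the fibre is an abelian variety and $H^{g}(-,\O)$ is one-dimensional—shows that $\mathcal H$ has generic rank one along the smooth subscheme $\Im(\zeta)$; being maximal Cohen--Macaulay over the regular local rings of $\Im(\zeta)\cong\Spec R_X$ it is a line bundle there, and since $\Pic(\Spec R_X)=0$ we conclude $\mathcal H\cong\zeta_{*}(\O_{\Spec R_X})$, which yields the isomorphism \eqref{E:mor-Phi} and hence \eqref{directim}.
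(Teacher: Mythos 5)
Your proposal reproduces the paper's own strategy quite faithfully through the first three stages: the codimension bound on $\supp(R\wt{u}_*\Pun)$ via the equigeneric stratification (Corollaries \ref{C:Diaz-Har} and \ref{codim}), the concentration in degree $g$ and Cohen--Macaulayness of $\mathcal H:=R^g\wt{u}_*(\Pun)$ via relative duality and the local-Ext vanishing, and the set-theoretic identification $\supp(\mathcal H)=\Im(\zeta)$ (your component-by-component argument, forcing the closure of $v(W)$ to be a component of an equigeneric stratum and then invoking nodality plus Theorem C for nodal curves, is a clean repackaging of the paper's Claims 1 and 3). The gap is in your final step. From what you have actually established --- $\mathcal H$ Cohen--Macaulay of codimension $g$, set-theoretically supported on $\Im(\zeta)$, with one-dimensional fibre at the generic point $0_\eta=\zeta(\eta)$ --- one cannot conclude $\mathcal H\cong\zeta_*(\O_{\Spec R_X})$. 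The top-degree base-change computation $\mathcal H\otimes{\bf k}(0_\eta)\cong H^g(\ov{J}_{\X_\eta}(\un q^\eta),\O)\cong k(\eta)$ only says, via Nakayama, that the stalk $\mathcal H_{0_\eta}$ is a cyclic module $\O_{J(\X),0_\eta}/I$ with $\sqrt{I}=\m_{0_\eta}$; it does not bound its length, and both "generic rank one along $\Im(\zeta)$" and "maximal Cohen--Macaulay over the regular local rings of $\Im(\zeta)$" presuppose that $\mathcal H$ is annihilated by the ideal sheaf $I_\zeta$ of $\Im(\zeta)$ --- which is exactly the scheme-theoretic support statement you have not proved. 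Concretely, the sheaf $\O_{J(\X)}/I_\zeta^2$ is Cohen--Macaulay of codimension $g$ (it is an extension of $\O_{\Im(\zeta)}$ by the locally free conormal sheaf $I_\zeta/I_\zeta^2$), is set-theoretically supported on $\Im(\zeta)$, is cyclic, and has one-dimensional fibres at every point of $\Im(\zeta)$, yet it is not $\zeta_*(\O_{\Spec R_X})$; so the properties you list are strictly weaker than your conclusion.

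The missing ingredient is the \emph{generic scheme-theoretic} identification, and this is precisely where the paper uses Mumford's theorem for abelian schemes (\cite[Sec. III.13]{Mum}): over the open locus $(\Spec R_X)_{\rm sm}$ of smooth curves one has $R\wt{u}_*(\Pun)\cong\zeta_*(\O)[-g]$ on the nose (the paper's Claim 1), so $\supp(\mathcal H)$ is generically reduced; Lemma \ref{L:CM-supp} (Cohen--Macaulay plus generically reduced support implies reduced support) then upgrades the set-theoretic equality to the scheme-theoretic one, after which $\mathcal H=\zeta_*\zeta^*\mathcal H$ and the paper concludes not by classifying $\mathcal H$ abstractly but by checking that the natural map satisfies: $\zeta^*\Phi[g]$ equals the top-degree base-change morphism $\zeta^*R^g\wt{u}_*(\Pun)\to R^gu_*(\O_{\ov{J}_{\X}(\un q)})\cong\O_{\Spec R_X}$, which is always an isomorphism. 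This points to a second, smaller omission: the theorem asserts a \emph{natural} isomorphism $\Phi$, which must be constructed (the paper does this by base change, triviality of $\wt{\zeta}^*(\Pun)$, relative duality and adjunction) before one can test whether it is an isomorphism; your argument, even if repaired, would only produce an abstract isomorphism. Your write-up becomes correct if you insert Claim 1 and Lemma \ref{L:CM-supp} before the last paragraph, and replace the "generic rank one / MCM" reasoning by the pullback-along-$\zeta$ base-change conclusion.
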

\begin{proof}
Clearly, the last assertion follows from the first one by base change  to the central fiber of $v:J(\X)\to \Spec R_X$; hence it is enough to prove \eqref{E:mor-Phi}.

We will first explain how the morphism $\Phi$ is defined.
By applying base change  (see e. g. \cite[Prop. A.85]{BBH}) to the diagram \eqref{E:diag-ThmA} and using that $u$ is flat,  we get a natural isomorphism
\begin{equation}\label{E:basechan}
L\zeta^*(R\wt{u}_* (\Pun))\stackrel{\cong}{\longrightarrow} R u_* (L \wt{\zeta}^*(\Pun)).
\end{equation}
Consider now the right hand side of \eqref{E:basechan}. Since $\Pun$ is a line bundle, we have that $L \wt{\zeta}^*(\Pun)=\wt{\zeta}^*(\Pun)$.
From the definition \eqref{E:Psheaf-un}, using the functoriality of the determinant of cohomology and the fact that $(\id\times \wt \zeta)^*(p_{13}^*\wh{\I}^0)=\O_{\X\times \J_{\X}(\un q)}$ by the definition of $\wt \zeta$,
% we can compute
%$$\zeta^* \Pun={\mathcal D}_{p_{2}}((\id\times \wt \zeta)^*(p_{12}^*\wh{\I}\otimes p_{13}^*\wh{\I}^0))^{-1}\otimes \mathcal D_{p_{2}}(\wt (\id \times \zeta)^*(p_{13}^*\wh{\I}^0)) \otimes {\mathcal D}_{p_{2}}((\id\times \wt \zeta)^*((p_{12}^*\wh{\I}))= $$
% and the fact that $(\id,\wt \zeta)^* p_{13}^* \wt \I^0$
% Applying the morphism $\beta_{\un q}$ to the zero section $\zeta\in J(\X)(\Spec R_X)$ of $v$, we get an element $\beta_{\un q}(\zeta)\in \Pic^o(\J_{\X}(\un q))(\Spec R_X)$, which, by the definition of $\beta_{\un q}$, is represented by the line bundle $\wt{\zeta}^*(\Pun)\in {\mathcal Pic}(\J_{\X}(\un q))$. It follows from Proposition \ref{P:hom-betaun} that the element $\beta_{\un q}(\zeta)\in \Pic^o(\J_{\X}(\un q))(\Spec R_X)$ is zero, which implies that  $\wt{\zeta}^*(\Pun)$ is the pull-back of a line bundle from $\Spec R_X$ (see \cite[Sec. 8.1, Prop. 4]{BLR}).
%However, since $R_X$ is a power series ring  by Lemma \ref{L:morpi}\eqref{L:morpi2}, the only line bundle on $\Spec R_X$ is the trivial one; hence
we deduce that $\wt{\zeta}^*(\Pun)= \O_{\ov{J}_{\X}(\un q)}$.
Using this, we get an identification
\begin{equation}\label{E:Lzeta}
R u_* (L \wt{\zeta}^*(\Pun))=R u_* (\wt{\zeta}^*(\Pun))=R u_*(\O_{\ov{J}_{\X}(\un q)}).
\end{equation}
Since the complex of sheaves $R u_*(\O_{\ov{J}_{\X}(\un q)})$ is concentrated in cohomological degrees
from $0$ to $g$, we get a morphism of complexes of sheaves
\begin{equation}\label{E:mor-to-g}
R u_*(\O_{\ov{J}_{\X}(\un q)})\to R^g u_*(\O_{\ov{J}_{\X}(\un q)})[-g].
\end{equation}
Moreover, since the morphism $u$ is projective of relative dimension $g$, with trivial relative dualizing sheaf and geometrically connected fibers,
then the relative duality  applied to $u$ gives that (see \cite[Chap. III, Cor. 11.2(g)]{Har-bis}) :
\begin{equation}\label{E:iso-deg-g}
R^g u_*(\O_{\ov{J}_{\X}(\un q)})\cong  \O_{\Spec R_X}.
%\cong \Hom(u_*(\O_{\ov{J}_{\X}(\un q)}), \O_{\Spec R_X})\cong \O_{\Spec R_X},
\end{equation}
%Finally we have a morphism
%\begin{equation}\label{E:Hom-mor}
%\Hom(R u_*(\O_{\ov{J}_{\X}(\un q)}), \O_{\Spec R_X})\to \Hom(\O_{\Spec R_X}, \O_{\Spec R_X})\cong \O_{\Spec R_X},
%\end{equation}
%which is induced by applying the functor $\Hom(-, \O_{\Spec R_X})$ to the composition of the two natural
%morphisms $\O_{\Spec R_X}\to u_*(\O_{\ov{J}_{\X}(\un q)})\to Ru_*(\O_{\ov{J}_{\X}(\un q)})$.
Putting together  \eqref{E:basechan},  \eqref{E:Lzeta}, \eqref{E:mor-to-g} and  \eqref{E:iso-deg-g},
we get a morphism
\begin{equation}\label{E:basech2}
\Psi:L\zeta^*(R\wt{u}_* (\Pun))\to \O_{\Spec R_X}[-g].
\end{equation}
Since $L\zeta^*$ is left adjoint to $R\zeta_*$ (see \cite[p. 83]{Huy}) and $R\zeta_*\O_{\Spec R_X}\cong \zeta_*\O_{\Spec R_X}$ because $\zeta$ is a closed embedding (hence $\zeta_*$ is an exact functor),
 the morphism $\Psi$ gives rise to the morphism $\Phi$ by adjunction.

The remaining part of the proof will be devoted to showing that the morphism $\Phi$ is a quasi-isomorphism
of complexes of sheaves.  We need some preliminary results that we collect under the name of Claims.
The first result says that $\Phi$
is generically a quasi-isomorphism. More precisely, let $(\Spec R_X)_{\rm sm}$ be the open subset of $\Spec R_X$
consisting of the points $s\in \Spec R_X$ such that $\X_{\ov s}$ is smooth. Then we have:

\vspace{0,1cm}

\un{Claim  1:} The morphism $\Phi$ is a quasi-isomorphism over the open subset $v^{-1}((\Spec R_X)_{\rm sm})$.

Indeed, $A:=v^{-1}((\Spec R_X)_{\rm sm})$ is an abelian group scheme over $B:=(\Spec R_X)_{\rm sm}$ via the map $v$. Therefore, it follows from \cite[Proof of Theorem 1.1]{mukai2} (which generalizes the classical result of
Mumford \cite[Sec. III.13]{Mum} for abelian varieties over a field), that we have quasi-isomorphisms
\begin{equation}\label{E:Muk-iso}
R\wt{u}_*(\Pun)_{|A}\cong  R^g \wt{u}_*( \Pun)[-g]_{|A}\cong \zeta_*(\zeta^*\omega_{A/B})[-g].
\end{equation}
However, since $R_X$ is a power series ring, the line bundle $\zeta^*(\omega_{A/B})$  is trivial on $B=(\Spec R_X)_{\rm sm}$.
By comparing the construction of the quasi-isomorphism \eqref{E:Muk-iso} in loc. cit. and our definition of the morphism $\Phi$, one can easily check that the quasi-isomorphism \eqref{E:Muk-iso} coincides with the restriction $\Phi_{|A}$ of $\Phi$ to the open subset $A=v^{-1}((\Spec R_X)_{\rm sm})$, up to possibly multiplying by the pullback of an invertible function on $B$ (depending on the choice of a trivialization of $\zeta^*(\omega_{A/B})$ on $B$). Therefore, we conclude that $\Phi_{|A}$ is a quasi-isomorphism.

\vspace{0,2cm}

\un{Claim  2:} We have that $\codim( \supp (R\wt{u}_* \Pun))=g$.

%Let us first prove that
%\begin{equation}\label{E:codim-push}
%\codim( \supp (R\wt{u}_* \Pun))=g.
%\end{equation}
First of all, Claim 1 gives that $\codim(\supp  (R\wt{u}_* \Pun))\leq g$. In order to prove the reverse inequality, we stratify
the scheme $\Spec R_X$  into locally closed subsets (see Lemma \ref{L:lower-semcont}) according to the geometric genus of the fibers of the universal family $\X\to \Spec R_X$:
$$(\Spec R_X)^{g^{\nu}=l}:=\{s\in \Spec R_X\: :\: g^{\nu}(\X_{\ov s})=l\}, $$
for any $g^{\nu}(X)\leq l\leq p_a(X)=g$. Corollary \ref{C:Diaz-Har} gives that $\codim (\Spec R_X)^{g^{\nu}=l}\geq g-l$. On the other hand, on the fibers of $v$ over
$(\Spec R_X)^{g^{\nu}=l}$, the sheaf $R\wt{u}_* \Pun$ has support of codimension at least $l$ by Corollary \ref{codim}. Therefore, we get
\begin{equation}\label{E:supp-strata}
\codim (\supp (R\wt{u}_* \Pun) \cap v^{-1}((\Spec R_X)^{g^{\nu}=l}))\geq g \text{ for any }Êg^{\nu}(X)\leq l\leq g.
\end{equation}
Since the locally closed subsets $(\Spec R_X)^{g^{\nu}=l}$ form a stratification of $\Spec R_X$, we deduce that
$g\leq \codim( \supp (R\wt{u}_* \Pun))$, which concludes
the proof of Claim 2.

\un{Claim 3:} The complex $R\wt{u}_*(\Pun)$ is supported in cohomological degree $g$, i.e.
$R\wt{u}_*(\Pun)[g]\cong R^g \wt{u}_*( \Pun)$.

We apply the relative duality (see e.g.  \cite[Chap. VII.3]{Har-bis}) to the projective morphism $\wt{u}$. Since $\wt{u}$ is flat of relative dimension $g$
and it has  trivial relative dualizing sheaf, we get a quasi-isomorphism
\begin{equation}\label{E:rel-duality}
R\HHom (R\wt{u}_*\,(\Pun)^{-1}, \O_{J(\X)})\cong R\wt{u}_* \Pun[g],
\end{equation}
where $(\Pun)^{-1}$ is the inverse of $\Pun$, i.e. $(\Pun)^{-1}:=\HHom(\Pun,\O_{\ov{J}_{\X}(\un q)\times_{\Spec R_X}  J(\X)})$.
The left hand side of \eqref{E:rel-duality} can be computed using the following spectral sequence (see \cite[Chap. 3, Formula (3.8)]{Huy}):
\begin{equation}\label{E:spec-seq}
E_2^{p,q}=\EExt^p(R^{-q}\wt{u}_*(\Pun)^{-1},\O_{J(\X)}) \Rightarrow \EExt^{p+q}(R\wt{u}_*(\Pun)^{-1},\O_{J(\X)}),
\end{equation}
where clearly $E_2^{p,q}=0$ unless $0\leq -q\leq g$ and $p\geq 0$.
If we denote by $i$ the involution of the group scheme $v:J(\X)\to \Spec R_X$ that sends $\calM\in J(\X)$ into $\calM^{-1}\in J(\X)$, then
Proposition \ref{P:hom-betaun} gives that $(\Pun)^{-1}=(\id\times i)^*(\Pun)$; hence
\begin{equation}\label{E:inv-Poi}
R\wt{u}_*\,(\Pun)^{-1}=i^*(R\wt{u}_*\Pun).
\end{equation}
In particular, the complex $R\wt{u}_*\,(\Pun)^{-1}$ has  codimension
$g$ by  Claim 2. This implies that for any $0\leq -q\leq g$, the sheaf $R^{-q}\wt{u}_*(\Pun)^{-1}$ has codimension at least $g$; hence, since
the dualizing sheaf of $J(\X)$ is trivial, we get that (see \cite[Prop. 1.1.6]{HL}):
\begin{equation}\label{E:van-Ext}
E_2^{p,q}=\EExt^p(R^{-q}\wt{u}_*(\Pun)^{-1},\O_{J(\X)})=0 \: \: \text{ for every } p<g \text{ and every } q.
\end{equation}
Using the spectral sequence \eqref{E:spec-seq} and the vanishing \eqref{E:van-Ext}, it is easily seen that the complex in the left hand side of \eqref{E:rel-duality}
can have non-vanishing cohomology only in non-negative degrees. On the other hand, since $\wt{u}$ has fibers of dimension $g$, the complex in the right hand side of
\eqref{E:rel-duality} can have non-vanishing cohomology only in degrees belonging to the interval $[-g,0]$. Putting together these two results, we deduce that the two complexes in \eqref{E:rel-duality} must be supported in cohomological degree $0$, which concludes the proof of Claim 3.

%$R\wt{u}_*(\Pun)$ is supported in degree $g$, i.e. that
%\begin{equation}\label{E:degree-g}
%R\wt{u}_*(\Pun)\cong R^g \wt{u}_*( \Pun)[-g].
%\end{equation}

\un{Claim 4:} $R^g\wt{u}_*(\Pun)$ is a Cohen-Macaulay sheaf of codimension $g$.

 Indeed, consider the complex $R\wt{u}_*(\Pun)^{-1}$, which  is also supported in cohomological degree $g$ by \eqref{E:inv-Poi} and Claim 3, i.e.  $R\wt{u}_*(\Pun)^{-1}\cong R^g \wt{u}_*( \Pun)^{-1}[-g]$.
 Substituting into \eqref{E:rel-duality}  and using again Claim 3, we get
\begin{equation}\label{E:1Ext}
\EExt^p(R^{g}\wt{u}_*(\Pun)^{-1},\O_{J(\X)})=
\begin{sis}
R^g\wt{u}_*(\Pun)&\:\: \text{ if } p=g,\\
0  & \:\: \text{ if } p\neq g. \\
\end{sis}
\end{equation}
This implies that $R^{g}\wt{u}_*(\Pun)^{-1}$ is Cohen-Macaulay of codimension $g$ by \cite[Cor. 3.5.11]{BH}. Using \eqref{E:inv-Poi}, we get that
also $R^{g}\wt{u}_*(\Pun)$ is Cohen-Macaulay of codimension $g$, q.e.d.

\vspace{0,2cm}

\un{Claim  5:} We have a set-theoretic equality $\supp(R^g \wt{u}_*( \Pun))=\Im (\zeta)$.

Observe that the pull-back $\supp(R^g \wt{u}_*( \Pun))_{|J(\X_{\ov s})}$ of $\supp(R^g \wt{u}_*( \Pun))$ to the geometric fiber $J(\X_{\ov s})$ of $v$ over $s\in \Spec R_X$
 is equal to the locus of all elements $M\in J(\X_{\ov s})$ such that $H^g(\ov{J}_{\X_{\ov s}}(\un q^s), \P^s_M)\neq 0$, where we have set $\P_M^s:=(\Pun)_{|\ov{J}_{\X_{\ov s}}(\un q^s)\times \{M\}}$.
Since $\ov{J}_{\X_{\ov s}}(\un q^s)$ has trivial dualizing sheaf by Theorem \ref{T:compJac}\eqref{T:compJac4}, Serre's duality gives that
$$H^g(\ov{J}_{\X_{\ov s}}(\un q^s), \P^s_M)\cong H^0(\ov{J}_{\X_{\ov s}}(\un q^s), (\P^s_M)^{-1})^{\vee}= H^0(\ov{J}_{\X_{\ov s}}(\un q^s), \P^s_{M^{-1}})^{\vee}.$$
Applying now Proposition \ref{P:nonvaH0}, whose hypothesis are satisfied by  Lemma \ref{L:morpi}\eqref{L:morpinew}, we get the  set-theoretic equality
\begin{equation}\label{E:supp-fibers}
\supp(R^g \wt{u}_*( \Pun))_{|J(\X_{\ov s})}=\{M\in J(\X_{\ov s}) \: : \:  \P^s_{M^{-1}}\cong \O_{\ov{J}_{\X_{\ov s}}(\un q^s)}\} \: \: \text{ for every } s\in \Spec R_X.
\end{equation}
Moreover, combining \eqref{E:supp-fibers} with Corollary \ref{C:isobeta} and  Proposition \ref{P:H1-nodal}, we get that
\begin{equation}\label{E:supp-nodal}
\supp(R^g \wt{u}_*( \Pun))_{|J(\X_{\ov s})}=\{\O_{\X_{\ov s}}\} \: \: \text{ for every } s\in \Spec R_X \text{ such that } \X_{\ov s} \: \text{ is nodal. }
\end{equation}
The above formula \eqref{E:supp-nodal} allows us to improve the estimate \eqref{E:supp-strata} on the codimension of the intersection of $\supp(R^g \wt{u}_*( \Pun))$ with
the locally closed subset $v^{-1}((\Spec R_X)^{g^{\nu}=l}))$.  Indeed, by looking at the proof of  \eqref{E:supp-strata}, we see that we can have an equality in \eqref{E:supp-strata}
for some $l$ such that $g^{\nu}(X)\leq l\leq g$  only if:
\begin{itemize}
\item the image of  $\supp(R^g \wt{u}_*( \Pun))$ via the morphism $v$ contains a generic point $\eta$ of $(\Spec R_X)^{g^{\nu}=l}$ of codimension $g-l$ in $\Spec R_X$,
\item the codimension of $\supp(R^g \wt{u}_*( \Pun))\cap v^{-1}(\eta)$ in $J(\X_{\eta})$ is equal to $l$.
\end{itemize}
However, since a generic point $\eta$ of the stratum $(\Spec R_X)^{g^{\nu}=l}$ of $\Spec R_X$
is such that $\X_{\ov \eta}$ is nodal by Theorem \ref{T:strati}\eqref{T:strati2}, formula \eqref{E:supp-nodal} tells us that equality in  \eqref{E:supp-strata}
 is only possible for $l=g$; in other words we have that
\begin{equation}\label{E:supp-lowerstra}
\codim (\supp (R\wt{u}_* \Pun) \cap v^{-1}((\Spec R_X)^{g^{\nu}=l}))\geq g+1 \text{ for any } g^{\nu}(X)\leq l< g.
\end{equation}
After these preliminaries, we can now finish the proof of Claim 5.
Since $R^g \wt{u}_*( \Pun)$ is a Cohen-Macaulay sheaf of codimension $g$ by Claim 4, then all the irreducible components of  $\supp(R^g \wt{u}_*( \Pun))$
have codimension $g$ by \cite[Thm. 6.5(iii), Thm. 17.3(i)]{Mat}. Let $Z$ be an irreducible component of $\supp(R^g \wt{u}_*( \Pun))$. Using \eqref{E:supp-lowerstra} and the fact that $Z$ has codimension $g$,
we get that $v(Z)$ must contain the generic point $\eta$  of $\Spec R_X$. Then Claim 1 implies that necessarily we must have $Z=\Im(\zeta)$, q.e.d.

%Clearly $\Im (\zeta)$ is one of the irreducible components of  $\supp(R^g \wt{u}_*( \Pun))$. If $Z$ is another irreducible component of $\supp(R^g \wt{u}_*( \Pun))$ then, using \eqref{E:supp-lowerstra} and the fact that $Z$ has codimension $g$, we get that $v(Z)$ must contain the generic point $\eta$  of $\Spec R_X$. Then Claim 1 implies that necessarily we must have $Z=\Im(\zeta)$, q.e.d.

\vspace{0,2cm}

\un{Claim  6:} We have a scheme-theoretic equality $\supp(R^g \wt{u}_*( \Pun))=\Im (\zeta)$.

Since the subscheme $\Im (\zeta)$ is reduced, the inclusion of subschemes $\Im (\zeta)\subseteq  \supp(R^g \wt{u}_*( \Pun))$ follows from Claim 5.
Moreover, Claim 1 says that this inclusion is generically an equality; in particular $\supp(R^g \wt{u}_*( \Pun))$ is generically reduced.
Furthermore, since $R^g \wt{u}_*( \Pun)$ is a Cohen-Macaulay sheaf by Claim 2, Lemma \ref{L:CM-supp} below implies that $\supp(R^g \wt{u}_*( \Pun))$ is reduced.
Therefore, we must have the equality of subschemes $\supp(R^g \wt{u}_*( \Pun))=\Im (\zeta)$.

\vspace{0,2cm}

We can now finish the proof of the fact that $\Phi$ is a quasi-isomorphism.  Observe that by Claim 1, the shifted morphism $\Phi[g]$ can be regarded as a morphism of sheaves
$$\Phi[g]: R^g \wt{u}_*(\Pun)\to \zeta_*(\O_{\Spec R_X}).$$
Moreover, using Claim 6, we get that $\Phi$ is a quasi-isomorphism if and only if
$\zeta^*\Phi[g]$  is an isomorphism. By definition of $\Phi$ and using Claim 3, the shifted pull-back
\begin{equation}\label{E:basech2bis}
\zeta^*\Phi[g]: \zeta^*R^g \wt{u}_*(\Pun)\to \zeta^*\zeta_*(\O_{\Spec R_X})=\O_{\Spec R_X}
\end{equation}
coincides, up to the shift, with the morphism $H^g(\Psi)$ induced by the morphism $\Psi$ of \eqref{E:basech2}.
By tracing back the definition of the morphism $\Psi$, we get that $\zeta^*\Phi[g]$ is the composition of the top degree base change
morphism
\begin{equation}\label{E:base-bis}
\zeta^*R^g \wt{u}_*(\Pun)\to R^g u_*(\wt{\zeta}^*(\Pun))=R^g u_* (\O_{\ov{J}_{\X}(\un q)})
\end{equation}
with the isomorphism \eqref{E:iso-deg-g}. However, since $u$ has fibers of dimension $g$, the top degree base change  \eqref{E:base-bis} is a quasi-isomorphism, hence we are done.

\end{proof}

\begin{lemma}\label{L:CM-supp}
Let $Y$ be a Noetherian scheme and $\F$ a coherent sheaf on $Y$. Assume that $\F$ is Cohen-Macaulay and that the scheme-theoretic support $\supp(\F)$ of $\F$ is generically reduced.
Then $\supp(\F)$ is reduced.
\end{lemma}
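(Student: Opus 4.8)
The statement is local on $Y$, so I would reduce immediately to commutative algebra: replace $Y$ by an affine open $\Spec R$ with $R$ Noetherian and $\F$ by a finitely generated $R$-module $M$. Writing $A:=R/\ann(M)$, the scheme-theoretic support $\supp(\F)$ is, on this open, $\Spec A$, and $M$ becomes a \emph{faithful} finitely generated $A$-module which is still Cohen-Macaulay (the depth and the dimension of a module are insensitive to whether it is regarded over $R$ or over $R/\ann(M)$). The goal is then to show that $A$ is reduced, and the plan is to verify the two classical conditions $(R_0)$ and $(S_1)$: a Noetherian ring satisfying both is reduced (see e.g. \cite[Thm.~23.8]{Mat}, although the final step below will in fact be elementary and self-contained). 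Condition $(R_0)$ is precisely the hypothesis that $\supp(\F)=\Spec A$ is generically reduced, so all the content lies in proving $(S_1)$, i.e. that $A$ has no embedded associated primes.

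To establish $(S_1)$ I would exploit that $M$ is faithful and Cohen-Macaulay over $A$. Fix a prime $\mathfrak q$ of $A$ that is not minimal in $\supp(\F)$, so that $\dim A_{\mathfrak q}\geq 1$, and localize. The module $M_{\mathfrak q}$ is faithful over the local ring $A_{\mathfrak q}$, hence $\dim_{A_{\mathfrak q}} M_{\mathfrak q}=\dim A_{\mathfrak q}\geq 1$; since $M_{\mathfrak q}$ is Cohen-Macaulay this gives $\depth_{A_{\mathfrak q}} M_{\mathfrak q}=\dim A_{\mathfrak q}\geq 1$. The key elementary input is the following: a finitely generated faithful module over a local ring of depth $0$ itself has depth $0$ --- indeed, if $a$ is a socle generator of $A_{\mathfrak q}$, then faithfulness forces $aM_{\mathfrak q}\neq 0$, and any nonzero element of $aM_{\mathfrak q}$ is annihilated by the maximal ideal. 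Applying this contrapositively, $\depth_{A_{\mathfrak q}} M_{\mathfrak q}\geq 1$ forces $\depth A_{\mathfrak q}\geq 1$, that is $\mathfrak q\notin\Ass(A)$. Thus every associated prime of $A$ is a minimal prime of $\supp(\F)$, which is exactly $(S_1)$.

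Finally I would assemble the two conditions. Having no embedded primes, $A$ injects into the product of its localizations at the minimal primes of $\supp(\F)$, namely $A\hookrightarrow \prod_{\mathfrak p} A_{\mathfrak p}$: an element of the kernel would have its annihilator contained in some associated, hence minimal, prime, contradicting the fact that it dies in the corresponding localization. By $(R_0)$ each $A_{\mathfrak p}$ is an Artinian local reduced ring, i.e. a field, so the product is reduced and therefore so is its subring $A$. I expect the main obstacle to be the middle step, the passage from Cohen-Macaulayness of $M$ to the absence of embedded primes of $A=R/\ann(M)$: this must be routed through the module $M$ rather than through $A$ directly, since the quotient $A$ itself need not be Cohen-Macaulay, and it is exactly the faithful-module depth argument that bridges that gap.
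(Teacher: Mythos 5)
Your proof is correct, but it takes a genuinely different route from the paper's. The paper never establishes directly that $A=R/\ann(M)$ has no embedded primes; instead it chooses generators $m_1,\dots,m_s$ of $M$, writes $\ann(M)=\bigcap_i\ann(m_i)$, and observes that each cyclic quotient $R/\ann(m_i)$ embeds into $M$, so its associated primes sit among the associated primes of $M$, which are all minimal because $M$ is Cohen-Macaulay (citing \cite[Thm. 17.3(i)]{Mat}); generic reducedness then forces each $\ann(m_i)$ to be an intersection of minimal primes, and intersecting over $i$ shows $\ann(M)$ is radical. You instead prove the stronger intermediate statement that $A$ itself satisfies $(S_1)$: your socle argument --- a faithful finitely generated module over a depth-zero local ring has depth zero, applied contrapositively after localizing at a non-minimal prime $\mathfrak q$ of $A$ where faithfulness gives $\dim_{A_{\mathfrak q}}M_{\mathfrak q}=\dim A_{\mathfrak q}$ --- is exactly the bridge the paper avoids by routing through the submodules $R/\ann(m_i)\hookrightarrow M$, and your closing $(R_0)+(S_1)$ step via $A\hookrightarrow\prod_{\mathfrak p}A_{\mathfrak p}$ replaces the paper's explicit identification $\ann(M)=\bigcap_j P_j$. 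What your approach buys is a cleaner, generator-free statement of independent interest (any ring carrying a faithful Cohen-Macaulay module satisfies $(S_1)$) and a self-contained depth computation; what the paper's buys is that it stays entirely within the bookkeeping of associated primes and annihilators, leaning on standard textbook facts rather than on any depth argument for $A$. Both proofs hinge on the same core input, namely that Cohen-Macaulayness of $M$ excludes embedded associated primes.
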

\begin{proof}
The statement is clearly local; hence we may assume that $Y=\Spec R$ with $R$ a Noetherian ring and that $\F$ is equal to the sheafification of a finitely generated module $M$ over $R$.
Therefore, $\supp(\F)$ is the closed subscheme $V(\ann(M))$ of $\Spec R$ defined by the annihilator ideal $\ann(M)$ of $M$.
%$$\ann(M):=\{r\in R\: : \: r\cdot m=0 \: \:\text{ for every } m\in M\}.$$
Consider the set $\Ass(M):=\{P_1,\cdots, P_r\}$ of associated primes of $M$. Since $M$ is a Cohen-Macaulay module,
all its associated primes are minimal by \cite[Thm. 17.3(i)]{Mat}; therefore $\{P_1,\ldots,P_r\}$ are exactly
the associated minimal primes of $R/\ann(M)$ by \cite[Thm. 6.5(iii)]{Mat}.

Consider now a finite set of generators $\{m_1,\cdots,m_s\}$ of the $R$-module $M$. Clearly, we have that
\begin{equation}\label{E:annM}
\ann(M)=\bigcap_{i=1}^s \ann(m_i),
\end{equation}
where $\ann(m_i)$ is the annihilator ideal of the element $m_i\in M$.
%$$\ann(m_i):=\{r\in R\: : \: r\cdot m_i=0 \}.$$

Since we have an inclusion $R/\ann(m_i)\hookrightarrow M$ of $R$-modules obtained by sending the class of $1$ to $m_i$,
the set of associated primes of $R/\ann(m_i)$ is a subset of $\Ass(M)$; say $\Ass(R/\ann(m_i))=\{P_j \: : \: j\in A_i\}$
for some $A_i\subseteq \{1,\cdots, r\}$. In particular, $R/\ann(m_i)$ does not have embedded primes.
Moreover, since $V(\ann(m_i))\subseteq V(\ann(M))$ and $V(\ann(M))$ is generically reduced along the subvarieties
$V(P_i)$ by hypothesis, it follows that $V(\ann(m_i))$ is also generically reduced. This implies that $V(\ann(m_i))$
is reduced, or in other words that
\begin{equation}\label{E:annmi}
\ann(m_i)=\bigcap_{j\in A_i} P_j.
\end{equation}
Combining \eqref{E:annM} and \eqref{E:annmi}, together with the fact that $\{P_1,\cdots, P_r\}\subseteq \Ass(R/\ann(M))$, we
get that
\begin{equation}\label{E:ann-decompo}
\ann(M)=\bigcap_{i=1}^r P_i,
\end{equation}
which shows that $V(\ann(M))$ is a reduced subscheme of $\Spec R$.
\end{proof}

The formula \eqref{directim} established in Theorem \ref{T:push-for} allows us to prove Theorem A from the introduction, following the original approach of Mukai \cite[Thm. 2.2]{mukai}.

\begin{proof}[Proof of Theorem A]
We have to show that the integral transform, with kernel the Poincar\'e line bundle $\P$ on $\ov{J}_X(\un q)\times J(X)$,
\begin{eqnarray*}
\Phi^{\P}_{J(X)\to \ov{J}_X(\un q)}=\Phi^{\P}:D^b(J(X))& \longrightarrow & D^b(\ov{J}_X(\un q))\\
\cplx{E} &\longmapsto & R  p_{1*}(p_2^*(\cplx{E})\otimes \P),
\end{eqnarray*}
is fully faithful.

Since $\ov J_X(\un q)$ is a projective variety of dimension $g:=p_a(X)$ with trivial dualizing sheaf,  the functor $\Phi^{\P}$ admits as a left adjoint the following integral transform
(see \cite[Prop. 1.17]{HLS08})
\begin{eqnarray*}
\Phi^{\P^{-1}[g]}_{\ov{J}_X(\un q)\to J(X)}=\Phi^{\P^{-1}[g]}:D^b(\ov{J}_X(\un q)) & \longrightarrow & D^b(J(X)) \\
\cplx{E} &\longmapsto & R  p_{2*}(p_1^*(\cplx{E})\otimes \P^{-1}[g]).
\end{eqnarray*}
In order to show that $\Phi^{\P}$ is fully faithful, it is sufficient (and necessary, see e.g. \cite[\href{http://stacks.math.columbia.edu/tag/07RB}{Tag07RB}]{ST}) to show that the composition $\Phi^{\P^{-1}[g]}\circ \Phi^{\P}$ is an isomorphism.

By the standard convolution formula (see e.g. \cite[Prop. 1.3]{mukai}), the composition $\Phi^{\P^{-1}[g]}\circ \Phi^{\P}$ is the integral functor $\Phi^{\M}_{J(X)\to J(X)}:=\Phi^{\M}$ with kernel given by
\begin{equation}\label{E:ker-comp}
\M:=R  p_{13*}(p_{12}^*(\P)\otimes p_{23}^*(\P^{-1}[g])),
\end{equation}
where $p_{ij}$ are the obvious projections from $J(X)\times \J_X(\un q)\times J(X)$. Consider now the Cartesian diagram
\begin{equation}\label{E:divmap}
\xymatrix{
J(X)\times \J_X(\un q) \times J(X)  \ar[d]_{p_{13}}\ar[r]^{\wt n}\ar@{}[dr]|{\square}&  \J_X(\un q)\times J(X)\ar[d]^{p_2}\\
J(X)\times J(X)\ar[r]^{n} &  J(X),
}
\end{equation}
where $n$ is the morphism sending $(M,N)\in J(X)\times J(X)$ into $M\otimes N^{-1}\in J(X)$ and $\wt n$ sends $(M,I, N)\in J(X)\times \J_X(\un q)\times J(X)$ into $(I, M\otimes N^{-1})\in \J_X(\un q)\times J(X)$.
By Proposition \ref{P:beta-hom}, it follows that
$$\left(p_{12}^*(\P)\otimes p_{23}^*(\P^{-1})\right)_{\{M\}\times \J_X(\un q)\times \{N\}}=\P_M\otimes (\P_{N})^{-1}=\P_{M\otimes N^{-1}} \text{ for any } M, N\in J(X).$$
Therefore, by the seesaw principle, we get that
\begin{equation}\label{E:pullP}
p_{12}^*(\P)\otimes p_{23}^*(\P^{-1})=\wt n^*(\P)\otimes p_{13}^*(L),
\end{equation}
for some line bundle $L$ on $J(X)\times J(X)$. Now, applying the base change  formula  to the diagram \eqref{E:divmap} (using that $n$ is flat morphism), formula \eqref{directim} and the projection formula, we obtain that
\begin{equation}\label{E:forM}
\M=R  p_{13*}(p_{12}^*(\P)\otimes p_{23}^*(\P^{-1}[g]))\cong n^*(R p_{2*}(\P[g])) \otimes L\cong n^{*}({\bf k}(0)) \otimes L=\O_{\Delta}\otimes L=L_{|\Delta},
\end{equation}
where $\Delta$ is the diagonal of $J(X)\times J(X)$. This show that the integral functor $\Phi^{\M}$ is equal to the tensor product with the line bundle $L_{|\Delta}$ on $\Delta\cong J(X)$, hence an isomorphism, q.e.d.
\end{proof}

Corollary B follows now quite easily from Theorem \ref{T:push-for} and Theorem A.

\begin{proof}[Proof of Corollary B]
Let $M\in J(X)$.
If $M\neq [\O_X]$ then  the vanishing of  $H^i(\ov{J}_{X}(\un q), \P_M)$ for any $i$ follows from \eqref{directim}.

If $M=[\O_X]=0$ is the identity element of $J(X)$ then obviously $\P_M=\O_{\J_X(\un q)}$. Observe that, from the definition of $\Phi^{\P}$, it follows that
\begin{equation*}\label{E:skyscraper}
\Phi^{\P}({\bf k}(0))= R  p_{1*}(p_2^*({\bf k}(0))\otimes \P)=  R  p_{1*}( \P_{|\ov{J}_X(\un q)\times \{0\}})=\P_{\O_{X}}=\O_{\J_X(\un q)},
\end{equation*}
where ${\bf k}(0)$ denotes the structure sheaf of the point $0\in J(X)$.
Using  the fully faithfulness of the
integral transform $\Phi^{\P}$ (see Theorem A), we get
$$H^i(\J_X(\un q), \O_{\J_X(\un q)})={\rm Ext}^i_{\J_X(\un q)}(\O_{\J_X(\un q)},\O_{\J_X(\un q)})={\rm Ext}^i_{\J_X(\un q)}(\Phi^{\P}({\bf k}(0)),
\Phi^{\P}({\bf k}(0)) )={\rm Ext}^i_{J(X)}({\bf k}(0), {\bf k}(0)).$$
Now we conclude using the well-known facts that ${\rm Ext}^i_{J(X)}({\bf k}(0), {\bf k}(0))=\Lambda^i
{\rm Ext}^1_{J(X)}({\bf k}(0), {\bf k}(0))$ (using the Koszul resolution, see e.g. the proof of \cite[Cor. 2, p. 129]{Mum}) and that ${\rm Ext}^1_{J(X)}({\bf k}(0),{\bf k}(0))$
is canonically isomorphic to the tangent space of $J(X)$ at $0$, which is isomorphic to $H^1(X,\O_X)$
(see e.g. \cite[Sec. 8.4]{BLR}).
\end{proof}

We can now prove that autoduality holds for fine compactified Jacobians.

\begin{proof}[Proof of Theorem C]
Theorem C follows from Corollary \ref{C:isobeta}, whose hypothesis is satisfied by  Corollary B.

%Fix a general polarization $\un q$ on $X$ and set $\ov{J}:=\ov{J}_X(\un q)$. The map
%$$\begin{aligned}
%\beta_{\un q}:\Pic^{\un 0}(X)=J(X) & \to \Pic^o(\ov{J}), \\
%M& \mapsto \P_M,
%\end{aligned}$$
%is an injective homomorphism of connected group schemes by Corollary \ref{C:inj-beta}.
%Consider the differential of the map $\beta_{\un q}$ at the origin $\zeta=[\O_X]$ of $J(X)$:
%$$d\beta_{\un q}: T_{\zeta} J(X)=H^1(X, \O_X) \to T_{\O_{\ov{J}}} \Pic^o(\ov{J})=H^1(\ov{J}, \O_{\ov{J}}).$$ÊÊ
%The linear map $d \beta_{\un q}$ is naturally  identified with the linear map induced by the Fourier-Mukai functor $\FF_{\P}$ with kernel
%equal to the Poincar\'e bundle $\P$:
%$$H^1(X,\O_X)=\Ext_{J(X)}^1(\O_{\zeta}, \O_{\zeta})\stackrel{\FF_{\P}}{\longrightarrow} \Ext^1_{\ov{J}}(\FF_{\P}(\O_{\zeta}), \FF_{\P}(\O_{\zeta}))=Ê\Ext^1_{\ov{J}}(\O_{\ov{J}}, \O_{\ov{J}})=
%H^1(\ov{J}, \O_{\ov{J}})$$
%where we have used the fact that $\FF_{\P}(\O_{\zeta})=\O_{\ov{J}}$ (see the proof of Corollary B). Since $\FF_{\P}$ is fully-faithful according to Theorem A, we deduce that the above
%map, and hence $d\beta_{\un q}$, is an isomorphism. This implies that the image of $\beta_{\un q}$ is open; since it is also closed (being the image of an homomorphism of group schemes)
%and $\Pic^o(\ov{J})$ is connected,  we deduce that $\beta_{\un q}$ is surjective. This implies that $\beta_{\un q}$ is an isomorphism of group schemes.
\end{proof}

%Finally, we prove that algebraic equivalence and numerical equivalence coincide for fine compactified Jacobians.

%\begin{proof}[Proof of Theorem D]
%This follows from Theorem \ref{T:Pico=tau} using Corollary B.
%\end{proof}

\section{Proof of Theorem D}\label{S:ThmD}

The aim of this section is to prove Theorem D from the introduction. We will first prove the result for fine compactified Jacobians of curves  that admit an Abel map,
in the sense of \S \ref{S:Abel}, and under the assumption that the underlying curve does not have separating nodes.

\begin{thm}\label{T:thmD-Abel}
Let $X$ be a reduced curve with locally planar singularities and without separating nodes. Let $\un q$ be a general polarization on $X$ such that the associated fine compactified Jacobian
$\ov{J}_X(\un q)$ admits an Abel map, i.e. there exists $L\in \Pic^{|\un q|+p_a(X)}(X)$ such that $\Im A_L\subseteq \ov{J}_X(\un q)$. Then the universal fine compactified Jacobian $\J_{\X}(\un q)$ with respect to the polarization $\un q$
(as in \S \ref{S:unJac}) satisfies
\begin{equation}\label{E:D-un}
\Pic^o(\ov{J}_{\X}(\un q))=\Pic^{\tau}(\ov{J}_{\X}(\un q)).
\end{equation}
In particular, by restricting to the central fiber, we get $\Pic^o(\ov{J}_{X}(\un q))=\Pic^{\tau}(\ov{J}_{X}(\un q))$.
\end{thm}

\begin{proof}
Since $X$ does not have separating nodes by assumption, the Abel map $A_L:X\to \bJbar_X$ sends a point $p$ into $\m_p\otimes L$, where $\m_p$ is the ideal sheaf of $p\in X$ (see Theorem \ref{T:Abel}\eqref{T:Abel2}).
In other words, if  $\I_{\Delta}$ denotes the ideal sheaf of the diagonal $\Delta\subset X\times X$ and $p_i$ is the projection of $X\times X$ onto the $i$-th factor,  then the map $A_L$ is induced by the sheaf
$\I_{\Delta}\otimes p_1^*(L)$ on $X\times X$, seen as a flat family of simple torsion-free rank-$1$ sheaves on $X$ via the projection $p_2$ (see e.g. \cite[Lemma (8.7)]{AK}).

We will now extend the Abel map $A_L$ to a relative  Abel map over $\Spec R_X$.
First of all, the line bundle $L$ on $X$ can be extended to a line bundle $\L$ on the family $\X$.
Indeed, since an obstruction space for the functor of deformations of $L$ is $H^2(X,L\otimes L^{\vee})=H^2(X, \O_X)$
(see e.g. \cite[Thm. 8.5.3(b)]{FGA}) and since this latter group is zero because $X$ is a curve, we get that $L$ can be
extended to a line bundle $\ov{\L}$ on the formal semiuniversal deformation $\ov{\X}\to \Spf R_X$ of $X$.
However, by Grothendieck's algebraization theorem for coherent sheaves (see \cite[Thm. 8.4.2]{FGA}), the line bundle $\ov{\L}$ is the completion of a line bundle $\L$ on the
effective semiuniversal deformation family $\pi: \X\to \Spec R_X$ of $X$.
By construction, the restriction of $\L$ to the central fiber of $\pi$ is isomorphic to the line bundle $L$ on $X$.

Consider now the sheaf $\I_{\Delta^{\rm un}}\otimes p_1^*(\L)$ on $\X\times_{\Spec R_X} \X$, where  $\I_{\Delta^{\rm un}}$ denotes the ideal sheaf of the diagonal
$\Delta^{\rm un} \subset \X\times_{\Spec R_X} \X$ and $p_i$ is the projection of $\X\times_{\Spec R_X} \X$ onto the $i$-th factor. Via the projection $p_2$, we can regard $\I_{\Delta^{\rm un}}\otimes p_1^*(\L)$ as a flat family of torsion-free rank-$1$ sheaves (see e.g. \cite[Lemma (8.7)]{AK}). Moreover, since the geometric fibers of $p_2$ do not have separating nodes by Corollary  \ref{C:sep-point} above,  the pull-back of $\I_{\Delta^{\rm un}}\otimes p_1^*(\L)$ to the geometric fibers of $p_2$ is simple
(see e.g. \cite[Example 38]{est1}).
It follows that $\I_{\Delta^{\rm un}}\otimes p_1^*(\L)$ is a flat family of simple torsion-free rank-$1$ sheaves via the projection $p_2$; hence it defines a relative Abel map
over $\Spec R_X$
\begin{equation}\label{E:globAb}
A_{\L}: \X  \longrightarrow \bJbar_{\X},
\end{equation}
which, by construction,  restricts on each geometric fiber over $s\in \Spec R_X$ to the Abel map $A_{\L_s}:\X_{\ov s}\to \bJbar_{\X_{\ov s}}$
associated to $\L_s:=\L_{|\X_{\ov s}}$. In particular, the restriction of $A_{\L}$ to the closed point $[\m_X]\in \Spec R_X$ is equal to the Abel map $A_L$.
Since $A_L$ takes values in $\ov{J}_X(\un q)$ by hypothesis and $\J_{\X}(\un q)$ is open in $\bJbar_{\X}$, the map $A_{\L}$ takes values in $\ov{J}_{\X}(\un q)$, or in other words we get a relative Abel map
\begin{equation}\label{E:global-Abel}
A_{\L}: \X\to \ov{J}_{\X}(\un q).
\end{equation}

The pull-back morphism $A_{\L}^*: \Pic(\ov{J}_{\X}(\un q))\to \Pic(\X)=\bJ_{\X}$ between the two relative Picard schemes (whose existence is guaranteed by
Fact \ref{F:univ-Jac}\eqref{F:univ-Jac1} and Theorem \ref{T:Pic-univ}\eqref{T:Pic-univ1}) clearly sends $\Pic^o(\ov{J}_{\X}(\un q))$, which exists by Theorem
\ref{T:Pic-univ}\eqref{T:Pic-univ4} and Corollary B, into the universal generalized Jacobian $J(\X)=\Pic^o(\X)$ of $\X$, which exists by Fact \ref{F:ungenJac}. We denote by
$A_{\L}^{*,o}:\Pic^o(\ov{J}_{\X}(\un q))\to J(\X)$ the induced homomorphism of group schemes.
Consider now the   composition
\begin{equation}\label{E:compmor}
A_{\L}^{*,o}\circ \beun: J(\X)\to J(\X),
\end{equation}
where $\beun:J(\X)\to \Pic^o(\ov{J}_{\X}(\un q))$ is defined in  \eqref{E:univ-beta}.
Proposition \ref{P:prop-AL} implies that $A_{\L}^{*,o}\circ \beun$ is an isomorphism on each geometric fiber of $J(\X)\to \Spec R_X$; hence, the same is true on each fiber since the property  of being an isomorphism is invariant under faithfully flat base change  (see \cite[(2.7.1)]{EGAIV2}).
% the morphism $A_{\L}^{*,o}\circ \beun$ is an isomorphism on each fiber of $J(\X)\to \Spec R_X$.
Since the morphism $J(\X)\to \Spec R_X$ is flat (see Fact \ref{F:ungenJac}), we deduce that $A_{\L}^{*,o}\circ \beun$ is an isomorphism by \cite[(17.9.5)]{EGAIV4}, or in other words that
$A_{\L}^{*,o}$ is a left inverse of  $\beun$. Since $\beun$ is an isomorphism by Theorem \ref{T:isobetauniv} and Corollary B,
we get that $A_{\L}^{*,o}:\Pic^o(\ov{J}_{\X}(\un q))\to J(\X)$ is an isomorphism of group schemes.

The pull-back morphism $A_{\L}^*$ also sends $\Pic^{\tau}(\ov{J}_{\X}(\un q))$, which exists by Theorem \ref{T:Pic-univ}\eqref{T:Pic-univ2}, into the generalized Jacobian
$J(\X)$ of $\X$ since it is well known  that for the family of curves $\pi:\X\to \Spec R_X$ we have that $\Pic^{\tau}(\X)=\Pic^o(\X)=J(\X)$ (see e.g. \cite[Ex. 9.6.21]{FGA}).
Therefore, we get that the induced homomorphism $A_{\L}^{*,\tau}:\Pic^{\tau}(\ov{J}_{\X}(\un q))\to J(\X)$ is a surjective homomorphism of $\Spec R_X$-group schemes.

Summing up this discussion, we get the following diagram of homomorphisms of group schemes over $\Spec R_X$:
\begin{equation}\label{E:Abel-dia}
\xymatrix{
\Pic^{\tau}(\ov{J}_{\X}(\un q)) \ar@{->>}[rd]^{A_{\L}^{*,\tau}} & \\
& J(\X)=\Pic^{\tau}(\X)=\Pic^o(\X)\\
\Pic^o(\ov{J}_{\X}(\un q)) \ar[ru]_{A_{\L}^{*,o}}^{\cong} \ar@{^{(}->}[uu]^i& \\
}
\end{equation}
where $i$ is an open embedding between two smooth group schemes over $\Spec R_X$ (as it follows from Theorem \ref{T:Pic-univ} and Corollary B).

Consider now  the open subset $U\subseteq \Spec R_X$ (introduced in Lemma \ref{L:morpi}\eqref{L:morpi3}) consisting of all the points $s$ such that
the geometric fiber $\X_{\ov s}$ of the universal family $\pi:\X\to \Spec R_X$ over $s$ is smooth or has a unique singular point that is a node. By Lemma \ref{L:morpi}\eqref{L:morpi3}, the complement of $U$ inside $\Spec R_X$ has codimension at least two.

\un{Claim  1:} The restriction of $A_{\L}^{*,\tau}$ to $U$
$$(A_{\L}^{*,\tau})_{|U}:  \Pic^{\tau}(\ov{J}_{\X}(\un q))_{|U}\to J(\X)_{|U}$$
is an isomorphism. In particular, $A_{\L}^{*,\tau}$ is an isomorphism in codimension one.

Indeed, using the above diagram \eqref{E:Abel-dia}, it is enough to prove that the open embedding $i$ is an isomorphism over $U$ or, in other words,
that:
\begin{equation}\label{E:equa-2Pic}
\Pic^o(\ov{J}_{\X_{\ov s}}(\un q^s))=\Pic^{\tau}(\ov{J}_{\X_{\ov s}}(\un q^s)) \text{  for any } s\in U.
\end{equation}
By the definition of $U$ and Corollary \ref{C:sep-point}, the fiber $\X_{\ov s}$ over a point $s\in U$ can be of two types:
\begin{enumerate}[(i)]
\item \label{E:fibra1} $\X_{\ov s}$ is smooth;
%\item \label{E:fibra2} $\X_{\ov s}$ has two smooth irreducible components $\X_{\ov s}^1$ and $\X_{\ov s}^2$ which meet in a separating node.
\item \label{E:fibra3} $\X_{\ov s}$ is an irreducible curve having a unique singular point that  is a node;
\end{enumerate}
In case \eqref{E:fib1}, $\ov{J}_{\X_{\ov s}}(\un q^s)$ is an abelian variety and the equality \eqref{E:equa-2Pic}  is proved for abelian varieties by Mumford in  \cite[Cor. 2, p. 178]{Mum}.
%Also in case \eqref{E:fib2}, $\ov{J}_{\X_{\ov s}}(\un q^s)$ is an abelian variety by Fact \ref{F:for-smooth}  and the result follows again from the above mentioned result of Mumford.
In case \eqref{E:fibra3}, the equality \eqref{E:equa-2Pic} is due to Esteves-Gagn\'e-Kleiman \cite[Thm. 2.1]{egk}, where the same result is proved
for integral curves with at worst double points.

\vspace{0,2cm}

\un{Claim  2:} $A_{\L}^{*,\tau}$ is an isomorphism.

Indeed,  observe that $J(\X)$ is integral and regular by Fact \ref{F:ungenJac} while  $\Pic^{\tau}(\ov{J}_{\X}(\un q))$ is integral and separated over $\Spec R_X$ by Theorem \ref{T:Pic-univ}.
Therefore,  the same argument used in Claim 2 of the proof of Theorem \ref{T:isobetauniv} gives that $A_{\L}^{*,\tau}$ is an open embedding. Since we know that $A_{\L}^{*,\tau}$ is surjective,
we deduce that $A_{\L}^{*,\tau}$ is an isomorphism, q.e.d.

%VECCHIA DIMOSTRAZIONE
%Indeed, since $A_{\L}^{*,\tau}$ is a birational map between two integral schemes which is an isomorphism in codimension one (by Claim 1) and the codomain is normal and locally factorial (being regular by  Fact \ref{F:ungenJac}), we deduce that $A_{\L}^{*,\tau}$ is a local isomorphism by Van der Waerden's theorem on the purity of the ramification locus (see \cite[(21.12.12)]{EGAIV4}). In particular, $A_{\L}^{*,\tau}$ is quasi-finite. Moreover, since $A_{\L}^{*,\tau}$ is birational (by Claim 1) and separated (which follows from the fact that  $\Pic^{\tau}(\ov{J}_{\X}(\un q))\to \Spec R_X$ is separated by Theorem \ref{T:Pic-univ}\eqref{T:Pic-univ2}, see \cite[Chap. II, Cor. 4.6(e)]{Har}) and the codomain is normal (being regular), we deduce that $A_{\L}^{*,\tau}$ is an open embedding by Zariski's main theorem (see \cite[(4.4.9)]{EGAIII1}). Since we know that $A_{\L}^{*,\tau}$ is surjective, we deduce that $A_{\L}^{*,\tau}$ is an isomorphism, q.e.d.

\vspace{0,2cm}

From diagram \eqref{E:Abel-dia} and Claim 2, we deduce that the open embedding $i$ must be an equality, q.e.d.
%By considering the central fiber of $i$, the theorem now follows.

\end{proof}

In order to prove the general case of Theorem D, we will use the
following result, which allows us to compare two different
universal fine compactified Jacobians of $X$ over the open subset $U\subset \Spec R_X$ considered in Lemma \ref{L:morpi}\eqref{L:morpi3}.
We state and prove it only under the additional assumption that $X$ does not have separating nodes, because this is enough for our purposes and
this assumption simplifies the proof. However, the result still holds for curves with separating nodes.

\begin{lemma}\label{L:comp-unJac}
Let $\un q$ and $\un q'$ be two general polarizations on a curve $X$ with locally planar singularities and without separating nodes.
Let $U$ be the open subset of $\Spec R_X$ consisting of those points $s\in \Spec R_X$ such that $\X_{\ov s}$ has at most a unique
singular point that  is a node (as in Lemma \ref{L:morpi}\eqref{L:morpi3}).
Consider the induced universal fine compactified Jacobians $u:\ov{J}_{\X}(\un q)\to \Spec R_X$ and
$u':\ov{J}_{\X}(\un q')\to \Spec R_X$ (as in Theorem \ref{T:univ-fine}), and set $\ov{J}_{\X}(\un q)_{|U}:=u^{-1}(U)$
and $\ov{J}_{\X}(\un q')_{|U}:=(u')^{-1}(U)$.
Then there exists a line bundle $\L$ on $\X$ such that
the multiplication by $\L$ induces an isomorphism of schemes over $U$:
$$-\otimes \L: \ov{J}_{\X}(\un q)_{|U}\stackrel{\cong}{\longrightarrow} \ov{J}_{\X}(\un q')_{|U}.
$$
\end{lemma}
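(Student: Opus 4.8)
The plan is to produce $\L$ by first constructing a suitable line bundle $L$ on the central fibre $X$ and then extending it over $\Spec R_X$. Since tensoring by a line bundle $\L$ on $\X$ induces an automorphism $-\otimes\L$ of the whole relative compactified Jacobian $\bJbar_{\X}$ (with inverse $-\otimes\L^{-1}$), and since $\ov{J}_{\X}(\un q)_{|U}$ and $\ov{J}_{\X}(\un q')_{|U}$ are open subschemes of $(\bJbar_{\X})_{|U}$ cut out by fibrewise stability (Theorem \ref{T:univ-fine}), it suffices to arrange that for every $s\in U$ the automorphism $-\otimes \L_s$ of $\bJbar_{\X_s}$ (with $\L_s:=\L_{|\X_s}$) carries $\un q^s$-stable sheaves bijectively onto $\un{q'}^s$-stable sheaves. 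Indeed, such an $\L$ then maps the two stable loci onto one another, and being an automorphism of the ambient space it restricts to the desired isomorphism over $U$.

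By Lemma \ref{L:codim1}, for $s\in U$ the fibre $\X_s$ is either (i) smooth, (ii) irreducible with a single node, or (iii) a union of two smooth components meeting in a single (necessarily separating) node. In cases (i) and (ii) the curve is irreducible, so $\bJbar_{\X_s}$ has no proper subcurves to test and $\ov{J}_{\X_s}(\un q^s)$ is the full component $\bJbar_{\X_s}^{|\un q|}$; here $-\otimes\L_s$ does the job precisely when $\deg(L)=|\un q'|-|\un q|$. In case (iii), writing $\X_s=\X_s^1\cup \X_s^2$, the only subcurves with connected complement are $\X_s^1$ and $\X_s^2$, each with $\delta=1$; by the argument of Proposition \ref{P:norm-sheaves} a simple sheaf $I$ is $\un q^s$-stable exactly when $\deg_{\X_s^1}(I)$ equals the nearest integer $n(\X_s^1)$ to $\un q^s_{\X_s^1}$, which is well defined because generality of $\un q$ forces $\un q^s_{\X_s^1}\notin \tfrac12+\Z$ (Remark \ref{R:conn-pola}). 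Since $\deg_{\X_s^1}(\L_s)=\deg_{W}(L)$ for $W:=\Sigma_s(\X_s^1)\subseteq X$ (constancy of degrees in the family, as in the proof of Theorem \ref{T:univ-fine}, together with Lemma-Definition \ref{D:def-pola}), matching stability in case (iii) is equivalent to $\deg_W(L)=n'(W)-n(W)$, where $n(W)$ and $n'(W)$ denote the nearest integers to $\un q_W$ and $\un{q'}_W$.

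It remains to find $L\in\Pic(X)$ realizing $\deg(L)=|\un q'|-|\un q|$ together with $\deg_W(L)=n'(W)-n(W)$ for every $W$ occurring in case (iii). Each such $W$ has $W,W^c$ connected and $\delta_W=1$, hence is cut off by a single separating node of $X$ (a node, since $X$ is Gorenstein, by Convention \ref{N:sep-node} and Fact \ref{F:Gor-dagger}); equivalently $W$ is a union of separating blocks $Y_i$ corresponding to an edge of the block tree of $X$ (in the sense of \S\ref{S:Abel-sep}). The main point, and the only real obstacle, is the consistency of this overdetermined integral system: although $W\mapsto \un q'_W-\un q_W$ is additive, the rounded quantity $W\mapsto n'(W)-n(W)$ need not be. I will resolve this using the tree structure. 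The relation $n(W^c)=|\un q|-n(W)$ yields $n'(W^c)-n(W^c)=(|\un q'|-|\un q|)-(n'(W)-n(W))$; rooting the block tree one then solves for the block degrees $\deg_{Y_i}(L)$ edge by edge from the leaves inward, the acyclicity of the tree guaranteeing that no contradictory constraint arises and the complementation relation matching the total-degree constraint at the root, while within each block the prescribed total degree may be distributed among its components arbitrarily (there are no further constraints there). Finally, since $\bJ_{\X}\to\Spec R_X$ is smooth (Fact \ref{F:univ-Jac}) and $R_X$ is Henselian, the point $[L]\in \bJ_X$ lifts to a section $\Spec R_X\to \bJ_{\X}$, i.e. to a line bundle $\L$ on $\X$ with $\L_{|X}=L$ (triviality of $\Pic(\Spec R_X)$ removing the twisting ambiguity). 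This $\L$ satisfies all the fibrewise requirements by construction, so $-\otimes \L$ furnishes the required isomorphism $\ov{J}_{\X}(\un q)_{|U}\xrightarrow{\cong}\ov{J}_{\X}(\un q')_{|U}$.
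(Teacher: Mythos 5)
Your proposal is correct and takes essentially the same route as the paper's proof: construct $L$ on $X$ with prescribed degrees on unions of separating blocks, extend it to a line bundle $\L$ on $\X$, and use that $-\otimes\L$ is an automorphism of $\bJbar_{\X}$ which, by the trichotomy of fibres over $U$ (smooth; irreducible with one node; two smooth components meeting in a separating node), carries the $\un q^s$-stable locus onto the $\un q'^s$-stable locus fibre by fibre. The only deviations are cosmetic — the paper first invokes Proposition \ref{P:norm-sheaves}\eqref{P:norm-sheaves3} to assume that $\un q$ and $\un q'$ are induced from the separating blocks, which makes your nearest-integer linear system trivially solvable (the block degrees are just $\un q'_{Y_i}-\un q_{Y_i}$), and it extends $L$ to $\L$ by the deformation-theoretic argument of Theorem \ref{T:univ-fine} rather than by henselian lifting of a section of $\bJ_{\X}\to \Spec R_X$ — while your unproved assertion that $\delta_W=1$ for the limit subcurves is precisely the paper's equally unproved assertion that the separating node of such a fibre specializes to a separating node of $X$, so on that key geometric point you are exactly as rigorous as the paper.
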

\begin{proof}
Choose a line bundle $L$ on $X$ of degree  $\deg L=|\un q'|-|\un q|$.
As in the proof of Theorem \ref{T:thmD-Abel}, we can find a line bundle $\L$ on the total space $\X$ of the effective semiuniversal deformation
$\pi:\X\to \Spec R_X$ such that the restriction of $\L$ to the central fiber $X$ of $\pi$ is isomorphic to $L$.
Clearly, the multiplication by $\L$ induces an isomorphism
$$-\otimes \L: \bJbar_{\X}\stackrel{\cong}{\longrightarrow}\bJbar_{\X},$$
the inverse being given by the multiplication by $\L^{-1}$. Since any universal fine compactified Jacobian
is an open subscheme of $\bJbar_X$, in order to conclude the proof it is enough to prove that
for any $s\in U$
\begin{equation}\label{E:inc-fibers}
(-\otimes \L_s)(\ov{J}_{\X_{\ov s}}(\un q^s))= \ov{J}_{\X_{\ov s}}(\un q'^s)
\end{equation}
where $\L_s$ denotes the restriction of $\L$ to the geometric fiber $\X_{\ov s}$ and $\ov{J}_{\X_{\ov s}}(\un q^s)$ (resp.
$\ov{J}_{\X_{\ov s}}(\un q'^s)$) is the geometric fiber of $\ov{J}_{\X}(\un q)$ (resp. $\ov{J}_{\X}(\un q')$)
over $s$ (see Theorem \ref{T:univ-fine}).

By the definition of $U$ and Corollary \ref{C:sep-point}, if $s\in U$ then $\X_{\ov s}$ is  irreducible (either smooth or with a unique node).
Therefore, $\J_{\X_{\ov s}}(\un q)$ (resp. $\J_{\X_{\ov s}}(\un q')$) parametrizes all torsion-free, rank-$1$ sheaves on $X$ of Euler characteristic $|\un q|$ (resp. $|\un q'|$).
Hence  \eqref{E:inc-fibers} follows from the fact that $\deg \L_s=\deg L=|\un q'|-|\un q|$.

\end{proof}

We now prove the general case of Theorem D.

\begin{proof}[Proof of Theorem D]

First of all, we make the following

\vspace{0.1cm}

\un{Reduction:} It is enough to prove Theorem D for a curve $X$ without separating nodes.

\vspace{0.1cm}

Indeed, let $X$ be an arbitrary curve with locally planar singularities and let $Y_1,\ldots, Y_r$ be the separating blocks of $X$ as in \S\ref{S:Abel}.
By Theorem \ref{T:Abel}\eqref{T:Abel1}, every fine compactified Jacobian of $X$ is isomorphic to
$$\J_X(\un q)\cong \prod_{i=1}^r \J_{Y_i}(\un q^i),$$
for some fine compactified Jacobians $\J_{Y_i}(\un q^i)$ of $Y_i$. Observe that $\Pic^{\tau}(\J_X(\un q))$ and $\Pic^{\tau}(\J_{Y_i}(\un q^i))$ are smooth by Theorem \ref{T:Pic-univ}\eqref{T:Pic-univ4} and Corollary B; and similarly for $\Pic^o$. Therefore, we can apply a result of Langer (\cite[Cor. 4.7]{Lan}) which says that
$$\begin{sis}
& \Pic^{\tau}(\J_X(\un q))\cong \prod_{i=1}^r \Pic^{\tau}(\J_{Y_i}(\un q^i)),\\
& \Pic^{o}(\J_X(\un q))\cong \prod_{i=1}^r \Pic^{o}(\J_{Y_i}(\un q^i)).\\
\end{sis}$$
Moreover, since the above isomorphisms are obtained in loc. cit. via the natural box product maps,
the inclusion $\Pic^{o}(\J_X(\un q))\hookrightarrow \Pic^{\tau}(\J_X(\un q))$ is given by the product of the inclusions
$\Pic^{o}(\J_{Y_i}(\un q^i))\hookrightarrow \Pic^{\tau}(\J_{Y_i}(\un q^i))$ on each single factor.
Therefore, if we prove Theorem D for the curves $Y_i$ (that do not have separating nodes), Theorem D will follow also for $X$, q.e.d.

\vspace{0.1cm}

From now on, we assume that $X$ does not have separating nodes.
Consider an arbitrary fine compactified Jacobian $\ov{J}_X(\un q)$ of  $X$. Since $\Pic^o(\ov{J}_X(\un q))$ is an open subscheme of $\Pic^{\tau}(\ov{J}_X(\un q))$ and they are both
of finite type over an algebraically closed field $k$, in order to prove that they are equal, it is sufficient (and necessary) to prove that they have the same $k$-points, i.e. that
\begin{equation}\label{E:equa-points}
\Pic^o(\ov{J}_X(\un q))(k)=\Pic^{\tau}(\ov{J}_X(\un q))(k).
\end{equation}
Consider now the schemes $\Pic^o(\ov{J}_{\X}(\un q))$ and $\Pic^{\tau}(\ov{J}_{\X}(\un q))$, which are smooth over $\Spec R_X$ by Theorem \ref{T:Pic-univ} and Corollary B.
Since $S:=\Spec R_X$ is henselian (because $R_X$ is a complete ring), the canonical reduction maps $\Pic^o(\ov{J}_{\X}(\un q))(S)\to \Pic^o(\ov{J}_X(\un q))(k)$ and
$\Pic^{\tau}(\ov{J}_{\X}(\un q))(S) \to \Pic^{\tau}(\ov{J}_X(\un q))(k)$ from the set of $S$-valued points to the set of $k$-valued points are surjective by \cite[Sec. 2.3, Prop. 5]{BLR}.
Therefore, in order to show the equality \eqref{E:equa-points}, it is enough to show that
\begin{equation}\label{E:equa-lifts}
\Pic^o(\ov{J}_{\X}(\un q))(S)=\Pic^{\tau}(\ov{J}_{\X}(\un q))(S).
\end{equation}
Observe that we have the following natural inclusions
\begin{equation}\label{E:incl-Pic}
\Pic^o(\ov{J}_{\X}(\un q))(S)\subseteq \Pic^{\tau}(\ov{J}_{\X}(\un q))(S) \subseteq \Pic(\ov{J}_{\X}(\un q))(S).
\end{equation}
Note that  ${\mathcal Pic}(S)={\mathcal Pic}(\Spec R_X)=0$ because $R_X$ is a power series ring. Also  the morphism $u:\ov{J}_{\X}(\un q)\to S$ admits a section passing through its smooth
locus (see Theorem \ref{T:univ-Jac}) $J_{\X}(\un q)\to S$  by \cite[Sec. 2.3, Prop. 5]{BLR}. Thus, by  \cite[Sec. 8.1, Prop. 4]{BLR}, we
have a natural identification
\begin{equation}\label{E:sec-lb}
{\mathcal Pic}(\ov{J}_{\X}(\un q))\stackrel{\cong}{\longrightarrow} \Pic(\ov{J}_{\X}(\un q))(S),
\end{equation}
where, as usual (see \S\ref{N:Pic-field}), we denote by ${\mathcal Pic}(\ov{J}_{\X}(\un q))$ the group of line bundles on $\ov{J}_{\X}(\un q)$.

Consider now the open subscheme $\ov{J}_{\X}(\un q)_{|U}:=u^{-1}(U)\subseteq \ov{J}_{\X}(\un q)$, where $U$ is the open subset of $\Spec R_X$ consisting of those points $s\in \Spec R_X$ such that $\X_{\ov s}$ has at most a unique singular point that  is a node (as in Lemma \ref{L:morpi}\eqref{L:morpi3}). The complement of $\ov{J}_{\X}(\un q)_{|U}$ inside $\ov{J}_{\X}(\un q)$
has codimension at least two by Lemma \ref{L:morpi}\eqref{L:morpi3} . Since $\ov{J}_{\X}(\un q)$ is a regular, irreducible and separated scheme by Theorem \ref{T:univ-Jac}, we can apply \cite[Chap. II, Prop. 6.5, Cor. 6.16]{Har} in order to conclude that the natural restriction map induces an isomorphism
 \begin{equation}\label{E:iso-restr}
 {\mathcal Pic}(\ov{J}_{\X}(\un q))\stackrel{\cong}{\longrightarrow} {\mathcal Pic}(\ov{J}_{\X}(\un q)_{|U}).
 \end{equation}
 Moreover, the same argument used to prove \eqref{E:sec-lb} (noticing that ${\mathcal Pic}(U)={\mathcal Pic}(S)=0$) gives that
 \begin{equation}\label{E:sec-lbU}
{\mathcal Pic}(\ov{J}_{\X}(\un q)_{|U})\stackrel{\cong}{\longrightarrow} \Pic(\ov{J}_{\X}(\un q)_{|U})(U).
\end{equation}
 By combining \eqref{E:sec-lb}, \eqref{E:iso-restr} and \eqref{E:sec-lbU}, we get that the following natural restriction map of sections is an isomorphism
\begin{equation}\label{E:restr-U}
{\rm res}: \Pic(\ov{J}_{\X}(\un q))(S) \stackrel{\cong}{\longrightarrow} \Pic(\ov{J}_{\X}(\un q))(U)=\Pic(\ov{J}_{\X}(\un q)_{|U})(U).
\end{equation}
It is clear that ${\rm res}(\Pic^o(\ov{J}_{\X}(\un q))(S))\subseteq\Pic^o(\ov{J}_{\X}(\un q)_{|U})(U)$ and similarly that
${\rm res}(\Pic^{\tau}(\ov{J}_{\X}(\un q))(S))\subseteq\Pic^{\tau}(\ov{J}_{\X}(\un q)_{|U})(U)$.

Consider any general polarization $\un q'$ on $X$ such that the associated fine compactified Jacobian $\ov{J}_X(\un q')$ admits an Abel map in the sense of \S \ref{S:Abel}.   Note that there are plenty of such general polarizations due to Theorem
\ref{T:Abel}\eqref{T:Abel3}. Then the inclusions \eqref{E:incl-Pic} and the isomorphism \eqref{E:restr-U}
hold true also for the polarization $\un q'$. Moreover, Theorem \ref{T:thmD-Abel} (which we can apply since $X$ does not have separating nodes by assumption) implies that
\begin{equation}\label{E:eq-Picotau}
\Pic^o(\ov{J}_{\X}(\un q'))=\Pic^{\tau}(\ov{J}_{\X}(\un q')).
\end{equation}
Lemma \ref{L:comp-unJac} below implies that there exists a line bundle $\L$ on $\X$ inducing an isomorphism
\begin{equation}\label{E:eq-U}
-\otimes \L: \ov{J}_{\X}(\un q)_{|U}\stackrel{\cong}{\longrightarrow} \ov{J}_{\X}(\un q')_{|U}.
\end{equation}

Combining \eqref{E:incl-Pic} and \eqref{E:restr-U} (and their analogues for $\un q'$) together with \eqref{E:eq-Picotau} and \eqref{E:eq-U}, we get the following commutative diagram of (abstract) abelian groups
\begin{equation}\label{E:diag-restr}
\xymatrix{
\Pic^o(\ov{J}_{\X}(\un q))(S) \ar@{^{(}->}[r] \ar[d]& \Pic^{\tau}(\ov{J}_{\X}(\un q))(S)  \ar@{^{(}->}[r] \ar[d]& \Pic(\ov{J}_{\X}(\un q))(S) \ar[d]^{{\rm res}}_{\cong} \\
\Pic^o(\ov{J}_{\X}(\un q)_{|U})(U) \ar@{^{(}->}[r] & \Pic^{\tau}(\ov{J}_{\X}(\un q)_{|U})(U)  \ar@{^{(}->}[r]  & \Pic(\ov{J}_{\X}(\un q)_{|U})(U)  \\
\Pic^o(\ov{J}_{\X}(\un q')_{|U})(U) \ar@{=}[r] \ar[u]^{\cong} & \Pic^{\tau}(\ov{J}_{\X}(\un q')_{|U})(U)  \ar@{^{(}->}[r] \ar[u]^{\cong} & \Pic(\ov{J}_{\X}(\un q')_{|U})(U) \ar[u]_{(-\otimes \L)^*}^{\cong}  \\
\Pic^o(\ov{J}_{\X}(\un q'))(S) \ar@{=}[r] \ar[u]& \Pic^{\tau}(\ov{J}_{\X}(\un q'))(S)  \ar@{^{(}->}[r] \ar[u]& \Pic(\ov{J}_{\X}(\un q'))(S) \ar[u]_{{\rm res'}}^{\cong} \\
}
\end{equation}

Now we need the following crucial

\begin{lemma}\label{L:claim}
Notation as above. For any section $\sigma\in  \Pic(\ov{J}_{\X}(\un q))(S)$,  set $\sigma':=(((-\otimes \L)^*\circ {\rm res'})^{-1} \circ{\rm res})(\sigma)\in  \Pic(\ov{J}_{\X}(\un q'))(S)$. Then it holds that
\begin{equation}\label{E:res-Pico}
\sigma \in \Pic^o(\ov{J}_{\X}(\un q))(S) \Longleftrightarrow \sigma' \in \Pic^o(\ov{J}_{\X}(\un q'))(S).
\end{equation}
\end{lemma}

%\un{Claim :} For any section $\sigma\in  \Pic(\ov{J}_{\X}(\un q))(S)$, if we set $\sigma':=(((-\otimes \L)^*\circ {\rm res'})^{-1} \circ{\rm res})(\sigma)\in  \Pic(\ov{J}_{\X}(\un q'))(S)$, then  we have that
%\begin{equation}\label{E:res-Pico}
%\sigma \in \Pic^o(\ov{J}_{\X}(\un q))(S) \Longleftrightarrow \sigma' \in \Pic^o(\ov{J}_{\X}(\un q'))(S).
%\end{equation}

Using the above Lemma (which will be proved below), we can conclude the proof of Theorem D.
From \cite[Thm. 5.1]{Klei}, it follows that there exists a number $N$ such that any section $\sigma\in \Pic^{\tau}(\ov{J}_{\X}(\un q))(S)$ is such that $\sigma^N\in \Pic^{o}(\ov{J}_{\X}(\un q))(S)$ and similarly for the sections in $\Pic^{\tau}(\ov{J}_{\X}(\un q'))(S)$.
Therefore, from the commutative diagram \eqref{E:diag-restr} and \eqref{E:res-Pico},
we deduce that for a section $\sigma\in  \Pic(\ov{J}_{\X}(\un q))(S)$, if we set $\sigma':=(((-\otimes \L)^*\circ {\rm res'})^{-1} \circ{\rm res})(\sigma) \in  \Pic(\ov{J}_{\X}(\un q'))(S)$ as before, then
 we have that
\begin{equation}\label{E:res-Pictau}
\sigma \in \Pic^{\tau}(\ov{J}_{\X}(\un q))(S) \Longleftrightarrow \sigma' \in \Pic^{\tau}(\ov{J}_{\X}(\un q'))(S).
\end{equation}
An easy chasing in diagram \eqref{E:diag-restr} together with \eqref{E:res-Pico} and \eqref{E:res-Pictau} shows that the required equality \eqref{E:equa-lifts} does hold true and this concludes the proof of Theorem D.

%An easy chasing in diagram \eqref{E:diag-restr} together with \eqref{E:res-Pico} shows that the required equality \eqref{E:equa-lifts} follows if we prove the following equality
%\begin{equation}\label{E:equa-rest}
%\Pic^o(\ov{J}_{\X}(\un q)_{|U})(U)= \Pic^{\tau}(\ov{J}_{\X}(\un q)_{|U})(U).
%\end{equation}
%In order to prove \eqref{E:equa-rest}, consider any general polarization $\un q'$ on $X$ such that the associated fine compactified Jacobian $\ov{J}_X(\un q')$ admits an Abel map, in the sense of Definition \ref{D:ex-Abel-sep}. Note that there are plenty of such general polarizations due to Corollary  \ref{C:exist-Abel}\eqref{C:exist-Abel2}. Theorem \ref{T:thmD-Abel} (see also its proof) implies that $\Pic^o(\ov{J}_{\X}(\un q'))=\Pic^{\tau}(\ov{J}_{\X}(\un q'))$ which, restricting to the open subset $U\subseteq\Spec R_X$, gives that
%\begin{equation}\label{E:eq-Pic-Ab}
%\Pic^o(\ov{J}_{\X}(\un q')_{|U})= \Pic^{\tau}(\ov{J}_{\X}(\un q')_{|U}).
%\end{equation}
%This equality, together with Lemma \ref{L:comp-unJac} below, implies \eqref{E:equa-rest}, which concludes our proof.

\end{proof}

\begin{proof}[Proof of Lemma \ref{L:claim}]

Let us prove the implication $\Rightarrow$ (the other being analogous).
Let $f^{o}:\Pic^{o}(\ov{J}_{\X}(\un q))\rightarrow S$ be the morphism representing the functor  $\Pic^{o}_{u}$.
This morphism is smooth by Theorem \ref{T:Pic-univ}\eqref{T:Pic-univ4} and it has geometrically connected fibers by construction.
We are going to apply the following Lemma \ref{L:def-sect}  (whose proof will be given below)
to  the morphism $f^{o}$ denoting by  $\tau_{1}$ its section $\sigma$
and by $\tau_{2}$ the zero section of the $S$-group scheme $\Pic^{o}(\ov{J}_{\X}(\un q))$.

%Notice that the sections $\tau_1$ and $\tau_2$ are contained in an affine open subset $\mathcal{Z}$ of $\Pic^{o}(\ov{J}_{\X}(\un q))$ since the central fiber $\Pic^o(\ov{J}_X(\un q))$ of $f^{o}$ is a $k$-group scheme, hence quasi-projective (see e.g. \cite[Sec. 6.4, Thm. 1]{BLR}). Let  $Z$ be the  intersection $\mathcal{Z}\cap \Pic^o(\ov{J}_{X}(\un q))$ and set $q_{1}=\tau_{1}(\overline{0})\in Z$ and $q_{2}=\tau_{2}(\overline{0})\in Z$, where $\overline{0}\subset S$ is the closed point.

\begin{lemma}\label{L:def-sect}
%Let $S=\Spec k[[x_1,\ldots,x_r]]$, let $\overline{0}$ be  the unique closed point of $S$  and
Let $f^{o}:\Y\to S=\Spec k[[x_1,\ldots,x_r]]$ be a surjective
smooth morphism  with geometrically connected smooth fibers and
%such that $Y:=(f^{o})^{-1}(\overline{0})$ is connected.
let $\tau_1$ and $\tau_2$ be two sections of $f^{o}$.
%and assume that there exists an open affine subset $\mathcal{Z}=\Spec(A) \subseteq \Y$ containing $q_1:=\tau_1(\overline{0})$ and $q_2:=\tau_2(\overline{0})$.

For any  affine $k$-variety  $V=\Spec(B)$,  set
$S_V:=\Spec{B[[x_1,\ldots,x_r]]}$ and let $\varphi_{V}:S_V\rightarrow V$ be the map induced by the natural inclusion $B\subset B[[x_1,\ldots,x_r]]$.
Let $f^{o}_{V}: S_V\times_S \Y\rightarrow S_V$ be the base change  map of $f^{o}$ via the natural map $\pi_V: S_V\to S$ induced by the inclusion $k[[x_1,\ldots, x_r]]\subset B[[x_1,\ldots,x_r]]$.
%[Notice also that the fiber of the morphism $S_V\to S$ over the unique closed point $\ov 0$ of $S$ is isomorphic to $V$].
%\subset S_V$ induced by the quotient $B[[x_1,\ldots,x_r]]\twoheadrightarrow B$.

Then $\tau_1$ and $\tau_2$ are homotopic in the following sense: there are a sequence of connected smooth affine $k$-varieties $V_1, \ldots ,V_m$, sections $\sigma_i$ of $f^o_{V_i}$ and closed points $q_i,q'_i \in V_i$ such that
\begin{enumerate}
\item $\tau_1=(\sigma_1)_{|\varphi_{V_1}^{-1}(q_1)}$,
\item $(\sigma_i)_{|\varphi_{V_i}^{-1}(q_i')}=(\sigma_{i+1})_{|\varphi_{V_{i+1}}^{-1}(q_{i+1})}$ \: \text{ for any } $i=1,\ldots, m-1$,
\item $(\sigma_m)_{|\varphi_{V_m}^{-1}(q_m')}=\tau_2$.
\end{enumerate}

%Denote by $Z=\Spec(R)$ the regular connected affine scheme $\mathcal{Z}\cap Y$, by
%$\mathbb{A}^{1}$ the affine line $\Spec(k[t])$ over $k$ and let $0$ (resp. $1$) be the point of $\mathbb{A}^1$ defined by the ideal $(t)$ (resp. $(t-1)$).
%Then there exist two sections $\tau_{i,\mathbb{A}^{1}}$ (for $i=1, 2$) of $f^{o}_{\mathbb{A}^{1}}$ and a section $\tau_{Z}$ of $f^{o}_{Z}$ such that
%\begin{enumerate}
%\item{$\tau_1=\tau_{1,\mathbb{A}^{1}|\varphi_{\mathbb{A}^{1}}^{-1}(1)}$,}
%\item{$\tau_{1,\mathbb{A}^{1}|\varphi_{\mathbb{A}^{1}}^{-1}(0)}= \tau_{Z|\varphi_{Z}^{-1}(q_1)}$,}
%\item{$\tau_{Z|\varphi_{Z}^{-1}(q_2)}= \tau_{2,\mathbb{A}^{1}|\varphi_{\mathbb{A}^{1}}^{-1}(0)}$,}
%\item{$\tau_{2,\mathbb{A}^{1}|\varphi_{\mathbb{A}^{1}}^{-1}(1)}= \tau_2$.}
%\end{enumerate}
\end{lemma}

%As in Lemma \ref{L:def-sect}, for any affine $k$-variety $V=\Spec B$ set $S(B):=\Spec B[[x_1,\ldots,x_r]]$ and let $f^{o}_V: S(B)\times_S \Pic^o(\ov{J}_{\X}(\un q)) \to S(B)$ be the map induced by $f^{o}$ by base change, finally let $\varphi_{V}:S(B)\rightarrow V$ be the map induced by the natural inclusion $B\subset B[[x_1,\ldots,x_r]]$.

Since (as observed before) the morphism $u:\ov{J}_{\X}(\un q)\to S$ admits a section passing through its smooth
locus, the same holds for the induced map  $u_{V}: S_V\times_S \ov{J}_{\X}(\un q) \to S_V$ for an affine $k$-variety $V$, hence  by \cite[Sec. 8.1, Prop. 4]{BLR}
every section of $f^{o}_V: \Pic^o(S_V\times_S \ov J_{\X}(\un q))\cong S_V\times_S \Pic^o(\ov{J}_{\X}(\un q)) \to S_V$
is represented by a  line bundle on $S_V\times_S \ov{J}_{\X}(\un q)$  which is unique, up to tensor product with a line bundle from $S_V$.
Therefore, using that the Picard group of $S$ is trivial, Lemma \ref{L:def-sect} implies
the existence of connected smooth affine $k$-varieties $V_i$, points $q_i, q_i'\in V_i$ and line bundles $L_i$ on $S_{V_i}\times_S \J_{\X}(\un q)$ such that
\begin{enumerate}[(1)]
\item $L\cong(L_1)_{|\varphi_{V_1}^{-1}(q_1)\times_{S} \ov{J}_{\X}(\un q)}$,
\item $(L_i)_{|\varphi_{V_i}^{-1}(q_i')\times_{S} \ov{J}_{\X}(\un q)}\cong (L_{i+1})_{|\varphi_{V_{i+1}}^{-1}(q_{i+1})\times_{S} \ov{J}_{\X}(\un q)}$ \: \text{ for any } $i=1,\ldots, m-1$,
\item $(L_m)_{|\varphi_{V_m}^{-1}(q_m')\times_{S} \ov{J}_{\X}(\un q)}\cong \O_{\J_{\X}(\un q)}$.
\end{enumerate}
where $L$ is the line bundle on $\ov{J}_{\X}(\un q)$ corresponding to the section $\sigma$ under the bijection \eqref{E:sec-lb}.

% two line bundles $L_{1}$ and $L_{2}$ on $S(k[t])\times_{S} \ov{J}_{\X}(\un q)$ and of a line bundle  $L_{Z}$ on $S(R)\times_{S} \ov{J}_{\X}(\un q)$, where $R:=\Gamma(Z,\cO_Z)$,  such that
%\begin{enumerate}\label{antonio}
%\item{$L\cong L_{1|\varphi_{\mathbb{A}^{1}}^{-1}(1)\times_{S} \ov{J}_{\X}(\un q)}$,}
%\item{$L_{1|\varphi_{\mathbb{A}^{1}}^{-1}(0)\times_{S} \ov{J}_{\X}(\un q)}\cong L_{Z|\varphi_{Z}^{-1}(q_1)\times_{S} \ov{J}_{\X}(\un q)}$,}
%\item{$L_{Z|\varphi_{Z}^{-1}(q_2)\times_{S} \ov{J}_{\X}(\un q)}\cong L_{2|\varphi_{\mathbb{A}^{1}}^{-1}(0)\times_{S} \ov{J}_{\X}(\un q)}$,}
%\item{$L_{2|\varphi_{\mathbb{A}^{1}}^{-1}(1)\times_{S} \ov{J}_{\X}(\un q)}\cong \mathcal{O}_{\ov{J}_{\X}(\un q)}$,}
%\end{enumerate}
%where $L$ is the line bundle on $\ov{J}_{\X}(\un q)$ corresponding to the section $\sigma$ under the bijection \eqref{E:sec-lb}.

By Lemma \ref{L:comp-unJac},
$\ov{J}_{\X}(\un q)$ and $\ov{J}_{\X}(\un q')$ are isomorphic in codimension $1$ and the same holds for
$S_V\times_{S} \ov{J}_{\X}(\un q)$ and $S_V\times_{S} \ov{J}_{\X}(\un q')$ for any affine $k$-variety $V$, since the natural morphism $S_V\to S$ is flat (by \cite[Thm. 22.3(v)]{Mat}).
Moreover, if $V$ is a smooth affine $k$-variety,  the schemes  $S_V\times_{S} \ov{J}_{\X}(\un q)$ and $S_V\times_{S} \ov{J}_{\X}(\un q')$
are regular. To see this for $S_V\times_{S} \ov{J}_{\X}(\un q)$ (the case of $S_V\times_{S} \ov{J}_{\X}(\un q')$ is analogous),
notice that, since $\ov{J}_{\X}(\un q)$ is proper over $S$, the closure of any point of $S_V\times_{S} \ov{J}_{\X}(\un q)$
contains a closed point $p$ whose residue field is $k$ and such that its projection $\pi_1(p)$ onto $S_V$ belongs to $V\cong (\pi_V)^{-1}(\ov 0)\subset S_V$.
Since regularity is stable under generalization, we only have to check that any such $p$ is a regular point of  $S_V\times_{S} \ov{J}_{\X}(\un q)$.
Since $S_V$ is flat over $S$, the projection $\pi_{2}:S_V\times_{S} \ov{J}_{\X}(\un q)\rightarrow \ov{J}_{\X}(\un q)$ is flat too.
Hence the regularity of $S_V\times_{S} \ov{J}_{\X}(\un q)$ at $p$ follows from the regularity of $\ov{J}_{\X}(\un q)$ and the regularity
of the fiber  $\pi_{2}^{-1}(\pi_{2}(p))\simeq V$ containing the point $p$ (see \cite[Theorem 23.7]{Mat}).
It follows that, if $V$ is a smooth affine $k$-variety,  the schemes $S_V\times_{S} \ov{J}_{\X}(\un q)$ and $S_V\times_{S} \ov{J}_{\X}(\un q')$
are locally factorial and isomorphic in codimension $1$, hence their Picard groups are  isomorphic  functorially with respect to   $V$ (i.e. via isomorphisms which are compatible with
the natural morphism $S_V\to S_{V'}$ induced by a morphism $V\to V'$).

In particular, the line bundles $L_i$ on  $S_{V_i}\times_S \J_{\X}(\un q)$ induce line bundles $L_i'$ on  $S_{V_i}\times_S \J_{\X}(\un q')$
such that
\begin{enumerate}[(1')]
\item $L'\cong(L_1')_{|\varphi_{V_1}^{-1}(q_1)\times_{S} \ov{J}_{\X}(\un q')}$,
\item $(L_i')_{|\varphi_{V_i}^{-1}(q_i')\times_{S} \ov{J}_{\X}(\un q')}\cong (L_{i+1}')_{|\varphi_{V_{i+1}}^{-1}(q_{i+1})\times_{S} \ov{J}_{\X}(\un q')}$ \: \text{ for any } $i=1,\ldots, m-1$,
\item $(L_m')_{|\varphi_{V_m}^{-1}(q_m')\times_{S} \ov{J}_{\X}(\un q')}\cong \O_{\J_{\X}(\un q')}$,
\end{enumerate}
where $L'$ is the line bundle on $\ov{J}_{\X}(\un q')$ corresponding to the section $\sigma'$ under the analogue  of the bijection \eqref{E:sec-lb} for $\ov{J}_{\X}(\un q')$.

%In particular the line bundles  $L_{1}$ and $L_{2}$ on $S(k[t])\times_{S} \ov{J}_{\X}(\un q)$ and the line bundle  $L_{Z}$ on $S(R)\times_{S} \ov{J}_{\X}(\un q)$ induce line bundles $L'_{1}$ and $L'_{2}$ on  $S(k[t])\times_{S} \ov{J}_{\X}(\un q')$ and a line bundle  $L'_{Z}$ on $S(R)\times_{S} \ov{J}_{\X}(\un q')$ such that
%\begin{enumerate}[(1$'$)]
%\item{$L'\cong L'_{1|\varphi_{\mathbb{A}^{1}}^{-1}(1)\times_{S} \ov{J}_{\X}(\un q')}$,}
%\item{$L'_{1|\varphi_{\mathbb{A}^{1}}^{-1}(0)\times_{S} \ov{J}_{\X}(\un q')}\cong L'_{Z|\varphi_{Z}^{-1}(q_{1})\times_{S} \ov{J}_{\X}(\un q')}$,}
%\item{$L'_{Z|\varphi_{Z}^{-1}(q_2)\times_{S} \ov{J}_{\X}(\un q')}\cong L'_{2|\varphi_{\mathbb{A}^{1}}^{-1}(0)\times_{S} \ov{J}_{\X}(\un q')}$,}
%\item{$L'_{2|\varphi_{\mathbb{A}^{1}}^{-1}(1)\times_{S} \ov{J}_{\X}(\un q')}\cong \mathcal{O}_{\ov{J}_{\X}(\un q')}$,}
%\end{enumerate}
%where $L'$ is the line bundle on $\ov{J}_{\X}(\un q')$ corresponding to the section $\sigma'$ under the analogue  of the bijection \eqref{E:sec-lb} for $\ov{J}_{\X}(\un q')$.

Restricting the line bundles $L_i'$ to $V_i\times \ov{J}_X(\un q') \subset S_{V_i}\times_S \ov{J}_X(\un q')$ and considering the isomorphisms (1')-(2')-(3') on the central fiber $\ov{J}_X(\un q')$,
we get that the restriction $L'_{|\ov{J}_X(\un q')}$ of $L'$ to $\ov{J}_X(\un q')$
is algebraically equivalent to the trivial line bundle $\O_{\ov{J}_X(\un q')}$ (because the varieties $V_i$ are connected).
%Restricting the line bundles $L'_{1}$ and $L'_{2}$ to $\Spec k[t]\times \ov{J}_X(\un q')\subset S(k[t])\times_{S} \ov{J}_{\X}(\un q')$ and the line bundle $L'_{Z}$ to $Z\times \ov{J}_X(\un q')\subset S(R)\times_{S} \ov{J}_{\X}(\un q')$ and considering the isomorphisms induced by (1$'$)--(4$'$) on the central fibre $\ov{J}_X(\un q')$ we get that the restriction $L'_{|\ov{J}_X(\un q')}$ of $L'$ to $\ov{J}_X(\un q')$ is algebraically equivalent to the trivial line bundle $\O_{\ov{J}_X(\un q')}$ (because $\mathbb{A}^1$ and $Z$ are connected varieties).
This means that $\sigma'$ sends the unique closed point of $S=\Spec R_X$ to  $\Pic^o(\ov{J}_{\X}(\un q))$.
 Since $\Pic^o(\ov{J}_{\X}(\un q))$ is open in  $\Pic(\ov{J}_{\X}(\un q))$ and the only open subset of $S$ containing
the closed point is the whole $S$, we conclude that
 $\sigma'(s)\in \Pic^o(\ov{J}_{\X_{\ov s}}(\un q'^s))$ for every $s\in S$, or in other words that $\sigma'\in \Pic^o(\ov{J}_{\X}(\un q'))(S)$, which
concludes the proof.

%Restricting  $L'_{i}$ to $\mathbb{A}^{1}\times \ov{J}_{X}(\un q')\subset S(k[t])\times_{S} \ov{J}_{\X}(\un q')$ we get that $L'_{i|1\times \ov{J}_{X}(\un q')}$ and  $L'_{i|0\times \ov{J}_{X}(\un q')}$ belong to the same connected component of the Picard scheme of $\ov{J}_{X}(\un q')$. Analogously, restricting $L'_{Z}$ to  $Z\times \ov{J}_{X}(\un q')\subset S(R)\times_{S} \ov{J}_{\X}(\un q')$ we obtain that $L'_{Z|q_{1}\times \ov{J}_{X}(\un q')}$ and  $L'_{Z|q_{2}\times \ov{J}_{X}(\un q')}$ belong  to the same connected component of the Picard scheme of $\ov{J}_{X}(\un q')$. Finally, by (1$'$), (2$'$), (3$'$) and (4$'$) we have
%\begin{enumerate}
%\item{$L'_{1|1\times \ov{J}_{X}(\un q')}=L'_{|\ov{J}_{X}(\un q')}$,}
%\item{$L'_{1|0\times \ov{J}_{X}(\un q')}=L'_{Z|q_{1}\times \ov{J}_{X}(\un q')}$,}
%\item{$L'_{Z|q_{2}\times \ov{J}_{X}(\un q')}=L'_{2|0\times \ov{J}_{X}(\un q')}$,}
%\item{$L'_{2|0\times \ov{J}_{X}(\un q')}=\mathcal{O}_{\ov{J}_{X}(\un q')}$,}
%\end{enumerate}
%hence $L'_{|\ov{J}_{X}(\un q')}\in \Pic^o(\ov{J}_X(\un q'))(k)$.

\end{proof}

%It remains to prove Lemma \ref{L:def-sect}.  Before proving this Lemma let us explain the statement. The Lemma roughly says that the sections $\tau_{1}$ and $\tau_{2}$ may be connected in a generalized sense by a sequence of three smooth varieties, two copies of ${\mathbb A}^{1}=\Spec (k[t])$ and a copy of $Z=\Spec R$. This connection is only performed  in a generalized sense because $S(k[t])\ne  {\mathbb A}^{1} \times S$ and $S(R)\ne Z\times S$. More precisely, the  composition  of $\tau_{1,{\mathbb A}^{1}}: S(k[t])\rightarrow S(k[t])\times_{S}\mathcal{Y}$ with the projection to $\mathcal{Y}$ gives a morphism $h_1: S(k[t])\rightarrow \mathcal{Y}$ connecting, along ${\mathbb A}^{1}$, the section  $\tau_{1}$ with a preferred section $\hat{\tau}_{1}:={\tau}_{1,{\mathbb A}^{1}|\varphi^{-1}_{{\mathbb A}^{1}}(0)}$ still passing through $q_{1}$. In the same way  the  composition  of $\tau_{Z}: S(R)\rightarrow S(R)\times_{S}\mathcal{Y}$ with the projection to $\mathcal{Y}$ gives a morphism $h_Z: S(R)\rightarrow \mathcal{Y}$ connecting, along $Z$, the section $\hat{\tau}_{1}$ with a preferred section $\hat{\tau}_{2}:={\tau}_{2,{\mathbb A}^{1}|\varphi^{-1}_{{\mathbb A}^{1}}(0)}$ passing through $q_{2}$. Finally the composition $h_2: S(k[t])\rightarrow \mathcal{Y}$ of $\tau_{2,{\mathbb A}^{1}}: S(k[t])\rightarrow S(k[t])\times_{S}\mathcal{Y}$ with the projection to $\mathcal{Y}$ connects $\hat{\tau}_{2}$ and $\tau_{2}$ along  ${\mathbb A}^{1}$.

\begin{proof}[Proof of Lemma \ref{L:def-sect}]

We can simplify the proof by performing two progressive reductions.

\vspace{0.1cm}

\un{Reduction 1:} We can assume that the images of $\tau_1$ and $\tau_2$ are contained in an open affine subset $\cU\subseteq \cY$.

\vspace{0.1cm}

Indeed, for $i=1,2$, let $\cU_i$ be an affine open subset of $\cY$ containing $p_i:=\tau_i(\ov 0)$, where $\ov{0}$ is  the unique closed point of $S$. From the hypothesis it follows that $\cY$ is regular and connected, hence irreducible. Therefore $\cU_1$ and $\cU_2$ must intersect. Pick any point $p_3\in \cU_1\cap \cU_2$ and choose a section $\tau_3$ of $f^o$ such that $\tau_3(\ov 0)=p_3$ (which exists since $f^o$ is a smooth morphism and $R_X$ is strictly Henselian, see \cite[Sec. 2.3, Prop. 5]{BLR}). Clearly, it is enough to prove that $\tau_1$ and $\tau_3$ are homotopic and that $\tau_3$ and $\tau_2$ are homotopic (because being homotopic is an equivalence relation).
The reduction is proved once we observe that the images of $\tau_1$ and $\tau_3$ (resp. of $\tau_3$ and $\tau_2$) are contained in $\cU_1$ (resp. in $\cU_2$), because $\ov 0$ is the unique closed point of $S$.

\vspace{0.1cm}

\un{Reduction 2:} We can assume that $\cY=S_U$ for a  connected smooth $k$-variety $U=\Spec R$ and that $f^{o}:S_U=\Spec R[[x_{1},\ldots, x_{r}]]\rightarrow S=\Spec k[[x_1,\ldots,x_r]]$
 is the map induced by the natural embedding $k[[x_1,\ldots,x_r]]\subset R[[x_{1},\ldots, x_{r}]]$ \footnote{Note that the morphism $f^o$ in the set-up of the Reduction 2 is  not smooth, being not locally of finite type. However, the statement of the Lemma still makes sense.}.

\vspace{0.1cm}

Indeed, using the hypothesis of  Reduction 1,  consider the open affine subset $U:=\cU\cap Y$ of the central fiber $Y:=\cY_{\ov 0}=(f^o)^{-1}(\ov 0)$ of $f^o$. Notice that $U$ is smooth and irreducible since $Y$ is so. Consider the coordinate rings $A:=\Gamma(\cU,\O_{\cU})$ and
$R:=\Gamma(U,\O_U)$ and let $I$ be the ideal of $A$ such that $R=A/I$.
As $S=\Spec k[[x_1,\ldots,x_r]]$, any section $\tau$ of $f^{o}$ factors through $\Spec \hat{A}$ where $\hat{A}$ is the $I$-adic completion of $A$.
Moreover, as $\Spec R$ is a smooth affine scheme over $k$,
 it is  rigid and its infinitesimal deformations are trivial (see \cite[Theorem 1.2.4]{Ser}).
As a consequence, $A/I^{n}\cong \frac{A/I[x_{1},\ldots, x_{r}]}{(x_{1},\ldots,
x_{r})^{n}}$ for any $n\in \mathbb{N}$ and $\hat{A}\cong A/I[[x_{1},\ldots, x_{r}]]\cong
R[[x_{1},\ldots, x_{r}]]$ and the second reduction is proved.

%So it will suffice to prove the thesis of the Lemma with $\Y$ and $\cU$ replaced by $\Spec(R[[x_{1},\ldots, x_{r}]])=S_U$ and assuming that $f^{o}:S_U=\Spec(R[[x_{1},\ldots, x_{r}]])\rightarrow S=\Spec k[[x_1,\ldots,x_r]]$ is the map induced by the natural embedding $k[[x_1,\ldots,x_r]]\subset R[[x_{1},\ldots, x_{r}]]$.

\vspace{0.1cm}

Under the assumptions of Reduction 2, set $V_1=V_3:=\bbA^1=\Spec k[t]$ and $V_2:=U=\Spec R$ and consider the points $q_1:=1, q_1':=0\in \bbA^1$, $q_2:=p_1=\tau_1(\ov 0), q_2':=p_2=\tau_2(\ov 0)\in U$,
$q_3:=0, q_3'=1\in \bbA^1$.  In order to conclude the proof of the Lemma with the above choices, it remains to construct sections
$\tau_{i,\mathbb{A}^{1}}$ and $\tau_{U}$ of, respectively $f_{\bbA^1}^o$ and $f^o_{U}$, such that
\begin{enumerate}
\item{$\tau_1=\tau_{1,\mathbb{A}^{1}|\varphi_{\mathbb{A}^{1}}^{-1}(1)}$,}
\item{$\tau_{1,\mathbb{A}^{1}|\varphi_{\mathbb{A}^{1}}^{-1}(0)}= \tau_{U|\varphi_{U}^{-1}(p_1)}=\text{constant section } p_1$,}
\item{$\tau_{U|\varphi_{U}^{-1}(p_2)}= \tau_{2,\mathbb{A}^{1}|\varphi_{\mathbb{A}^{1}}^{-1}(0)}=\text{constant section } p_2$,}
\item{$\tau_{2,\mathbb{A}^{1}|\varphi_{\mathbb{A}^{1}}^{-1}(1)}= \tau_2$.}
\end{enumerate}

We define $\tau_{U}: S_U \rightarrow S_U\times_{S} S_U$ as the diagonal embedding.
For every closed point $q\in U$, the fiber $\varphi_{U}^{-1}(q)$ is naturally identified with $S$. Using this identification, the definition of
$\tau_{U}$ implies that the section
$\tau_{U|\varphi_{U}^{-1}(q)}:\varphi_{U}^{-1}(q)\rightarrow
\varphi_{U}^{-1}(q)\times_{S} S_U$ is the constant section whose value is $q$.
In other words $\tau_{U|\varphi_{U}^{-1}(q)}$ is the map induced on spectra by the $k$-algebra morphism $g_{q}: R[[x_{1},\ldots,
x_{r}]]\rightarrow k[[x_{1},\ldots, x_{r}]]$
defined by reduction of coefficients of the power series  modulo the ideal of $q$ in $\Spec(R)$.

To define $\tau_{i,\mathbb{A}^{1}}:S_{\bbA^1}\rightarrow S_{\bbA^1}\times_{S} S_U$ (for $i=1,2$),
notice that $\tau_{i}: S\rightarrow  S_U$ is induced by a $k[[x_1,\ldots, x_r]]$-morphism $g_{i}: R[[x_1,\ldots,x_r]] \rightarrow k[[x_1,\ldots,x_r]]$ sending the ideal $J_i$ of $p_{i}$ in $\Spec R[[x_{1},\ldots, x_{r}]]$
to the maximal ideal of $k[[x_1,\ldots,x_r]]$.
Therefore $g_{i}$ factors through the $J_i-$adic completion
$\widehat{R[[x_1,\ldots,x_r]]}$  of $R[[x_1,\ldots,x_r]]$. Since $U$  is smooth over $k$,
there exists a  $k[[x_1,\ldots,x_r]]-$algebra isomorphism
$\widehat{R[[x_1,\ldots,x_r]]}$ $\simeq k[[x_1,\ldots,x_s]]$
 for some $s\ge r$. More precisely, for any regular sequence  $m_{1},\ldots, m_{s-r}$  generating the maximal ideal $J_i\cap R$ defining $p_{i}\in \Spec(R)$, the set $\{x_{1},\ldots,x_{r}, m_{1},\ldots, m_{s-r}\}$ generates both the ideals
$J_i\subset R[[x_1,\ldots,x_r]]$ and $J_i\cap R[x_1,\ldots,x_r] \subset R[x_1,\ldots,x_r]$. As a consequence we have  functorial isomorphisms
$$R[[x_1,\ldots,x_r]]/J_{i}^{n}\simeq R[x_1,\ldots,x_r]/(J_{i}\cap R[x_1,\ldots,x_r])^{n},$$
which induce a $k[[x_1,\ldots,x_r]]-$algebra isomorphism and homeomorphism
between $\widehat{R[[x_1,\ldots,x_r]]}$ and the completion
$\widehat{R[x_1,\ldots,x_r]}_{J_i\cap R[x_1,\ldots,x_r]}$ of
$R[x_1,\ldots,x_r]$ at the maximal ideal $J_i\cap R[x_1,\ldots,x_r]$. As $U=\Spec(R)$ is smooth of dimension $s-r$ over $k$, we also have a    $k[[x_1,\ldots,x_r]]-$algebra isomorphism and homeomorphism
$$\widehat{R[x_1,\ldots,x_r]}_{J_i\cap R[x_1,\ldots,x_r]}\simeq k[[x_1,\ldots,x_s]].$$
Summing up, there exist $k[[x_1,\ldots,x_r]]-$algebra morphisms
$\phi_i: R[[x_1,\ldots,x_r]]\rightarrow k[[x_1,\ldots,x_s]]$
and $\overline{g}_{i}: k[[x_1,\ldots,x_s]]\rightarrow k[[x_1,\ldots,x_r]]$
 such that $g_{i}= \overline{g}_{i}\circ \phi_i$.

Moreover, by construction, $\overline{g}_{i}$ is continuous with respect to the topologies induced by the maximal ideals and
$\overline{g}_{i}(x_{t})=x_{t}$ for $t\le r$, hence it is completely determined by the values $\overline{g}_{i}(x_{t})$ for $t>r$; explicitly, for any
 $b(x_1,\ldots, x_{r},x_{r+1},\ldots, x_{s})\in k[[x_1,\ldots,x_s]]$ we have
$$\overline{g}_{i}(b(x_1,\ldots, x_{r},x_{r+1},\ldots, x_{s}))=
b(x_1,\ldots, x_{r},\overline{g}_{i}(x_{r+1}),\ldots,
\overline{g}_{i}(x_{s})).$$
Using these morphisms we define
$$\phi_{i,\mathbb{A}^{1}}:k[t][[x_1,\ldots,x_r]]\otimes_{k[[x_1,\ldots,x_r]]} R[[x_1,\ldots,x_r]]
\rightarrow k[t][[x_1,\ldots,x_r]]\otimes_{k[[x_1,\ldots,x_r]]}
k[[x_1,\ldots,x_s]]$$
as the unique $k[t][[x_1,\ldots,x_r]]-$morphism induced by $\phi_i$. Next, we define
$$\begin{aligned}
&\overline{g}_{i,\mathbb{A}^{1}}:
k[t][[x_1,\ldots,x_r]]\otimes_{k[[x_1,\ldots,x_r]]} k[[x_1,\ldots,x_s]]  \longrightarrow k[t][[x_1,\ldots,x_r]], \\
&a(t,x_1,\ldots, x_{r})\otimes b(x_1,\ldots, x_{r},x_{r+1},\ldots, x_{s})  \mapsto
a(t,x_1,\ldots, x_{r})\;b(x_1,\ldots,
x_{r},t\overline{g}_{i}(x_{r+1}),\ldots, t\overline{g}_{i}(x_{s})). \end{aligned}$$
Finally, the evaluation of the composition
$\overline{g}_{i,\mathbb{A}^{1}}\circ\phi_{i, \mathbb{A}^{1}}$ modulo $(t-\alpha)$ gets $g_{i}$ for $\alpha=1$ and the reduction of the coefficients of the power series modulo the ideal of $p_i$ in $\Spec(R)$ for $\alpha=0$.
Hence we may choose  $\tau_{i,\mathbb{A}^{1}}$ as the map induced on spectra by $\overline{g}_{i,\mathbb{A}^{1}}\circ\phi_{i, \mathbb{A}^{1}}$.

\end{proof}

\section{Appendix: Hitchin fibration vs compactified Jacobians of spectral curves}
%\label{S:Hitchin}

\noindent

Let $C$ be a fixed connected smooth and projective curve of genus $g$ over an algebraically closed field $k$ and let $L$ be a line bundle on $C$ (often it is convenient to assume that
$L$ has high degree, e.g. $\deg L\geq 2g-2$). Fix a natural number $r\geq 1$ and an integral number $d\in \Z$.

An {\em $L$-twisted Higgs pair}  (or simply a Higgs pair when $L$ is clear from the context) on $C$ is a pair $(E,\phi)$ consisting of a vector bundle $E$ on $C$ and a homomorphism $\phi:E\to E\otimes L$ (called the Higgs field).  The degree (resp. the rank) of a Higgs pair $(E,\phi)$ is the degree $\deg E$ (resp. the rank $\rk E$) of the underlying vector bundle $E$. In the important special case when $L=\omega_C$, an $\omega_C$-twisted Higgs pair is simply called a {\em Higgs bundle}.

The algebraic stack $\M=\M(r,d, L)$ of all $L$-twisted Higgs pairs $(E,\phi)$ on $C$ of rank $r$ and degree $d$ is endowed with a morphism  (called the {\em Hitchin morphism})
\begin{equation}\label{E:fibr-Hitchin}
\begin{aligned}
\H: \M(r,d, L)=\M & \longrightarrow \A=\A(r,L):=\oplus_{i=1}^r H^0(C, L^i) \\
(E,\phi) & \mapsto \H(E,\phi):=(a_1(E,\phi), \ldots, a_r(E,\phi)),
\end{aligned}
\end{equation}
where $L^i=L^{\otimes i}$ is the $i$-th tensor product of $L$ and $a_i(E,\phi):=(-1)^i \Tr(\Lambda^i \phi)\in H^0(C,L^i)$.

The algebraic (Artin) stack $\M$ is not of finite type. In order to obtain a space of finite type (and indeed a variety), one introduces a semistability condition as follows.
A Higgs pair $(E,\phi)$ is called {\em semistable} (resp. {\em stable}) if for all non-trivial proper subsheaves $F\subsetneq E$ that are stable with respect to $\phi$ (i.e. such that $\phi(F)\subseteq F\otimes L$) we have
$$\frac{\deg F}{\rk F}\leq \frac{\deg E}{\rk E} \:\:\: \text{(resp. }<). $$
Observe that, given a Higgs pair $(E,\phi)$,  if $E$ is a semistable (resp. stable) vector bundle then  $(E,\phi)$ is semistable (resp. stable) but the converse is in general false.

The coarse moduli space $M=M(r,d,L)$ of S-equivalence classes of semistable $L$-twisted Higgs pairs $(E,\phi)$ of rank $r$ and degree $d$  has been constructed by N. Hitchin \cite{Hit} for $L=\omega_C$ using analytic methods (namely gauge theory) and later by C. Simpson \cite{Sim} for $L=\omega_C$ and by N. Nitsure \cite{Nit} for an arbitrary $L$, using algebro-geometric methods (namely geometric invariant theory). As proved in \cite{Sim} and \cite{Nit}, the Hitchin fibration \eqref{E:fibr-Hitchin} induces  a flat projective morphism (called the {\em Hitchin fibration}):
\begin{equation}\label{E:fibr-Hitchin2}
\begin{aligned}
H: M(r,d,L)=M & \longrightarrow \A=\A(r,L):=\oplus_{i=1}^r H^0(C, L^i) \\
(E,\phi) & \mapsto (a_1(E,\phi), \ldots, a_r(E,\phi)).
\end{aligned}
\end{equation}
%where $a_i(E,\phi):=(-1)^i \Tr(\Lambda^i \phi)\in H^0(C,L^i)$.

\begin{remark}\label{R:hyperkahler}
In the special case where $L=\omega_C$, $r$ and $d$ are coprime (so that there are no strictly semistable Higgs pairs) and $k=\mathbb{C}$, Hitchin \cite{Hit}  proved  that:
\begin{itemize}
\item  $M=M(r,d,\omega_C)$ is an hyperk\"ahler (non compact) manifold containing, as an open subset, the cotangent bundle of the moduli space of stable (= semistable) vector bundles on $C$ of degree $d$ and rank $r$;
\item  $H$  is an algebraically completely integrable system.
\end{itemize}
This result has been generalized to the case where $L\otimes \omega_C^{-1}$ is effective by F. Bottacin \cite{Bot} and E. Markman \cite{Mar}: in this case, it is shown in loc. cit. that $M=M(r,d,L)$ is endowed with a  Poisson structure  (depending upon the choice of a section of $L\otimes \omega_C^{-1}$) with respect to which $H$ becomes an  algebraically completely integrable system.
\end{remark}

The fibers of the Hitchin morphism $\H$ and of the Hitchin fibration $H$ can be described in terms of compactified Jacobians of spectral curves, as we are now going to explain, following Beauville-Narasimhan-Ramanan \cite{BNR}
and Schaub \cite{Sch}.
Consider the $\PP^1$-fibration $p:P=\PP(\O_C\oplus L^{-1})\to C$  and  let $\O(1)$ be the relatively ample line bundle on $P$. We will denote by $y$ the section of $\O(1)$ whose pushforward via $p$ corresponds to the constant section $(1,0)$ of  the vector bundle $p_*\O(1)=\O_C\oplus L^{-1}$. Similarly, we will denote by $x$ the section of $\O(1)\otimes p^*(L)$ whose pushforward via $p$ corresponds to the constant section $(0,1)$ of the vector bundle $p_*(\O(1)\otimes p^*(L))=(\O_C\oplus L^{-1})\otimes L=L\oplus \O_C$. In other words, $\{y=0\}$ is the section of $p$ (that we call $\infty$-section) corresponding to the surjection $\O_C\oplus L^{-1}\twoheadrightarrow L^{-1}$ and $\{x=0\}$ is the section of $p$ (that we call $0$-section) corresponding to the surjection $\O_C\oplus L^{-1}\twoheadrightarrow \O_C$.
Given $\un a=(a_1,\ldots, a_r)\in \oplus_{i=1}^r H^0(C, L^i)=\A$, the {\em spectral curve} $C_{\un a}$ associated to $\un a$ is the projective (but possibly singular) curve \footnote{In this Appendix, we violate \S\ref{N:curves}: by curve, we mean any projective scheme over algebraically closed field $k$ of pure dimension one, not necessarily reduced nor connected.} inside $P$ given by
%as the zero locus of the following section of $[\O(1)\otimes p^*(L)]^r$:
$$C_{\un a}:=\{x^r+p^*(a_1)\cdot x^{r-1}\cdot y+\ldots+ p^*(a_r)\cdot y^r=0\}\subset P.$$
Via this construction, the affine space $\A$ is identified with the open subset of the complete linear system $|\O(r)\otimes p^*(L^r)|=\PP(p_*(\O(r))\otimes L^r)=\PP(\oplus_{i=0}^r H^0(C, L^i))$ consisting of all curves that do not meet the $\infty$-section $\{y=0\}$.

The arithmetic genus of the spectral curves can be computed as follows. First note that the canonical sheaf of $P=\PP(\O_C\oplus L^{-1})$ is equal to
$\omega_P=\O(-2)\otimes p^*(\omega_C\otimes L)$ (see \cite[Chap. V, Lemma 2.10]{Har}).
Therefore, if we set $\xi:=c_1(\O(1))$ and we denote by $f$  the class of the fiber of $p$ in the N\'eron-Severi group of $P$, the  adjunction formula gives
$$
p_a(C_{\un a})=\frac{\left[c_1(\omega_P)+C_{\un a}\right]\cdot C_{\un a}}{2}+1= \frac{\left[-2\xi+(2g-2-\deg L)f+r\xi+r\deg L f\right]\cdot (r\xi+r\deg L f)}{2}+1=
$$
\begin{equation}\label{E:genus-spec}
= r(g-1)+\binom{r}{2}\deg L+1,
\end{equation}
where we used that  $\xi\cdot f=1$, $\xi^2=-\deg L$ and $f^2=0$.

\vspace{0.1cm}

The spectral curve $C_{\un a}$ can be very singular (although it has locally planar singularities because it is embedded in the smooth surface $P$), and in particular it is not necessarily reduced nor irreducible.  The base $\A$ of the Hitchin morphism admits two notable  open (by \cite[(12.2.4)]{EGAIV3}) subsets $\A^{\rm ell}\subseteq \A^{\rm reg}\subseteq \A $, called respectively the elliptic locus and the regular locus, defined as follows:
$$\begin{aligned}
&\A^{\rm ell}:=\{\un a\in \A\: :\: C_{\un a} \: \text{is integral}\}, \\
& \A^{\rm reg}:=\{\un a\in \A\: :\: C_{\un a} \: \text{is reduced and connected}\}. \\
\end{aligned}
$$
The study of the topology of the Hitchin morphism restricted to the elliptic locus $\A^{\rm ell}$ has played a crucial role in B. C. Ng\^o's proof of the fundamental lemma (see \cite{Ngo1} and \cite{Ngo2}) and, more generally, the study of  the Hitchin morphism over the regular locus $\A^{\rm reg}$ was a crucial ingredient in Chaudouard-Laumon's proof of the weighted fundamental lemma (see \cite{CL1} and \cite{CL2}).

\begin{remark}\label{E:strata-A}
If $L$ is globally generated and non trivial on $C$, then the complete linear system $|\O(1)\otimes p^*(L)|$ on $P$ is globally generated and it defines a morphism from $P$ onto the cone over the image of $C$ via $|L|$ (see \cite[Chap. V, Example 2.11.4]{Har}). Therefore, the complete linear system $|\O(r)\otimes p^*(L^r)|$ on $P$ is globally generated and it is not composed with a pencil (i.e. the image of the associated morphism has dimension greater than one). From this, we deduce that (under the above assumption on $L$):
\begin{itemize}
\item all spectral curves are connected by Bertini's second theorem (see \cite[Chap. III, Exercise 11.3]{Har});
\item the generic spectral curve is smooth if ${\rm char}(k)=0$ by Bertini's first theorem (see \cite[Chap. III, Cor. 10.9]{Har}).
\end{itemize}
In particular, we deduce that under the above assumptions on $L$ and in characteristic zero, the above loci $\A^{\rm ell}$ and $\A^{\rm reg}$ are non empty.
See also \cite[Prop. 2.1]{Mar} where the above two properties are stated (without a proof) under the assumption that $L^{\otimes r}$ is very ample.
\end{remark}

The restriction of the morphism $p:P\to C$ to $C_{\un a}$ is a degree-$r$ finite morphism $\pi_{\un a}:C_{\un a}\to C$, called the {\em spectral cover } associated to $\un a\in \A$. Note that, since the zero sets of $x$ and $y$ are disjoint in $P$, the restriction of the section $y$ to $C_{\un a}$ is everywhere non-zero, which implies that $\O(1)_{|C_{\un a}}=\O_{C_{\un a}}$. Therefore, the restriction $x_{|C_{\un a}}$ of the section $x$ to $C_{\un a}$ can be considered as a section of $\left[p^*(L)\otimes \O(1)\right]_{|C_{\un a}}=p^*(L)_{|C_{\un a}}=\pi_{\un a}^*(L).$

%Note that the curve $C_{\un a}$ need not to be reduced nor irreducible.

\begin{fact}[Spectral correspondence \cite{BNR}--\cite{Sch}]\label{F:corr-spec}
Let $\un a\in \A= \oplus_{i=1}^r H^0(C, L^i)$ and consider the associated degree-$r$ spectral cover $\pi_{\un a}:C_{\un a}\to C$.
\noindent
\begin{enumerate}[(i)]
\item \label{F:corr-spec1} There is a bijective correspondence
$$\Pi:\left\{\begin{matrix}  \text{Torsion-free rank-1 sheaves $I$ on } C_{\un a} \\ \end{matrix} \right\} \rightarrow
\left\{\begin{matrix} \text{$L$-twisted Higgs pairs } (E,\phi) \text{ on } C \: \text{of rank } r \\
 \text{ such that }  \H(E,\phi)=\un a \end{matrix} \right\}=\H^{-1}(\un a)
$$
obtained by associating to a torsion-free rank-1 sheaf $I$ on $C_{\un a}$ the $L$-twisted Higgs pair $\Pi(I)=(E,\phi)$ on $C$ consisting of the vector bundle $E:=(\pi_{\un a})_*(I)$ on $C$ together with the Higgs field $\phi:(\pi_{\un a})_*(I)\to  (\pi_{\un a})_*(I)\otimes L=(\pi_{\un a})_*(I\otimes \pi_{\un a}^*(L))$ given by the multiplication with $x_{|C_{\un a}}\in H^0(C_{\un a},\pi_{\un a}^*(L))$.

Moreover, $\chi(I)=\deg \Pi(I)+r(1-g)$.
%Moreover, the degree of $\Pi(I)$ is equal to $\displaystyle \deg \Pi(I)= \deg I-\binom{r}{2}\deg L$.
\item \label{F:corr-spec2} Given a torsion-free rank-1 sheaf $I$ on $C_{\un a}$, then $\Pi(I)$ is a semistable (resp. stable)  $L$-twisted Higgs pair on $C$ if and only if for any subscheme $Z\subseteq C_{\un a}$ of pure dimension one we have that
\begin{equation}\label{E:stabHig}
\chi(I_Z)\geq \chi(I) \frac{\deg ({\pi_{\un a}}_{|Z})}{r}  \: \: \: (\text{resp. }> ),
\end{equation}
where $\deg ({\pi_{\un a}}_{|Z})$ is the degree of the finite morphism ${\pi_{\un a}}_{|Z}: Z\to C$.

In particular, if $C_{\un a}$ is reduced and connected then $\Pi(I)$ is a semistable (resp. stable)  $L$-twisted Higgs pair of degree $d$ on $C$ if and only if $I$ is semistable (resp. stable) with respect to the polarization $\un q^{\un a}$ on $C_{\un a}$ given by
\begin{equation}\label{E:pola-cover}
\un q^{\un a}_{ Z}:=[d+r(1-g)] \frac{\deg ({\pi_{\un a}}_{|Z})}{r}.
\end{equation}
\item \label{F:corr-spec3} Assume that $\un a\in \A^{\rm reg}$ (i.e. $C_{\un a}$ is a reduced connected curve) and that $\un q^{\un a}$ is a general polarization in the sense of Definition \ref{def-int} (i.e. every torsion-free rank-1 sheaf on $C_{\un a}$ which is  $\un q^{\un a}$-semistable is also $\un q^{\un a}$-stable).
%in the sense of Definition \ref{def-int} (which is always the case if $r$ and $d$ are coprime).
Then the bijective correspondence $\Pi$ gives rise to an isomorphism
\begin{equation}\label{E:fibersH-Jac}
\ov{J}_{C_{\un a}}(\un q^{\un a})\xrightarrow[\cong]{\Pi}H^{-1}(\un a),
\end{equation}
where $\ov{J}_{C_{\un a}}(\un q^{\un a})$ is the fine compactified Jacobian of the reduced curve $C_{\un a}$ with respect to the general polarization $\un q^{\un a}$
(see \S  \ref{S:fine-Jac}).
\end{enumerate}
\end{fact}
\begin{proof}
Part \eqref{F:corr-spec1} is proved in \cite[Prop. 3.6]{BNR} under the hypothesis that the spectral curve $C_{\un a}$ is integral and in \cite[Prop. 2.1]{Sch} (see also \cite[\S 6]{HP}) for an arbitrary spectral curve. The last assertion follows from Riemann-Roch
$$\chi(I)=\chi((\pi_{\un a})_*(I))=\deg (\pi_{\un a})_*(I)+r(1-g).$$
%degree of $I$ is related to the degree $\deg E=\deg\left( (\pi_{\un a})_*(I)\right)=d$ by mean of the following
%$$\deg I=\chi(I)-\chi(\O_{C_{\un a}})= \chi(E)-(1-p_a(C_{\un a}))= d+r(1-g)+\left( r(g-1)+\binom{r}{2}\deg L\right)= d+\binom{r}{2}\deg L,$$
%where we used the formula \eqref{E:genus-spec} for $p_a(C_{\un a})$ together with the fact that $\chi(I)=\chi((\pi_{\un a})_*(I))$.

Part \eqref{F:corr-spec2}: from the proof of \cite[Thm. 3.1]{Sch} it follows that $\Pi(I)$ is a semistable (resp. stable) $L$-twisted Higgs pair if and only if for any subscheme $Z\subseteq C_{\un a}$ of pure dimension one we have that
\begin{equation}\label{E:Schaub1}
\frac{\deg [({\pi_{\un a}}_{|Z})_*(I_{Z})]}{\rk [({\pi_{\un a}}_{|Z})_*(I_{Z})]}\geq \frac{\deg[ ({\pi_{\un a}})_*(I)]}{\rk[ ({\pi_{\un a}})_*(I)]}=\frac{d}{r} \: \: \: (\text{resp. }> ),
\end{equation}
where $I_{Z}$ is the biggest torsion-free quotient of the restriction $I_{|Z}$ of $I$ to $Z$. The sheaf ${(\pi_{\un a}}_{|Z})_*(I_{Z})$ is locally free of rank equal to the degree $\deg ({\pi_{\un a}}_{|Z})$ of the finite morphism ${\pi_{\un a}}_{|Z}:Z \to C$ and
its degree can be computed using Riemann-Roch:
$$\deg [ ({\pi_{\un a}}_{|Z})_*(I_{Z})]=\chi [({\pi_{\un a}}_{|Z})_*(I_{Z})] - \rk [ ({\pi_{\un a}}_{|Z})_*(I_{Z}) ](1-g)=\chi(I_{Z}) -\deg ({\pi_{\un a}}_{|Z}) (1-g).
$$
Therefore, the inequality \eqref{E:Schaub1} is equivalent to
\begin{equation}\label{E:Schaub2}
\chi(I_Z)\geq \left[\frac{d}{r}+1-g \right] \deg ({\pi_{\un a}}_{|Z})=\chi(I) \frac{\deg ({\pi_{\un a}}_{|Z})}{r}  \: \: \: (\text{resp. }> ),
\end{equation}
which concludes the proof of  \eqref{F:corr-spec2}.

Part \eqref{F:corr-spec3} follows from \eqref{F:corr-spec2} and the fact that the bijective correspondence $\Pi$ of \eqref{F:corr-spec1} does hold in  families as well (see \cite[Prop. 5.1]{Sch}).

\end{proof}

\begin{remark}\label{R:gen-pola}
Note that if  $d$ and $r$ are coprime, then for every $\un a\in \A^{\rm reg}$ the polarization $\un q^{\un a}$ on $C_{\un a}$ is general since there are no strictly semistable Higgs pairs, hence no strictly semistable torsion-free rank-1 sheaves on $C_{\un a}$ by Fact \ref{F:corr-spec}\eqref{F:corr-spec2}.

In the general case, Chaudouard-Laumon \cite{CL1} have introduced, after taking a suitable cover of $ \A^{\rm reg}$, alternative semistability conditions on the stack of Higgs pairs over $\A^{\rm reg}$, for which there are no strictly semistable objects.
The moduli spaces defined by these new semistability conditions are such that the fibers of the associated Hitchin fibrations  are always isomorphic to  fine compactified Jacobians.

\end{remark}

Donagi-Pantev conjectured in \cite[Conj. 2.5]{DP} that the stack $\M(r,\omega_C)$ of Higgs bundles  satisfies the following autoduality property, which can be viewed as a ``classical limit" of the (conjectural) geometric Langlands correspondence (see \cite[Sec. 2]{DP} for a discussion of the geometric Langlands correspondence and the passage to the classical limit).

\begin{conj}[Langlands duality for Higgs bundles \cite{DP}]\label{C:Lang-duality}
Let $\M(r,\omega_C)$ be the moduli stack  of Higgs bundles of rank $r$ and let $D^b(\M(r,\omega_C))$ be the bounded derived category of quasi-coherent sheaves on $\M(r,\omega_C)$. There exists a canonical equivalence of categories
$${\Phi}:D^b(\M(r,\omega_C))\to D^b(\M(r,\omega_C))$$
which intertwines the action of the classical limit tensorization functors with the action of the classical limit Hecke functors.
%Then there exists a universal Poincar\'e sheaf $\P$ on $\M\times_{\A}\M$ such that the Fourier-Mukai transform with kernel $\P$
%$${\Phi}^{\P}:D^b(M)\to D^b(M)$$
%s an auto-equivalence of the bounded derived category $D^b(M)$ of quasi-coherent sheaves on $M$.
\end{conj}
See \cite[Sec. 2]{DP} for the definition of the tensorization functors and Hecke functors, together with their classical limits.
%More precisely, the above auto-equivalence ${\Phi}^{\P}$ is expected to intertwine the action of the classical limit tensorization functors with the action of the classical limit Hecke functors (see \cite[Conj. 2.5]{DP} for a precise formulation).
More generally, Donagi-Pantev conjectured in loc. cit. a  Langlands duality between the stack of $G$-Higgs bundles (for $G$ any reductive group) and the stack of ${}^{L}G$-Higgs bundles, where ${}^{L}G$ is the Langlands dual of $G$.  Conjecture \ref{C:Lang-duality} is the special case in which $G=\GL_r={}^{L}G$.

The autoequivalence $\Phi$ of Conjecture \ref{C:Lang-duality} is expected to be given by a Fourier-Mukai transform with kernel equal to a universal Poincar\'e sheaf $\P$ on $\M(r,\omega_C)\times_{\A(r,\omega_C)}\M(r,\omega_C)$. Moreover, $\Phi$ is expected to preserve the Hitchin
morphism $\H:\M(r,\omega_C)\to \A(r,\omega_C)$, i.e. for any $\un a\in \A(r,\omega_C)$ the Fourier-Mukai transform with kernel $\P_{\un a}:=\P_{|(\H\times \H)^{-1}(\un a)}$
\begin{equation}\label{E:fibdual}
{\Phi}^{\P_{\un a}}:D^b(\H^{-1}(\un a))\to D^b(\H^{-1}(\un a))
\end{equation}
should be an auto-equivalence of the bounded derived category $D^b(\H^{-1}(\un a))$ of quasi-coherent sheaves on $\H^{-1}(\un a)$.

In \cite[Sec. 5.3, Sec. 5.4]{DP}, Donagi-Pantev proved Conjecture \ref{C:Lang-duality} (and its generalized version for any reductive group $G$) over the open subset $\A^{\rm sm}:=\{\un a\in \A\: :\: C_{\un a} \: \text{ is smooth and } \pi_{\un a}: C_{\un a} \to C \text{ is simply ramified} \}\subset \A$.
  More precisely, if $\un a\in \A^{\rm sm}$ then $\H^{-1}(\un a)$ is isomorphic to the Jacobian of $C_{\un a}$ by Fact \ref{F:corr-spec}, and Donagi-Pantev proved in loc. cit. that the classical Fourier-Mukai transform (introduced by Mukai in \cite{mukai}) intertwines the action of the classical limit tensorization functors with the action of the classical limit Hecke functors.

 If $\un a\in \A^{\rm ell}$, i.e. if the associated spectral curve $C_{\un a}$ is integral, then  $\H^{-1}(\un a)$ is isomorphic to the compactified Jacobian of $C_{\un a}$ by  Fact \ref{F:corr-spec} (no stability conditions are needed in this case to define the compactified Jacobian) and the expected autoequivalence of \eqref{E:fibdual}  is constructed by Arinkin \cite{arin2}.

If $\un a\in  \A^{\rm reg}$, i.e. $C_{\un a}$ is reduced, then the stack $\H^{-1}(\un a)$ of rank-$1$ torsion-free sheaves on $C_{\un a}$ (see Fact \ref{F:corr-spec}\eqref{F:corr-spec1}) contains the fine compactified Jacobians of $C_{\un a}$ as open and proper subsets. Therefore Theorem E of the introduction (whose proof will appear in \cite{MRV2}) can be seen as a first step towards proving the autoequivalence \eqref{E:fibdual} for reduced spectral curves.

\vspace{0,3cm}

\noindent {\bf Acknowledgments.} We are extremely grateful to C. L\'opez-Mart\'in, who shared with us some thoughts at an early stage of this project and provided a faithfulness criterion for integral transforms that was initially used in the proof of Theorem A.
We thank T. Dedieu, F. Esposito, E. Esteves,  L. Migliorini, V. Schende, E. Sernesi  for several useful conversations.
 %on fine compactified Jacobians, T. Dedieu and E. Sernesi for discussions on deformation theory of curves with locally planar singularities and L. Migliorini for discussions on the Hitchin fibration.
We are very grateful to the referee for her/his very careful reading of the paper and for the many suggestions and questions that helped in improving  the presentation and the level of accuracy.  
%In particular, the referee's questions prompted us to give a complete and detailed proof of the properties of the equigeneric stratification stated in Theorem \ref{T:strati}, a result which is well-known to the experts but for which no complete proof (not even over the complex numbers)  seems to be available.  
In particular, the referee pointed out a gap in a previous proof of Theorem \ref{T:Pic-univ} and suggested  a shorter proof of Theorem A, which avoids any faithfulness criterion.

This project started while the first author was visiting the Mathematics Department of the University of Roma ``Tor Vergata'' funded by the ``Michele Cuozzo'' 2009 award. She wishes to express her gratitude to Michele Cuozzo's family and to the Department for this great opportunity.
%providing her the opportunity to .

M. Melo was partially supported by CMUC -- UID/MAT/00324/2013, funded by the Portuguese Government through FCT/MEC and co-funded by the European Regional Development Fund through PT2020 and by a Rita Levi Montalcini Grant from the Italian government.
M. Melo and F. Viviani
were partially supported by  the FCT (Portugal) projects \textit{Geometria de espa\c cos de moduli de curvas e variedades abelianas} (EXPL/MAT-GEO/1168/2013) and \textit{Comunidade Portuguesa de Geometria Alg\'ebrica} (PTDC/MAT-GEO/0675/2012).
A. Rapagnetta and F. Viviani were supported by the MIUR project  \textit{Spazi di moduli e applicazioni} (FIRB 2012).

\bibliographystyle{apsr}

\end{document}